\newtheorem{theorem}{Theorem}[section]
\newtheorem{corollary}[theorem]{Corollary}
\newtheorem{lemma}[theorem]{Lemma}
\newtheorem{proposition}[theorem]{Proposition}
\theoremstyle{definition}
\newtheorem{remark}[theorem]{Remark}
\newcommand{\R}{\mathbb{R}}
\newcommand{\N}{\mathbb{N}}
\newcommand{\eps}{\varepsilon}
\def\diam{{\rm diam}}
\def\dist{{\rm dist}}
\newcommand{\BBB}{\color{black}}
\newcommand{\EEE}{\color{black}}
\numberwithin{equation}{section}
\begin{document}

\title[Functionals defined on piecewise rigid functions]{Functionals defined on piecewise rigid functions: \\ Integral representation and $\Gamma$-convergence}

\subjclass[2010]{ 49J45, 49Q20, 70G75,   74R10.} 

% Please provide minimum  5 keywords.
 \keywords{Piecewise rigid functions, free-discontinuity problems, integral representation, $\Gamma$-convergence, fundamental estimates, fracture mechanics, polycrystals.}

\author{Manuel Friedrich}
\address[Manuel Friedrich]{Applied Mathematics,  
Universit\"{a}t M\"{u}nster, Einsteinstr. 62, D-48149 M\"{u}nster, Germany}
\email{manuel.friedrich@uni-muenster.de}
\urladdr{https://www.uni-muenster.de/AMM/Friedrich/index.shtml}

\author{Francesco Solombrino}
\address[Francesco Solombrino]{Dip. Mat. Appl. ``Renato Caccioppoli'', Univ. Napoli ``Federico II'', Via Cintia, Monte S. Angelo
80126 Napoli, Italy}
\email{francesco.solombrino@unina.it}
\urladdr{http://www.docenti.unina.it/francesco.solombrino}

\begin{abstract}
 We analyse integral representation and $\Gamma$-convergence properties of functionals defined on \emph{piecewise rigid functions}, i.e., functions which are piecewise affine on a Caccioppoli partition \BBB where the \EEE derivative in each component is constant and  lies in a  set  without rank-one connections. Such functionals \BBB account for interfacial energies in the variational modeling of materials which locally show a rigid behavior. \EEE Our results are  based on  localization techniques for $\Gamma$-convergence and a careful adaption of the global method for  relaxation    \cite{BFLM, BDM}  to this new setting, under rather general assumptions. They constitute a first step  towards the investigation of   lower semicontinuity, relaxation, and homogenization  for free-discontinuity problems in spaces of (generalized) functions of bounded deformation.
\end{abstract}

\maketitle

\section{Introduction}

Many  problems in  materials science, physics, computer science, and other fields  involve the minimization of surface energies for configurations which represent partitions of the domain into regions of finite perimeter.  Among the vast body of literature, we only mention examples in the direction of liquid crystals \cite{Ericksen}, phase transition problems in immiscible fluids \cite{baldo, Modica, Morgan}, fracture mechanics \cite{AmbrosioBraides3}, image segmentation \cite{MumSha},  spin-like lattice systems \cite{ABC, ACR, BraidesCicalese}, or  polycrystalline structures \cite{Caraballo1, deluca}, and refer to the references cited therein. 
 %with different orientations of small drops,   with different densities of a fluid , % with  different crystalline orientations

In the framework of the calculus of variations, these phenomena can be formulated by means of integral functionals defined on \emph{Caccioppoli partitions} or \emph{piecewise constant functions} on such partitions, see \cite[Section 4.4]{Ambrosio-Fusco-Pallara:2000}  or Section \ref{sec: cacc} below for their definition.  Problems of this kind have first been  studied in the seminal work by {\sc Almgren} \cite{Almgren}. Later, {\sc Ambrosio and Braides} \cite{AmbrosioBraides, AmbrosioBraides2} carried out a comprehensive analysis by developing a theory of integral representation, compactness, $\Gamma$-convergence, and relaxation. They also addressed the problem of lower semicontinuity which has been further developed over the last years, see, e.g.,   \cite[Section 5.3]{Ambrosio-Fusco-Pallara:2000} or \cite{Caraballo1, Caraballo2, Caraballo3}. Recent advances dealing with density and continuity results \cite{BCG-part, matthias} witness that the study of this class of functionals is of ongoing interest.

Understanding the properties of Caccioppoli partitions is also a mainstay in the analysis of free-discontinuity problems \cite{Ambrosio-Fusco-Pallara:2000, DeGiorgi-Ambrosio:1988} defined on  \emph{special functions of bounded variation} ($SBV$) (see \cite[Section 4]{Ambrosio-Fusco-Pallara:2000}). Indeed, in this context, the study of lower semicontinuity conditions \cite{Amb, Ambrosio:90-2}, the derivation of integral representation formulas \cite{BFLM, BDM, BraidesPiat:96, Braides-Defranceschi}, or compactness properties \cite{Manuel} can often be  reduced to corresponding problems on partitions. In a similar fashion, homogenization and $\Gamma$-convergence  for free-discontinuity problems \cite{Braides-Defranceschi, Caterina, Caterina2, Giacomini-Ponsiglione:2006},  their approximation \cite{Annika, BellettiniCoscia, Braides-approx, matthias2}, as well as results on the existence of quasistatic evolutions \cite{Francfort-Larsen:2003, Giacomini-Ponsiglione:2006}   rely fundamentally on the decoupling of bulk and surface effects, for which a profound understanding of energies defined on piecewise constant functions is necessary.

In the present paper we are interested in analogous problems for functionals defined on \emph{piecewise rigid functions}, i.e., functions which are piecewise affine on a Caccioppoli partition \BBB where the \EEE derivative in each component is constant and  lies in a set $L$ without rank-one connections \cite{ball.james}. Our standard examples are the rotations $L= SO(d)$ and the space $L= \R^{d\times d}_{\rm skew}$ of skew symmetric matrices. In the application  of materials science, particularly in fracture mechanics, piecewise rigid functions  for $L= SO(d)$  can be interpreted as the configurations which   may exhibit  cracks along surfaces but do not store nonlinear elastic energy. In fact, in \cite{Chambolle-Giacomini-Ponsiglione:2007}, a remarkable \emph{piecewise rigidity} result has been proved showing that the set of these  functions coincides 
with the (seemingly larger) set of functions $u \in SBV$ with approximate gradient $\nabla u \in SO(d)$ almost everywhere. An analogous result holds in the geometrically linear setting $L= \R^{d\times d}_{\rm skew}$ for functions in the space $(G)SBD$ of \emph{(generalized) special functions of bounded deformation}, introduced in \cite{ACD, DM}. \BBB In the context of fracture mechanics, these results imply that a deformation of a cracked hyperelastic (respectively, linearly elastic) body does not store elastic energy if and only if it is piecewise rigid. Thus, interfacial energies of materials which show locally rigid behavior in different regions of the body can be naturally modeled by functionals defined on piewise rigid functions. \EEE

\BBB However, our primary purpose comes from the study of  free-discontinuity problems defined on the space $GSBD^p$, see  \cite{DM},  which has obtained steadily  increasing attention over the last years, cf., e.g., \cite{Chambolle-Conti-Francfort:2014, Iu3, Crismale2, Crismale,  Conti-Iurlano:15, Conti-Focardi-Iurlano:15, Friedrich:15-2, Friedrich:15-3, Friedrich:15-4, FriedrichSolombrino}. We have indeed already mentioned before how the analysis of partition problems has proved to be a relevant tool in the study of free-discontinuity problems on $SBV$. When coming to similar problems on $GSBD^p$, where only a control on the symmetrized gradient of the competitors is available, a larger space than piecewise constant functions must be taken into account in order, e.g., to provide lower semicontinuity conditions for surface integrands, or representation formulas for $\Gamma$-limits, and, in general, to deal with the issues that we mentioned above in the $SBV$ context. In our opinion, it is quite natural to expect that the understanding of energies defined on piecewise rigid functions for $L = \R^{d\times d}_{\rm skew}$ represents a significant first step (or maybe even the building block) of such a research program. \EEE

In this first paper on this topic, we investigate integral representation and $\Gamma$-convergence for functionals defined on piecewise rigid functions. Lower semicontinuity, homogenization, and relaxation will be carried out in a forthcoming paper. We now proceed by describing our setting in more detail. 

Let $L\subset \mathbb{R}^{d\times d}$ be a closed set of \emph{rigid matrices}  not  satisfying the \emph{Hadamard compatibility condition}   (equivalently,  having no rank-one connections between each other, see \cite{ball.james}),  for which  a locally  Bilipschitz parametrization exists, see \eqref{eq: repr0.0} below for details. \BBB The condition of no rank-one connections is needed to ensure that functions exhibit discontinuities along the interface of two components with different constant derivative in $L$. This  rules out the formation of laminates. The local Bilipschitz parametrization allows us to treat the matrices $L$ as a subset of a linear space instead of a manifold, cf.\ the case $L= SO(d)$. \EEE For $\Omega \subset \R^d$ open and bounded, we denote by $PR_L(\Omega)$ the set of piecewise rigid functions $u$, i.e., 
\begin{align}\label{eq: basic functions} 
u(x) = \sum\nolimits_{j\in \N} (Q_j\, x + b_j) \chi_{P_j}(x),
\end{align}
 where $(P_j)_{j\in\N}$ is a Caccioppoli partition of $\Omega$,  $Q_j \in L$, and $b_j \in \R^d$  for all $j \in \N$.  For open subsets $A \subset \Omega$, we consider functionals $\mathcal{F}(\cdot,A): PR_L(\Omega) \to [0,\infty)$ of the form
\begin{align}\label{eq: basic problem}
\mathcal{F}(u,A) = \int_{J_u\cap  A} f(x,[u](x),\nu_u(x))\, d\mathcal{H}^{d-1}(x),
\end{align}
 where by $[u]$ and $\nu_u$ we denote the jump height and a normal to the jump (i.e., a normal to the interface), respectively, and $f$ represents an interfacial energy   density  which may additionally depend on the material point $x$. 
 
 We are interested in the problem if, for a sequence of functionals $(\mathcal{F}_n)_n$ with densities $(f_n)_n$, an effective limiting problem exists in the sense of variational convergence ($\Gamma$-convergence). Then, it is a natural question if also the $\Gamma$-limit is  of the form  \eqref{eq: basic problem}.  In this context, a standard procedure relies on localization techniques for $\Gamma$-convergence  (see \cite{DalMaso:93}), i.e., passing to a $\Gamma$-limit $\mathcal{F}(\cdot,A)$  of the sequence $\mathcal{F}_n(\cdot,A)$ for every open $A \subset \Omega$. Afterwards, one shows that under certain conditions for  $\mathcal{F}(\cdot,A)$, including  suitable semicontinuity, locality, and measure theoretic properties, there exists an integral representation   for  $\mathcal{F}(\cdot,A)$ in the sense of \eqref{eq: basic problem}.

An approach in this spirit has been performed in \cite{AmbrosioBraides} for finitely valued piecewise constant functions, i.e., for functions of the form \eqref{eq: basic functions} with $Q_j = 0$ and $b_j \in K$ for a finite set $K \subset \R^d$. $\Gamma$-convergence and integral representation are guaranteed under the natural \emph{growth conditions} $0<\alpha \le  f_n(x,\xi,\nu) \le \beta$ and a \emph{uniform continuity condition}  $x \mapsto f_n(x,\xi,\nu)$   along the sequence of densities $(f_n)_n$, which are maintained in the $\Gamma$-limit.  Later, for the problem of integral representation (but not for $\Gamma$-convergence), the continuity assumption  in $x$ has been dropped in \cite[Theorem 3]{BFLM}, and,  under a continuity condition  $\xi \mapsto f(x,\xi,\nu)$,   the result has been generalized to  $K = \R^d$ in \cite[Theorem 3.2]{BraidesPiat:96}. In the present paper,  under similar growth and continuity conditions,  we derive analogous results for $PR_L(\Omega)$ in place of piecewise constant functions. As a byproduct, choosing $L = \lbrace 0 \rbrace$, we also generalize the above mentioned $\Gamma$-convergence results to the case $K = \R^d$.

We now give a more thorough outline of our proof strategy and  provide a comparison  with \cite{AmbrosioBraides}.  First, concerning integral representation, we follow the global method for relaxation developed in \cite{BFLM, BDM}, which essentially consists in comparing  asymptotic Dirichlet problems on small  balls  with different boundary data depending on the local properties of $u$. For $\Gamma$-convergence, we apply the localization techniques described above, see e.g.\ \cite{Braides-Defranceschi:98, DalMaso:93}. 

For both methods, the key ingredient  is a construction for  \emph{joining} two functions $u,v \in PR_L(\Omega)$, which is usually called the \emph{fundamental estimate}. Typically, this is achieved by means of a cut-off construction of the form $w := u \varphi + (1 - \varphi) \BBB v \EEE $ for some smooth $\varphi$ with $0 \le \varphi \le 1$. In the present context, however, a crucial problem has to be faced since in general $w$ is not in $PR_L(\Omega)$. In the case of piecewise constant functions, this issue was solved in \cite{AmbrosioBraides} by using the coarea formula in $BV$, see \cite[Lemma 4.4]{AmbrosioBraides},  which allows to approximate $w$ by a piecewise constant function $\tilde{w}$. Geometrically, the joining of $u$ and $v$ to $\tilde{w}$ consists in modifying the partitions and adding additional interface whose length is controlled by $d(u,v)$, where $d(u,v)$ is a suitable metric on the space.  The same strategy cannot be pursued in the present context: e.g., when $L=\R^{d\times d}_{\rm skew}$, we have $PR_L(\Omega) \subset SBD(\Omega)$, where no analog of the coarea  formula  is known to hold. (We refer to \cite{Friedrich:15-4} for more details in that direction.) 

Our main trick is the following: we apply the coarea formula twice, once for the functions themselves and once for their derivatives. Roughly speaking, this allows to join two functions $u,v \in PR_L(\Omega)$ by adding additional interface whose length is controlled in terms of $d(u,v)$ and $d_\nabla(\nabla u,\nabla v)$ for suitable metrics $d$ and $d_\nabla$. Unfortunately, the metric $d_\nabla$ is too strong and not compatible with the available compactness results. Therefore, we apply this construction only on components $P_j$ in \eqref{eq: basic functions}  whose volume is `not too small'  since on such sets the derivative of  an affine mapping can be controlled in terms of the mapping itself  by elementary arguments (cf.\ Lemma \ref{lemma: rigid motion}). This in turn allows to control $d_\nabla(\nabla u,\nabla v)$ in terms of $d(u,v)$ on such components. On the other components (i.e.,  those having small volume), we introduce additional interface by a direct geometrical construction, see Lemma \ref{lemma: fundamental estimate}. This strategy leads to a fundamental estimate in $PR_L(\Omega)$, see Lemma \ref{lemma: fundamental estimate2}. Under an additional technical condition, see \eqref{eq: extra condition}, we are able to  provide also a refined version of this result in Lemma \ref{lemma: fundamental estimate2-new} where boundary values are preserved. This is instrumental for the application to the global method of relaxation.

Apart from the fundamental estimate, we encounter another technical difficulty with respect to other integral representation results \cite{BFLM, BDM, BraidesPiat:96,  Conti-Focardi-Iurlano:15}.  There, at least as an intermediate step, one may consider growth conditions of the form  
$$\alpha\mathcal{H}^{d-1}(J_u \cap A)  + \alpha' \int_{J_u \cap A} |[u]| \,d\mathcal{H}^{d-1} \le  \mathcal{F}(u,A)  \le \beta\mathcal{H}^{d-1}(J_u \cap A) + \beta' \int_{J_u \cap A} |[u]| \,d\mathcal{H}^{d-1}$$
for $0 < \alpha \le \beta$ and $0 <\alpha' \le \beta'$. The lower bound allows to apply compactness results in $SBV$. In the present context, however, we  are forced to work with $\alpha' = \beta' = 0$   since in the construction of the fundamental estimate we control only the length of the added interface but not the modification of the jump heights.  Thus,  more delicate arguments are necessary to obtain suitable compactness results and, as a consequence, convergence of  minima for asymptotic Dirichlet problems, see  Lemma \ref{lemma: gamma-min-upper} and Lemma \ref{lemma: gamma-min-lower}. The latter is not only of general interest, but in particular instrumental to show that the uniform continuity condition   $\xi \mapsto f_n(x,\xi,\nu)$   along the sequence of densities $(f_n)_n$ is maintained in the $\Gamma$-limit, see \eqref{eq: h5/6}. These arguments are based on novel truncation techniques, see Section \ref{sec: truncation}, which are inspired by the recent work \cite{Manuel} where compactness results for free-discontinuity  problems  on $(G)SBV^p$ have been derived in a very general sense.

Finally, let us briefly compare our result for $L = \mathbb{R}^{2 \times 2}_{\rm skew}$ with the  integral representation in $SBD^p$, $p>1$, in dimension two, proved in \cite{Conti-Focardi-Iurlano:15}. Although in this specific case our functionals are defined on a subspace of $(G)SBD^p$, our result is not merely a simple consequence of  \cite{Conti-Focardi-Iurlano:15} since there is in general no obvious way to extend a functional from $PR_L$ to  $(G)SBD^p$. Indeed, as explained above, the issue of joining two functions is more delicate in the present context and calls for novel versions of a fundamental estimate.

The paper is organized as follows. In Section \ref{sec: main} we introduce our setting and present our main results about integral representation and $\Gamma$-convergence. Section \ref{sec: prelim} is devoted to preliminaries about Caccioppoli partitions and (piecewise) rigid functions. In Section \ref{sec: fund} we formulate and prove the fundamental estimate. Here, we also present a refinement preserving boundary values and a scaled version on small balls. Section \ref{sec: representation} and Section \ref{sec: gamma} are devoted to the proofs of the integral representation and the $\Gamma$-convergence result, respectively. Finally, Section \ref{sec: examples} discusses the examples $L=SO(d)$, $L = \R^{d\times d}_{\rm skew}$ and introduces a truncation method which is instrumental for the convergence of minima for asymptotic Dirichlet problems.

\section{The setting and main results}\label{sec: main}

\textbf{Notation:} Throughout the paper  $\Omega \subset \R^d$ is open, bounded with Lipschitz boundary. Let $\mathcal{A}(\Omega)$ be the  family of open subsets of $\Omega$, and  $\mathcal{A}_0(\Omega) \subset \mathcal{A}(\Omega)$ be the subset of sets with regular boundary. By $\mathcal{B}(\Omega)$ we denote the family of Borel sets contained in $\Omega$. By $\omega_m$ we denote  the $m$-dimensional measure of the unit ball \BBB in $\R^m$. \EEE  The symbol $B_R(x)$ will denote a ball of radius $R$ centered at $x$ in an Euclidian space.    The notations $\mathcal{L}^d$ and $\mathcal{H}^{d-1}$ are used for  the Lebesgue measure, and the $(d-1)$-dimensional Hausdorff measure in $\mathbb{R}^d$, respectively. For a $\mathcal{L}^d$-measurable set $E\subset\mathbb{R}^d$,  the symbol $\chi_E$ denotes its \BBB characteristic \EEE  function. For $A,B \in \mathcal{A}(\Omega)$ with $\overline{B} \subset A$, we write $B \subset \subset A$.

\textbf{Jump set:} If  $u: \Omega \to \R^d$ is a $\mathcal{L}^d$-measurable function, $u$ is said to have an \emph{approximate limit} $a \in \R^d$ at a point $x \in \Omega$ if and only if
$$
\lim_{\varrho \to 0^+}\frac{\mathcal L^d\left(\{|u-a|\ge \varepsilon\}\cap B_\varrho(x)\right)}{\varrho^d}=0\hbox{ for every }\varepsilon >0\,.
$$
In this case, one writes $\mathrm{ap }\lim_{y\to x}u(y)=a$. The \emph{approximate jump set} $J_u$ is defined as the set of points $x \in \Omega$ such that there exist $a\neq b \in \R^d$ and $\nu \in S^{d-1}:=\{\xi \in \R^{d}: |\xi|=1\}$ with
$$
\mathrm{ap}\lim_{\substack{y\to x\\ \langle y-x,  \nu \rangle >0}}u(y)=a\ \ \ \ \ \ \ \ \ \ \quad \mathrm{ap}\lim_{\substack{y\to x\\ \langle y-x,  \nu \rangle <0}}u(y)=b\,.
$$
The triplet $(a, b, \nu)$ is uniquely determined up to a permutation of $(a, b)$ and a change of sign of $\nu$, and is denoted by $(u^+ (x), u ^-(x), \nu_u (x))$. The jump of  $u$ is the function $[u]:J_u \to \R^d$ defined by $[u](x) := u^+ (x)-u^- (x)$ for every $x \in J_u$.

\textbf{Set of rigid matrices:}  We consider a closed subset  $L\subset \mathbb{R}^{d\times d}$ with the following two properties: First, each pair of matrices in  $L$ does not satisfy 
the \emph{Hadamard compatibility condition} (see \cite{ball.james}), i.e.,  there holds
\begin{align}\label{eq: matrix-assumption}
{\rm rank} (Q_1 - Q_2) \ge 2 \ \ \ \ \ \ \text{ for all $Q_1, Q_2 \in L$, \ \ \  $Q_1 \neq Q_2$.}
\end{align}
Moreover, we suppose that, roughly speaking, there exists a locally  Bilipschitz parametrization of $L$. 
More precisely, we suppose that there exist constants $d_L \in \N$, $0 < c_L< 1$, $ C_L >0$, $r_L \in (0,+\infty]$,  and a surjective Lipschitz mapping  $\Psi_L:(-r_L,r_L)^{d_L} \to L$ with Lipschitz constant $C_L$  such that,  for each $Q \in L$, there exists a {\it right inverse} mapping $\Xi_L: B_{c_Lr_L}(Q) \cap L \to (-r_L,r_L)^{d_L}$ of $\Psi_L$ satisfying  
\begin{align}\label{eq: repr0.0}
|\Xi_L(Q_1) - \Xi_L(Q_2)| \le C_L |Q_1 - Q_2| \ \ \ \text{for all} \ \ Q_1,Q_2 \in B_{c_Lr_L}(Q) \cap L.
\end{align}
In particular, $r_L = \infty$ is admissible. In this case, we use the convention  $c_L r_L=\infty$, which means that $\Psi_L$ has a globally Lipschitz right inverse $\Xi_L$ defined on all of $L$.  If instead $r_L<+\infty$ (that is, $L$ is  compact), it suffices that a Lipschitz right inverse is defined on small balls around each point having uniform radius, and that its Lipschitz constant is uniformly bounded  by $C_L$.

It is well-known that property \eqref{eq: matrix-assumption} is satisfied for  $L=\R^{d\times d}_{\rm skew}$ as well as for $L=SO(d)$. 
Property  \eqref{eq: repr0.0}  is immediate in the case $L=\R^{d\times d}_{\rm skew}$ since it suffices to define $\Psi_L$ as the canonical isomorphism between $\R^{\frac{d(d-1)}2}$ and $\R^{d\times d}_{\rm skew}$,  which is bijective and Bilipschitz. Actually, property \eqref{eq: repr0.0} is also satisfied for $L=SO(d)$, when $d=2$ or $d=3$.

\begin{proposition}\label{prop: rotations}
Let $d=2$, or $d=3$. Then, the set $L=SO(d)$ complies with property \eqref{eq: repr0.0}.
\end{proposition}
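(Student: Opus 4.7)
For $d=2$ I would set $d_L = 1$, take $r_L > \pi$ (say $r_L = 3\pi/2$) and let $\Psi_L(\theta)$ be the planar rotation by angle $\theta$. This is a Lipschitz surjection onto $SO(2)$ because already the restriction to $(-\pi, \pi]$ is onto. For each $Q = \Psi_L(\theta_Q) \in SO(2)$ with $\theta_Q \in (-\pi, \pi]$, the plan is to define $\Xi_L$ on $B_{c_L r_L}(Q) \cap SO(2)$ by sending $Q'$ to the unique angle in $(\theta_Q - \eta, \theta_Q + \eta)$ representing $Q'$, where $\eta$ is fixed (say $\eta = \pi/3$). For $c_L$ small enough in terms of $\eta$ and $r_L$, this angle lies in $(-r_L, r_L)$, and the Lipschitz constant is bounded by a universal constant, since on short arcs of $S^1$ the Euclidean chord distance is bilipschitz equivalent to arc length.

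For $d=3$ I would use the canonical isomorphism $v \mapsto v^\wedge$ between $\R^3$ and $\R^{3\times 3}_{\rm skew}$, set $d_L = 3$, pick $r_L > \pi$ (to be tuned) and put $\Psi_L(v) = \exp(v^\wedge)$. Surjectivity onto $SO(3)$ is classical, since the restriction of $\exp$ to $\overline{B_\pi(0)}$ is already onto by the axis-angle description of rotations; smoothness of $\exp$ on the bounded cube supplies a global Lipschitz constant. For the local inverse, given $Q \in SO(3)$ I fix a preimage $v_Q \in \overline{B_\pi(0)}$. An explicit computation via Rodrigues' formula shows that $d\Psi_L(v_Q)$ has singular values $1$ and $2\sin(|v_Q|/2)/|v_Q|$ (with multiplicity two), all bounded below by $2/\pi$ as $v_Q$ ranges over $\overline{B_\pi(0)}$. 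The inverse function theorem therefore produces a smooth local inverse around $v_Q$ with Lipschitz constant bounded uniformly in $Q$; taking $c_L r_L$ sufficiently small, this local inverse is defined on all of $B_{c_L r_L}(Q) \cap SO(3)$, and its image stays in $(-r_L, r_L)^3$ as long as $r_L$ is chosen, e.g., $r_L = 2\pi$.

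The main subtlety, and the reason $r_L$ must be strictly larger than $\pi$, is the failure of global injectivity of $\exp$ on $\overline{B_\pi(0)}$: any rotation $Q$ by angle $\pi$ about an axis $\hat n$ has both $\pi \hat n$ and $-\pi \hat n$ as preimages of norm $\pi$, and in any ambient ball around $Q$ there are rotations of the form $R_{\pi-\eps}(-\hat n) = R_{-(\pi-\eps)}(\hat n)$ whose only preimage of norm $\le \pi$ is $-(\pi-\eps)\hat n$, approximately $2\pi$-far from $\pi \hat n$. If $\Xi_L$ were forced to take values in $\overline{B_\pi(0)}$, Lipschitz continuity would fail near such $Q$. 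With $r_L > \pi$ one may instead continue the branch $v_Q = \pi \hat n$ to the nearby preimage $(\pi + \eps)\hat n$ of $R_{\pi-\eps}(-\hat n)$, which still lies in $(-r_L, r_L)^3$ and at distance $2\eps$ from $\pi \hat n$, restoring the Lipschitz bound; verifying that this branch selection is exactly what the inverse function theorem hands us is the one non-cosmetic step of the proof.
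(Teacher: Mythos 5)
Your argument is correct, but it takes a genuinely different route from the paper's. The paper avoids the inverse function theorem altogether and works explicitly: for rotations close to $I$ it uses the matrix logarithm (convergent power series), and for rotations far from $I$ it constructs the inverse by hand, first showing that the rotation axis $n_R$ depends Lipschitz-continuously on $R$, then extracting the angle $\theta_R$ from the point $(\langle Rw,w\rangle,\langle Rw,z\rangle)\in S^1$, which is bounded away from the singular point $(1,0)$ so that a smooth inverse of $\theta\mapsto(\cos\theta,\sin\theta)$ taking values in a compact interval $[\eta,2\pi-\eta]$ applies. You instead do a unified argument: compute that the singular values of $d\Psi_L(v)$ are $1$ and (twice) $2\sin(|v|/2)/|v|$, observe that these are bounded below by $2/\pi$ on $\overline{B_\pi(0)}$, and invoke a quantitative inverse function theorem plus compactness to extract a uniform local inverse radius and Lipschitz constant. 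Both are sound; yours is shorter and more conceptual (it exposes the Lie-theoretic structure through $\operatorname{dexp}$), while the paper's is more elementary and self-contained, requiring no uniform IFT. A few small points: the step ``the IFT produces a local inverse with Lipschitz constant bounded uniformly in $Q$'' deserves at least a sentence --- the uniformity comes from $\Psi_L$ being $C^2$ on the compact cube together with the uniform lower bound on the singular values, and one also needs that intrinsic and ambient distances on $SO(3)$ are comparable, since \eqref{eq: repr0.0} is stated with the ambient Frobenius norm. Also, the distance from $(\pi+\eps)\hat n$ to $\pi\hat n$ is $\eps$, not $2\eps$; harmless, but worth fixing. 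Finally, your diagnosis of the subtlety at $|v|=\pi$ mirrors what forces the paper into its two-case split: where the paper avoids the problem by choosing the branch of the angle in a compact interval bounded away from $0$ and $2\pi$, you correctly note that the IFT automatically continues the branch across $|v|=\pi$ into $(-r_L,r_L)^3$ once $r_L>\pi$.
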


This fact, although based on standard  representation properties of rotation matrices,  seems to be nontrivial to us.  For the reader's convenience,  we will  thus give a proof below in Appendix \ref{proof1}.

\textbf{Piecewise rigid functions:} We introduce the space of \emph{piecewise rigid functions} by
\begin{align}\label{eq: PR-def}
PR_L(\Omega) := & \big\{ u: \Omega \to   \R^d \text{ $\mathcal{L}^d$-measurable: } \   u(x) = \sum\nolimits_{j\in \N} (Q_j\, x + b_j) \chi_{P_j}(x), \notag \\
 & \text{where }  Q_j \in L, \, b_j \in \R^d, \text{ and }  (P_j)_j  \text{ is a Caccioppoli partition of } \Omega \big\}. 
\end{align}
 Here and henceforth, we will call an affine mapping of the form $q_{Q,b}(x):=Qx+b$  with $Q\in L$ and $b \in \R^d$ a  {\it rigid motion}. It follows from the properties of Caccioppoli partitions, see  Section \ref{sec: cacc}, that for each $u \in PR_L(\Omega)$ we have that $\mathcal{H}^{d-1}(J_u \setminus \bigcup_j \partial^* P_j) = 0$ and thus $\mathcal{H}^{d-1}(J_u)  < + \infty$. We equip $PR_L(\Omega)$ with the topology induced by measure convergence on $\Omega$. 
 
 When $L = \R^{d\times d}_{\rm skew}$,  one can equivalently characterize $PR_L(\Omega)$ as the subspace of $GSBD$ functions  (see \cite{DM}) whose symmetrized approximate gradient $e(u)$ equals zero $\mathcal{L}^d$-almost everywhere.   For a proof  we refer to \cite[Theorem A.1]{Chambolle-Giacomini-Ponsiglione:2007} and  \cite[Remark 2.2(i)]{Friedrich:15-4}.  In a similar fashion, in the case $L=SO(d)$, $PR_L(\Omega)$ coincides with the $GSBV$ functions whose approximate gradient satisfies $\nabla u(x) \in SO(d)$ for $\mathcal{L}^d$-a.e.\ $x \in \Omega$, see \cite{Chambolle-Giacomini-Ponsiglione:2007}.

\textbf{Functionals:} We consider functionals $\mathcal{F}: PR_L(\Omega) \times \mathcal{B}(\Omega) \to [0,\infty)$ with the following general assumptions: 
\begin{itemize}
\item[{\rm (${\rm H_1}$)}]  $\mathcal{F}(u,\cdot)$ is a Borel measure for any $u \in PR_L(\Omega)$,
\item[(${\rm H_2}$)]  $\mathcal{F}(\cdot,A)$ is lower semicontinuous with respect to convergence in measure on $\Omega$ for any $A \in \mathcal{A}(\Omega)$,
\item[(${\rm H_3}$)]   $\mathcal{F}(\cdot, A)$ is local for any $A \in \mathcal{A}(\Omega)$, in the sense that, if $u,v \in PR_L(\Omega)$ satisfy $u=v$ a.e.\ in $A$, then $\mathcal{F}(u,A) = \mathcal{F}(v,A)$,
\item[(${\rm H_4}$)]  there exist $0 < \alpha < 1$ and $\beta \ge 1$ such that for any $u \in PR_L(\Omega)$ and $B \in \mathcal{B}(\Omega)$, 
$$\alpha\mathcal{H}^{d-1}(J_u \cap B) \le \mathcal{F}(u,B) \le \beta \mathcal{H}^{d-1}(J_u \cap B),$$ 
\item[{\rm (${\rm H_5}$)}]  there exists an increasing modulus of continuity $\sigma:[0,+\infty) \to [0,\beta]$ with $\sigma(0)=0$ such that for any $u,v \in PR_L(\Omega)$ and $S \in B(\Omega)$ with $S \subset  J_u \cap J_v$ we have
$$|\mathcal{F}(u,S) - \mathcal{F}(v,S)| \le \int_{S} \sigma(|[u]- [v]|) \, d\mathcal{H}^{d-1},$$ 
where we choose the orientation $\nu_u = \nu_v$ on $J_u \cap J_v$.
\end{itemize}

We remark that {\rm (${\rm H_1}$)}--{\rm (${\rm H_3}$)} are standard assumptions, see \cite{AmbrosioBraides,BFLM, BraidesPiat:96, ButtazzoDalMaso,  Conti-Focardi-Iurlano:15}. In these results, the growth condition in {\rm (${\rm H_4}$)} is replaced by one of the form $\int_{J_u} (1+|[u]|)\, d\mathcal{H}^{d-1}$ from below and above. Our growth assumption from below is more relevant for fracture models and the growth assumption from above  is instrumental for our fundamental estimate proved in Section \ref{sec: fund}. However, it comes at the expense of more elaborated compactness arguments and  the fact that we need to consider functionals defined on measurable, but possibly not integrable functions. A continuity condition of the form {\rm (${\rm H_5}$)} was also used, e.g., in \cite{BraidesPiat:96, Braides-Defranceschi}.

\textbf{Main results:} We now formulate the first main result of this article addressing integral representation of functionals $\mathcal{F}$ satisfying {\rm (${\rm H_1}$)}--{\rm (${\rm H_5}$)}. To this end, we introduce some further notation: for every $u \in PR_L(\Omega)$ and $A \in \mathcal{A}(\Omega)$ we define 
\begin{align}\label{eq: general minimization} 
\mathbf{m}_{\mathcal{F}}(u,A) = \inf_{v \in PR_L(\Omega)} \  \lbrace \mathcal{F}(v,A): \ v = u \ \text{ in a neighborhood of } \partial A \rbrace,
\end{align}
and  for $x_0 \in \Omega$, $\xi \in \R^d$, and $\nu \in S^{d-1}$ we introduce the functions
\begin{align}\label{eq: jump competitor}
u_{x_0,\xi,\nu}(x) = \begin{cases}  0 & \text{if } \langle x-x_0,\nu\rangle > 0,\\ \xi & \text{if } \langle x-x_0,\nu\rangle < 0. \end{cases} 
\end{align}

 \begin{theorem}[Integral representation]\label{theorem: PR-representation}
Let $\Omega \subset \R^d$ be open, bounded with Lipschitz boundary and   $\mathcal{F}:  PR_L(\Omega) \times \mathcal{B}(\Omega) \to [0,\infty)$ be such that {\rm (${\rm H_1}$)}--{\rm (${\rm H_5}$)} hold. Then 
\begin{align}\label{eq:gdef-new}
\mathcal{F}(u,B) = \int_{J_u\cap  B} f(x,[u](x),\nu_u(x))\, d\mathcal{H}^{d-1}(x)
\end{align}
for all $u \in PR_L(\Omega)$, $B \in \mathcal{B}(\Omega)$, where $f$ is given by
\begin{align}\label{eq:gdef}
f(x_0,\xi,\nu) = \limsup_{\eps \to 0} \frac{\mathbf{m}_{\mathcal{F}}(u_{x_0,\xi,\nu},B_\eps(x))}{\omega_{d-1}\,\eps^{d-1}}
\end{align}
for all $x_0 \in \Omega$, $\xi \in \R^d$, and $\nu \in S^{d-1}$. 
\end{theorem}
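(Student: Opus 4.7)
The plan is to adapt the global method for relaxation of \cite{BFLM, BDM} to the piecewise rigid setting. By (${\rm H}_1$) and the upper bound in (${\rm H}_4$), the set function $\mathcal{F}(u,\cdot)$ is a finite Borel measure that is absolutely continuous with respect to $\mathcal{H}^{d-1}$ restricted to $J_u$. Since $J_u$ is $(d-1)$-rectifiable (being contained in the countable union of reduced boundaries of the components of the Caccioppoli partition associated with $u$), the Besicovitch differentiation theorem reduces the proof of \eqref{eq:gdef-new} to showing
\begin{equation}\label{eq:pp-key}
\lim_{\varepsilon \to 0} \frac{\mathcal{F}(u,B_\varepsilon(x_0))}{\omega_{d-1}\,\varepsilon^{d-1}} = f(x_0,[u](x_0),\nu_u(x_0))
\end{equation}
for $\mathcal{H}^{d-1}$-a.e.\ $x_0 \in J_u$, together with Borel measurability of $x \mapsto f(x,[u](x),\nu_u(x))$.

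The argument for \eqref{eq:pp-key} proceeds in two comparison steps. In Step~1, I replace $\mathcal{F}(u,B_\varepsilon(x_0))$ by $\mathbf{m}_{\mathcal{F}}(u,B_\varepsilon(x_0))$. The inequality $\mathbf{m}_{\mathcal{F}}(u,B_\varepsilon(x_0)) \le \mathcal{F}(u,B_\varepsilon(x_0))$ is immediate, since $u$ itself is an admissible competitor. Conversely, given $\delta>0$ and a $\delta$-optimal competitor $v_\varepsilon$ for $\mathbf{m}_{\mathcal{F}}(u,B_\varepsilon(x_0))$, I would invoke the boundary-value-preserving fundamental estimate (Lemma~\ref{lemma: fundamental estimate2-new}) to glue $v_\varepsilon$ with $u$ inside a thin annulus, producing $w_\varepsilon \in PR_L(\Omega)$ with $w_\varepsilon = u$ outside $B_\varepsilon(x_0)$. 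By locality (${\rm H}_3$) and the upper bound in (${\rm H}_4$), the cost of the gluing is bounded by $\beta$ times the $\mathcal{H}^{d-1}$-measure of the added interface, which the fundamental estimate guarantees to be $o(\varepsilon^{d-1})$.

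In Step~2, I replace $u$ by its blow-up $u_{x_0,[u](x_0),\nu_u(x_0)}$. At $\mathcal{H}^{d-1}$-a.e.\ $x_0 \in J_u$, the rescaled functions $u_\varepsilon(y) := u(x_0 + \varepsilon y)$ converge in measure on $B_1(0)$ to the corresponding jump function. Applying Lemma~\ref{lemma: fundamental estimate2-new} in both directions, and exploiting (${\rm H}_5$) to estimate the change in the functional along the shared portion of the jump sets by a modulus of continuity of the difference of jump heights, a scaling argument yields
\begin{equation*}
\bigl|\mathbf{m}_{\mathcal{F}}(u,B_\varepsilon(x_0)) - \mathbf{m}_{\mathcal{F}}(u_{x_0,[u](x_0),\nu_u(x_0)},B_\varepsilon(x_0))\bigr| = o(\varepsilon^{d-1}).
\end{equation*}
Combined with Step~1, this shows the limit on the left of \eqref{eq:pp-key} exists and coincides with the limsup defining $f$. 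Borel measurability of $x \mapsto f(x,[u](x),\nu_u(x))$ follows from the continuity of $\mathbf{m}_{\mathcal{F}}(u_{x_0,\xi,\nu},B_\varepsilon(x_0))$ in $(\xi,\nu)$, itself a consequence of (${\rm H}_5$) together with a rotation/translation argument, and the Borel structure of the rectifiable set $J_u$.

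The main obstacle is Step~2: the absence of any $L^1$-type lower bound in (${\rm H}_4$) (i.e.\ $\alpha' = 0$ in the notation of the introduction) rules out standard compactness arguments when the jump heights of the minimizing sequences for $\mathbf{m}_{\mathcal{F}}$ may blow up. The construction must therefore rely on the truncation techniques of Section~\ref{sec: truncation} to convert in-measure convergence into quantitative control usable in (${\rm H}_5$), and on the locally Bilipschitz parametrization \eqref{eq: repr0.0} to guarantee that the piecewise affine gluings produced by the fundamental estimate remain in $PR_L(\Omega)$ without introducing rank-one incompatibilities between neighboring components.
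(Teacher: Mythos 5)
Your overall architecture (Besicovitch differentiation, then two comparison steps) mirrors the paper's proof, and your Step~2 is essentially the content of Lemma~\ref{lemma: minsame}: blow-up at a jump point (Lemma~\ref{lemma: blow up}) combined with the scaled boundary-value-preserving fundamental estimate (Corollary~\ref{rem: scaling}). However, your Step~1 contains a genuine gap. Gluing a $\delta$-optimal competitor $v_\varepsilon$ for $\mathbf{m}_{\mathcal{F}}(u,B_\varepsilon(x_0))$ with $u$ in a thin annulus produces \emph{another} admissible competitor $w_\varepsilon$ for the same minimization problem, so the only conclusion available is $\mathbf{m}_{\mathcal{F}}(u,B_\varepsilon(x_0)) \le \mathcal{F}(w_\varepsilon,B_\varepsilon(x_0))$, which is vacuous. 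No argument carried out at a single fixed ball can recover the opposite inequality $\mathcal{F}(u,B_\varepsilon(x_0)) \le \mathbf{m}_{\mathcal{F}}(u,B_\varepsilon(x_0)) + o(\varepsilon^{d-1})$: in general the gap $\mathcal{F}(u,B_\varepsilon(x_0)) - \mathbf{m}_{\mathcal{F}}(u,B_\varepsilon(x_0))$ is of order $\varepsilon^{d-1}$, and it only becomes negligible for $\mathcal{H}^{d-1}$-a.e.\ $x_0$ after aggregating over a fine covering of $J_u$ by balls and invoking the lower semicontinuity hypothesis (${\rm H}_2$) to pass from the pieced-together near-minimizers back to $u$. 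You never mention (${\rm H}_2$), which is precisely the hypothesis that drives this step. In the paper this aggregation is encoded via the covering functional $\mathbf{m}^*_{\mathcal{F}}$ in \eqref{eq: delta-def}--\eqref{eq: m*} and carried out in Lemmas~\ref{lemma: flaviana} and~\ref{lemma: G=m-2}, following the scheme of \cite{Conti-Focardi-Iurlano:15}; no fundamental estimate is used for this part.

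A secondary point: the truncation techniques of Section~\ref{sec: truncation} are not needed for Theorem~\ref{theorem: PR-representation}. They are developed only to verify \eqref{eq: condition-new-new} in Lemma~\ref{lemma: gamma-min-lower}, i.e.\ to handle the lack of coercivity in the $\Gamma$-convergence setting of Section~\ref{sec: gamma}, and they require the additional structural hypotheses of Section~\ref{sec: examples}. The integral representation theorem is proved without them, using only the fundamental estimates of Section~\ref{sec: fund} together with (${\rm H}_2$).
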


The second main theorem addresses $\Gamma$-convergence of functionals $\mathcal{F}$ satisfying {\rm (${\rm H_1}$)} and {\rm (${\rm H_3}$)}--{\rm (${\rm H_5}$)}. For an exhaustive treatment of $\Gamma$-convergence we refer to \cite{Braides-Defranceschi:98, DalMaso:93}.

\begin{theorem}[$\Gamma$-convergence]\label{th: gamma}
 Let $\Omega \subset \R^d$ open, bounded with Lipschitz boundary.  Let $\mathcal{F}_n: PR_L(\Omega) \times \mathcal{B}(\Omega) \to [0,\infty)$ be a sequence of functionals satisfying {\rm (${\rm H_1}$)}, {\rm (${\rm H_3}$)}--{\rm (${\rm H_5}$)} for the same $0 <\alpha < \beta$ and  $\sigma: [0,+\infty) \to [0,\beta]$.   Then there exists $\mathcal{F}:  PR_L(\Omega) \times \mathcal{B}(\Omega) \to [0,\infty)$ satisfying {\rm (${\rm H_1}$)}--{\rm (${\rm H_4}$)} and a subsequence (not relabeled) such that
$$\mathcal{F}(\cdot,A) =\Gamma\text{-}\lim_{n \to \infty} \mathcal{F}_n(\cdot,A) \ \ \ \ \text{with respect to convergence in measure on $A$} $$
for all $A \in  \mathcal{A}_0(\Omega) $. Moreover, if there holds 
\begin{align}\label{eq: condition-new-new}
\limsup_{n \to \infty} \mathbf{m}_{\mathcal{F}_n}(u, B_\eps(x_0)) \le  \mathbf{m}_{{\mathcal{F}}}(u, B_\eps(x_0)) \le \sup\nolimits_{0<\eps' < \eps} \,  \liminf_{n \to \infty} \mathbf{m}_{\mathcal{F}_n}(u, B_{\eps'}(x_0)) 
\end{align}
for all  $u \in PR_L(\Omega)$ and  each ball $B_\eps(x_0) \subset \Omega$,  then   $\mathcal{F}$ satisfies also {\rm (${\rm H_5}$)} and  admits the representation \eqref{eq:gdef-new}--\eqref{eq:gdef}. 
\end{theorem}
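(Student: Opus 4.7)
I follow the localization method for $\Gamma$-convergence developed in \cite{Braides-Defranceschi:98, DalMaso:93}, adapted to the space $PR_L(\Omega)$ by means of the fundamental estimates Lemma \ref{lemma: fundamental estimate2} and Lemma \ref{lemma: fundamental estimate2-new} and the Dirichlet-problem convergence results announced in Lemma \ref{lemma: gamma-min-upper} and Lemma \ref{lemma: gamma-min-lower}. First, by the abstract compactness theorem for $\Gamma$-convergence on separable spaces, together with a diagonal procedure over a countable dense subfamily $\mathcal{D}\subset\mathcal{A}_0(\Omega)$, I extract a subsequence (not relabeled) along which $\mathcal{F}'(\cdot,A):=\Gamma\text{-}\lim_n\mathcal{F}_n(\cdot,A)$ exists with respect to convergence in measure for every $A\in\mathcal{D}$. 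I then define the inner-regular envelope
\[
\mathcal{F}(u,A):=\sup\{\,\mathcal{F}'(u,A'):A'\in\mathcal{D},\ A'\subset\subset A\,\},\qquad A\in\mathcal{A}(\Omega),
\]
and check that $\mathcal{F}(\cdot,A)=\Gamma\text{-}\lim_n\mathcal{F}_n(\cdot,A)$ on every $A\in\mathcal{A}_0(\Omega)$: the $\Gamma$-$\liminf$ inequality is immediate by monotonicity, while the $\Gamma$-$\limsup$ inequality is obtained by gluing recovery sequences on sets $A'\subset\subset A$ with the competitor itself outside, using the fundamental estimate Lemma \ref{lemma: fundamental estimate2} to control the interface added in the transition layer.

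Next I verify {\rm (${\rm H_1}$)}--{\rm (${\rm H_4}$)} for $\mathcal{F}$. Lower semicontinuity {\rm (${\rm H_2}$)} is a general feature of $\Gamma$-limits; locality {\rm (${\rm H_3}$)} passes to the limit because each $\mathcal{F}_n$ is local and any discrepancy near $\partial A$ can be absorbed via the fundamental estimate; the growth bound {\rm (${\rm H_4}$)} follows by taking $\Gamma$-$\liminf/\limsup$ of the uniform bounds $\alpha\mathcal{H}^{d-1}(J_u\cap B)\le\mathcal{F}_n(u,B)\le\beta\mathcal{H}^{d-1}(J_u\cap B)$ along recovery sequences and exploiting the semicontinuity of $u\mapsto\mathcal{H}^{d-1}(J_u\cap\cdot)$ on Caccioppoli partitions (Section \ref{sec: prelim}). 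The measure property {\rm (${\rm H_1}$)} is the main technical point: I apply the De Giorgi--Letta criterion. Superadditivity on disjoint open sets is trivial from the definition of $\Gamma$-$\liminf$ and inner regularity holds by construction; the delicate ingredient is subadditivity on open sets, which is exactly what Lemma \ref{lemma: fundamental estimate2} is designed to yield, since it permits joining two recovery sequences over overlapping open sets with an error that vanishes as the overlap shrinks.

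Finally, assume \eqref{eq: condition-new-new}. To obtain {\rm (${\rm H_5}$)} for $\mathcal{F}$, I first apply {\rm (${\rm H_5}$)} to each $\mathcal{F}_n$ along admissible competitors of the Dirichlet problem \eqref{eq: general minimization} associated to the jump datum $u_{x_0,\xi,\nu}$; modifying such a competitor by a constant $\xi-\xi'$ on one side of the jump plane produces an admissible competitor for the $\xi'$-problem with the same interface, giving
\[
\bigl|\mathbf{m}_{\mathcal{F}_n}(u_{x_0,\xi,\nu},B_\eps(x_0))-\mathbf{m}_{\mathcal{F}_n}(u_{x_0,\xi',\nu},B_\eps(x_0))\bigr|\le\sigma(|\xi-\xi'|)\,\omega_{d-1}\eps^{d-1}.
\]
Passing $n\to\infty$ along the selected subsequence and using both inequalities in \eqref{eq: condition-new-new}, I transfer this continuity estimate to $\mathbf{m}_{\mathcal{F}}$, hence via \eqref{eq:gdef} to the candidate density $f$, which therefore is continuous in $\xi$ with modulus $\sigma$ uniformly in $(x_0,\nu)$. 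A blow-up argument then identifies, at $\mathcal{H}^{d-1}$-a.e.\ point of $J_u$, the surface density of $\mathcal{F}(u,\cdot)$ with $f(x,[u](x),\nu_u(x))$ via the asymptotic minima $\mathbf{m}_{\mathcal{F}}(u_{x,[u](x),\nu_u(x)},B_\eps(x))/(\omega_{d-1}\eps^{d-1})$; combined with the $\xi$-continuity of $f$, this yields {\rm (${\rm H_5}$)} for $\mathcal{F}$ and simultaneously puts us in position to invoke Theorem \ref{theorem: PR-representation} to deduce \eqref{eq:gdef-new}--\eqref{eq:gdef}.

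The hardest step is the measure property via subadditivity, since the usual $BV$-coarea trick of \cite{AmbrosioBraides} is unavailable for $L=\R^{d\times d}_{\rm skew}$; this is precisely the reason for the two-fold coarea construction underlying Lemma \ref{lemma: fundamental estimate2}. The second nontrivial point is that plain $\Gamma$-convergence does not preserve the jump-continuity modulus {\rm (${\rm H_5}$)}, and the auxiliary condition \eqref{eq: condition-new-new}, together with the compactness and convergence of minima for asymptotic Dirichlet problems developed in Section \ref{sec: truncation}, is exactly what is needed to make {\rm (${\rm H_5}$)} survive in the limit.
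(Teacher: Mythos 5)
Your overall strategy---$\bar\Gamma$-compactness plus inner-regular envelope, De Giorgi--Letta via the fundamental estimate for the measure property, and then jump-continuity of the Dirichlet minima under \eqref{eq: condition-new-new}---is essentially the paper's (Lemmas \ref{eq: liminflimsup-prop} and \ref{lemma: gamma-l2}, and the last part of Section \ref{sec: gamma}). Two points need repair.

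The competitor you build for the $\xi'$-problem is wrong. If $z\in PR_L(B_\eps(x_0))$ is an almost-minimizer for $\mathbf{m}_{\mathcal{F}_n}(u_{x_0,\xi,\nu},B_\eps(x_0))$, then $z+(\xi'-\xi)\chi_{\{\langle x-x_0,\nu\rangle<0\}}$ in general creates a fresh jump of height $\xi'-\xi$ along $\{\langle x-x_0,\nu\rangle=0\}\cap B_\eps(x_0)$ wherever $z$ did not already jump, and {\rm (${\rm H_5}$)} gives no control there since it only compares energies on the \emph{common} jump set; by {\rm (${\rm H_4}$)} this fresh interface costs up to $\beta\omega_{d-1}\eps^{d-1}$ independently of $|\xi-\xi'|$, ruining the modulus. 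The paper instead adds $(\xi'-\xi)\chi_P$ with $P:=\{z=\xi\}$, which is (up to a null set) a component of the partition underlying $z$, so that $\partial^*P\subset J_z$ up to an $\mathcal{H}^{d-1}$-null set and $J_{z'}\subset J_z$, with only the jump height on $J_{z'}\cap\partial^*P$ altered; the resulting bound then carries an extra but harmless factor $\alpha^{-1}\beta$ (from $\mathcal{H}^{d-1}(J_z)\le\alpha^{-1}(\omega_{d-1}\eps^{d-1}\beta+\delta)$) that your estimate omits.

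The ``blow-up argument'' that is supposed to identify the surface density of $\mathcal{F}(u,\cdot)$ with $f$ conceals the central difficulty: the blow-up identification of $\mathbf{m}_{\mathcal{F}}(u,B_\eps(x_0))$ with $\mathbf{m}_{\mathcal{F}}(\bar{u}_{x_0},B_\eps(x_0))$ in Lemma \ref{lemma: minsame} relies on {\rm (${\rm H_5}$)} for the functional being blown up, and $\mathcal{F}$ is precisely the functional for which {\rm (${\rm H_5}$)} is not yet available. Your closing logic is also circular: {\rm (${\rm H_5}$)} for $\mathcal{F}$, a statement about $\mathcal{F}(u,S)-\mathcal{F}(v,S)$ on arbitrary $S\subset J_u\cap J_v$, cannot be read off from the $\xi$-continuity of the candidate density $f$ without already having the representation \eqref{eq:gdef-new}, so one cannot ``simultaneously'' obtain {\rm (${\rm H_5}$)} and the applicability of Theorem \ref{theorem: PR-representation}. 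The paper breaks this chicken-and-egg problem via Corollary \ref{corollary: PR-representation-tilde}, whose proof (through Corollary \ref{cor: minsame}) performs the blow-up at the level of the $\mathcal{F}_n$---which do satisfy {\rm (${\rm H_5}$)} uniformly---and transfers the asymptotics to $\mathcal{F}$ using \emph{both} inequalities in \eqref{eq: condition-new-new} together with the scaled fundamental estimate; this yields the representation first, and only afterwards is {\rm (${\rm H_5}$)} deduced from the $\xi$-continuity of $f$. Incidentally, Lemmas \ref{lemma: gamma-min-upper} and \ref{lemma: gamma-min-lower} play no role in the present theorem: they serve to \emph{verify} \eqref{eq: condition-new-new} for concrete choices of $L$ in Section \ref{sec: examples}, whereas here it is a hypothesis.
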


We note that condition \eqref{eq: condition-new-new} can be verified  for $L = \mathbb{R}^{d\times d}_{\rm skew}$ and $L = SO(d)$, $d=2,3$, see Section \ref{sec: examples}. Theorem \ref{theorem: PR-representation} and Theorem \ref{th: gamma} will be proved in Section \ref{sec: representation} and Section \ref{sec: gamma}, respectively. The key ingredient for both results, namely a fundamental estimate in $PR_L(\Omega)$, is addressed in Section \ref{sec: fund}. From now on we drop the index $L$ and write $PR(\Omega)$ instead of $PR_L(\Omega)$ if now confusion arises.

\section{Preliminaries}\label{sec: prelim}

%-----------------------------------------
\subsection{Caccioppoli partitions}\label{sec: cacc}
%---------------------------------------------------

We say that a partition $\mathcal{P} = (P_j)_j$ of an open set $\Omega\subset \R^d$, $d \ge 2$, is a \textit{Caccioppoli partition} of $\Omega$ if $\sum\nolimits_j \mathcal{H}^{d-1}(\partial^* P_j) < + \infty$, where $\partial^* P_j$ denotes  the \emph{essential boundary} of $P_j$ (see \cite[Definition 3.60]{Ambrosio-Fusco-Pallara:2000}). Moreover, by $(P_j)^1$ we denote the points where $P_j$ has density one (see again \cite[Definition 3.60]{Ambrosio-Fusco-Pallara:2000}). By definition, the sets $(P_j)^1$ and $\partial^* P_j$ are Borel measurable. The  local structure of Caccioppoli partitions can be characterized as follows (see \cite[Theorem 4.17]{Ambrosio-Fusco-Pallara:2000}).
 
\begin{theorem}[Local structure]\label{th: local structure}
Let $(P_j)_j$ be a Caccioppoli partition of $\Omega$. Then 
$$\bigcup\nolimits_j (P_j)^1 \cup \bigcup\nolimits_{i \neq j} (\partial^* P_i \cap \partial^* P_j)$$
contains $\mathcal{H}^{d-1}$-almost all of $\Omega$.
\end{theorem}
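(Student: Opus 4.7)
The plan is to reduce the statement to the De Giorgi--Federer structure theorem applied componentwise and then exploit the partition identity $\sum_j \chi_{P_j}=\chi_\Omega$. First, for each $j$ the set $P_j$ has finite perimeter, so Federer's theorem gives
\[
\mathcal{H}^{d-1}\Bigl(\R^d\setminus\bigl((P_j)^0\cup(P_j)^1\cup\partial^* P_j\bigr)\Bigr)=0,
\]
and the density $\Theta(P_j,x)$ equals $\tfrac12$ at $\mathcal{H}^{d-1}$-a.e.\ $x\in\partial^* P_j$. Since a countable union of $\mathcal{H}^{d-1}$-null sets is $\mathcal{H}^{d-1}$-null, I obtain an exceptional set $E\subset\Omega$ with $\mathcal{H}^{d-1}(E)=0$ outside of which, for every $j$ simultaneously, $\Theta(P_j,x)$ exists and takes values in $\{0,\tfrac12,1\}$.

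The second step uses the partition identity. For $x\in\Omega$ and $r$ small enough that $B_r(x)\subset\Omega$, disjointness of the $P_j$ gives
\[
\sum_j\frac{\mathcal{L}^d(P_j\cap B_r(x))}{\omega_d\, r^d}=1.
\]
Passing to $r\to 0^+$, Fatou's lemma immediately yields $\sum_j\Theta(P_j,x)\le 1$, while for the reverse inequality I must control the tail $\mathcal{L}^d(Q_M\cap B_r(x))/(\omega_d r^d)$ with $Q_M:=\bigcup_{j>M}P_j$. Here the Caccioppoli hypothesis enters crucially: $\mathcal{H}^{d-1}(\partial^* Q_M)\le\sum_{j>M}\mathcal{H}^{d-1}(\partial^* P_j)\to 0$ together with $\mathcal{L}^d(Q_M)\to 0$ implies, via a relative-isoperimetric estimate on balls, that $\Theta(Q_M,x)\to 0$ at $\mathcal{H}^{d-1}$-a.e.\ $x\in\Omega$ as $M\to\infty$. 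Combining the two bounds yields $\sum_j\Theta(P_j,x)=1$ at $\mathcal{H}^{d-1}$-a.e.\ $x\in\Omega$.

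Putting the two steps together, at $\mathcal{H}^{d-1}$-a.e.\ $x\in\Omega$ the densities $\Theta(P_j,x)\in\{0,\tfrac12,1\}$ sum to $1$, so the only arithmetic possibilities are $1=1$, in which case $x\in(P_j)^1$ for a unique index $j$, or $1=\tfrac12+\tfrac12$, in which case $x\in\partial^* P_i\cap\partial^* P_j$ for a unique pair $i\neq j$ (since a density $\tfrac12$ at $x$ forces $x$ into the essential boundary of the corresponding component). This shows that the complement of $\bigcup_j(P_j)^1\cup\bigcup_{i\neq j}(\partial^* P_i\cap\partial^* P_j)$ in $\Omega$ is $\mathcal{H}^{d-1}$-negligible. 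The main obstacle I expect is the $\mathcal{H}^{d-1}$-refinement hidden in the tail estimate $\Theta(Q_M,x)\to 0$: this is strictly stronger than the trivial Lebesgue statement $\mathcal{L}^d(Q_M)\to 0$, and it is precisely the point where the summability of the perimeters $\sum_j\mathcal{H}^{d-1}(\partial^* P_j)<\infty$ becomes indispensable.
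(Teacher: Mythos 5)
The paper does not prove Theorem~\ref{th: local structure}; it is quoted directly from \cite[Theorem 4.17]{Ambrosio-Fusco-Pallara:2000}, so there is no in-paper proof to compare against. Your blind reconstruction identifies the right skeleton: simultaneous Federer for all $P_j$, the partition identity $\sum_j\Theta(P_j,x)=1$, and the arithmetic endgame where densities in $\{0,\tfrac12,1\}$ summing to $1$ force the $1$ or $\tfrac12+\tfrac12$ alternative. The one genuinely nontrivial step is, however, asserted rather than carried out, and it is not a one-liner.

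To obtain $\Theta(Q_M,x)\to 0$ for $\mathcal{H}^{d-1}$-a.e.\ $x$ you must rule out two persistence scenarios along the decreasing sequence $Q_M$: $\Theta(Q_M,x)=\tfrac12$ for all large $M$, which is the easy one (up to null sets $x\in\bigcap_M\partial^*Q_M$, and $\mathcal{H}^{d-1}(\partial^*Q_M)\to 0$); and $\Theta(Q_M,x)=1$ for all $M$, which is where the genuine $\mathcal{H}^{d-1}$-refinement lives. For the latter, $\mathcal{L}^d\bigl(\bigcap_M(Q_M)^1\bigr)=0$ follows trivially from $\mathcal{L}^d(Q_M)\to 0$ but does not give what you need. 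The phrase ``via a relative-isoperimetric estimate on balls'' conceals a density estimate plus a covering argument: for each $x\in(Q_M)^1$, since $\mathcal{L}^d(Q_M)$ is small there is a critical radius $\rho_x\le\bigl(2\mathcal{L}^d(Q_M)/\omega_d\bigr)^{1/d}$ with $\mathcal{L}^d(Q_M\cap B_{\rho_x}(x))=\tfrac12\omega_d\rho_x^d$, whence the relative isoperimetric inequality gives $P(Q_M,B_{\rho_x}(x))\ge c_d\,\rho_x^{d-1}$; a Vitali $5r$-covering of $(Q_M)^1$ by such balls then yields $\mathcal{H}^{d-1}_{\delta_M}\bigl((Q_M)^1\bigr)\le C\,P(Q_M)$ with gauge $\delta_M\to 0$, which forces $\mathcal{H}^{d-1}\bigl(\bigcap_M(Q_M)^1\bigr)=0$. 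That Vitali/Besicovitch step is the missing ingredient; once it is supplied your argument closes, and it is indeed the point where $\sum_j\mathcal{H}^{d-1}(\partial^*P_j)<\infty$ is indispensable, as you correctly anticipate.
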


 Essentially, the  theorem states that $\mathcal{H}^{d-1}$-a.e.\ point of $\Omega$ either belongs to exactly one element of the partition or to the intersection of exactly two sets $\partial^* P_i$, $\partial^* P_j$.  We say that a  partition is \textit{ordered} if $\mathcal L^d(P_i) \ge \mathcal L^d(P_j)$ for $i \le j$. Moreover, we say that a set of finite perimeter $P_j$ is \emph{indecomposable} if it cannot be written as $P^1 \cup P^2$ with $P^1 \cap P^2 = \emptyset$, $\mathcal{L}^d(P^1), \mathcal{L}^d(P^2) >0$ and $\mathcal{H}^{d-1}(\partial^* P_j) = \mathcal{H}^{d-1}(\partial^* P^1) + \mathcal{H}^{d-1}(\partial^* P^2)$. We  state a compactness result for ordered Caccioppoli partitions. (See \cite[Theorem 4.19, Remark 4.20]{Ambrosio-Fusco-Pallara:2000} or \cite[Theorem 2.8]{FriedrichSolombrino} for the slightly adapted version presented here.)  

\begin{theorem}[Compactness]\label{th: comp cacciop}
Let $\Omega \subset \R^d$ be a bounded Lipschitz domain.  Let $\mathcal{P}_i = (P_{j,i})_j$, $i \in \N$, be a sequence of ordered Caccioppoli partitions of $\Omega$ with $$\sup\nolimits_{i \ge 1} \sum\nolimits_{j}\mathcal{H}^{d-1}(\partial^* P_{j,i}) < + \infty.$$
Then there exists a Caccioppoli partition $(P_j)_j$  of $\Omega$   and a  subsequence (not relabeled) such that  
$\sum_{j} \mathcal L^d\left(P_{j,i} \triangle  P_j\right) \to 0$  as $i \to \infty$, where   $P_{j,i} \triangle  P_j = (P_{j,i}\setminus P_{j}) \cup (P_{j}\setminus P_{j,i})$.  
\end{theorem}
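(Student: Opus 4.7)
The plan is to regard each characteristic function $\chi_{P_{j,i}}$ as an element of $BV(\Omega)$ with $|D\chi_{P_{j,i}}|(\Omega)=\mathcal{H}^{d-1}(\partial^* P_{j,i})$, apply $BV$-compactness componentwise, and then combine via a diagonal argument. For each fixed $j \in \N$, the sequence $(\chi_{P_{j,i}})_i$ is uniformly bounded in $BV(\Omega)$, so up to a subsequence it converges in $L^1(\Omega)$ to the characteristic function of a set of finite perimeter $P_j$. A standard diagonal extraction yields one subsequence (not relabeled) along which $\mathcal{L}^d(P_{j,i}\triangle P_j)\to 0$ for every $j$. Lower semicontinuity of the perimeter together with Fatou's lemma then gives $\sum_j \mathcal{H}^{d-1}(\partial^* P_j) \le \sup_i \sum_j \mathcal{H}^{d-1}(\partial^* P_{j,i}) =: C$, so each $P_j$ has finite perimeter and the total perimeter is finite.

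Next I would verify that $(P_j)_j$ is a partition of $\Omega$ up to Lebesgue null sets. Pairwise disjointness follows by passing to the limit in $\int_\Omega \chi_{P_{j,i}}\chi_{P_{k,i}}\,dx = 0$ for $j \neq k$. The critical point is that the limit family still covers $\Omega$, equivalently $\sum_j \mathcal{L}^d(P_j) = \mathcal{L}^d(\Omega)$. Fatou only gives one inequality; for the reverse I use the ordering: since $j\,\mathcal{L}^d(P_{j,i}) \le \sum_{k \le j}\mathcal{L}^d(P_{k,i}) \le \mathcal{L}^d(\Omega)$, one has $\mathcal{L}^d(P_{j,i}) \le \mathcal{L}^d(\Omega)/j$. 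Combined with the relative isoperimetric inequality (applicable since $\mathcal{L}^d(P_{j,i}) \le \mathcal{L}^d(\Omega)/2$ for $j \ge 2$), this yields
\[
\mathcal{L}^d(P_{j,i}) \le c\,\mathcal{L}^d(P_{j,i})^{1/d}\,\mathcal{H}^{d-1}(\partial^* P_{j,i}) \le c\,\mathcal{L}^d(\Omega)^{1/d}\,j^{-1/d}\,\mathcal{H}^{d-1}(\partial^* P_{j,i}),
\]
and summing from $N+1$ onwards gives the crucial tail estimate
\[
\sum_{j > N} \mathcal{L}^d(P_{j,i}) \le c\,\mathcal{L}^d(\Omega)^{1/d}\,(N+1)^{-1/d}\,C,
\]
which holds uniformly in $i$. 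Letting first $i\to\infty$ and then $N\to\infty$ produces the required identity $\sum_j \mathcal{L}^d(P_j)=\mathcal{L}^d(\Omega)$.

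The last step is to upgrade componentwise $L^1$ convergence to the full series statement $\sum_j \mathcal{L}^d(P_{j,i}\triangle P_j)\to 0$. Splitting the sum at an index $N$ yields
\[
\sum_j \mathcal{L}^d(P_{j,i}\triangle P_j) \le \sum_{j \le N} \mathcal{L}^d(P_{j,i}\triangle P_j) + \sum_{j > N}\mathcal{L}^d(P_{j,i}) + \sum_{j > N}\mathcal{L}^d(P_j),
\]
where the first term vanishes for fixed $N$ as $i\to\infty$ by the componentwise convergence, the second is dominated by $c\,\mathcal{L}^d(\Omega)^{1/d}(N+1)^{-1/d}C$ uniformly in $i$ by the tail estimate above, and the third tends to zero as $N\to\infty$ since $\sum_j \mathcal{L}^d(P_j)$ converges. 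Choosing $N$ large and then $i$ large concludes the argument. The main obstacle I anticipate is precisely this uniform tail bound: the ordering hypothesis is indispensable here, since without it one would have no mechanism to prevent perimeter or mass from escaping to arbitrarily high indices as $i\to\infty$, and the argument would break down.
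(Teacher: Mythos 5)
Your proof is correct, and it is in essence the standard argument for this statement, which the paper does not prove itself but attributes to \cite[Theorem 4.19, Remark 4.20]{Ambrosio-Fusco-Pallara:2000} (and to \cite[Theorem 2.8]{FriedrichSolombrino} for the variant used here). The structure — $BV$-compactness on each index followed by diagonal extraction, and then the ordering combined with the relative isoperimetric inequality to get the uniform tail bound $\sum_{j>N}\mathcal{L}^d(P_{j,i})\le c\,\mathcal{L}^d(\Omega)^{1/d}(N+1)^{-1/d}\sum_j\mathcal H^{d-1}(\partial^*P_{j,i})$, which simultaneously prevents mass from escaping to high indices and upgrades componentwise $L^1$ convergence to convergence of the full series — is exactly the reference proof. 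The only point worth spelling out is that the relative isoperimetric inequality as you invoke it requires $\Omega$ to be connected (or one has to work component-by-component, using that a bounded Lipschitz domain has finitely many components and that $\mathcal L^d(P_{j,i})\le \mathcal L^d(\Omega)/j$ eventually falls below half the smallest component's measure); since ``domain'' here conventionally means connected open set, this is consistent with the hypothesis. You also correctly isolate the ordering assumption as the indispensable ingredient: without it the tail estimate has no leverage and the limit family could fail to cover $\Omega$.
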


%-----------------------------------------
\subsection{Properties of rigid and piecewise rigid functions}\label{sec: rig/piec-rig}
%---------------------------------------------------

Recall the function space $PR(\Omega)$ introduced in \eqref{eq: PR-def}, and the fact that each $u \in PR(\Omega)$ can be written as 
 $u = \sum_j q_j \chi_{P_j}$, where  $(P_j)_j$ is a Caccioppoli partition of $\Omega$ and $(q_j)_j$ are  rigid motions, i.e., $q_j(x) = Q_j\, x + b_j$ with $Q_j \in L$ and $b_j \in \R^d$. We point out that the representation of $u$ is not unique. In the following, we will use two specific representations of $u$: (a) We say that the representation is \emph{pairwise distinct} if all affine mappings $(q_j)_{j }$ are pairwise different. In this case, we observe by \eqref{eq: matrix-assumption} that
\begin{align}\label{eq: bdy and jump}
\mathcal{H}^{d-1}\Big(J_u \triangle \Big(\bigcup\nolimits_{j \in \N} \partial^* P_j \setminus \partial \Omega \Big)\Big) = 0.
\end{align}
(b) We say that the representation is \emph{indecomposable}  if each $P_j$ is a indecomposable set of finite perimeter and we have 
$$\mathcal{H}^{d-1}(\partial^* P_i \cap \partial^* P_j) >0 \ \ \text {for} \ \ i\neq j \ \ \ \ \Rightarrow \ \ \ \ q_i \neq q_j.$$
Note that for such representations there also holds by \eqref{eq: matrix-assumption}
\begin{align}\label{eq: bdy and jump-new}
\mathcal{H}^{d-1}\Big(J_u \triangle \Big(\bigcup\nolimits_{j \in \N} \partial^* P_j \setminus \partial \Omega \Big)\Big) = 0.
\end{align}
An indecomposable representation can be deduced from a piecewise distinct representation by splitting each $P_j$ uniquely into its connected components, i.e., into a countable family of pairwise disjoint, indecomposable sets, see \cite[Theorem 1]{Ambrosio-Morel}. 
We start by a compactness result in $PR(\Omega)$.

\begin{lemma}[Compactness and lower semicontinuity]\label{lemma: PR compactness}
Let $\Omega \subset \R^d$ be open, bounded with Lipschitz boundary. \\
\noindent {\rm (i)} Let $(u_n)_n \subset PR(\Omega)$ be a sequence with $\sup_n \int_{\Omega} \psi(|u_n|) + \mathcal{H}^{d-1}(J_{u_n}) <+\infty$, where $\psi:[0,\infty) \to [0,\infty)$ is continuous, strictly increasing,  and satisfies $\lim_{t \to \infty}\psi(t) = + \infty$. Then there exist $u \in PR(\Omega)$ and a subsequence (not relabeled)  such that $u_n \to u$ in measure.\\
\noindent {\rm (ii)} \BBB Given $(u_n)_n \subset PR(\Omega)$ with $u_n \to u$ in measure, there holds $\mathcal{H}^{d-1}(J_u) \le \liminf_{n \to \infty} \mathcal{H}^{d-1}(J_{u_n})$\EEE.  
\end{lemma}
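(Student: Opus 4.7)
The plan is to treat both parts through the same skeleton: represent each $u_n$ via its Caccioppoli partition, apply Theorem \ref{th: comp cacciop} to extract a limiting partition, and then upgrade this to convergence of the affine pieces. The two parts differ only in the tools used to control the affine coefficients.

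For part (i), I would fix an ordered indecomposable representation $u_n=\sum_j q_{j,n}\chi_{P_{j,n}}$ with $q_{j,n}(x)=Q_{j,n}x+b_{j,n}$ and $Q_{j,n}\in L$. By \eqref{eq: bdy and jump-new}, the hypothesis bounds $\sum_j\mathcal{H}^{d-1}(\partial^*P_{j,n})$ uniformly in $n$, so Theorem \ref{th: comp cacciop} yields a subsequence and a Caccioppoli partition $(P_j)$ with $\sum_j\mathcal{L}^d(P_{j,n}\triangle P_j)\to 0$. For each $j$ with $c_j:=\mathcal{L}^d(P_j)>0$, the coercivity of $\psi$ through Markov's inequality on $\int_{P_{j,n}}\psi(|q_{j,n}|)\le C$ provides an $R=R(c_j)$ such that $|q_{j,n}|\le R$ on a subset of $P_{j,n}$ of measure at least $c_j/4$. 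Since a nonzero rigid motion cannot vanish on a set of positive measure, a compactness argument on the finite-dimensional space of rigid motions (normalised by $|Q|+|b|=1$) delivers $|Q_{j,n}|+|b_{j,n}|\le C(c_j)R$. A diagonal extraction then gives $Q_{j,n}\to Q_j$ in the closed set $L$ and $b_{j,n}\to b_j$ for every such $j$. Setting $u:=\sum_j(Q_jx+b_j)\chi_{P_j}\in PR(\Omega)$, the convergence $u_n\to u$ in measure follows from uniform convergence of $q_{j,n}$ on the bounded set $\Omega$ combined with the tail estimate provided by the ordered partition.

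Part (ii) is analogous but uses measure convergence in place of coercivity. Without loss of generality $\sup_n\mathcal{H}^{d-1}(J_{u_n})<\infty$ along a subsequence realising the liminf. Using a pairwise distinct representation, \eqref{eq: bdy and jump} gives $\mathcal{H}^{d-1}(J_{u_n})=\mathcal{H}^{d-1}(\bigcup_j\partial^*P_{j,n}\cap\Omega)$, and Theorem \ref{th: comp cacciop} again extracts a limiting partition $(P_j)$. For $j$ with $\mathcal{L}^d(P_j)>0$, combining $u_n\to u$ in measure with $\chi_{P_{j,n}}\to\chi_{P_j}$ in $L^1$ yields $q_{j,n}\to u$ in measure on $P_j$; applying the same finite-dimensional principle to the differences $q_{j,n}-q_{j,m}$ (a nonzero affine map cannot vanish in measure on a set of positive Lebesgue measure) shows $(Q_{j,n},b_{j,n})$ is Cauchy, hence converges to some $(Q_j,b_j)$ with $Q_j\in L$. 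Therefore $u=\sum_j(Q_jx+b_j)\chi_{P_j}\in PR(\Omega)$, and using the inclusion $J_u\subset\bigcup_j\partial^*P_j\cap\Omega$ together with Theorem \ref{th: local structure}, the lower semicontinuity of the perimeter for each $j$, and Fatou's lemma, I obtain
\[
\mathcal{H}^{d-1}(J_u)\le\tfrac12\sum_j\mathcal{H}^{d-1}(\partial^*P_j\cap\Omega)\le\tfrac12\liminf_n\sum_j\mathcal{H}^{d-1}(\partial^*P_{j,n}\cap\Omega)=\liminf_n\mathcal{H}^{d-1}(J_{u_n}),
\]
the final equality being Theorem \ref{th: local structure} applied to the pairwise distinct representation of $u_n$.

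The main obstacle I foresee is the control of the affine coefficients. Both parts rest on the finite-dimensional fact that, uniformly over subsets $E\subset\Omega$ of Lebesgue measure at least a fixed $c>0$, one has $|Q|+|b|\le C(c)\|q\|_{L^\infty(E)}$ for every rigid motion $q(x)=Qx+b$, a statement proved by a compactness/contradiction argument on the normalised sphere $|Q|+|b|=1$, using that a nonzero affine map cannot vanish on a positive-measure set. This principle is what lets the Markov bound in (i) and the measure convergence in (ii) be promoted to actual convergence of the coefficients, while the closedness of $L$ ensures that the limit coefficients $Q_j$ remain in $L$. All remaining ingredients---Caccioppoli compactness, diagonalisation, perimeter lower semicontinuity, and Fatou's lemma---are standard.
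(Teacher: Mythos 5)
Your proposal is correct and follows essentially the same skeleton as the paper's proof: apply Theorem~\ref{th: comp cacciop} to the ordered partitions, identify the limiting partition, control the affine coefficients on components of positive limiting volume, and diagonalize. The only notable deviation is that, instead of invoking the quantitative estimate of Lemma~\ref{lemma: rigid motion} (which the paper proves precisely so that it can be cited here), you re-derive the needed finite-dimensional bound $|Q|+|b|\le C(c)\|q\|_{L^\infty(E)}$ by a qualitative compactness/contradiction argument on the normalized sphere; this is a perfectly valid and slightly more elementary route for this lemma, though it gives less quantitative information than the paper's $\tau_\psi$. Your treatment of part~(ii) is also slightly more self-contained than the paper's terse remark, since you explicitly re-run the partition compactness along a subsequence realizing the liminf and recover the coefficients from measure convergence on components of positive volume rather than from the coercivity hypothesis of part~(i), which part~(ii) does not assume; this is a small but genuine sharpening of the exposition.
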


\EEE

The proof of the above compactness result relies on \eqref{eq: matrix-assumption}, as well as on the following auxiliary result, which will be used several times in the sequel.

\begin{lemma}\label{lemma: rigid motion}
Let  $G\in \mathbb{R}^{d \times d}$, $b \in \R^d$.  Let $\delta >0$, $R >0$, and let $\psi \colon \R_+\to \R_+$ be a continuous, strictly increasing function with $\psi(0) = 0$. Consider a measurable, bounded set $E \subset \R^d$  with $E \subset B_R(0)$ and $\mathcal{L}^d(E)\ge \delta$. Then there exists a continuous, strictly increasing function $\tau_\psi: \psi(\R_+) \to \R_+$ with $\tau_\psi(0) = 0$ only depending on $\delta$, $R$, and $\psi$ such that 
\begin{align}\label{eq: estimate1}
|G| + |b| \le \tau_\psi\Big( \fint_{E} \psi(|G\,x + b|)\, \mathrm{d}x \Big).
\end{align}
If $\psi(t) = t^p$, $p \in [1,\infty)$, then $\tau_\psi$ can be chosen as $\tau_\psi(t) = ct^{1/p}$ for  $c=c(p,\delta,R)>0$. Moreover, there exists $c_0>0$ only depending on $\delta$ and $R$ such that 
\begin{align}\label{eq: estimate2}
\Vert G\, x + b \Vert_{L^\infty(B_R(0))} \le (\omega_d R^{d})^{-1} c_0 \Vert  G\,x + b\Vert_{L^1(E)}  \le c_0 \Vert  G\,x + b\Vert_{L^\infty(E)}.
\end{align}
\end{lemma}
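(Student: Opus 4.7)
My plan is to reduce the whole lemma to a single uniform estimate on affine maps, namely
\[
\int_E |Gx+b|\,dx \ \geq\ c_0(\delta,R)\,(|G|+|b|),
\]
valid for every measurable $E\subset B_R(0)$ with $\mathcal{L}^d(E)\geq \delta$. Once this is in hand, \eqref{eq: estimate1} follows by a Chebyshev-type extraction together with the monotonicity of $\psi$, and the first inequality of \eqref{eq: estimate2} follows from it paired with the elementary bound $\|Gx+b\|_{L^\infty(B_R(0))}\le R|G|+|b|$. The second inequality of \eqref{eq: estimate2} is trivial, since $\|Gx+b\|_{L^1(E)}\le \mathcal{L}^d(E)\,\|Gx+b\|_{L^\infty(E)}\le \omega_d R^d \|Gx+b\|_{L^\infty(E)}$.

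To prove the uniform estimate, I would argue by contradiction. By homogeneity it suffices to consider $|G|+|b|=1$; then $(G,b)$ ranges over the unit sphere of the finite-dimensional space $V:=\mathbb{R}^{d\times d}\oplus\mathbb{R}^d$. If the claim failed, there would exist sequences $(G_n,b_n)$ on this unit sphere and $E_n\subset B_R(0)$ with $\mathcal{L}^d(E_n)\ge\delta$ such that $\int_{E_n}|G_n x+b_n|\,dx\to 0$. By compactness in $V$, pass to a subsequence with $(G_n,b_n)\to(G_0,b_0)$, $|G_0|+|b_0|=1$, so that $G_n x+b_n\to G_0 x+b_0$ uniformly on $B_R(0)$. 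By weak-$*$ compactness in $L^\infty(B_R(0))$, $\chi_{E_n}\rightharpoonup^*\chi$ with $\int_{B_R(0)}\chi\ge\delta$, and passing to the limit in $\int_{B_R(0)}\chi_{E_n}|G_n x+b_n|\,dx$ yields $\int_{B_R(0)}\chi(x)|G_0 x+b_0|\,dx=0$. Hence $G_0 x+b_0=0$ on a set of positive measure, which forces $(G_0,b_0)=0$, since the zero set of any nontrivial affine function has Lebesgue measure zero. This contradicts $|G_0|+|b_0|=1$.

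From the uniform estimate, a Chebyshev argument supplies a subset $E'\subset E$ with $\mathcal{L}^d(E')\ge \delta'>0$ (with $\delta'$ depending only on $\delta,R$) on which $|Gx+b|\ge \tfrac{c_0}{2}(|G|+|b|)$. Monotonicity of $\psi$ then gives
\[
\fint_E \psi(|Gx+b|)\,dx \ \ge\ \tfrac{\delta'}{\omega_d R^d}\,\psi\!\bigl(\tfrac{c_0}{2}(|G|+|b|)\bigr),
\]
and inverting with the continuous, strictly increasing $\psi^{-1}:\psi(\mathbb{R}_+)\to\mathbb{R}_+$ (which vanishes at $0$) produces $\tau_\psi$ with the required properties and dependence. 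For $\psi(t)=t^p$ this collapses to $\tau_\psi(t)=c\,t^{1/p}$ with $c=c(p,\delta,R)$. The principal obstacle is the compactness step: one must handle sequences of sets $E_n$ simultaneously with sequences of affine functions, and the weak-$*$ convergence of characteristic functions in $L^\infty(B_R(0))$ is precisely what legitimizes the interchange of limits.
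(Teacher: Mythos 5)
Your compactness argument for the uniform $L^1$ estimate $\int_E |Gx+b|\,dx \ge c_0(\delta,R)\,(|G|+|b|)$ is correct and is a genuinely different route from the paper's explicit construction (which reduces to $b=0$, singles out the leading eigenvector $v$ of $G^T G$, uses $|G(y+sv)|\ge |s|\,|G|$ for $y\perp v$, and controls $\mathcal{L}^d(E\setminus E_r)$ via the isodiametric inequality). The compactness route is cleaner but non-constructive; your derivations of \eqref{eq: estimate2} and of \eqref{eq: estimate1} for $\psi(t)=t^p$ are sound.

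However, the Chebyshev step fails to give \eqref{eq: estimate1} for general $\psi$ once $m:=\sup\psi<\infty$ --- and this case genuinely matters, since the paper repeatedly invokes the lemma with $\psi(t)=t/(1+t)$ (see Remark~\ref{rem: topo} and the proofs of Lemma~\ref{eq: liminflimsup-prop} and Lemma~\ref{lemma: gamma-min-upper}). Your extraction yields $E'\subset E$ with $\mathcal{L}^d(E')\ge\delta'$ and $|Gx+b|\ge c'(|G|+|b|)$ on $E'$, giving $\fint_E\psi(|Gx+b|)\,dx \ge \kappa\,\psi(c'(|G|+|b|))$ with $\kappa:=\delta'/(\omega_d R^d)$, and necessarily $\kappa<1$: for a cutoff proportional to $|G|+|b|$ the excluded slab/tube $\{|Gx+b|<c'(|G|+|b|)\}\cap B_R(0)$ has measure bounded below uniformly in $(G,b)$, so the set $E'$ of positive threshold can never exhaust $E$, and the ratio $\delta'/\mathcal{L}^d(E)$ is strictly below $1$. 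Consequently the candidate $\tau_\psi(t)=\tfrac{1}{c'}\psi^{-1}(\kappa^{-1}t)$ is defined only for $t<\kappa m<m$, not on all of $\psi(\R_+)=[0,m)$, and when $|G|+|b|\to\infty$ (which drives $\fint_E\psi\to m$) your inequality degenerates to the vacuous $m\le\kappa^{-1}t$ and bounds nothing. The paper circumvents precisely this by making the cutoff $r(t)$ depend on $t=\fint_E\psi$: it shrinks $r(t)\to 0$ as $t\to m$ so that $\mathcal{L}^d(E\setminus E_{r(t)})/\mathcal{L}^d(E)\to 0$, while the argument of $\psi^{-1}$ in \eqref{eq: hatpsi}, namely $\sqrt t(\sqrt{m_t}+\sqrt t)/2$, stays strictly below $m$ by design, and the required blow-up of $\tau_\psi$ near $m$ is supplied by $r(t)^{-1}\to\infty$. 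Your fixed-fraction Chebyshev would have to be upgraded to such an adaptive, $t$-dependent construction to close the argument.
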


\begin{proof}
We start by proving an estimate  under weaker assumptions than in the statement above. We claim that for each measurable, bounded set $E$  with $\diam(E) \le 2R$ (not necessarily contained in $B_R(0)$)  and $\mathcal{L}^d(E)\ge \delta$ there holds
\begin{align}\label{eq: estimate1-newly}
|G|  \le \hat{\tau}_\psi\Big( \fint_{E} \psi(|G\,x + b|)\, \mathrm{d}x \Big)
\end{align}
for a continuous, strictly increasing function $\hat{\tau}_\psi$ with $\hat{\tau}_\psi(0)=0$. It is not restrictive to consider only the case $b=0$. Indeed, every $b \in \mathbb{R}^d$ can be decomposed orthogonally  as $b=\hat b- Gy$, with $\hat{b} \in ({\rm im}(G))^\perp$ and $y \in \R^d$. Then clearly $|Gx+b|\ge |G(x-y)|$ for all $x \in \mathbb{R}^d$. Since $\psi$ and  $\hat{\tau}_\psi$ are increasing, and  $\mathcal{L}^d(E)$  and ${\rm diam}(E)$ are left unchanged by a translation of  $E$, we can then assume $y=\hat b  = b = 0$.

 Since matrix norms are equivalent, we endow $\mathbb{R}^{d \times d}$ with the spectral norm throughout the proof. We fix   an  eigenvector $v$  with unit norm    corresponding to the maximal eigenvalue of the symmetric positive semidefinite matrix $G^TG$, \BBB i.e.\ $G^TG v = |G|^2 v$ by the definition of spectral norm. \EEE   Let $v^\perp$ be the $(d-1)$-dimensional hyperplane orthogonal to $v$. Since $|v|=1$, for each $y \in v^\perp$ there holds  
 \begin{equation}\label{eq: matrix-estimate}
|G (y  + s \, v  ) | = \big( |Gy|^2 + 2s |G|^2 \langle y, v \rangle + s^2|G|^2|v|^2 \big)^{1/2} \ge |s| \,|G|
\end{equation}
for all $s \in \R$. For $r>0$ we define   
 \begin{equation}\label{eq: matrix-estimate2}
E_r:=\{x = y + s\,v \in E: \, y \in v^\perp, \,  |s| \EEE \ge   r\}.
\end{equation}
Using the isodiametric inequality  and the fact that $\diam(E) \le 2R$, we have $\mathcal{L}^d(E \setminus E_{r})\le 2r\omega_{d-1}R^{d-1}$.   Let $m := \sup_{t\in\R_+} \psi(t) \in \R_+ \cup \lbrace + \infty \rbrace$. Hence, setting for each $t \in [0,m)$ $m_t = \min \lbrace 4t, m \rbrace$, \BBB   $r(t)>0$ can be chosen, \EEE only depending on $\delta$ and $R$, such that  
\begin{align}\label{eq: volume away}
\mathcal{L}^d\big(E \setminus E_{r(t)}\big)\le \delta \frac{\sqrt{m_t}-\sqrt{t}}{\sqrt{m_t}+\sqrt{t}}.
\end{align}
Above the right-hand side is extended by continuity with the value $\frac13\delta$ for $t=0$. Note that $r(t)$ is continuous in $t$. We define the function  
\begin{align}\label{eq: hatpsi}
\hat{\tau}_\psi(t) =   r(t)^{-1} \psi^{-1}\big(\sqrt{t}(\sqrt{m_t}+\sqrt{t})/2\big), \ \ \ \ \ t \in [0,m),
\end{align}
 which is clearly well defined for $t \in [0,m)$, and satisfies $\hat{\tau}_\psi(0) = 0$ since $\psi^{-1}(0) = 0$.

By \eqref{eq: volume away} and $\mathcal{L}^d(E) \ge \delta$ we get   $\mathcal{L}^d(E_{r(t)}) \ge 2\sqrt{t} (\sqrt{m_t}+\sqrt{t})^{-1}\mathcal{L}^d(E)$. Let $t = \fint_E \psi(|G\,x|)$ for brevity.  This along with \eqref{eq: matrix-estimate}-\eqref{eq: matrix-estimate2} and the fact that  $\psi \ge 0$ is monotone increasing yields
\begin{align*}
\psi(r(t) |G|) &\le  \frac{1}{\mathcal{L}^d(E_{r(t)})} \int_{E_{r(t)}} \psi(|G\,x|)\, \mathrm{d}x \le \frac{1}{\mathcal{L}^d(E_{r(t)})} \int_{E} \psi(|G\, x|)\, \mathrm{d}x \le \frac{\sqrt{m_t}+\sqrt{t}}{2\sqrt{t}}\fint_{E} \psi(|G\, x|)\, \mathrm{d}x\\
& = \sqrt{t}(\sqrt{m_t}+\sqrt{t})/2.
\end{align*}
This implies $|G| \le r(t)^{-1} \psi^{-1}(\sqrt{t}(\sqrt{m_t}+\sqrt{t})/2) = \hat{\tau}_\psi(t)$ since   $\psi^{-1}$ is strictly increasing, too. This concludes the proof of \eqref{eq: estimate1-newly}.

We now show \eqref{eq: estimate1} for $\tau_\psi := (2R+1)\hat{\tau}_\psi + 2\psi^{-1}$. Whenever $|b|\le 2R|G|$, the statement follows directly from \eqref{eq: estimate1-newly}. If instead $|b|>2|G|R$, since $|Gx|\le R|G|$ for all $x\in B_R(0)$, we have $|Gx+b|>\frac12 |b|$ for all $x \in E \subset B_R(0)$. This implies   $\psi(|b|/2) \le \fint_{E} \psi(|G\,x + b|)\, \mathrm{d}x $ and thus 
\[
|b|\le 2\psi^{-1} \Big(  \fint_{E} \psi(|G\,x + b|)\, \mathrm{d}x \Big)\,.
\]
This along with \eqref{eq: estimate1-newly} and the definition $\tau_\psi = (2R+1)\hat{\tau}_\psi + 2\psi^{-1}$ shows  \eqref{eq: estimate1}. 

We consider the special situation $\psi(t) = t^p$, $p \in [1,\infty)$.  Since $m=\infty$ in this case, in view of \eqref{eq: hatpsi}, it is not hard to check that $\hat{\tau}_\psi(t) \le ct^{1/p}$ and thus $\tau_\psi(t) \le ct^{1/p}$ for some $c$ sufficiently large depending only on $\delta$, $R$, and $p$. Thus, $\tau_\psi$ can be replaced by the function $t \mapsto ct^{1/p}$.

We finally show \eqref{eq: estimate2}. We apply \eqref{eq: estimate1} with $\psi(t)=t$. By using that $\tau_\psi(t) \le ct$ we get $|G| + |b| \le c \fint_E |  G\,x + b| \, \mathrm{d}x$. We conclude the proof by recalling that $\mathcal{L}^d(E) \ge \delta$ and noting that   $|G \, x + b| \le |G|R + |b|$ for all $x \in  B_R(0)$.  
\end{proof}

For similar estimates of this kind, we also refer to \cite{Chambolle-Conti-Francfort:2014, Friedrich:15-4, FriedrichSolombrino}. We can now prove Lemma \ref{lemma: PR compactness}.

\begin{proof}[Proof of Lemma \ref{lemma: PR compactness}]
\BBB We start with (i). \EEE We consider the pairwise distinct representation $u_n = \sum_j q_{j,n} \chi_{P_{j,n}}$ of each $u_n$ and the associated ordered Caccioppoli partitions  $\mathcal{P}_n = (P_{j,n})_j$, $n \in \N$. Observe that the assumption $\sup_{n \ge 1} \mathcal{H}^{d-1}(J_{u_n})<+\infty$ and \eqref{eq: bdy and jump} imply that
$$
\sup\nolimits_{n \ge 1} \sum\nolimits_{j}\mathcal{H}^{d-1}(\partial^* P_{j,n}) < + \infty.
$$
Thus, up  to a subsequence (not relabeled), there exists a  limiting  Caccioppoli partition $(P_j)_j$ in the sense of Theorem \ref{th: comp cacciop}. It is clearly not restrictive to assume that  $\mathcal{L}^d(P_j)>0$ for all $j$, since, after neglecting all null sets, we still have a Caccioppoli partition of $\Omega$. By lower semicontinuity of the perimeter, by using  Theorem \ref{th: local structure}, and by \eqref{eq: bdy and jump}  we also have
\begin{equation}\label{eq: lowsemic}
\frac12\sum\nolimits_{j}\mathcal{H}^{d-1}(\partial^* P_{j}\setminus \partial \Omega) \le \liminf_{n \to \infty} \frac12\sum\nolimits_{j}\mathcal{H}^{d-1}(\partial^* P_{j,n}\setminus \partial \Omega)=\liminf_{n \to \infty} \mathcal{H}^{d-1}(J_{u_n})\,.
\end{equation}

For a fixed $j\in \mathbb{N}$, Theorem \ref{th: comp cacciop} implies that there exists $\delta_j$, independently of $n$, with $\mathcal{L}^d(P_{j,n})\ge \delta_j$ for all $n$. Now,  by assumption there  holds $\fint_{P_{j,n}}\psi(|q_{j,n}(x)|)\,\mathrm{d}x \le \frac{M}{\delta_j}$, where $M := \sup_n \int_\Omega \psi(|u_n|)$.  Hence, we deduce by Lemma \ref{lemma: rigid motion} and the coerciveness of $\psi$  that there exists a constant $c_{ \Omega,  M, j}$ such that
\[
\sup\nolimits_{n \ge 1}\|q_{j,n}\|_{W^{1,\infty}(\Omega)} \le c_{ \Omega,  M, j}.
\]
By the Ascoli-Arzel\`a Theorem, a diagonal argument,  and by the fact that  $L$ is closed, we deduce that there exist rigid motions $(q_j)_j$ so that, for each $j$, there holds
\begin{equation}\label{eq: rigid comp}
\lim_{n\to \infty}\|q_{j,n}-q_j\|_{L^\infty(\Omega)}\to  0
\end{equation}
along a subsequence independent of $j$, which we do not relabel.   We  set $u=\sum_j q_j \chi_{P_j}$, and clearly we get $u \in PR(\Omega)$, while \eqref{eq: rigid comp} and Theorem \ref{th: comp cacciop} give $u_n \to u$ in measure. \BBB To see (ii), we note that \EEE  by construction $J_u \subset \bigcup_j (\partial^* P_j\setminus \partial \Omega)$ up to an  $\mathcal{H}^{d-1}$-negligible  set. Thus,  we deduce the inequality  $\mathcal{H}^{d-1}(J_u) \le \liminf_{n \to \infty} \mathcal{H}^{d-1}(J_{u_n})$ directly  from Theorem \ref{th: local structure} and \eqref{eq: lowsemic}.  
\end{proof}

We now collect some crucial properties of piecewise rigid functions in the blow-up at jump points. In particular, we  construct suitable modifications with \BBB the property that they converge to the function defined in \eqref{eq: jump competitor} in $L^p$, $1\le p < +\infty$, see \eqref{eq: blow up-new}(vi). This convergence property will be instrumental for the proof of the integral representation formula in Section \ref{sec: representation}.  We \EEE denote the half spaces $\lbrace \langle x-x_0,\nu\rangle > 0\rbrace$ and $\lbrace \langle x-x_0,\nu\rangle < 0\rbrace$ by $H^+(x_0,\nu)$ and $H^-(x_0,\nu)$, respectively.

\begin{lemma}[Blow-up at jump points]\label{lemma: blow up}
Let $u = \sum_{j\in \N}q_j\chi_{P_j} \in PR(\Omega)$. Let $\theta \in (0,1)$. For $\mathcal{H}^{d-1}$-a.e.\ $x_0 \in J_u$ we find $i,j\in \N$   such that $x_0 \in \partial^* P_i \cap \partial^* P_j$, and a sequence $u_\eps \in PR(B_\eps(x_0))$ satisfying
\begin{align}\label{eq: blow up-new} 
{\rm (i)} & \ \ \lim_{\eps\to 0} \  \frac{1}{\eps^d}  \mathcal{L}^d\big( (B_\eps(x_0) \cap H^+(x_0,\nu_u)) \setminus P_i \big) +  \lim_{\eps\to 0} \ \frac{1}{\eps^d}  \mathcal{L}^d\big( (B_\eps(x_0) \cap H^-(x_0,\nu_u)) \setminus P_j \big)  = 0, \notag\\
{\rm (ii)} & \ \ \lim_{\eps\to 0} \ \frac{1}{\omega_{d-1}\,\eps^{d-1}} \mathcal{H}^{d-1}\Big(J_u \cap \big(B_\eps(x_0) \setminus B_{t\eps}(x_0)\big)\Big) = (1-  t^{d-1}  ) \ \ \text{ for all } t \in (0,1), \notag\\
{\rm (iii)} & \ \ u_\eps= q_i \chi_{P_i} +q_j\chi_{P_j} \   \text{on $B_{(1-\theta)\eps}(x_0)$,}\notag\\
{\rm (iv)} & \ \ \text{$u_\eps = u$ in a neighborhood of $\partial B_\eps(x_0)$,} \notag \\ 
{\rm (v)} & \ \  \lim_{\eps \to 0} \ \frac{1}{\eps^{d-1}}\Big(\mathcal{H}^{d-1}(J_{u_\eps} \setminus J_u) +  \mathcal{H}^{d-1}\big(\lbrace x\in J_{u_\eps} \cap J_u: \ [u_\eps](x) \neq [u](x)\rbrace \big)\Big) = 0,\notag \\
{\rm (vi)} & \ \ \lim_{\eps \to 0} \   \frac{1}{\eps^d} \int_{B_{(1-\theta)\eps}(x_0)} \big|u_\eps(x) - (u^-(x_0)+ u_{x_0,[u](x_0),\nu_u(x_0)})\big|^p \, \mathrm{d}x = 0 \ \ \ \forall \, 1 \le p < \infty.
\end{align}
\end{lemma}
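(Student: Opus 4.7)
The plan is to perform a surgery near a sufficiently regular point $x_0 \in J_u$: inside a ball of radius $\sim \eps$ I replace $u$ by a two-valued piecewise rigid function that preserves $u$ near the boundary, by absorbing all minority pieces of the Caccioppoli partition into one of the two dominant ones. I first restrict to a set of full $\mathcal{H}^{d-1}$-measure in $J_u$ on which $x_0$ belongs to $\partial^* P_i \cap \partial^* P_j$ for a unique pair $(i,j)$ (Theorem \ref{th: local structure} together with \eqref{eq: bdy and jump}), De Giorgi's blow-up for $P_i$ and $P_j$ gives the half-space limits in (i) with $\{u^+(x_0),u^-(x_0)\}=\{q_i(x_0),q_j(x_0)\}$ up to orientation of $\nu_u$, and the $(d-1)$-rectifiability of $J_u$ with density one at $x_0$ yields (ii) by subtraction at the scales $\eps$ and $t\eps$. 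The crucial density estimates driving the surgery are
\begin{equation*}
\mathcal{L}^d\Big(\bigcup_{k\neq i,j} P_k \cap B_\eps(x_0)\Big) = o(\eps^d), \qquad \sum_{k\neq i,j} \mathcal{H}^{d-1}(\partial^* P_k \cap B_\eps(x_0)) = o(\eps^{d-1}).
\end{equation*}
The first is immediate from $\mathcal{L}^d(P_i\cap B_\eps),\mathcal{L}^d(P_j\cap B_\eps) = (\tfrac12+o(1))\omega_d\eps^d$; the second follows by differentiating the finite Radon measure $B\mapsto \sum_k \mathcal{H}^{d-1}(B\cap \partial^* P_k) = 2\mathcal{H}^{d-1}(B\cap J_u)$ (Theorem \ref{th: local structure}) with respect to $\mathcal{H}^{d-1}(\cdot \cap J_u)$, observing that only $k=i,j$ contribute to its Radon--Nikodym derivative at $x_0$.

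A Fubini/coarea argument applied to $|\cdot - x_0|$ on $\bigcup_{k\ne i,j} P_k$ then furnishes a radius $r_\eps\in((1-\theta)\eps,\eps)$ with $\mathcal{H}^{d-1}(\partial B_{r_\eps}(x_0) \cap \bigcup_{k\ne i,j} P_k) = o(\eps^{d-1})$, and which may additionally be chosen so that $\mathcal{H}^{d-1}(\partial B_{r_\eps}(x_0) \cap J_u) = 0$ by the generic slicing property of rectifiable sets. I then define
\[
u_\eps := q_i \chi_{B_{r_\eps}(x_0)\setminus P_j} + q_j \chi_{P_j \cap B_{r_\eps}(x_0)} + u\, \chi_{B_\eps(x_0)\setminus B_{r_\eps}(x_0)},
\]
which is piecewise rigid on $B_\eps(x_0)$. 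Property (iv) is immediate, and (iii) holds on $(P_i \cup P_j)\cap B_{(1-\theta)\eps}$, a set that exhausts $B_{(1-\theta)\eps}$ up to $\mathcal{L}^d$-measure $o(\eps^d)$.

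For (v), I decompose $B_\eps(x_0)$ into $B_{r_\eps}$, $\partial B_{r_\eps}$, and the outer annulus. Inside $B_{r_\eps}$, $J_{u_\eps}\cap B_{r_\eps}$ reduces to $\partial^* P_j \cap B_{r_\eps}$, which is contained in $J_u$ by \eqref{eq: bdy and jump}, and the bad set $\{[u_\eps]\neq[u]\}$ is confined to $\bigcup_{k\neq i,j}(\partial^* P_j\cap\partial^* P_k)\cap B_{r_\eps}$, of measure $o(\eps^{d-1})$ by the second density estimate. On $\partial B_{r_\eps}$, any new jump of $u_\eps$ lies in $\partial B_{r_\eps}\cap \bigcup_{k\neq i,j}P_k$, of measure $o(\eps^{d-1})$ by the coarea choice, and is $\mathcal{H}^{d-1}$-null in $J_u$ by the slicing property. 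In the outer annulus $u_\eps = u$, with no extra contribution. Hence both terms in (v) are $o(\eps^{d-1})$. Finally, for (vi), on $B_{(1-\theta)\eps}$ the function $u_\eps$ takes the values $q_i$ or $q_j$ according to a two-set partition that approximates the half-space decomposition $H^\pm(x_0,\nu_u)$ up to $\mathcal{L}^d$-measure $o(\eps^d)$; the pointwise difference of $u_\eps$ from the two-valued target $u^-(x_0) + u_{x_0,[u](x_0),\nu_u(x_0)}$ (whose values $\{q_i(x_0),q_j(x_0)\}$ are matched to the two sides of the tangent hyperplane up to orientation of $\nu_u$) is $O(\eps)$ on the majority set and bounded on the $o(\eps^d)$-minority set, giving an $L^p$-error of $o(\eps^d)$.

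The main technical obstacle is the second density estimate above, which is the mechanism by which the surgery only pays an $o(\eps^{d-1})$ interface price for absorbing all the minority pieces of the Caccioppoli partition. It hinges on differentiating the countable sum of partition-boundary measures against $\mathcal{H}^{d-1}(\cdot\cap J_u)$, combined with the fact (Theorem \ref{th: local structure}) that a regular jump point belongs to exactly two of the boundaries $\partial^* P_k$ with positive $\mathcal{H}^{d-1}$-density.
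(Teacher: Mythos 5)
Your construction of $u_\eps$ does not satisfy property (iii) as stated. You define $u_\eps = q_i$ on $B_{r_\eps}(x_0)\setminus P_j$, so on the minority set $\bigcup_{k\neq i,j} P_k \cap B_{(1-\theta)\eps}(x_0)$ your function equals $q_i$. However, (iii) requires $u_\eps = q_i\chi_{P_i}+q_j\chi_{P_j}$ \emph{on all of} $B_{(1-\theta)\eps}(x_0)$, and this function vanishes on the complement of $P_i\cup P_j$. Since $q_i\neq 0$ in general (e.g.\ for $L=SO(d)$ there is no zero element), and since the minority set has positive $\mathcal L^d$-measure generically, these are different functions. You acknowledge this yourself when you write that (iii) ``holds on $(P_i\cup P_j)\cap B_{(1-\theta)\eps}$ \dots\ up to $\mathcal L^d$-measure $o(\eps^d)$'' --- but that is a weaker statement than the lemma claims, and the subsequent uses of (iii) in Lemma~\ref{lemma: minsame} rely on the exact identity (to recognise the large components when verifying \eqref{eq: extra condition-new}).

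The fix is immediate and in fact what the paper does: define $u_\eps := u\,\chi_{P_i\cup P_j}$ on $B_{r_\eps}(x_0)$ and $u_\eps := u$ outside. Then (iii) holds verbatim, and your analysis of (iv)--(vi) goes through with only cosmetic changes --- the inner jump set becomes a subset of $\partial^*P_i\cup\partial^*P_j$ rather than $\partial^*P_j$ alone, but the bad set where $[u_\eps]\neq[u]$ is still $(\partial^*P_i\cup\partial^*P_j)\cap\bigcup_{k\neq i,j}\partial^*P_k$, controlled by your second density estimate; and on the minority set the pointwise error in (vi) remains $O(1)$ on an $o(\eps^d)$-measure set. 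Apart from this, your argument is a somewhat more elaborate but equivalent version of the paper's: the coarea choice of the good radius is identical, and your Radon--Nikodym derivation of $\sum_{k\neq i,j}\mathcal H^{d-1}(\partial^*P_k\cap B_\eps(x_0))=o(\eps^{d-1})$ is a valid (if more explicit) route to what the paper obtains directly from Theorem~\ref{th: local structure} and \cite[Theorem~3.59]{Ambrosio-Fusco-Pallara:2000}. Note also that your argument implicitly requires the representation $u=\sum_j q_j\chi_{P_j}$ to be pairwise distinct (you invoke \eqref{eq: bdy and jump}); this is a harmless normalisation but should be stated.
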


\begin{proof}
For  $\mathcal{H}^{d-1}$-a.e.\ $x_0 \in J_u$ there exist two components $P_i$ and $P_j$ such that $x_0 \in \partial^* P_i \cap \partial^* P_j$ and
\begin{align}\label{eq: blow up-1}
{\rm (i)} &\ \, \lim_{\eps \to 0} \frac{\mathcal{L}^d\big( (B_\eps(x_0) \cap H^+(x_0,\nu_u)) \setminus P_i \big) + \mathcal{L}^d\big( (B_\eps(x_0) \cap H^-(x_0,\nu_u)) \setminus P_j\big)}{\eps^d} = 0, \notag\\
 {\rm (ii)}& \ \, \lim_{\eps \to 0} \frac{\mathcal{H}^{d-1}\big(B_{\eps}(x_0) \cap J_u \big)}{\omega_{d-1}\,\eps^{d-1}} =  \lim_{\eps \to 0} \frac{\mathcal{H}^{d-1}\big(B_{\eps}(x_0) \cap J_u \cap \partial^* P_i\cap \partial^*P_j\big)}{\omega_{d-1}\,\eps^{d-1}} = 1.
\end{align}
This follows from Theorem \ref{th: local structure} and \cite[Theorem 3.59]{Ambrosio-Fusco-Pallara:2000}. Note that \eqref{eq: blow up-1}  implies \eqref{eq: blow up-new}(i),(ii).  Using the coarea formula and \eqref{eq: blow up-1}(i) we can choose $\gamma_\eps \in ((1-\theta)\eps,\eps)$ such that
\begin{align}\label{eq: blow up-2}
\lim_{\eps \to 0} \frac{\mathcal{H}^{d-1}( \partial B_{\gamma_\eps}(x_0) \setminus (P_i\cup P_j))}{\eps^{d-1}} = 0.
\end{align}
We define $u_\eps \in PR(B_\eps(x_0))$ by
\begin{align*}
u_\eps(x) = \begin{cases}
 u(x)\chi_{P_i \cup P_j}(x) & \text{if} \ x \in B_{\gamma_\eps}(x_0),\\
 u(x) & \text{if} \ x \in B_{\eps}(x_0) \setminus \overline{B_{\gamma_\eps}(x_0)}.
\end{cases}
\end{align*}
The definition directly implies \eqref{eq: blow up-new}(iii),(iv).  By \eqref{eq: blow up-1}(ii)  we observe 
$$\frac{1}{\eps^{d-1}} \mathcal{H}^{d-1}\big((B_{\gamma_\eps}(x_0) \cap J_u ) \setminus (\partial^* P_i \cap \partial^* P_j ) \big) \to 0.$$
 This along with \eqref{eq: blow up-2} shows \eqref{eq: blow up-new}(v). Finally, \eqref{eq: blow up-new}(vi) follows from \eqref{eq: blow up-new}(i),(iii) and the fact that $q_i(x)$ and $q_j(x)$ converge uniformly to $u^+(x_0)$ and $u^-(x_0)$, respectively, as $x \to x_0$.
\end{proof}

\section{Fundamental estimate for $PR(\Omega)$}\label{sec: fund}
 
This section is devoted to a fundamental estimate for functionals  defined on piecewise rigid functions.  It will be the key tool to prove our integral representation and $\Gamma$-convergence results. \BBB  The results in this section will be proven using a  weaker assumption than  (${\rm H_5}$).  We will namely assume 
\begin{itemize}
\item[{\rm (${\rm H_5}^\prime$)}]  there exists an increasing modulus of continuity $\sigma:[0,+\infty) \to [0,\beta]$ with $\sigma(0)=0$ such that for any $u,v \in PR_L(\Omega)$ and $S \in B(\Omega)$ with $S \subset  J_u \cap J_v$ we have
$$|\mathcal{F}(u,S) - \mathcal{F}(v,S)| \le \int_{S} \sigma(|u^+- v^+|+|u^--v^-|) \, d\mathcal{H}^{d-1},$$ 
where we choose the orientation $\nu_u = \nu_v$ on $J_u \cap J_v$.
\end{itemize}
\EEE
 
\subsection{Fundamental estimate} 
 
In this section we formulate different versions of the fundamental estimate. We first give the main statement and afterwards we provide a generalization which also takes boundary data into account. We use the following convention in the whole section: given $A,U\in\mathcal{A}_0(\Omega)$, $A \subset U$, we may regard every $u \in PR(A)$ as a function on $U$, extended by $u = 0$ on $U \setminus A$.

%
%\begin{figure}[!h]
%\centering
%\begin{overpic}[width=0.6\linewidth,clip]{bild-a}
%\put(0,66){{$(a)$}}
%\put(0,28){{$(b)$}}
%\put(37,53){{$A'$}}
%\put(52,53){{$K$}}
%\put(65,53){{$A$}}
%\put(65,17){{$P_{j_1}$}}
%\put(55,3){{$P_{j_2}$}}
%\put(83,53){{$B$}}
%\end{overpic}
%\caption{\small (a) Illustration of the sets $A' \subset \subset K \subset \subset  A$ and $B$ in Lemma \ref{lemma: fundamental estimate2}. (b) Define  $U = A' \cup B$. A partition of $U \setminus \overline{K} = B \setminus \overline{K}$ corresponding to some $u \in PR(U \setminus \overline{K})$ is depicted. The indices $j_1$ and $j_2$ corresponding to $P_{j_1}$ and $P_{j_2}$ lie in the index set $J$ defined in   \eqref{eq: touching cond}. } \label{figure1}
%\end{figure}

 \begin{lemma}[Fundamental estimate]\label{lemma: fundamental estimate2}
Let $\eta >0$ and  $A', A, B \in \mathcal{A}_0(\Omega)$ with $A' \subset \subset  A$, and let $\psi: \R_+ \to \R_+$ be continuous and strictly increasing with $\psi(0) = 0$. Then  there exist a constant $M>0$ and  a  lower semicontinuous function  $\Lambda: PR(A) \times PR(B) \to \R_+\cup\{+\infty\}$ satisfying 
\begin{align}\label{eq: Lambda0}
\Lambda(z_1,z_2) \to 0 \ \text{ whenever } \ \int_{(A\setminus A')\cap B} \psi(|z_1 - z_2|) \to 0
\end{align}  
such that for every functional $\mathcal{F}$ satisfying {\rm (${\rm H_1}$)}, \BBB {\rm (${\rm H_3}$)}, {\rm (${\rm H_4}$)}, and {\rm (${\rm H_5}^\prime$) }\EEE  and for all $u \in PR(A)$, $v \in PR(B)$ there exists a function $w \in PR(A' \cup B)$ such that
\begin{align}\label{eq: assertionfund}
{\rm (i)}& \ \ \mathcal{F}(w, A' \cup B) \le  \mathcal{F}(u,A \cap J_w)  + \mathcal{F}(v, B \cap J_w) \notag \\
 & \  \  \ \ \ \ \ \ \ \ \ \ \ \ \ \ \ \ \ \  \  + \big(\mathcal{H}^{d-1}(\partial A \cup \partial A' \cup \partial B) + \mathcal{F}(u,A)  + \mathcal{F}(v, B) \big) \big(\eta + M\sigma (\Lambda(u,v)) \big), \notag \\ 
{\rm (ii)}& \ \  \BBB \Vert \min\lbrace |w - u|, |w-v| \rbrace \Vert_{L^\infty(A' \cup B)} \EEE \le \Lambda(u,v),
\end{align}
 where $\sigma$ is given in \BBB {\rm (${\rm H_5}^\prime$)}\EEE.  If $\psi(t) = t^p$, $1 \le p < \infty$, then  $\Lambda(z_1,z_2) = M\Vert z_1-z_2 \Vert_{L^p((A \setminus A')\cap B)}$.
\end{lemma}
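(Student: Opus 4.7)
The plan is to construct $w$ by sharply switching between $u$ and $v$ across a level set $\{\varphi = t^\ast\}$ of a smooth cutoff. The classical smooth interpolation $\varphi u + (1-\varphi)v$ fails to be piecewise rigid, so a sharp switch is unavoidable; it introduces new interface whose $\mathcal{H}^{d-1}$-size and jump heights must both be controlled.

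I first write $u = \sum_i q_i \chi_{P_i}$ and $v = \sum_j r_j \chi_{R_j}$ in indecomposable form, so that by (${\rm H_4}$) and \eqref{eq: bdy and jump-new} the series $\sum_i \mathcal{H}^{d-1}(\partial^* P_i \cap A)$ and $\sum_j \mathcal{H}^{d-1}(\partial^* R_j \cap B)$ are finite. Given $\eta > 0$, I pick finite index sets $I_\eta, J_\eta$ so that each selected component has volume $\ge \delta = \delta(\eta) > 0$ and the tail perimeters are at most $\eta\,[\mathcal{H}^{d-1}(\partial A\cup\partial A'\cup\partial B)+\mathcal{F}(u,A)+\mathcal{F}(v,B)]/(2\beta)$. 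Attention focuses on the finitely many large intersections $P_i \cap R_j$ of volume $\ge \delta^2$, since on each such set Lemma \ref{lemma: rigid motion} controls $\|q_i - r_j\|_{L^\infty(A)}$ via $\int_{P_i\cap R_j}\psi(|u-v|)$. The complementary sliver regions (small components plus thin intersections of large ones) have total perimeter controlled by the tail and are handled by the direct geometric construction mentioned in the introduction: there I set $w$ equal to either $u$ or $v$ depending on the side of the transition, paying $\beta$ times that small perimeter, which lies well within the $\eta$-part of the error.

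For the cutoff I use the standard averaging trick: either via $N\sim\beta\mathcal{H}^{d-1}(\partial A')/\eta$ nested layers between $A'$ and $A$, or by applying the $BV$ coarea formula to a single smooth $\varphi\in C_c^\infty(A)$ with $\varphi\equiv 1$ near $A'$, I find a level $t^\ast\in(0,1)$ such that $S:=\{\varphi = t^\ast\}$ satisfies $\mathcal{H}^{d-1}(S)\le \eta\mathcal{H}^{d-1}(\partial A')/\beta$. The glued function is
\[
 w := u\,\chi_{\{\varphi > t^\ast\}} + v\,\chi_{\{\varphi \le t^\ast\}} \in PR(A'\cup B),
\]
with $u$ extended by zero outside $A$. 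Since the switch is sharp, $w$ is piecewise rigid and, modulo $\mathcal{H}^{d-1}$-null sets, $J_w \subset (J_u\cap\{\varphi>t^\ast\})\cup(J_v\cap\{\varphi<t^\ast\})\cup S$. By locality (${\rm H_3}$) and the Borel measure property (${\rm H_1}$), $\mathcal{F}(w,\cdot)=\mathcal{F}(u,\cdot)$ on Borel subsets of the open set $\{\varphi>t^\ast\}$ and $\mathcal{F}(w,\cdot)=\mathcal{F}(v,\cdot)$ on $\{\varphi<t^\ast\}$, yielding the first two terms of (i); the new contribution on $S$ is bounded via (${\rm H_4}$) by $\beta\mathcal{H}^{d-1}(S)\le \eta\mathcal{H}^{d-1}(\partial A')$, fitting into the $\eta$-part of the error.

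The $\sigma(\Lambda)$ correction then arises by invoking (${\rm H_5}'$) on those portions of $J_u\cap J_v$ lying on the opposite side of $S$ from where $w$ takes its value: on each large intersection, $w$'s traces differ from those of $u$ (or $v$) by at most $\|q_i - r_j\|_{L^\infty(A)}$, which by estimate \eqref{eq: estimate2} of Lemma \ref{lemma: rigid motion} is bounded by $c\,\|u-v\|_{L^p(P_i\cap R_j)}\le c\,\|u-v\|_{L^p((A\setminus A')\cap B)}$. This motivates
\[
 \Lambda(z_1,z_2) := M\,\tau_\psi\Bigl(\int_{(A\setminus A')\cap B}\psi(|z_1-z_2|)\,dx\Bigr),
\]
lower semicontinuous as a composition of a lower semicontinuous integral with the monotone function $\tau_\psi$ from Lemma \ref{lemma: rigid motion}, and reducing to $M\,\|z_1-z_2\|_{L^p((A\setminus A')\cap B)}$ when $\psi(t)=t^p$. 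Property (ii) follows at once since $w(x)\in\{u(x),v(x)\}$ pointwise and $|q_i - r_j|\le\Lambda(u,v)$ on each large intersection, while on sliver regions $w$ agrees with one of $u,v$. The main obstacle I anticipate is the simultaneous coordination of the four approximation parameters—$(I_\eta,J_\eta)$, the thresholds $\delta$ and $\delta^2$, the layer count $N$, and the tail perimeter bound—together with the careful matching of traces across $S$ and on $J_u\cap J_v$ so that (${\rm H_5}'$) genuinely delivers the $\sigma(\Lambda)$ bound rather than only the coarser $\beta$-bound from (${\rm H_4}$).
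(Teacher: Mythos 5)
Your plan rests on the sharp switch
\[
w = u\,\chi_{\{\varphi > t^\ast\}} + v\,\chi_{\{\varphi \le t^\ast\}},
\]
together with the claim that one can pick a level $t^\ast$ so that $\mathcal{H}^{d-1}(\{\varphi = t^\ast\}) \le \eta\,\mathcal{H}^{d-1}(\partial A')/\beta$. That claim is false, and it is the critical point at which the argument breaks down. For every $t\in(0,1)$ the level set $\{\varphi = t\}$ separates $A'$ from $\R^d\setminus A$, so $\mathcal{H}^{d-1}(\{\varphi=t\})$ is bounded from below by the least perimeter among sets sandwiched between $A'$ and $A$ — a fixed positive geometric constant independent of $\eta$. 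The coarea formula only tells you the \emph{average} over $t$ is $\|\nabla\varphi\|_{L^1}$, which is itself bounded below by the same quantity; no level set can be made small. In the Ambrosio–Braides setting with finitely many values, the sharp switch works because one can choose $S$ to mostly avoid the open set $\{u=v\}$ (where no jump is created); here, two distinct rigid motions agree only on a lower-dimensional affine subspace, so $\{u\neq v\}$ is essentially all of $A\cap B$ and the full $\mathcal{H}^{d-1}$-measure of $S$ becomes new jump. Since ${\rm (H_4)}$ gives only $\alpha\mathcal{H}^{d-1}(J_w\cap S)\le\mathcal{F}(w,S)$ (no lower bound proportional to jump height), a small jump height does not help: the energy contribution of $S$ is of order one and cannot be absorbed into the $\eta$-error.

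The paper's mechanism is essentially the opposite of yours: it deliberately gives up the property $w\in\{u,v\}$ pointwise, precisely because a sharp cut is unavoidable only if one insists on it. Instead, $u$ and $v$ are lifted to piecewise constant parametrizations $u_{\rm par}, v_{\rm par}$ via the Bilipschitz chart $\Psi_L$ of $L$; the smooth interpolation $z_{\rm par}=\varphi u_{\rm par}+(1-\varphi)v_{\rm par}$ is a $GSBV$ (not piecewise constant) function with \emph{no new jump}, only a nonzero $\nabla z_{\rm par}$ on the transition shell controlled in $L^1$ by $\|u_{\rm par}-v_{\rm par}\|_{L^1}$; Theorem~\ref{thm: poincare} then replaces $z_{\rm par}$ by a piecewise constant $w_{\rm par}$ whose \emph{new} interface (outside $J_{z_{\rm par}}$) has $\mathcal{H}^{d-1}$-measure at most $\theta$, while the $L^\infty$-error is $\lesssim \theta^{-1}\|\nabla z_{\rm par}\|_{L^1}$; mapping back through $\Psi_L$ gives a genuinely piecewise rigid $w$ which is only close to $u$ or $v$ in $L^\infty$, not equal to them. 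This is why the lemma's estimate (ii) reads $\Vert\min\{|w-u|,|w-v|\}\Vert_{L^\infty}\le\Lambda(u,v)$ rather than $=0$, and why ${\rm (H_5')}$ is needed on $J_w\cap J_u$ and $J_w\cap J_v$ to pay for the perturbed traces. Your decomposition into large and small intersections, the use of Lemma~\ref{lemma: rigid motion} to compare rigid motions, and the rough shape of $\Lambda$ are all in the right spirit and do reappear in the paper; but without the parametrization + piecewise Poincar\'e step there is no way to make the new interface small, and the proof cannot be repaired by adjusting the layer count $N$ or the tail parameters.
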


 \begin{remark}[Topology]\label{rem: topo}
 {\normalfont
We recall that $PR(\Omega)$ is equipped with the topology induced by measure convergence, i.e., a natural choice in Lemma \ref{lemma: fundamental estimate2} is $\psi(t) = \frac{t}{1+t}$. For the applications, however, we are also interested in other topologies, e.g.\ $\psi(t) = t^p$, and therefore we account for different choices in the statement. Note that $\int_{(A\setminus A')\cap B} \psi(|z_1 - z_2|) $ might be infinite. In this case, also $\Lambda$ satisfies $\Lambda(z_1,z_2) = +\infty$, and $\sigma (\Lambda(u,v))$ has to be understood as $\lim_{t \to \infty} \sigma(t)$.  }
 \end{remark}

 \begin{remark}[$L^\infty$-estimate]\label{rem: l-infty}
 {\normalfont
In the case of piecewise constant functions studied by {\sc Ambrosio and Braides} \cite{AmbrosioBraides}, it is possible to construct $w$ in such a way that $w(x) \in \lbrace u(x),v(x)\rbrace$ for a.e.\ $x \in A' \cup B$. In our setting, we slightly have to modify  rigid  motions  by the coarea formula, with modifications controlled in terms of $\Lambda(u,v)$. This allows us to establish an $L^\infty$-control on  \BBB $\min\lbrace |w - u|, |w-v| \rbrace$ \EEE  in  \eqref{eq: assertionfund}(ii). (Note that each function $u,v,w$ itself might not lie in $L^\infty$.)
 }
 \end{remark}

 \begin{remark}[Non-attainment of boundary data]\label{rem: extra cond}
 {\normalfont
{\rm (i)} We emphasize that the function $w$ provided above does not necessarily satisfy $w = v \text{ on } B\setminus A$, as it will be often required in the applications in Section \ref{sec: representation} and Section \ref{sec: gamma}. Indeed, consider the following example (for simplicity, in the planar setting $d=2$ \BBB for scalar-valued functions. The extension to the vector case is straightforward): \EEE 

Let $\rho>0$ and define the set $A' = B_{1-2\rho}(0)$,   $A = B_{1-\rho}(0)$, and $B = B_1(0) \setminus B_{1-3\rho}(0)$. For $\eps>0$, we consider the piecewise constant functions $u \in PR(A)$ and $v_\eps \in PR(B)$  defined by
\begin{align}\label{eq: eps-val}
u = 0 \text{ on } A, \ \ \ \ v_\eps = 0 \text{ on } B \cap \lbrace x_2 > 0 \rbrace, \ \ \ v_\eps = \eps \text{ on } B \cap \lbrace x_2 < 0 \rbrace.
\end{align}
For each $w \in PR(B_1(0))$ with $w = v_\eps \text{ on } B\setminus A$, one observes that each line parallel to $e_2$ intersects $J_w$. \BBB To see this, we choose a piecewise constant function $\bar{w} \in SBV(B_1(0);[0,1])$ with $\bar{w} = w$ on $B \setminus A$ and $\mathcal{H}^1(J_w \triangle J_{\bar{w}}) = 0$, and apply the slicing property  \cite[Theorem 3.108]{Ambrosio-Fusco-Pallara:2000} of $BV$ functions. \EEE  This implies $\mathcal{H}^1(J_w) \ge 2$ and thus $\mathcal{F}(w, B_1(0)) \ge 2\alpha$ by (${\rm H_4}$).  On the other hand, we have 
 \begin{align*}
  & \mathcal{F}(u,A)  +  \mathcal{F}(v_\eps, B) + \big(\mathcal{H}^1(\partial A' \cup \partial A \cup \partial B) + \mathcal{F}(u,A)  + \mathcal{F}(v_\eps, B) \big) \big(\eta +  M\sigma(\Lambda(u,v_\eps)) \big)\\
  & \le 6\rho\beta +  (6\pi + 6\rho\beta)\big(\eta +  M\sigma(\Lambda(u,v_\eps)) \big).
  \end{align*}
  Observe that $\int_{(A \setminus A') \cap B} \psi(|u-v_\eps|) \le    \pi\rho(1-\frac{3}{2}\rho)\psi(\eps) \to 0$ as $\eps\to 0$ for fixed $\rho$ and thus $\Lambda(u,v_\eps) \to 0$ by \eqref{eq: Lambda0}. In view of  $\mathcal{F}(w, B_1(0)) \ge 2\alpha$,   this contradicts \eqref{eq: assertionfund}{\rm (i)} when we choose $\eta$ small enough, and let  first   $\eps \to 0$ and then $\rho \to 0$. 

{\rm (ii)} The example in {\rm (i)} shows that the issue of non-attainment of boundary data  occurs already on the level of piecewise constant functions. The only reason why this problem did not appear in the fundamental estimate for piecewise constant functions by {\sc Ambrosio and Braides}, see \cite[Lemma 4.4]{AmbrosioBraides},  is due to the fact that the functions considered there attain only a \emph{finite} number of different values. In fact, the delicate point here is the case where the functions $u$ and $v_\eps$ attain very similar values, see \eqref{eq: eps-val}. 
 }
 \end{remark}

 For the formulation of a version of the  fundamental estimate with boundary data, we need to introduce the following technical definition:  for sets $A', U \in \mathcal{A}_0(\Omega)$ with \BBB $A' \subset  U$, \EEE a piecewise rigid function  $v = \sum_{j \in \N} q_j \chi_{P_j} \in PR(U \setminus \overline{A'})$ in its pairwise distinct representation  (see Section \ref{sec: rig/piec-rig}), and a constant $\delta>0$ we define
\begin{align}\label{eq: Psi}
\Phi(A',U;v,\delta) := \min\nolimits_{j_1,j_2 \in J, \, j_1 \neq j_2} \Vert q_{j_1} - q_{j_2} \Vert_{L^\infty(U)},
 \end{align}
where $J \subset \N$ denotes the index set of \emph{large components} defined by
\begin{align}\label{eq: touching cond}
J:= \big\{ j \in \N:   \ \BBB \mathcal{L}^d\big(P_j \cap (U \setminus \overline{A'})\big) \EEE  \ge \delta \big\}.
\end{align}
As $J$ contains a finite number of indices, it is clear that $\Phi(A',U;v,\delta)>0$. If $\# J \le 1$, then $\Phi(A',U;v,\delta) = +\infty$. \BBB We remark that the difference of the affine mappings in \eqref{eq: Psi} is compared on $U$ and not on $U \setminus \overline{A'}$ (where $v$ is defined) as in the proof we need to   modify functions in the whole domain $U$ and not only inside $U \setminus \overline{A'}$. On the contrary, we emphasize that in \eqref{eq: touching cond} the volume of the components inside $U \setminus \overline{A'}$ is measured.    \EEE

 \begin{lemma}[Fundamental estimate, boundary data]\label{lemma: fundamental estimate2-new}
Let $\eta >0$ and  $A', A, B \in \mathcal{A}_0(\Omega)$ with $A' \subset \subset  A$. Let $\psi: \R_+ \to \R_+$ be continuous and strictly increasing with $\psi(0) = 0$. Let $\Lambda$ be the function of Lemma \ref{lemma: fundamental estimate2}. Then  there exist constants  $\delta>0$ and $M_1 \ge 1$ such that for   every functional $\mathcal{F}$ satisfying {\rm (${\rm H_1}$)}, \BBB {\rm (${\rm H_3}$)}, {\rm (${\rm H_4}$)}, and {\rm (${\rm H_5}^\prime$) }\EEE  and for all $u \in PR(A)$, $v \in PR(B)$ satisfying the condition 
\begin{align}\label{eq: extra condition}
M_1\Lambda(u,v) \le \Phi(A',A'\cup B; v|_{B \setminus \overline{A'}},\delta),
\end{align}  
 there exists a function $w \in PR(A' \cup B)$ such that
\begin{align}\label{eq: assertionfund-newnew}
{\rm (i)}& \ \ \mathcal{F}(w, A' \cup B) \le  \mathcal{F}(u,A)  + \mathcal{F}(v, B) \notag \\
 & \  \ \ \ \ \ \ \ \ \ \ \ \ \ \ \ \ \  \ \ \ \  + \big(\mathcal{H}^{d-1}(\partial A' \cup \partial A \cup \partial B) + \mathcal{F}(u,A)  + \mathcal{F}(v, B) \big) \big(2\eta + M_2\sigma (\Theta(u,v))\big), \notag \\ 
{\rm (ii)}& \ \   \BBB \Vert \min\lbrace |w - u|, |w-v| \rbrace \Vert_{L^\infty(A' \cup B)} \EEE \le \Theta(u,v),\notag\\
{\rm (iii)} & \ \ w = v \text{ on } B\setminus A,
\end{align}
 where $\sigma$ is given in \BBB{\rm (${\rm H_5}^\prime$)}\EEE, and $M_2>0$ as well as $\Theta: PR(A) \times PR(B) \to\R_+\cup\{+\infty\}$  are independent of $u$, $v$, and $\mathcal{F}$. Here, $\Theta$   is a  lower  semicontinuous  function satisfying 
\begin{align}\label{eq: Theta convergence}
\Theta(z_1,z_2) \to 0 \ \text{ whenever } \ \int_{(A\setminus A')\cap B} \psi(|z_1 - z_2|) \to 0\,.
\end{align}
If $\psi(t) = t^p$, $1 \le p < \infty$, then  $\Theta(u,v) = M_2\Vert u - v \Vert_{L^p((A \setminus A') \cap B)}$.
\end{lemma}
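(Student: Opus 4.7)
The plan is to start from the unconstrained joining $w_0 \in PR(A'\cup B)$ produced by Lemma~\ref{lemma: fundamental estimate2} and to repair it in the transition layer $A\setminus A'$, so as to enforce $w = v$ on $B\setminus A$. The separation assumption~\eqref{eq: extra condition}, which plays no role in Lemma~\ref{lemma: fundamental estimate2}, is precisely what makes the repair quantitatively controllable: for $M_1$ large it will imply that the pointwise $L^\infty$-closeness of $w_0$ to each large-component rigid motion of $v$ is unambiguous, so the modification needed to match $v$ exactly is uniquely determined cell by cell.

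Concretely, I would fix an intermediate open set $A'\subset\subset A''\subset\subset A$ and apply Lemma~\ref{lemma: fundamental estimate2} with $A''$ in place of $A'$ and $\eta$ replaced by $\eta' := \eta/(1+\mathcal{H}^{d-1}(\partial A\cup\partial A'\cup\partial B))$, obtaining $w_0 \in PR(A''\cup B)$ with $\|\min\{|w_0-u|,|w_0-v|\}\|_{L^\infty(A''\cup B)} \le \Lambda(u,v)$ and the energy bound in~\eqref{eq: assertionfund}(i). Writing $v|_{B\setminus\overline{A'}} = \sum_j q_j\chi_{P_j}$ in its pairwise distinct representation and letting $J$ be the large-component set of~\eqref{eq: touching cond}, I would then observe that for $M_1\ge 2$ the triangle inequality together with~\eqref{eq: extra condition} forbids two distinct indices $j_1\ne j_2 \in J$ from both satisfying $|w_0-q_{j_i}|\le\Lambda(u,v)$ at any point of $A'\cup B$; therefore the ``$q_j$-cell'' $\{x:|w_0(x)-q_j(x)|\le\Lambda(u,v)\}$ is unambiguously defined for each $j\in J$, and I would replace the rigid motion of $w_0$ on that cell by $q_j$ itself — an $L^\infty$-modification of size at most $2\Lambda(u,v)$ which preserves the piecewise rigid structure since $q_j \in L$. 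On the complementary region, made of small components of $v|_{B\setminus\overline{A'}}$, I would extend $v$ into $A\setminus A'$ by a direct geometric construction in the spirit of Lemma~\ref{lemma: fundamental estimate}, adding interface only on a thin collar of $\partial A \cap B$. Restricting to $A'\cup B$ yields $w\in PR(A'\cup B)$ with $w = v$ on $B\setminus A$.

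The main obstacle is the interface/energy accounting. The rigid-motion replacement on large components changes $\mathcal{F}$ on interfaces shared with $w_0$ by at most $\int\sigma(2\Lambda(u,v))\,d\mathcal{H}^{d-1}$ via~(${\rm H_5}^\prime$), and the new interface it introduces lies on portions of $\partial A$ with $\mathcal{H}^{d-1}$-measure bounded by $\mathcal{H}^{d-1}(\partial A)$; by~(${\rm H_4}$) and the reduced choice of $\eta'$, both are absorbed into the $2\eta(\mathcal{H}^{d-1}(\partial A'\cup\partial A\cup\partial B)+\mathcal{F}(u,A)+\mathcal{F}(v,B))$ and $M_2\sigma(\Theta(u,v))$ terms. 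The genuinely delicate point is the small-component contribution: its total perimeter inside $B\setminus\overline{A'}$ is not uniformly bounded in $v$, so $\delta$ must be tuned so that either the condition~\eqref{eq: extra condition} forces $v$ to lie in a regime where small components carry perimeter at most $\eta\mathcal{F}(v,B)/\alpha$ (via a tail-of-series argument on $\sum_j\mathcal{H}^{d-1}(\partial^* P_j)\le\mathcal{F}(v,B)/\alpha$), or else the geometric extension on small components can be done on a thin collar whose added interface is controlled by $\eta\mathcal{H}^{d-1}(\partial A')$. Setting $\Theta(u,v):=3M_2\Lambda(u,v)$ then delivers \eqref{eq: assertionfund-newnew}(ii) and property~\eqref{eq: Theta convergence} by the corresponding property of $\Lambda$ in Lemma~\ref{lemma: fundamental estimate2}, and lower semicontinuity of $\Theta$ follows from that of $\Lambda$; the $L^p$-special case is inherited verbatim.
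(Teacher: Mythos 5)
Your high-level strategy --- apply Lemma~\ref{lemma: fundamental estimate2} first and then repair the resulting function near the boundary to enforce $w=v$ on $B\setminus A$ --- matches the paper's, but the step where you assert disjointness of the ``$q_j$-cells'' does not hold as stated, and it cannot be fixed without exactly the ingredient you omit. You claim that if $|w_0(x)-q_{j_1}(x)|\le\Lambda(u,v)$ and $|w_0(x)-q_{j_2}(x)|\le\Lambda(u,v)$ at some point $x$, then $|q_{j_1}(x)-q_{j_2}(x)|\le 2\Lambda(u,v)$ contradicts \eqref{eq: extra condition} once $M_1\ge 2$. It does not: $\Phi$ in \eqref{eq: Psi} measures $\Vert q_{j_1}-q_{j_2}\Vert_{L^\infty(U)}$ over the \emph{entire} set $U=A'\cup B$, whereas your triangle inequality bounds the difference of the two affine maps only at the single point $x$. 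Two affine maps can agree to within $2\Lambda$ at one point and still be far apart in $L^\infty(U)$, so the cells can perfectly well overlap. The missing ingredient is the rigid-motion estimate \eqref{eq: estimate2} of Lemma~\ref{lemma: rigid motion}: if two rigid motions are $L^\infty$-close on a set of measure at least $\delta$ contained in $B_R(0)\supset U$, then with the constant $c_0$ from \eqref{eq: estimate2} they are $c_0$-times as close in $L^\infty(U)$. This upgrade from local to global closeness is what makes the uniqueness argument work, is the source of the constants $\delta$ and $M_1=2c_0$ in the statement, and it forces the argument to run on the intersection partition $(P^z_i\cap P^v_j)$ of the components of $z$ and $v$ --- pieces on which one has the measure lower bound $\ge\delta$ --- rather than on the sublevel sets $\{|w_0-q_j|\le\Lambda\}$, whose boundaries would moreover introduce new interface of uncontrolled $\mathcal{H}^{d-1}$-measure.

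Two further inaccuracies. The total perimeter of the small components of $v$ in $B\setminus\overline{A'}$ \emph{is} uniformly bounded, by a multiple of $\alpha^{-1}\mathcal{F}(v,B)+\mathcal{H}^{d-1}(\partial B)$ via {\rm (${\rm H_4}$)}; the genuine difficulty is making the $\mathcal{H}^{d-1}$-measure of a single slice through them small, which the paper achieves by shrinking $\delta$ (so that the small components have small volume by isoperimetry) and then selecting the cut radius $T$ via the coarea formula for $\dist(\cdot,A')$, not by a tail-of-series argument on the perimeters. Also, replacing $w_0$ by $q_j$ on a set where $|w_0-q_j|\le\Lambda(u,v)$ changes $w_0$ by at most $\Lambda(u,v)$, not $2\Lambda(u,v)$; harmless here, but it suggests the energy accounting in the last paragraph was not carried out.
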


The object $\Phi$ measures how `similar' a function is on different (large) components. Roughly speaking, the technical condition \eqref{eq: extra condition} ensures that, for the functions $u$ and $v$, the phenomenon described in Remark \ref{rem: extra cond} cannot occur. \BBB In this context, we remark that, for given $\delta>0$, the constant $M_1$ will be chosen sufficiently large in the proof, depending on the constant $c_0$ in Lemma~\ref{lemma: rigid motion}. \EEE

In the applications, we will need to use the fundamental estimate on balls of different sizes. To this end, we formulate a scaled version of Lemma \ref{lemma: fundamental estimate2-new}.

 \begin{corollary}[Scaled version of the fundamental estimate]\label{rem: scaling}
 
Let $\eta>0$ and $\rho>0$. Suppose that   $A', A, B \in \mathcal{A}_0(\Omega)$ with $A' \subset \subset  A$ are given such that $\rho A',  \rho A, \rho B \subset \Omega$. Let $u_\rho \in PR(\rho A)$ and $v_\rho \in PR(\rho B)$. Under the assumption that
\begin{align}\label{eq: extra condition-new}
\rho^{-d}MM_1\Vert u_\rho - v_\rho \Vert_{L^1((\rho A\setminus \rho A')\cap \rho B)}\le  \Phi(\rho A', \, \rho A'\cup \rho B; \, v_\rho|_{\rho B \setminus \rho \overline{A'}}, \, \rho^d \delta)
\end{align}  
one finds a function  $w_\rho \in PR(\rho A' \cup \rho B)$ satisfying
\begin{align}\label{eq: extra condition-new-new}
{\rm (i)}& \ \ \mathcal{F}(w_\rho, \rho A' \cup \rho B) \le  \mathcal{F}(u_\rho,\rho A)  +  \mathcal{F}(v_\rho, \rho B) \notag \\
 & \  + \big(\rho^{d-1}C_{A',A,B} + \mathcal{F}(u_\rho,\rho A)  + \mathcal{F}(v_\rho, \rho B) \big) \Big(2\eta + M_2\sigma\Big(   M_2\rho^{-d}\Vert u_\rho - v_\rho\Vert_{L^1(\rho (A\setminus A')\cap \rho B)}\Big) \Big), \notag \\ 
{\rm (ii)}& \ \   \BBB \Vert \min\lbrace |w_\rho - u_\rho|, |w_\rho-v_\rho| \rbrace \Vert_{L^\infty(\rho A' \cup \rho B)} \EEE \le M_2 \rho^{-d} \Vert u_\rho - v_\rho \Vert_{L^1( \rho  (  A\setminus   A')\cap \rho B)},\notag\\
{\rm (iii)}& \ \ w_\rho = v_\rho \text{ on } \rho B\setminus \rho A, 
\end{align}
where $M$ is the constant of Lemma \ref{lemma: fundamental estimate2}, $M_1$, $M_2$, $\delta$ are the constants of Lemma \ref{lemma: fundamental estimate2-new}  (applied for $\psi(t)=t$), and $C_{A',A,B}:= \mathcal{H}^{d-1}(\partial A' \cup \partial A \cup \partial B)$ for brevity.

 \end{corollary}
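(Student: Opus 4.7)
The plan is a direct scaling reduction to Lemma \ref{lemma: fundamental estimate2-new}. Given $u_\rho \in PR(\rho A)$ and $v_\rho \in PR(\rho B)$, I define their rescaled counterparts $u(x) := \rho^{-1} u_\rho(\rho x)$ on $A$ and $v(x) := \rho^{-1} v_\rho(\rho x)$ on $B$. Since $\rho^{-1}(Q(\rho x) + b) = Q x + b/\rho$ is again a rigid motion with the same matrix $Q \in L$, both functions lie in $PR(A)$ and $PR(B)$ respectively, with Caccioppoli partitions obtained by rescaling those of $u_\rho$, $v_\rho$ by the factor $1/\rho$. I also introduce the rescaled functional
\[
\tilde{\mathcal{F}}(z,S) := \rho^{-(d-1)} \mathcal{F}(z_\rho, \rho S), \qquad z_\rho(y) := \rho z(y/\rho),
\]
and verify that it satisfies (${\rm H_1}$), (${\rm H_3}$), (${\rm H_4}$), (${\rm H_5}^\prime$). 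The first two are immediate from the corresponding properties of $\mathcal{F}$; (${\rm H_4}$) is preserved with the same $\alpha,\beta$ because $\mathcal{H}^{d-1}(J_{z_\rho} \cap \rho S) = \rho^{d-1} \mathcal{H}^{d-1}(J_z \cap S)$; and (${\rm H_5}^\prime$) holds with the modulus $\tilde\sigma(t) := \sigma(\rho t)$, since $[u_\rho](\rho x) = \rho [u](x)$.

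Next I translate the smallness condition \eqref{eq: extra condition-new} into the hypothesis \eqref{eq: extra condition} of Lemma \ref{lemma: fundamental estimate2-new} applied to $\tilde{\mathcal{F}}$, $u$, $v$ with $\psi(t) = t$. The change of variables $y = \rho x$ gives
\[
\|u-v\|_{L^1((A\setminus A')\cap B)} = \rho^{-(d+1)} \|u_\rho - v_\rho\|_{L^1((\rho A\setminus \rho A')\cap \rho B)},
\]
so $\Lambda(u,v) = M \rho^{-(d+1)} \|u_\rho - v_\rho\|_{L^1}$. On the other hand, a component of the partition of $v$ has $\mathcal{L}^d$-measure at least $\delta$ inside $(A'\cup B)\setminus \overline{A'}$ iff the corresponding component of the partition of $v_\rho$ has measure at least $\rho^d \delta$ inside $(\rho A'\cup \rho B)\setminus \rho\overline{A'}$, and the differences of the associated rigid motions scale as $\|q_{j_1}^{(v_\rho)} - q_{j_2}^{(v_\rho)}\|_{L^\infty(\rho(A'\cup B))} = \rho \,\|q_{j_1}^{(v)} - q_{j_2}^{(v)}\|_{L^\infty(A'\cup B)}$. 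Hence
\[
\Phi\bigl(A', A'\cup B;\, v|_{B\setminus \overline{A'}},\, \delta\bigr) = \rho^{-1}\, \Phi\bigl(\rho A',\, \rho A'\cup \rho B;\, v_\rho|_{\rho B\setminus \rho\overline{A'}},\, \rho^d \delta\bigr),
\]
and multiplying \eqref{eq: extra condition-new} through by $\rho$ is exactly $M_1 \Lambda(u,v) \le \Phi(A', A'\cup B;\, v|_{B\setminus \overline{A'}}, \delta)$.

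Applying Lemma \ref{lemma: fundamental estimate2-new} to $\tilde{\mathcal{F}}$, $u$, $v$ therefore yields $w \in PR(A'\cup B)$ satisfying the unscaled analogues of (i)--(iii), with $\Theta(u,v) = M_2 \|u-v\|_{L^1}$. I then define $w_\rho(y) := \rho w(y/\rho) \in PR(\rho A'\cup \rho B)$. Condition (iii) is preserved by construction since $w = v$ on $B\setminus A$. Multiplying the energy inequality by $\rho^{d-1}$ cancels the normalization in $\tilde{\mathcal{F}}$ and produces (i), with $\rho^{d-1}\mathcal{H}^{d-1}(\partial A' \cup \partial A \cup \partial B)$ appearing as the factor $\rho^{d-1} C_{A',A,B}$; the argument of the modulus becomes $\sigma(\rho\,\Theta(u,v)) = \sigma(M_2 \rho^{-d} \|u_\rho - v_\rho\|_{L^1})$ thanks to the identity $\rho\cdot\rho^{-(d+1)} = \rho^{-d}$. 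Finally, (ii) follows from $\|w_\rho - u_\rho\|_{L^\infty(\rho A'\cup \rho B)} = \rho \|w - u\|_{L^\infty(A'\cup B)}$ (and analogously for $v$) combined with $\rho \Theta(u,v) = M_2 \rho^{-d}\|u_\rho - v_\rho\|_{L^1}$. There is no genuine obstacle; the only care required is keeping track of the correct powers of $\rho$ for volumes, perimeters, $L^\infty$ norms of rigid motions, and the modulus of continuity.
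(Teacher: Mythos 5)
Your proof is correct, and it follows the same overall scaling-reduction strategy as the paper: rescale the functional and the two functions, verify the hypotheses of Lemma \ref{lemma: fundamental estimate2-new} for the rescaled objects, apply that lemma, and scale back. However, there is one genuine and worthwhile difference in the details. The paper defines $u(x) := u_\rho(\rho x)$ and $z_\rho(x) := z(x/\rho)$ with no amplitude normalization, whereas you use $u(x) := \rho^{-1}u_\rho(\rho x)$ and $z_\rho(y) := \rho\, z(y/\rho)$. The paper's version only leaves the derivative matrices in $L$ when $L$ is a cone (as is the case for $\R^{d\times d}_{\rm skew}$, where $\rho Q \in L$), but for $L = SO(d)$ the map $x \mapsto u_\rho(\rho x)$ has gradient $\rho Q \notin SO(d)$, so the rescaled function is not in $PR_L(A)$ and the assertion $z_\rho \in PR(\Omega)$ in the paper's definition of $\mathcal{F}^\rho$ fails. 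Your extra factor $\rho^{-1}$ restores $Q \in L$ and thus makes the construction valid for arbitrary $L$ satisfying \eqref{eq: matrix-assumption} and \eqref{eq: repr0.0}. The bookkeeping you then perform is consistent: the modulus of continuity becomes $\tilde\sigma(t) = \sigma(\rho t)$ (with $M_2$ unaffected, since that constant does not depend on $\sigma$), $\|u-v\|_{L^1}$ acquires the exponent $\rho^{-(d+1)}$ rather than $\rho^{-d}$, and the identity $\rho\cdot\rho^{-(d+1)} = \rho^{-d}$ ensures the final estimates agree with \eqref{eq: extra condition-new-new}. One small slip: to pass from \eqref{eq: extra condition-new} to \eqref{eq: extra condition} you should divide through by $\rho$ (i.e.\ multiply by $\rho^{-1}$), not multiply by $\rho$; with that correction, $\rho^{-(d+1)}MM_1\|u_\rho - v_\rho\|_{L^1} \le \rho^{-1}\Phi(\rho A',\ldots) = \Phi(A',\ldots)$ is exactly $M_1\Lambda(u,v) \le \Phi(A', A'\cup B; v|_{B\setminus\overline{A'}}, \delta)$, as required.
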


  The proof of Lemma \ref{lemma: fundamental estimate2} will be addressed in Section \ref{sec: fund-proof1}. The proofs of Lemma \ref{lemma: fundamental estimate2-new} and Corollary \ref{rem: scaling}  will be given in Section \ref{sec: fund-proof2}.   The reader may also skip the following subsections and go directly to the proofs of our main results in Section \ref{sec: representation} and Section \ref{sec: gamma}.

  \subsection{Proof of Lemma \ref{lemma: fundamental estimate2}}\label{sec: fund-proof1}
  This section is devoted to the proof of Lemma \ref{lemma: fundamental estimate2}. As a preparation, we formulate and prove a lemma about the choice of subsets.

 \begin{lemma}[Choice of subsets]\label{lemma: fundamental estimate}
Let $\lambda >0$. Let  $A', A \in \mathcal{A}_0(\Omega)$ with $A' \subset \subset  A$. For $0<t < d_{A',A} := \dist(\partial A', \partial A)$ we define
\begin{align}\label{eq:Et}
E_t := \lbrace x\in \R^d: \ \dist(x,A') < t \rbrace.
\end{align}
 Then for each set of finite perimeter $D\subset \Omega$ there exist $\frac{1}{4} d_{A',A} < T_1 < T_2 < \frac{3}{4} d_{A',A}$ and a function $\varphi \in C^\infty(A)$ with $0 \le \varphi \le 1$, $\varphi = 1$ in a neighborhood of $\overline{E_{T_1}}$, and ${\rm supp}(\varphi) \subset \subset E_{T_2}$ such that the set of finite perimeter $F:= D \cap (E_{T_2} \setminus \overline{E_{T_1}})$  and the function $\varphi$ satisfy
\begin{align}\label{eq: Mlambda}
\mathcal{H}^{d-1}(\partial^* F ) \le \lambda  \mathcal{H}^{d-1}(\partial^* D) + M_\lambda\mathcal{L}^d(D) \ \ \ \ \text{and}  \ \ \ \ \Vert \nabla \varphi \Vert_{\infty} \le M_\lambda, 
\end{align}
where $M_\lambda$ only depends on $\lambda$, $A'$, and $A$. 
 \end{lemma}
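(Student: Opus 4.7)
The plan is to apply the coarea formula to the distance function from $A'$, combined with a double pigeonhole argument. Throughout, let $f(x) := \dist(x,A')$, which is $1$-Lipschitz, so $E_t = \{f<t\}$ and $\partial E_t = \{f=t\}$ by continuity. Two preliminary observations are useful. First, for sets $T_1<T_2$ and $F := D \cap (E_{T_2} \setminus \overline{E_{T_1}})$, up to $\mathcal{H}^{d-1}$-null sets
\begin{equation*}
\partial^* F \subset \bigl(\partial^* D \cap f^{-1}([T_1,T_2])\bigr) \cup \bigl(\partial E_{T_1} \cap D^{(1)}\bigr) \cup \bigl(\partial E_{T_2} \cap D^{(1)}\bigr),
\end{equation*}
so the three terms must be controlled separately. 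Second, $E_{T_2} \subset A$ whenever $T_2 < d_{A',A}$: any segment from $y \notin A$ to a point of $A'$ must cross $\partial A$, so has length at least $d_{A',A}$. This ensures $\mathrm{supp}(\varphi) \subset\subset A$.

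For the first of the three contributions, I fix $N := \lceil 1/\lambda \rceil$ and partition $I := (\tfrac14 d_{A',A}, \tfrac34 d_{A',A})$ into $N$ equal subintervals $[a_{i-1},a_i]$ of width $d_{A',A}/(2N)$. Since the preimages $f^{-1}((a_{i-1},a_i))$ are pairwise disjoint, by additivity at least one subinterval $[s_1,s_2]$ satisfies
\begin{equation*}
\mathcal{H}^{d-1}\bigl(\partial^* D \cap f^{-1}([s_1,s_2])\bigr) \le \tfrac{1}{N}\mathcal{H}^{d-1}(\partial^* D) \le \lambda\, \mathcal{H}^{d-1}(\partial^* D).
\end{equation*}
To control the other two contributions, I apply the Lipschitz coarea formula to $f|_D$, giving $\int_{s_1}^{s_2}\mathcal{H}^{d-1}(\{f=t\}\cap D)\, dt \le \mathcal{L}^d(D)$. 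By the mean value theorem I pick $T_1 \in [s_1, s_1 + (s_2-s_1)/3]$ and $T_2 \in [s_2 - (s_2-s_1)/3, s_2]$ with $\mathcal{H}^{d-1}(\{f=T_i\}\cap D) \le \tfrac{6N}{d_{A',A}}\mathcal{L}^d(D)$, and the separation $T_2 - T_1 \ge (s_2-s_1)/3 = d_{A',A}/(6N)$ is automatic.

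For the cut-off, set $s := (T_1+T_2)/2$, choose $\varepsilon \in (0,(T_2-T_1)/4)$, and define $\varphi := \phi_\varepsilon * \chi_{E_s}$, where $\phi_\varepsilon$ is a standard symmetric mollifier. Then $\varphi \in C^\infty$, $\varphi = 1$ on the neighborhood $E_{s-\varepsilon} \supset \overline{E_{T_1}}$ of $\overline{E_{T_1}}$, $\mathrm{supp}(\varphi) \subset E_{s+\varepsilon} \subset\subset E_{T_2} \subset A$, and $\|\nabla\varphi\|_\infty \le C_d/\varepsilon \le 24 N C_d/d_{A',A}$. Combining these estimates with the decomposition of $\partial^*F$ from Paragraph 1 yields the claim with $M_\lambda := \max\bigl(12N/d_{A',A},\ 24 N C_d/d_{A',A}\bigr)$, which depends only on $\lambda$, $A'$, and $A$.

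The main obstacle is the tension between the two bounds: decreasing $\lambda$ requires a finer partition (larger $N$), which in turn forces a thinner separating annulus and hence a larger gradient for $\varphi$ and worse control on the $\partial E_{T_i}\cap D$ contributions. The double pigeonhole — first on $\mathcal{H}^{d-1}(\partial^* D)$ at the scale of $N$ annuli to extract the factor $\lambda$, then on $\mathcal{L}^d(D)$ within the chosen thin annulus via coarea to locate $T_1$, $T_2$ — is precisely what allows simultaneous control of all three pieces, with the entire price paid inside the constant $M_\lambda$.
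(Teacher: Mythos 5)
Your proposal is correct and follows essentially the same approach as the paper's proof: pigeonhole over $\sim \lambda^{-1}$ annuli to control the $\partial^* D$ contribution, then coarea with the distance function $\dist(\cdot,A')$ to select $T_1,T_2$ controlling the slice terms $\mathcal{H}^{d-1}(D\cap\{f=T_i\})$. The only difference is cosmetic: the paper pre-fixes a finite family of cut-offs $\varphi_i$ subordinate to shrunk annuli $B_i$ and defines $M_\lambda$ to include $\max_i\|\nabla\varphi_i\|_\infty$, whereas you construct $\varphi$ by mollifying $\chi_{E_s}$, which makes the gradient bound explicit but does not change the argument.
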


 \begin{proof}
 Choose $k \in \N$ such that $k \ge \lambda^{-1}$. Let $t_i = (\frac{1}{4} + \frac{i}{2k}) \, d_{A,A'}$ for $i=0,\ldots,k$, and define  $A_i = E_{t_i} \setminus \overline{E_{t_{i-1}}}$ for $i=1,\ldots,k$. We also define the smaller sets $B_i = E_{t_i^-} \setminus \overline{E_{t_{i-1}^+}}$, where $t_{i}^\pm = t_i \pm \frac{1}{8k} d_{A,A'}$.
For $i=1,\ldots,k$, let $\varphi_i\in C^\infty(A)$  with  $0 \le \varphi_i \le 1$, $\varphi_i = 1$ in a neighborhood of $\overline{E_{t_{i-1}^+}}$, and ${\rm supp}(\varphi) \subset  \subset E_{t_i^-}$, i.e., $\lbrace 0<\varphi_i < 1 \rbrace \subset \subset B_i$. Define
$$M_\lambda = \max\big\{ 16k \, d_{A,A'}^{-1}, \,  \max\nolimits_{i=1,\ldots,k} \Vert \nabla \varphi_i \Vert_\infty \big\}.$$ 
By recalling $k \ge \lambda^{-1}$ we find $i_0 \in \lbrace 1, \ldots, k \rbrace$ such that
\begin{align}\label{eq: Et-newe-old2}
\mathcal{H}^{d-1}(\partial^* D \cap A_{i_0})   \le \frac{1}{k}\sum\nolimits_{i=1}^k  \mathcal{H}^{d-1}(\partial^* D \cap A_{i})   \le \lambda   \mathcal{H}^{d-1}(\partial^* D).
\end{align}
 We  now claim that one can  find $t_{i_0-1}<  T_1 < t_{i_0-1}^+$ and $t_{i_0}^- <T_2 <  t_{i_0}$ such that 
\begin{align}\label{eq: Et-newe-old}
\mathcal{H}^{d-1}(D \cap \partial E_{T_1})  \le \frac{8k}{d_{A',A}} \, \mathcal{L}^d(D \cap A_{i_0}), \ \   \mathcal{H}^{d-1}(D \cap \partial E_{T_2})  \le \frac{8k}{d_{A',A}}  \, \mathcal{L}^d(D \cap A_{i_0}).
\end{align}
 We only prove the first inequality above since the other one is similar. To this aim, we observe that
\[
\{x\in \mathbb{R}^d: t_{i_0-1} <  \mathrm{dist}(x, A') < t_{i_0-1}^+\} \subset A_{i_0}\,.
\]Hence, applying the coarea formula to the Lipschitz function $g(x):= \mathrm{dist}(x, A')$, whose gradient has norm $1$ a.e., (see for instance \cite[Theorem 3.14]{EvansGariepy92})   we get
\[
 \mathcal{L}^d(D \cap A_{i_0})\ge \int_{t_{i_0-1}}^ {t_{i_0-1}^+}\mathcal{H}^{d-1}(D \cap \{g=t\})\,\mathrm{d}t\,. 
\]
Thus, since $t_{i_0-1}^+  - t_{i_0-1}=\frac{d_{A',A}}{8k}$, we can choose $t_{i_0-1}<  T_1 < t_{i_0-1}^+$ such that \eqref{eq: Et-newe-old} holds. 

We define $F:= D \cap (E_{T_2} \setminus \overline{E_{T_1}})$. In view of $\lbrace 0<\varphi_{i_0} < 1 \rbrace \subset B_{i_0}$, the definition of $T_1$ and $T_2$ implies that $\varphi_{i_0}$ satisfies $\varphi_{i_0} = 1$ in a neighborhood of $\overline{E_{T_1}}$, and ${\rm supp}(\varphi_{i_0}) \subset\subset E_{T_2}$. Moreover, by \eqref{eq: Et-newe-old2}--\eqref{eq: Et-newe-old} we get
\begin{align*}
\mathcal{H}^{d-1}(\partial^* F) & \le \mathcal{H}^{d-1}(\partial^* D \cap A_{i_0}) + \mathcal{H}^{d-1}(D \cap \partial E_{T_1}) + \mathcal{H}^{d-1}(D \cap \partial E_{T_2})\\
& \le \lambda   \mathcal{H}^{d-1}(\partial^* D)+ M_\lambda \mathcal{L}^d(D),
\end{align*}
where we used $M_\lambda \ge 16k d_{A,A'}^{-1}$. This yields the first part of \eqref{eq: Mlambda}. The second part of \eqref{eq: Mlambda} follows directly from the definition of $M_\lambda$.   
 \end{proof}

For the proof of Lemma \ref{lemma: fundamental estimate2}, we will need another two ingredients. First, we state an elementary property about the covering of points by balls.

\begin{lemma}[Covering with balls]\label{lemma: balls} 
Let $N \in \mathbb{N}$ and $r_0>0$. Then  each set of points $\lbrace x_1, \ldots, x_N \rbrace \subset \mathbb{R}^m$ can be covered by finitely many pairwise disjoint balls $\lbrace B_{r_k}(y_k) \rbrace_{k=1}^M$, $M \le N$, $(y_k)_{k=1}^M \subset \mathbb{R}^m$, satisfying
\begin{align*}
r_k \in [8^{-N} r_0, r_0], \ \ \ \ \ \  \ \ \  \dist\big( B_{r_i}(y_i),  B_{r_j}(y_j) \big)    > 2   \max_{k=1,\ldots,M} r_k \ \ \ \ \ \  \text{for} \  1 \le i < j \le M.
\end{align*}
\end{lemma}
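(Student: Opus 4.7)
The plan is to construct the desired family by an iterative merging algorithm. I would start from the $N$ equiradial balls $\{B_{\rho_0}(x_i)\}_{i=1}^N$, where the initial radius is chosen as
$$
\rho_0 := 3^{-(N-1)}\, r_0 \qquad (\text{with the convention } 3^0 = 1 \text{ for } N = 1).
$$
Then $\rho_0 \le r_0$, and since $3 \le 8$ one also has $\rho_0 \ge 8^{-N} r_0$. Throughout the procedure two invariants would be maintained: (a) every point $x_i$ lies in some current ball, and (b) every current radius is at least $\rho_0$.

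The iteration runs as follows. Given a current family $\{B_{r_i}(y_i)\}_{i=1}^{M_t}$ with $R_t := \max_i r_i$, test whether $\dist(B_{r_i}(y_i), B_{r_j}(y_j)) > 2 R_t$ for every $i \ne j$; if so, terminate. Otherwise choose a violating pair, for which $|y_i - y_j| \le r_i + r_j + 2 R_t \le 4 R_t$, and replace the two balls by the smallest enclosing ball $B_{r^*}(y^*)$ of their union, whose radius satisfies
$$
r^* = \tfrac12 (|y_i - y_j| + r_i + r_j) \le \tfrac12 (4R_t + R_t + R_t) = 3 R_t.
$$
Since the remaining balls keep their radii, $R_{t+1} \le 3 R_t$. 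Invariant (a) is preserved because the new ball contains both merged balls, and invariant (b) because $r^* \ge \tfrac12 (r_i + r_j) \ge \rho_0$.

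Each merge reduces the ball count by one, so the procedure halts after at most $N - 1$ iterations and outputs a family of at most $N$ balls. At termination the negation of the exit test gives exactly the required separation $\dist(B_{r_i}(y_i), B_{r_j}(y_j)) > 2 \max_k r_k$, and by (a) the points $x_i$ are covered. Combining the at most $N - 1$ triplings of the maximum radius with the starting value $\rho_0$ bounds the final maximum radius by $3^{N-1}\rho_0 = r_0$, while (b) gives minimum radius at least $\rho_0 \ge 8^{-N} r_0$, so every output radius lies in $[8^{-N} r_0, r_0]$. No genuine obstacle arises: the only subtle point is calibrating $\rho_0$ so that the cumulative factor from at most $N-1$ merges does not push the maximum radius past $r_0$, and the choice $3^{-(N-1)} r_0$ provides precisely the needed slack (any base in $[3,8]$ would equally do).
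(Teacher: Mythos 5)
Your proof is correct, and it is genuinely different from the paper's: the paper simply invokes Lemma~3.7 of \cite{Manuel} (with $\gamma=4$ and $R_0 = 8^{-N}r_0$), which already delivers a disjoint covering with center-to-center separation $|y_i - y_j| > 4\max_k r_k$, and then passes to the boundary-to-boundary gap $\dist(B_{r_i}(y_i), B_{r_j}(y_j)) > 2\max_k r_k$ by the triangle inequality. Your argument instead reproves the covering from scratch via an iterative merging scheme, which has the virtue of being self-contained and transparent. The accounting is sound: each merge at most triples the largest radius and occurs at most $N-1$ times, so starting from $\rho_0 = 3^{-(N-1)}r_0$ keeps the final radii in $[\rho_0, r_0] \subset [8^{-N}r_0, r_0]$; coverage and the lower radius bound are preserved invariants; and the negated exit test yields exactly the required separation. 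One small imprecision you may want to correct: the radius of the smallest enclosing ball of $B_{r_i}(y_i)$ and $B_{r_j}(y_j)$ is $r^* = \max\bigl\{ r_i,\ r_j,\ \tfrac12(|y_i - y_j| + r_i + r_j)\bigr\}$, not unconditionally $\tfrac12(|y_i - y_j| + r_i + r_j)$ (the latter fails when one ball contains the other). This does not affect your bounds, since in that degenerate case $r^* = \max(r_i, r_j) \le R_t \le 3R_t$ and $r^* \ge \rho_0$ still hold, but the statement should be phrased with the maximum to be literally correct.
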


\begin{proof}
From \cite[Lemma 3.7]{Manuel} applied for $\gamma=4$ and    $R_0 = r_0 8^{-N}$ we get pairwise disjoint balls $\lbrace B_{r_k}(y_k) \rbrace_{k=1}^M$ with $r_k \in [8^{-N} r_0, r_0]$ and $|y_i-y_j|    > 4   \max_{k=1,\ldots,M} r_k$ for $1 \le i < j \le M$. The statement follows with the triangle inequality. 
\end{proof}

We will also need the following result on the approximation of   $GSBV$ functions  with {\it piecewise constant} functions, which can be seen as a piecewise Poincar\'e inequality and essentially relies on the $BV$ coarea formula. For basic properties of $GSBV$ functions we refer to \cite[Section 4]{Ambrosio-Fusco-Pallara:2000}. 

\begin{theorem}[Piecewise Poincar\'e inequality]\label{thm: poincare}
Let $m\ge 1$ and $z \in (GSBV(\Omega))^m$ \BBB with $\Vert \nabla z \Vert_{L^1(\Omega)} + \mathcal{H}^{d-1}(J_z)<\infty$. \EEE Consider a Borel subset $D \subset \Omega$ with finite perimeter.    Fix  $\theta >0$.  Then there exists a partition  $(P_k)_{k=1}^\infty$ of $D$, made of sets of finite perimeter, and a piecewise constant function $z_{\rm pc}:=\sum_{k=1}^\infty b_k \chi_{P_k}$ such that
\[
\begin{split}
{\rm (i)} & \ \   \sum\nolimits_{k=1}^\infty \mathcal{H}^{d-1}\big( (\partial^* {P}_k \cap \BBB D^1) \EEE \setminus J_{z} \big) \le   \theta, \\
{\rm (ii)} &\ \ \Vert z-z_{\rm pc}  \Vert_{L^{\infty}( D )} \le c\theta^{-1}  \Vert  \nabla z\Vert_{L^1(D)},
\end{split}
\]
for a dimensional constant $c=c(m) >0$, \BBB where $D^1$ denotes the set of points with density one. \EEE If additionally  for some $i=1, \dots, m$ the component $z^i$ satisfies $\|z^i\|_{L^\infty(D)}\le M$, we also have
 $\|z_{\rm pc}^i\|_{L^\infty(D)}\le M$.
\end{theorem}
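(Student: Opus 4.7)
The plan is to construct $(P_k)$ as a joint refinement of scalar level-set partitions of the components $z^1,\dots,z^m$, in the spirit of the classical $BV$ coarea argument used for the piecewise constant approximation in $SBV$. Fix step-sizes $h_1,\dots,h_m>0$ to be chosen later, and consider shifts $s_i \in [0,h_i]$; for each multi-index $k=(k_1,\dots,k_m)\in \mathbb{Z}^m$ I set
\[
P_k := \bigcap_{i=1}^m \bigl\{x\in D \colon k_i h_i + s_i < z^i(x) \le (k_i+1) h_i + s_i\bigr\}.
\]
For a.e.\ choice of shift each $P_k$ is a set of finite perimeter, since $\partial^* P_k \cap D^1$ is contained in the union (over $i$ and $j$) of the level-set boundaries $\partial^*\{z^i > j h_i + s_i\}$, and by the $GSBV$ coarea formula the latter have finite $\mathcal{H}^{d-1}$-measure for $\mathcal{L}^1$-a.e.\ level.

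The next step is to select the shifts by an averaging argument. For each component $z^i$, the coarea formula applied to the truncations $\tau_R(z^i) \in SBV$, combined with monotone passage $R\to\infty$, gives
\[
\int_\mathbb{R} \mathcal{H}^{d-1}\bigl(\partial^*\{z^i > t\} \cap D \setminus J_{z^i}\bigr)\,\mathrm{d}t = \|\nabla z^i\|_{L^1(D)},
\]
since subtracting $J_{z^i}$ removes exactly the jump part of $|Dz^i|$. A Fubini-type mean value argument in $s_i \in [0,h_i]$ then produces a shift with
\[
\sum_{j\in\mathbb{Z}} \mathcal{H}^{d-1}\bigl(\partial^*\{z^i > j h_i + s_i\} \cap D \setminus J_{z^i}\bigr) \le h_i^{-1}\|\nabla z^i\|_{L^1(D)}.
\]
Using $J_{z^i}\subset J_z$ and the fact that every interface of $\partial^* P_k\cap D^1$ is shared by at most two adjacent cells, choosing $h_i := 2m\|\nabla z^i\|_{L^1(D)}/\theta$ yields
\[
\sum_k \mathcal{H}^{d-1}\bigl((\partial^* P_k \cap D^1)\setminus J_z\bigr) \le 2\sum_{i=1}^m h_i^{-1}\|\nabla z^i\|_{L^1(D)} \le \theta,
\]
which is (i). Components with $\|\nabla z^i\|_{L^1(D)}=0$ are simply dropped from the product.

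For (ii), on each nonempty $P_k$ I take $b_k^i$ to be a value attained by $z^i$ on $P_k$ (e.g., the approximate limit at a Lebesgue point of $z^i$ lying in $P_k$). Since $z^i|_{P_k}$ ranges in an interval of length $h_i$ by construction, $|z^i(x) - b_k^i| \le h_i$ throughout $P_k$, hence
\[
\|z - z_{\rm pc}\|_{L^\infty(D)} \le \max_i h_i \le \frac{2m}{\theta}\|\nabla z\|_{L^1(D)},
\]
giving (ii) with $c = c(m) = 2m$. By the same choice $|b_k^i| \le \|z^i\|_{L^\infty(P_k)} \le \|z^i\|_{L^\infty(D)}$, so any bound $\|z^i\|_{L^\infty(D)} \le M$ is automatically inherited by $z_{\rm pc}^i$, proving the last assertion.

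The main technical point, which I would handle carefully, is the rigorous justification of the coarea identity and of the separation of the absolutely continuous part of $|D z^i|$ from $J_{z^i}$ in the $GSBV$ setting, where $z^i$ need not be summable. The standard remedy is to truncate, apply the $SBV$ coarea formula for $\tau_R(z^i)$, note that $\{\tau_R(z^i)>t\}=\{z^i>t\}$ for $|t|<R$ with matching essential boundaries, and pass to the limit by monotone convergence of $|\nabla \tau_R(z^i)| \nearrow |\nabla z^i|$. The remaining steps (countability of the partition, double counting of interfaces) are elementary, as the relevant sums are absolutely convergent under the hypothesis $\|\nabla z\|_{L^1(\Omega)}+\mathcal H^{d-1}(J_z)<\infty$.
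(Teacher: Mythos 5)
Your argument is the standard coarea/level-set construction and matches the approach of the reference \cite{Friedrich:15-4} cited by the paper for this lemma; indeed, the paper's parenthetical remark that ``the values of the piecewise constant approximation are sampled from intersections of nonempty superlevel sets of the $GSBV$ function'' is a description of precisely your partition $(P_k)$. The truncation argument to justify the coarea identity in $GSBV$, the averaging over shifts $s_i$, the factor $2$ from double counting interfaces via \cite[Theorem 4.17]{Ambrosio-Fusco-Pallara:2000}, and the choice of $b_k^i$ in the essential range of $z^i|_{P_k}$ to preserve the $L^\infty$ cap are all sound; in particular it is enough for the coarea step to have the inequality $\int_{\R}\mathcal{H}^{d-1}(\partial^*\{z^i>t\}\cap D^1 \setminus J_{z^i})\,\mathrm{d}t\le \Vert \nabla z^i\Vert_{L^1(D)}$, which follows by monotone convergence from the $SBV$ coarea formula applied to truncations, as you indicate.

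The one point that should be stated more carefully is the treatment of components with $\Vert\nabla z^i\Vert_{L^1(D)}=0$. If one literally drops the $i$-th factor from the product, the resulting cells $P_k$ need not respect the level structure of $z^i$; since $z^i$ may still be a nonconstant piecewise constant $GSBV$ function, its oscillation over $P_k$ is then uncontrolled, which breaks (ii) and the $L^\infty$-cap assertion for that component. The fix is in fact the opposite of dropping it: since $\nabla z^i=0$ on $D$, the coarea identity gives $\mathcal{H}^{d-1}(\partial^*\{z^i>t\}\cap D^1\setminus J_{z^i})=0$ for $\mathcal{L}^1$-a.e.\ $t$, so $z^i|_D$ coincides a.e.\ with a genuine piecewise constant function on a Caccioppoli partition of $D$ whose interfaces lie in $J_{z^i}\subset J_z$ up to an $\mathcal{H}^{d-1}$-null set (cf.\ \cite[Theorem 4.23]{Ambrosio-Fusco-Pallara:2000}). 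Intersecting your grid with this partition adds nothing to (i) and makes $z^i$ exactly constant on each cell, giving zero error in the $i$-th component. With this correction the proof is complete.
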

  
  For a proof we refer to \cite[Theorem 2.3]{Friedrich:15-4}, although the argument can be retrieved in previous literature (see for instance  \cite{Amb, Braides-Defranceschi}). The additional property that $L^\infty$-caps are preserved by the approximation, which was not  stated explicitly there, is a direct consequence of the proof. (The values of the piecewise constant approximation are sampled  from  intersections of  nonempty superlevel sets  of the $GSBV$ function.) \BBB Moreover, we remark that in \cite{Friedrich:15-4} only sets $D$ with Lipschitz boundary have been considered. The statement is still true in the present situation, provided that the $\mathcal{H}^{d-1}$-measure of $\partial^* D$ does not contribute in estimate (i). To this end, it is essential to intersect with $D^1$. \EEE
  
As a final preparation for the proof of Lemma \ref{lemma: fundamental estimate2},   we  recall the definition of the  Lipschitz  mapping $\Psi_L$  before \eqref{eq: repr0.0}, and  we discuss how piecewise rigid functions can be parametrized by means of the mapping $\Psi_L$. Given a Caccioppoli partition $(P_j)_{j=1}^\infty$ of $\Omega$, $(\gamma_j)_{j=1}^\infty \subset (-r_L,r_L)^{d_L}$, and $(b_j)_{j=1}^\infty \subset \R^{d}$, we can define a piecewise rigid function $z \in  PR(\Omega)$ by 
\begin{align}\label{eq: basic parametrization}
z(x) = \sum\nolimits_{j=1}^\infty (\Psi_L(\gamma_j)\, x + b_j)\chi_{P_j}(x) \ \ \ \ \ \ \text{ for } x \in \Omega.
\end{align}
   We call $z_{\rm par} = \sum\nolimits_{j=1}^\infty (\gamma_j,b_j)\chi_{P_j} \in GSBV(\Omega;\R^{d_L} \times \R^d)$ a \emph{parametrization} of $z$ and observe that $z_{\rm par}$ is a piecewise constant function in the sense of \cite[Definition 4.21]{Ambrosio-Fusco-Pallara:2000}. Given another piecewise rigid function $\tilde{z} \in PR(\Omega)$ and a corresponding parametrization  $\tilde{z}_{\rm par} = \sum\nolimits_{j=1}^\infty (\tilde{\gamma}_j,\tilde{b}_j)\chi_{\tilde{P}_j}$, we observe that for all $i,j \in \N$ 
\begin{align*}
\Vert z - \tilde{z} \Vert_{L^\infty(P_i \cap \tilde{P}_j)}& \le \sup\nolimits_{x \in \Omega} |x| \, |   \Psi_L(\gamma_i)  -  \Psi_L(\tilde{\gamma}_j)|+ |b_i  -  \tilde{b}_j| \le \sup\nolimits_{x \in \Omega} |x| \, C_L|\gamma_i  -  \tilde{\gamma}_j|+ |b_i  -  \tilde{b}_j|, 
\end{align*}
where $C_L$ is larger or equal to the Lipschitz constant of $\Psi_L$. This implies 
\begin{align}\label{eq: parametrization property}
\Vert z - \tilde{z} \Vert_{L^\infty(\Omega)} \le \big(C_L \sup\nolimits_{x \in \Omega} |x| + 1 \big)    \Vert z_{\rm par} - \tilde{z}_{\rm par} \Vert_{L^\infty(\Omega)}. 
\end{align}
We are now ready to prove Lemma \ref{lemma: fundamental estimate2}.

\begin{proof}[Proof of Lemma \ref{lemma: fundamental estimate2}]
Let $A',A,B \in \mathcal{A}_0(\Omega)$ with $A' \subset \subset A$  and $\eta>0$.  Let $\lambda= \eta\alpha/(8\beta)$, where $\alpha,\beta$ are the constants from {\rm (${\rm H_4}$)},  and let $M_\lambda$ be the constant from Lemma \ref{lemma: fundamental estimate} applied for $A'$, $A$, and $\lambda$. We define $d_{A',A} = \dist(\partial A', \partial A)$. Let $\delta =(\alpha\eta/(8\beta c_{\pi,d}M_\lambda))^d$, where $c_{\pi,d}$ denotes the isoperimetric constant in dimension $d$.  All constants may depend on $L$ without further notice.   Throughout the proof we will assume without loss of generality that
\begin{align}\label{eq: closeuv}
\frac1{\delta} \int_{(A\setminus A^\prime)\cap B}\psi(|u-v|) <\mathrm{sup}_{t\in \mathbb{R}^+}\psi(t)\,.
\end{align}
Indeed, if this  does not hold we simply set $\Lambda(u, v)=+\infty$ and $w=u\chi_A + v\chi_{B\setminus A}$. Then, in view of  {\rm (${\rm H_1}$)} and {\rm (${\rm H_3}$)}--{\rm (${\rm H_4}$)}, \eqref{eq: assertionfund} is satisfied for $M= \beta (\lim_{t \to +\infty}\sigma(t))^{-1}$,  see also Remark \ref{rem: topo}. 

Let $u \in PR(A)$, $v \in PR(B)$ be given, and let  $u = \sum_j q^u_j \chi_{P_j^u}$ and  $v = \sum_j q^v_j \chi_{P_j^v}$ be their pairwise distinct representations (see Section \ref{sec: rig/piec-rig}).   We first define parametrizations $u_{\rm par}$ and $v_{\rm par}$ of $u$ and $v$ in the sense of \eqref{eq: basic parametrization} (Step 1). Then we decompose $A' \cup B$ into a \emph{good set} and a \emph{bad set} (Step 2). Roughly speaking, the bad set consists of the sets $(P_i^u \cap P_j^v)_{i,j \in \N}$ of measure smaller than $\delta$. On the good set, we join the parametrizations $u_{\rm par}$ and $v_{\rm par}$  by means of a  cut-off construction to a function $z_{\rm par}$  (Step 3). Afterwards, we use Theorem \ref{thm: poincare} to approximate $z_{\rm par}$ by a piecewise constant function $w_{\rm par}$. In the good set, the desired function $w$ is then obtained from $w_{\rm par}$ via  \eqref{eq: basic parametrization} and in the bad set we define $w=u$ (Step 4). Finally, we prove  \eqref{eq: Lambda0}--\eqref{eq: assertionfund}  for $w$ (Step 5).

\noindent \emph{Step 1 (Parametrization of $u$ and $v$):} We   introduce the index sets  $\mathcal{P}^u_{\rm large} = \lbrace i \in \N:  \, \mathcal{L}^d(P^u_i) \ge \delta \rbrace$ and  $\mathcal{P}^v_{\rm large} = \lbrace j \in \N: \, \mathcal{L}^d(P^v_j) \ge \delta \rbrace$. Let $Q_i^u$ and $Q_j^v$ be the corresponding matrices in $L$, and  denote by $b^u_i$ and $b^v_j$ the translations. We will show that for all $i \in \mathcal{P}^u_{\rm large}$ and  $j \in \mathcal{P}^v_{\rm large}$, respectively, there exist $\gamma_i^u \in \Psi^{-1}_L(Q_i^u)$ and   $\gamma_j^v \in \Psi^{-1}_L(Q_j^v)$  such that
\begin{align}\label{eq: repr1}
|\gamma^u_i - \gamma^v_j| \le C_\delta|Q^u_i - Q^v_j| \ \ \ \ \  \text{for all} \ \ \  i \in \mathcal{P}^u_{\rm large}, \ j \in \mathcal{P}^v_{\rm large},
\end{align}
for a constant   $C_\delta >0$   depending only on $\delta$, $A$, $B$, \BBB and $L$. \EEE Once this is proved, we define the \emph{parametrizations} $u_{\rm par} \in GSBV(A; \R^{d_L} \times \R^d)$ and $v_{\rm par} \in GSBV(B;\R^{d_L} \times \R^d)$ by 
\begin{align}\label{eq: repr2}
u_{\rm par} = \sum\nolimits_{i=1}^\infty (\gamma^u_i, b^u_i) \chi_{P^u_i} \ \ \ \ \  \text{and} \ \ \ \ \  v_{\rm par} = \sum\nolimits_{j=1}^\infty (\gamma^v_j, b^v_j) \chi_{P^v_j},
\end{align}
where for $i \notin \mathcal{P}^u_{\rm large}$ and  $j \notin \mathcal{P}^v_{\rm large}$ we can choose \emph{arbitrary} $\gamma_i^u \in \Psi^{-1}_L(Q_i^u)$ and   $\gamma_j^v \in \Psi^{-1}_L(Q_j^v)$, respectively.

We now proceed to show \eqref{eq: repr1}. First, if $r_L = + \infty$, then  $\Psi_L$  has a globally Lipschitz right inverse $\Xi_L$ defined on all of $L$,  and the property follows directly from \eqref{eq: repr0.0} when we choose $C_\delta \ge C_L$. Otherwise, we proceed as follows:  as a preliminary observation, we note  that 
\begin{align}\label{eq: repr3}
N:= \# \mathcal{P}^u_{\rm large} +  \# \mathcal{P}^v_{\rm large} \le \delta^{-1} \, \big(\mathcal{L}^d(A) +  \mathcal{L}^d(B)\big).
\end{align}
Indeed, since  $\delta  \le \mathcal{L}^d(P^u_i)$ for $i \in \mathcal{P}^u_{\rm large}$, we have
\begin{align*}
\# \mathcal{P}^u_{\rm large} & \le    \sum\nolimits_{i \in \mathcal{P}^u_{\rm large}}  \delta^{-1}\mathcal{L}^{d}(P^u_i) \le \delta^{-1}\mathcal{L}^{d}(A).
\end{align*}
A similar estimate holds for $\# \mathcal{P}^v_{\rm large}$ with $B$ in place of $A$. This yields \eqref{eq: repr3}.

Let $\mathcal{R} = \lbrace Q_i^u: \,  {i \in \mathcal{P}^u_{\rm large}} \rbrace \cup \lbrace Q_j^v: \, {j \in \mathcal{P}^v_{\rm large}}\rbrace$. For  convenience, we write $\mathcal{R} = (Q_k)_k$. Using Lemma \ref{lemma: balls} for $r_0 = c_L r_L$  we find  a finite number of pairwise disjoint balls $B_1,\ldots,B_n \subset \R^{d \times d}$, $n \le N$, with radius smaller than $c_Lr_L$ such that the balls $(B_i)_{i=1}^n$ cover $(Q_k)_k$, and one has
\begin{align}\label{eq: different balls}
Q_{k_1} \in B_{i_1} \ \ \text{ and } \ \ Q_{k_2} \in B_{i_2}  \ \ \text{ for } \ \ k_1 \neq k_2, \, i_1 \neq i_2 \ \ \ \Rightarrow \ \ \ |Q_{k_1} - Q_{k_2}| \ge   8^{-N}c_Lr_L.  
\end{align}
In view of \eqref{eq: repr0.0}, on each $B_i$, $i=1,\ldots,n$,   a  Lipschitz right-inverse  mapping $\Xi_L$ of $\Psi_L$ is well defined. We set   $\gamma_k = \Xi_L(Q_k)$ for all $Q_k \in B_i$.   We recall that each $Q^u_i$, $i \in \mathcal{P}^u_{\rm large}$, coincides with some $Q_k \in \mathcal{R}$, and we let $\gamma^u_i = \gamma_k$. For each $Q^v_j$ we proceed in a similar fashion. In view of this definition, we derive that \eqref{eq: repr1} holds for the constant $C_\delta = \max\lbrace C_L, 2\sqrt{d_L} 8^N /c_L\rbrace$. Indeed, if $Q_{k_1}, Q_{k_2}$ are contained in the same ball $B_i$, the property follows from \eqref{eq: repr0.0}. Otherwise, \eqref{eq: different balls} and the fact that $\gamma_{k_1},  \gamma_{k_2} \in (-r_L,r_L)^{d_L}$ imply
\begin{align*}
|\gamma_{k_1} - \gamma_{k_2}| \le 2\sqrt{d_L}r_L  \le 2\sqrt{d_L}  8^N c_L^{-1}   |Q_{k_1} - Q_{k_2}|.
\end{align*}
\BBB We note that    $C_\delta >0$   depends only on $\delta$, $A$, $B$, and $L$, see \eqref{eq: repr3}. \EEE

\noindent \emph{Step 2 (Identification of good and bad sets):} Let $(P^{u, v}_k)_k$ be the partition of $(A \setminus A') \cap B$ consisting of the nonempty sets $P_i^u \cap P_j^v \cap ((A \setminus A') \cap B)$, $i,j \in \N$. Clearly,  by Theorem \ref{th: local structure} and {\rm (${\rm H_4}$)} we have
\begin{align}\label{eq: Puv}
\sum\nolimits_{k=1}^\infty\mathcal{H}^{d-1}( \partial^* P^{u, v}_k )& \le \mathcal{H}^{d-1}(\partial A \cup \partial A' \cup \partial B) + 2\mathcal{H}^{d-1}(J_u) + 2\mathcal{H}^{d-1}(J_v)\notag \\
& \le 2\alpha^{-1} \big(\mathcal{F}(u,A) + \mathcal{F}(v,B) + \mathcal{H}^{d-1}(\partial A \cup \partial A' \cup \partial B)\big).
\end{align}
Let $\mathcal{P}^{u, v}_{\rm large} = \lbrace k: \, \mathcal{L}^d(P^{u, v}_k) \ge \delta \rbrace$ and $\mathcal{P}^{u, v}_{\rm small} = \N \setminus \mathcal{P}^{u, v}_{\rm large}$. We also define  
\begin{align}\label{eq: large/small-def}
D_{\rm large} = \bigcup\nolimits_{k \in \mathcal{P}^{u, v}_{\rm large}} P^{u, v}_k, \ \ \ \ \ \ \ \ \ \  D_{\rm small} = \big((A \setminus A') \cap B\big) \setminus D_{\rm large}.
\end{align}
 We observe   by \eqref{eq: Puv}  and the isoperimetric inequality that
\begin{align}\label{eq: Dsmall}
\mathcal{L}^d(D_{\rm small}) &= \sum_{k \in\mathcal{P}^{u,v}_{\rm small}} \mathcal{L}^d(P_k^{u,v}) \le  \delta^{1/d}  \sum_{k \in\mathcal{P}^{u,v}_{\rm small}} (\mathcal{L}^d(P^{u,v}_k))^{(d-1)/d}  \le  c_{\pi,d}\delta^{1/d} \sum_{k \in\mathcal{P}^{u,v}_{\rm small}} \mathcal{H}^{d-1}(\partial^* P_k^{u,v})\notag \\
& \le 2c_{\pi,d}\delta^{1/d} \alpha^{-1} \big(\mathcal{F}(u,A) + \mathcal{F}(v,B) + \mathcal{H}^{d-1}(\partial A \cup \partial A' \cup \partial B)\big). 
\end{align}
  We apply Lemma \ref{lemma: fundamental estimate} on $D_{\rm small}$ for $\lambda=\eta\alpha/(8\beta)$  to obtain $\frac{1}{4} d_{A',A} < T_1 < T_2 < \frac{3}{4} d_{A',A}$ and a function  $\varphi \in C^\infty(A)$ with $\varphi = 1$ in a neighborhood of $\overline{E_{T_1}}$ and ${\rm supp}(\varphi) \subset \subset E_{T_2}$  satisfying \eqref{eq: Mlambda}.  We define the sets   
\begin{align}\label{eq: goodDdef}
D_{\rm bad} = \big(D_{\rm small} \cap (E_{T_2} \setminus \overline{E_{T_1}})\big)^1, \ \  \ \ \ \ D_{\rm good} = \big((A' \cup B) \setminus D_{\rm bad}\big)^1,
\end{align}
where $(\cdot)^1$ denotes the set of points with density one. For an illustration of the sets we refer to Figure \ref{figure 1}. Lemma \ref{lemma: fundamental estimate} and  \eqref{eq: Puv}--\eqref{eq: Dsmall}  imply 
\begin{align}\label{eq: Et-newe-NNN}
\mathcal{H}^{d-1}\big(\partial^* D_{\rm bad}\big)& \le  \lambda  \mathcal{H}^{d-1}(\partial^* D_{\rm small}) + M_\lambda\mathcal{L}^d(D_{\rm small}) \notag\\
& \le \frac{\eta}{2\beta} (\mathcal{F}(u,A) + \mathcal{F}(v,B) + \mathcal{H}^{d-1}(\partial A \cup \partial A' \cup \partial B)), 
\end{align}
where we used $\lambda = \eta\alpha/(8\beta)$ and $\delta = (\alpha\eta/(8\beta c_{\pi,d}M_\lambda))^d$.

 %%%%%%%%%%%%%%%Figure: phases %%%%%%%%%%%%%%%%%%%%%%%%%%%%%%%%%%%%%%%%%%%
\begin{figure}[h]
\centering
\begin{tikzpicture}[scale=0.65]
% e_1 points right, e_2 points up, 
% four external points
%\coordinate (A) at (-3,3);
% \coordinate (B) at (3,3);
% \coordinate (C) at (3,-3);
% \coordinate (D) at (-3,-3);
% % points on the right 
% \coordinate (E) at (3,2.1);
% \coordinate (F) at (3,1.4);
% \coordinate (G) at (3,0.6);
% \coordinate (H) at (3,-0.1);
% \coordinate (I) at (3,-0.7);
% \coordinate (L) at (3,-1.4);
% \coordinate (M) at (3,-1.9);
% \coordinate (N) at (3,-2.6);
%  %points on the left
%  \coordinate (O) at (-3,2.1);
% \coordinate (P) at (-3,1.4);
% \coordinate (Q) at (-3,0.6);
% \coordinate (R) at (-3,-0.1);
% \coordinate (S) at (-3,-0.7);
% \coordinate (T) at (-3,-1.4);
% \coordinate (U) at (-3,-1.9);
% \coordinate (V) at (-3,-2.6);
%  %lines and colors
% 
% 
% \draw (A)--(B)--(C)--(D)--cycle;
% \draw[fill=cyan, opacity=0.2] (O)--(E)--(F)--(P)--cycle;
% \draw [black] (-3,2.1)  to [out=-45,in=175] (3,2.1);
%  \draw[fill=cyan, opacity=0.2] (Q)--(G)--(H)--(R)--cycle;
%   \node[shape=ellipse, minimum height=0.1cm, minimum width=1.4cm, draw=black, fill=red!10, opacity=0.8] at (1.7, 0.25){$B$};
%   \draw[fill=cyan, opacity=0.2] (S)--(I)--(L)--(T)--cycle;
%    \draw [black] (-3,-0.7)  to [out=-45,in=175] (-1,-0.8) to [out=-10, in=-175] (1.8,-1.2)  to [out=20, in=-185] (3,-1.4);
%    \draw[fill=cyan, opacity=0.2] (U)--(M)--(N)--(V)--cycle;
%     \node[shape=ellipse, minimum height=0.1cm, minimum width=1.4cm, draw=black, fill=red!10, opacity=0.8] at (-0.6, -2.25){$B$};
%  \draw[<->] (3.3,2.1)--(3.3,1.4);

 \hspace{-5.1cm}
 \draw[fill=red!10, opacity=0.2](-3, -3)rectangle	 (3, 3) ;
\draw[dashed](-1.2, -1.2)rectangle	 (1.2, 1.2) ; 
\draw[dashed](-2.5, -2.5)rectangle	 (2.5, 2.5) ; 
%\draw[fill=cyan, opacity=0.2](0, -2)rectangle	 (3, 2) ;
\draw[fill=red!10, opacity=1.0](-0.7, -0.7)rectangle	 (0.7, 0.7) ;  
\draw[fill=cyan!10, opacity=0.4](0, -2)rectangle	 (4, 2) ;  
%\draw[fill=gray, opacity=0.9](0.7, 0.7)rectangle	 (2, 0.8) ;  
%\draw[fill=gray, opacity=0.9](0.7, 0.7)rectangle	 (2, 0.8) ; 
%\draw[fill=gray, opacity=0.9](0.8, 0.3)rectangle	 (2.7, 0.38) ; 
%\draw[fill=gray, opacity=0.9](1.5, -1.5)rectangle	 (1.9, -1.1) ; 
%\draw[fill=gray, opacity=0.9](1.5, -0.5)rectangle	 (2.1, -0.35) ; 
%

 \node at (3.6, 1.7){$B$};
 \node at (0.3,0 ){$A'$};
 \node at (-2,2 ){$A$};

 \hspace{5.12cm}
 \draw[fill=red!10, opacity=0.2](-3, -3)rectangle	 (3, 3) ;
\draw[dashed](-1.2, -1.2)rectangle	 (1.2, 1.2) ; 
\draw[dashed](-2.5, -2.5)rectangle	 (2.5, 2.5) ; 
%\draw[fill=cyan, opacity=0.2](0, -2)rectangle	 (3, -0.5) ; 
%\draw[fill=cyan, opacity=0.2](0, 0.5)rectangle	 (3, 2) ; 
%\draw[fill=cyan, opacity=0.2](0.5, -0.5)rectangle	 (3, 0.5) ; 
\draw[fill=cyan, opacity=0.2](0, -2)rectangle	 (3, 2) ;
\draw[fill=red!10, opacity=1.0](-0.7, -0.7)rectangle	 (0.7, 0.7) ;  
\draw[fill=cyan!10, opacity=0.2](0, -2)rectangle	 (4, 2) ;  
\draw[fill=gray, opacity=0.9](0.8, 0.7)rectangle	 (2, 0.8) ;  
\draw[fill=gray, opacity=0.9](0.8, 0.7)rectangle	 (2, 0.8) ; 
\draw[fill=gray, opacity=0.9](0.85, 0.3)rectangle	 (2.7, 0.38) ; 
\draw[fill=gray, opacity=0.9](1.5, -1.5)rectangle	 (1.9, -1.1) ; 
\draw[fill=gray, opacity=0.9](2.65, -1.3)rectangle	 (2.85, -1.1) ; 
\draw[fill=gray, opacity=0.9](1.5, -0.5)rectangle	 (2.1, -0.35) ;

 \hspace{5.12cm}
 \draw[fill=red!10, opacity=0.2](-3, -3)rectangle	 (3, 3) ;

%\draw[fill=cyan, opacity=0.2](0, -2)rectangle	 (3, -0.5) ; 
%\draw[fill=cyan, opacity=0.2](0, 0.5)rectangle	 (3, 2) ; 
%\draw[fill=cyan, opacity=0.2](0.5, -0.5)rectangle	 (3, 0.5) ; 
\draw[fill=cyan!25, opacity=1.0](0, -2)rectangle	 (3, 2) ;
\draw[fill=cyan!25, opacity=1.0](-0.7, -0.7)rectangle	 (0.7, 0.7) ; 
\draw[fill=cyan!25, opacity=1.0](0, -2)rectangle	 (4, 2) ;  
 \draw[fill=gray, opacity=0.9](1.2, 0.7)rectangle	 (2, 0.8) ;  
\draw[fill=gray, opacity=0.9](1.2, 0.3)rectangle	 (2.5, 0.38) ; 
\draw[fill=gray, opacity=0.9](1.5, -1.5)rectangle	 (1.9, -1.1) ; 
\draw[fill=gray, opacity=0.9](1.5, -0.5)rectangle	 (2.1, -0.35) ; 
\draw[dashed](-1.2, -1.2)rectangle	 (1.2, 1.2) ; 
\draw[dashed](-2.5, -2.5)rectangle	 (2.5, 2.5) ; 

\node[shape=rectangle, minimum height=0.87cm, minimum width=0.1cm, fill=cyan!25, opacity=1.0] at (0, 0){};
 
%%\node at (-1.8,0.6) {$A$};
%\node at  (-0.6,2.1) {$B$};
%\node at  (1.8,-1.9) {$A$};
%\node at (3.9,1.75) {$\ep/\eta$};
%% Following is for debugging purposes so you can see where the points are
%% These are last so that they show up on top
%\foreach \xy in {A, B, C, D, E, F, G, H, I, L, M, N, O, P, Q, R, S, T,U,V}{
%  \node at (\xy) {\xy};
%}
\end{tikzpicture}
\caption{Left: The sets $A'$, $A$, and $B$, and $\partial E_{T_1}$, $\partial E_{T_2}$ (dashed).  (For illustration purposes, we replaced $\dist(x,A')$ in \eqref{eq:Et} by $\dist_\infty(x,A')$ in the picture.) \EEE  Middle:  $D_{\rm large}$ (blue) and $D_{\rm small}$ (gray).     Right: $D_{\rm good}$ (blue) and $D_{\rm bad}$ (gray).}
\label{figure 1}
\end{figure}
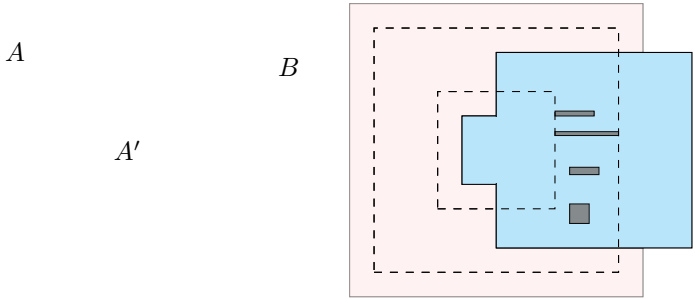

\noindent \emph{Step 3 (Joining $u_{\rm par}$ and $v_{\rm par}$ on $D_{\rm good}$):} Choose $R$ sufficiently large depending on $A$ and $B$ such that $A' \cup B \subset B_R(0)$. Recall the function $\psi$ given in the statement of the lemma.  Consider  $P^{u, v}_k$, $k \in \mathcal{P}^{u, v}_{\rm large}$, and choose $i \in \mathcal{P}^u_{\rm large}$, $j \in \mathcal{P}^v_{\rm large}$ such that $P_k^{u, v} = P_i^u \cap P_j^v \cap ((A \setminus A') \cap B)$.  By Lemma \ref{lemma: rigid motion} there exists a continuous, strictly increasing function $\tau_\psi: \psi(\R_+) \to \R_+$ with $\tau_\psi(0) = 0$ only depending on $\delta$, $R$, and $\psi$ such that by \eqref{eq: repr1}
 $$|\gamma^u_i - \gamma^v_j| + |b^u_i - b^v_j| \le C_\delta |Q_i^u - Q_j^v| + |b^u_i - b^v_j| \le  C_\delta  \, \tau_\psi\Big(\fint_{P_k^{u,v}} \psi\big(|(Q_i^u - Q_j^v)\,x + (b^u_i - b^v_j)|\big)\, \mathrm{d}x \Big).$$
Recall that $\mathcal{L}^d(P_k^{u,v}) \ge \delta$.   For each $z_1 \in PR(A)$, $z_2 \in PR(B)$,  we let 
\begin{align}\label{eq: Lambda1}
\Lambda_*(z_1,z_2) :=  C_\delta \,   \tau_\psi\Big( \frac{1}{\delta}  \int_{(A \setminus A') \cap B} \psi(|z_1 -z_2|)\, \mathrm{d}x \Big) 
\end{align}
if $\delta^{-1} \int_{(A \setminus A') \cap B} \psi(|z_1 -z_2|)\, \mathrm{d}x < \sup\nolimits_{t \in \R_+} \psi(t)$, and $\Lambda_*(z_1,z_2) = +\infty$ else. (Note that this is consistent with the definition below \eqref{eq: closeuv}.)  Recalling \eqref{eq: repr2} we thus find that
\begin{align}\label{eq: u,v-dist}
\Vert u_{\rm par} - v_{\rm par} \Vert_{L^\infty(D_{\rm large})} \le \Lambda_*(u,v) <+\infty,
\end{align}
where $D_{\rm large}$ is defined in \eqref{eq: large/small-def}. 

Let $\varphi \in C^\infty(A \cup B)$ be the function provided by Lemma \ref{lemma: fundamental estimate} which satisfies $0 \le \varphi \le 1$,    $\varphi = 1$ in a neighborhood of $\overline{E_{T_1}}$, and ${\rm supp} \, \varphi \subset \subset E_{T_2}$. We define 
$$z_{\rm par} = \varphi  u_{\rm par} + (1-\varphi)  v_{\rm par} \in GSBV(A' \cup B;\R^{d_L} \times \R^d).$$
 As $u_{\rm par}$ and $v_{\rm par}$ are piecewise constant, we get $\nabla z_{\rm par} = 0$ on $(A' \cup B) \setminus \lbrace 0 <\varphi< 1 \rbrace$    and 
$ \nabla z_{\rm par} = \nabla \varphi \otimes (u_{\rm par} -   v_{\rm par})$  on $\lbrace 0 <\varphi< 1 \rbrace$. By recalling the definition of $D_{\rm large}$ and  $D_{\rm good}$  in \eqref{eq: large/small-def} and \eqref{eq: goodDdef}, respectively, we observe $D_{\rm good} \cap \lbrace 0 <\varphi< 1 \rbrace \subset D_{\rm good} \cap (E_{T_2} \setminus \overline{E_{T_1}})  \subset D_{\rm large}$ \BBB up to an $\mathcal{L}^d$-negligible set, \EEE  see Figure \ref{figure 1}.  Therefore,   we obtain  by \eqref{eq: u,v-dist}
\begin{align}\label{eq: goodset1}
{\rm (i)} & \  \ \BBB \Vert \max\lbrace |z_{\rm par} - u_{\rm par}|,  |z_{\rm par} - v_{\rm par}|  \rbrace \Vert_{L^\infty(D_{\rm large})}  \EEE \le  \Vert  u_{\rm par} - v_{\rm par} \Vert_{L^\infty(D_{\rm large})} \le \Lambda_*(u,v),   \notag\\
{\rm (ii)} & \ \ \Vert \nabla z_{\rm par} \Vert_{L^1(D_{\rm good})} \le \Vert \nabla \varphi \Vert_\infty \Vert \Vert u_{\rm par} - v_{\rm par} \Vert_{L^1(D_{\rm large})} \le  \mathcal{L}^d(A' \cup B) \,  \Vert \nabla \varphi \Vert_\infty \, \Lambda_*(u,v). 
\end{align}
Moreover, since $J_{u},J_{u_{\rm par}}$ and  $J_{v},J_{v_{\rm par}}$ coincide up to $\mathcal{H}^{d-1}$-negligible sets,  we have 
\begin{align}\label{eq: Jzper}
J_{z_{\rm par}}\cap  D_{\rm good} \subset  \big( (J_u \cap E_{T_2}) \cup (J_v \setminus \overline{E_{T_1}}) \big) \cap  D_{\rm good}
\end{align}
up to an $\mathcal{H}^{d-1}$-negligible set. 

\noindent \emph{Step 4 (Definition of the piecewise rigid function $w$ using $z_{\rm par}$):}   We apply Theorem \ref{thm: poincare} for $z=z_{{\rm par}}$, $D={D_{\rm good}}$,  and for $\theta =  \eta (2\beta)^{-1} \, \mathcal{H}^{d-1}(\partial A \cup \partial A' \cup \partial B)$  to find a partition $({P}_k)_k$ of $D_{\rm good}$ and  corresponding constants $(\gamma_k,b_k)_{k=1}^\infty \subset (-r_L,r_L)^{d_L} \times  \R^{d}$  such that 
\begin{align}\label{eq: kornpoinsharp0-appl}
\begin{split}
{\rm (i)} & \ \   \sum\nolimits_{k=1}^\infty \mathcal{H}^{d-1}\big( (\partial^* {P}_k \cap D_{\rm good}) \setminus J_{z_{\rm par}} \big) \le  \eta (2\beta)^{-1} \, \mathcal{H}^{d-1}(\partial A \cup \partial A' \cup \partial B),\\
{\rm (ii)} &\ \ \Vert z_{\rm par} - (\gamma_k,b_k)  \Vert_{L^{\infty}({P}_k)} \le C_\eta  \Vert  \nabla z_{\rm par}\Vert_{L^1(D_{\rm good})} \ \ \ \ \text{for all} \ \ k \in \N,
\end{split}
\end{align}
where $C_\eta > 0$ depends on $\eta$, $\beta$, $A$, $A'$, and $B$.
We define $w_{\rm par} = \sum_{k=1}^\infty (\gamma_k,b_k)\chi_{{P}_k}$ on $D_{\rm good}$. By \eqref{eq: Mlambda}, \eqref{eq: goodset1}(ii), and \eqref{eq: kornpoinsharp0-appl}(ii) we obtain 
$$\Vert w_{\rm par} - z_{\rm par} \Vert_{L^\infty(D_{\rm good})} \le C_\eta \, {  \mathcal{L}^d(A' \cup B) \, } \Vert \nabla \varphi  \Vert_\infty \, \Lambda_*(u,v) \le C_\eta M_\lambda  \, \Lambda_*(u,v), $$
 where in the last step we passed to a larger constant $C_\eta$.  We observe that $D_{\rm good}$ coincides with $( (A' \cup B) \cap E_{T_1}) \cup (B\setminus  \overline{E_{T_2}}) \cup D_{\rm large}$ up to set of negligible $\mathcal{L}^d$-measure, see \eqref{eq: goodDdef}  and Figure \ref{figure 1}.  By  \eqref{eq: goodset1}(i) and the fact that $z_{\rm par}  = u_{\rm par}$ on $ (A' \cup B) \cap E_{T_1}$,  $z_{\rm par}  = v_{\rm par}$ on $B \setminus \overline{E_{T_2}}$, we get
\begin{align*}
{\rm (i)} & \ \ \Vert w_{\rm par} - u_{\rm par} \Vert_{L^\infty( (A' \cup B) \cap E_{T_1})} \le C_\eta M_\lambda \,  \Lambda_*(u,v), \ \  \Vert w_{\rm par} - v_{\rm par} \Vert_{L^\infty(B \setminus  {E_{T_2}})} \le C_\eta M_\lambda \, \Lambda_*(u,v),\\
{\rm (i)} & \ \  \BBB \Vert \max\lbrace |w_{\rm par} - u_{\rm par}|,  |w_{\rm par} - v_{\rm par}|  \rbrace \Vert_{L^\infty(D_{\rm large})} \EEE \le (1+ C_\eta M_\lambda)\Lambda_*(u,v). 
\end{align*}
Define  $w^{\rm good} \in PR(D_{\rm good})$  by $w^{\rm good}(x) = \sum_{k=1}^\infty (\Psi_L(\gamma_k) \, x + b_k)\chi_{{P}_k}(x)$.   Recalling $A' \cup B \subset B_R(0)$ by the choice of $R$, we get by \eqref{eq: parametrization property}
\begin{align}\label{eq: large-w-est}
{\rm (i)} & \ \ \Vert   w^{\rm good} - u\Vert_{L^\infty( (A' \cup B) \cap E_{T_1})} \le  \tfrac{1}{2}\Lambda(u,v) , \ \ \ \ \  \Vert   w^{\rm good} - v\Vert_{L^\infty(B \setminus {E_{T_2}})} \le  \tfrac{1}{2} \Lambda(u,v),\notag\\
{\rm (ii)} & \ \  \BBB \Vert \max\lbrace |w^{\rm good} - u|, |w^{\rm good} - v| \rbrace \Vert_{L^\infty(D_{\rm large})} \EEE  \le  \tfrac{1}{2}\Lambda(u,v), 
\end{align}
where $\Lambda$ is defined by 
\begin{align}\label{eq: Lambda2}
\Lambda(z_1,z_2): = 2   (1+ C_LR) (1+ C_\eta M_\lambda) \Lambda_*(z_1,z_2) \ \ \ \ \ \text{for} \ \ \ z_1 \in PR(A), \ z_2 \in PR(B).
\end{align}
 We now  define the piecewise rigid function $w \in PR(A' \cup B)$ by
\begin{align}\label{eq: ww-def}
w = \begin{cases}
w^{\rm good}   & \text{on } D_{\rm good}, \\
u & \text{on } D_{\rm bad}.
\end{cases}
\end{align}
In particular, this definition implies
 \begin{align}\label{eq: two diff prop}
 \Vert w - u \Vert_{L^\infty( D_{\rm good} \cap E_{T_2} )}  \le \tfrac{1}{2}\Lambda(u,v), \ \ \ \ \ \ \  \Vert w - v \Vert_{L^\infty( D_{\rm good} \setminus  {E_{T_1}} )}  \le \tfrac{1}{2}\Lambda(u,v).
\end{align} 
In fact, this follows from \eqref{eq: large-w-est} and the fact that $(E_{T_2}\setminus E_{T_1}) \cap D_{\rm good} \subset D_{\rm large}$, see  \eqref{eq: large/small-def} and \eqref{eq: goodDdef}.  We close this step of the proof by noticing that 
\begin{align}\label{eq:last in step 2}
\mathcal{H}^{d-1}\big( (J_{w} \cap D_{\rm good})\setminus J_{z_{\rm  par }}  \big) \le \frac{\eta}{2\beta} \mathcal{H}^{d-1}(\partial A \cup \partial A' \cup \partial B) 
\end{align}
 which follows from the definition of $ w^{\rm good}$ and  \eqref{eq: kornpoinsharp0-appl}(i).

\noindent \emph{Step 5 (Proof of \eqref{eq: Lambda0}--\eqref{eq: assertionfund}):} Having defined $w$, it remains to confirm \eqref{eq: Lambda0}--\eqref{eq: assertionfund}.  Recall the definition of $\Lambda$ in \eqref{eq: Lambda2}. In view of \eqref{eq: Lambda1}, \eqref{eq: Lambda2},  and the fact that $\tau_\psi(0) = 0$,  property \eqref{eq: Lambda0} holds.  By Fatou's lemma it is elementary to check that $\Lambda$ is lower semicontinuous.  In particular,  if $\psi(t) = t^p$, $1 \le p < \infty$, then  $\Lambda(z_1,z_2) = M\Vert z_1-z_2 \Vert_{L^p((A \setminus A')\cap B)}$ for some $M>0$ sufficiently large  since in this case $\tau_\psi(t) = ct^{1/p}$, see Lemma \ref{lemma: rigid motion}. 

Let us now show  \eqref{eq: assertionfund}. We first observe that \eqref{eq: assertionfund}(ii) follows from \eqref{eq: ww-def}--\eqref{eq: two diff prop}. Thus, it remains to prove \eqref{eq: assertionfund}(i). Recall the definition of   $D_{\rm good}$ and $D_{\rm bad}$ in \eqref{eq: goodDdef}. By {\rm (${\rm H_1}$)}, {\rm (${\rm H_3}$)},   {\rm (${\rm H_4}$)}, and the definition of $w$ we obtain 
\begin{align}\label{eq: conclusi1}
\mathcal{F}(w, A' \cup B) & =  \mathcal{F}(w, D_{\rm good} ) +  \mathcal{F}(w, \partial^* D_{\rm bad}) +  \mathcal{F}(w, D_{\rm bad} )\notag \\
& = \mathcal{F}(w,  D_{\rm good}  ) +  \mathcal{F}(w, \partial^* D_{\rm bad}) +  \mathcal{F}(u, J_w \cap D_{\rm bad}).
\end{align}
It now suffices to show that there holds
\begin{align}\label{eq: conclusi2}
{\rm (i)}  & \ \  \mathcal{F}(w, D_{\rm good})  \le \mathcal{F}(u, D_{\rm good} \cap A \cap J_w  ) + \mathcal{F}(v, D_{\rm good} \cap B \cap J_w) + \Delta,\notag\\
{\rm (ii)} & \ \   \mathcal{F}(w, \partial^* D_{\rm bad}) \le   \Delta, 
\end{align}
where for brevity we set  
$$\Delta =  \big(\mathcal{H}^{d-1}(\partial A \cup \partial A' \cup \partial B\big) + \mathcal{F}(u,A) + \mathcal{F}(v,B)) \, \big) \big(\eta/2 + \alpha^{-1}\sigma(\Lambda(u,v) \big) .$$ 
In fact, once this is shown, \eqref{eq: assertionfund}(i) follows from \eqref{eq: conclusi1} for $M \ge 2 \alpha^{-1}$.

\emph{Proof of \eqref{eq: conclusi2}(i).}  In view of {\rm (${\rm H_1}$)}, {\rm (${\rm H_4}$)},  and \eqref{eq: Jzper},  we find 
\begin{align}\label{eq: Gamma0}
\mathcal{F}(w, D_{\rm good}   ) \le \sum\nolimits_{j=1}^3 \mathcal{F}(w, \Gamma_j),
\end{align}
 where we define 
\begin{align*}
&\Gamma_1 := (J_w \cap D_{\rm good}) \cap (J_u \cap E_{T_2}),\\
& \Gamma_2 := (J_w \cap D_{\rm good})  \cap (J_v \setminus \overline{E_{T_1}}), \\
& \Gamma_3 := (J_w \cap D_{\rm good})  \setminus J_{z_{\rm   par }}. 
\end{align*}
We  estimate each $\mathcal{F}(w, \Gamma_j)$ separately. 

(1) For $\mathcal{H}^{d-1}$-a.e.\ $x \in \Gamma_1 = J_w \cap J_u \cap E_{T_2} \cap D_{\rm good}$, the one-sided approximate limits $w^+(x),w^-(x)$ of $w$ satisfy $|w^+(x) -u^+(x)|,|w^-(x)-u^-(x)| \le \frac{1}{2}\Lambda(u,v)$ by \eqref{eq: two diff prop}, where we choose $\nu_w = \nu_u$ on $J_w \cap J_u$. This implies \BBB $|w^+(x) -u^+(x)|+|w^-(x)-u^-(x)|   \le \Lambda(u,v) $ \EEE for $\mathcal{H}^{d-1}$-a.e.\ $x \in \Gamma_1$. Thus, by  {\rm (${\rm H_4}$)} this yields 
\BBB $$\int_{\Gamma_1} \sigma(|w^+ -u^+|+|w^--u^-|) \, d\mathcal{H}^{d-1} \le \mathcal{H}^{d-1}(J_u) \, \sigma(\Lambda(u,v)) \le \alpha^{-1}\mathcal{F}(u,A) \, \sigma(\Lambda(u,v)),$$
where $\sigma$ is the modulus of continuity from {\rm (${\rm H_5}^\prime$)}. This implies  by {\rm (${\rm H_5}^\prime$)} 
\begin{align}\label{eq: Gamma1}
\mathcal{F}(w,\Gamma_1) &\le \mathcal{F}(u,\Gamma_1) + \int_{\Gamma_1} \sigma(|w^+ -u^+|+|w^--u^-|) \, d\mathcal{H}^{d-1}\notag\\
& \le  \mathcal{F}(u,\Gamma_1) +  \alpha^{-1}\mathcal{F}(u,A) \, \sigma(\Lambda(u,v)). 
\end{align}

\EEE (2) In a similar fashion, for $\mathcal{H}^{d-1}$-a.e.\ $x \in \Gamma_2$, we have \BBB  $|w^+(x) -u^+(x)|+|w^-(x)-u^-(x)| \le 2 \frac{1}{2}\Lambda(u,v) =  \Lambda(u,v) $ by \eqref{eq: two diff prop}. Therefore, we have by {\rm (${\rm H_4}$)} and {\rm (${\rm H_5}^\prime$)} 
\begin{align}\label{eq: Gamma2}
\mathcal{F}(w,\Gamma_2) &\le \mathcal{F}(v,\Gamma_2) + \int_{\Gamma_2} \sigma( |w^+ -u^+|+|w^--u^-|) \, d\mathcal{H}^{d-1}\notag \\
 &\le\mathcal{F}(v,\Gamma_2) +  \alpha^{-1}\mathcal{F}(v,B) \, \sigma(\Lambda(u,v)).
\end{align}

\EEE (3) Finally, \eqref{eq:last in step 2} and {\rm (${\rm H_4}$)}  imply 
\begin{align}\label{eq: Gamma3}
\mathcal{F}(w,\Gamma_3) \le \beta\mathcal{H}^{d-1}\big( (J_w \cap D_{\rm good})  \setminus J_{z_{\rm par}}\big)   \le   \frac{\eta}{2}  \mathcal{H}^{d-1}(\partial A \cup \partial A' \cup \partial B).  
\end{align}
By combining \eqref{eq: Gamma0}--\eqref{eq: Gamma3} we obtain \eqref{eq: conclusi2}(i).

 \emph{Proof of \eqref{eq: conclusi2}(ii).}  We use  {\rm (${\rm H_4}$)} and \eqref{eq: Et-newe-NNN} to find 
\begin{align}\label{eq: Gamma4}
\mathcal{F}(w, \partial^* D_{\rm bad}   ) \le \beta\mathcal{H}^{d-1}\big(\partial^* D_{\rm bad}\big) \le   \frac{\eta}{2} \big(\mathcal{F}(u,A) + \mathcal{F}(v,B) + \mathcal{H}^{d-1}(\partial A \cup \partial A' \cup \partial B)\big) \le \Delta. 
\end{align}
This concludes the proof.  
\end{proof}

\begin{remark}\label{rem: for later}
{\normalfont

For later purposes in the proof of Lemma \ref{lemma: fundamental estimate2-new}, we observe that by the estimate on $\Gamma_3$ and $\partial^* D_{\rm bad}$, see \eqref{eq: Gamma3}--\eqref{eq: Gamma4},  we have that
\begin{align*}
\mathcal{H}^{d-1}(J_w) \le \mathcal{H}^{d-1}(J_u \cup J_v)  + \eta \big(\mathcal{F}(u,A) + \mathcal{F}(v,B) + \mathcal{H}^{d-1}(\partial A' \cup \partial A \cup \partial B) \big).
 \end{align*}
 By  {\rm (${\rm H_4}$)} this yields
 \begin{align*}
\mathcal{H}^{d-1}(J_w) \le (1+ \eta) \alpha^{-1} \big(\mathcal{F}(u,A) + \mathcal{F}(v,B) + \mathcal{H}^{d-1}(\partial A' \cup \partial A \cup \partial B) \big).
 \end{align*}
Moreover, \eqref{eq: two diff prop} implies that with $K :=\lbrace x \in \R^d: \, \dist(x,A') \ge \frac{3}{4}d_{A',A} \rbrace$ we get 
$$\Vert w - v \Vert_{L^\infty(B \cap K)}  \le \frac12  \Lambda(u,v)$$
since $K \cap E_{T_2} =\emptyset$ and thus $B \cap K \subset  D_{\rm good} \setminus E_{T_2}$,  see \eqref{eq: goodDdef}.

}
\end{remark}

\subsection{Proofs of  Lemma \ref{lemma: fundamental estimate2-new} and Corollary \ref{rem: scaling}}\label{sec: fund-proof2}

 In this section we prove the fundamental estimate for piecewise rigid functions with boundary data and present a scaled version as corollary.  We start with the proof of Lemma \ref{lemma: fundamental estimate2-new}.

\begin{proof}[Proof of Lemma \ref{lemma: fundamental estimate2-new}]
Let $A',A,B \in \mathcal{A}_0(\Omega)$ with $A' \subset \subset A$ and $\eta>0$ be given. It is not restrictive to suppose that $0 < \eta <1$. Set $U = A' \cup B$ for brevity.  We define  $d_{A',A} = \dist(\partial A', \partial A)$ and $\delta = (d_{A',A}\alpha\eta/(24\beta c_{\pi,d}))^d$, where $c_{\pi,d}$ denotes the isoperimetric constant in dimension $d$, and $\alpha,\beta$ are the constants from {\rm (${\rm H_4}$)}. Choose $R>0$ such that $U \subset B_R(0)$. Let   $c_0 \ge 1$  be the constant in  \eqref{eq: estimate2},  depending on $R$ and $\delta$. Define  $M_1 = 2c_0$.

Let $u \in PR(A)$, $v \in PR(B)$ be given and let  $u = \sum_j q^u_j \chi_{P_j^u}$ and  $v = \sum_j q^v_j \chi_{P_j^v}$ be their pairwise distinct representations. Suppose that \eqref{eq: extra condition} holds, where  $\Lambda(u,v)$ is the function from \eqref{eq: Lambda0}.   It is not restrictive to assume that $\Lambda (u,v) <+\infty$ is satisfied,  so that in particular  \eqref{eq: closeuv} holds. Otherwise, the result follows exactly as discussed below \eqref{eq: closeuv}.   We apply Lemma \ref{lemma: fundamental estimate2} on $u$ and $v$, and denote by $z \in PR(U)$ the piecewise rigid function satisfying  \eqref{eq: assertionfund}. By recalling Remark \ref{rem: for later} and using $0 < \eta < 1$, we also find
\begin{align}\label{eq: given by before}
{\rm (i)} \ \ & \mathcal{H}^{d-1}(J_z) \le 2\alpha^{-1}\big(\mathcal{F}(u,A) + \mathcal{F}(v,B) + \mathcal{H}^{d-1}(\partial A' \cup \partial A \cup \partial B) \big),\notag\\
{\rm (ii)} \ \ & \Vert z - v \Vert_{L^\infty(B \cap K)}   \le   \tfrac12   \Lambda(u,v),
\end{align} 
where $K :=\lbrace x \in \R^d: \, \dist(x,A') \ge \frac{3}{4}d_{A',A} \rbrace$. 

We let $z = \sum_i q^z_i \chi_{P^z_i} \in PR(U)$ be the corresponding pairwise distinct representation.  We first identify the  \emph{small components} which are given by the sets $(P_i^v \cap P_j^z)_{i,j \in \N}$ of measure smaller than $\delta$ (Step 1).
Then we consider the other components and show, by means of condition \eqref{eq: extra condition}, that for each $P_i^z$ there is at most one component $P_j^v$ such that the measure of $P_i^z \cap P_j^v$ exceeds $\delta$. We prove that the difference of the affine mappings $q_i^z$ and $q_j^v$ can be controlled suitably (Step 2). Starting from $z$, we then define  $w$ where the main idea in the definition is to replace $z$ on each $P^z_i$ by $v$ near $B \setminus A$ and by $q_j^v$ otherwise   (Step 3). This allows to show that the  correct boundary values are attained. Moreover,  the control on  the difference of the affine mappings yields that the energy increases only slightly by passing from $z$ to $w$ (Step 4).

\noindent  \emph{Step 1 (Small components):} Let $(P^{v, z}_k)_k$ be the partition of $B$ consisting of the nonempty sets $P_i^z \cap P_j^v$, $i,j \in \N$. Let $J^{v,z}_{\rm small} = \lbrace k \in \N: \mathcal{L}^d(P^{v,z}_k \cap K)  <  \delta\rbrace$ and $J^{v,z}_{\rm large} = \N \setminus J^{v,z}_{\rm small}$. We define $F_{\rm small} = \bigcup_{k \in J^{v,z}_{\rm small}} P^{v,z}_k$ and observe by \eqref{eq: bdy and jump} that
  \begin{align}\label{eq: Dsmall-bdy} 
\mathcal{H}^{d-1}\big( (\partial^* F_{\rm small} \cap B) \setminus (J_v \cup J_z)\big) = 0. 
\end{align}
 By using the isoperimetric inequality we get  
  \begin{align*}
\mathcal{L}^{d}(F_{\rm small} \cap K) & = \sum\nolimits_{k \in J^{v,z}_{\rm small}}\mathcal{L}^d({P}^{v,z}_k \cap K)  \le \delta^{1/d}\sum\nolimits_{k \in J^{v,z}_{\rm small}}(\mathcal{L}^d({P}^{v,z}_k))^{(d-1)/d}\\
&  \le \delta^{1/d}c_{\pi,d} \sum\nolimits_{k \in J^{v,z}_{\rm small}}\mathcal{H}^{d-1}(\partial^* {P}^{v,z}_k)  
 \\&\le  2\delta^{1/d} c_{\pi,d} \,  \big( \mathcal{H}^{d-1}(J_v) + \mathcal{H}^{d-1}(J_z) + \mathcal{H}^{d-1}(\partial B) \big),
\end{align*} 
where the last step follows from \eqref{eq: bdy and jump} and  Theorem \ref{th: local structure}. Similar to the proof of Lemma \ref{lemma: fundamental estimate}, we cut small components.  For $t>0$ define 
\begin{align*}
E_t := \lbrace x\in \R^d: \ \dist(x,A') < t \rbrace
\end{align*}
and observe that $E_t \cap (U \setminus A) = \emptyset$ for all $t \in (0,d_{A',A})$.  By  repeating the argument leading to \eqref{eq: Et-newe-old},     we find $T \in (\frac{3}{4} d_{A',A} ,d_{A',A})$ such that 
\begin{align}\label{eq: FsmallE}
\mathcal{H}^{d-1}(F_{\rm small} \cap \partial E_{T}) &= \mathcal{H}^{d-1}((F_{\rm small} \cap K) \cap \partial E_{T}) \le 4d_{A',A}^{-1}\mathcal{L}^d(F_{\rm small} \cap K)\notag\\
& \le 8\delta^{1/d} d_{A',A}^{-1} c_{\pi,d} \,  \big( \mathcal{H}^{d-1}(J_v) + \mathcal{H}^{d-1}(J_z) + \mathcal{H}^{d-1}(\partial B) \big).
\end{align}

 \noindent \emph{Step 2 (Large components):} For each $i \in \N$, we define  
\begin{align}\label{eq: Ji-def}
J_i = \Big\{ j\in \N:\, \exists \,  k \in J_{\rm large}^{v,z} \text{ such that } P^{v,z}_k = P_i^z \cap P_j^v \Big\}, 
\end{align}
and observe that for each $i \in \N$
\begin{align}\label{eq: onlyone2}
 \bigcup\nolimits_{j \in J_i} (P_i^z \cap P_j^v) = P_i^z \cap \bigcup\nolimits_{k \in  J_{\rm large}^{v,z}} P^{v,z}_k = P_i^z \cap (B \setminus F_{\rm small}),
\end{align}
where in the last step we used the definition of $F_{\rm small}$ before \eqref{eq: Dsmall-bdy}. We point out that $J_i = \emptyset$ is possible.  In this case, \eqref{eq: onlyone2} still holds because both sides of the equality are empty.  

We now provide some properties of the sets $J_i$. For each $i \in \N$ and each $j \in J_i$, we choose $k \in   J_{\rm large}^{v,z}$ such that $P^{v,z}_k = P_i^z \cap P_j^v$.   Since $U \subset B_R(0)$ and $\mathcal{L}^d(P^{v,z}_k \cap K) \ge \delta$, by \eqref{eq: estimate2} we have  $\Vert q^z_i -q^v_j \Vert_{L^\infty(U)} \le c_0 \Vert q^z_i - q^v_j \Vert_{L^\infty(P^{u,v}_k \cap K)}$.  By using the fact that $v = q^v_j$ and $z = q^z_i$ on $P^{v,z}_k$, by recalling  $M_1 = 2c_0$, and applying \eqref{eq: given by before}(ii) we derive 
\begin{align}\label{eq: fundest7}
\Vert q^z_i -q^v_j \Vert_{L^\infty(U)}\le c_0 \Vert q^z_i - q^v_j \Vert_{L^\infty(P^{u,v}_k \cap K)} \le  c_0 \Vert v - z \Vert_{L^\infty(B \cap K)}  \le  \frac{1}{4}M_1 \Lambda(u,v).
\end{align}
We now show that 
\begin{align}\label{eq: onlyone}
\# J_i \le 1 \ \ \ \ \ \text{for all } i \in \N.
\end{align}
In fact, assume by contradiction that for some $i$ there exist  two different $j,j' \in J_i$.  Then \eqref{eq: fundest7} together with the triangle inequality yields 
$$\Vert q^v_j - q^v_{j'}  \Vert_{L^\infty(U)} \le \frac{1}{2} M_1\Lambda(u,v).$$
Moreover, by \eqref{eq: Ji-def} and the definition of $J_{\rm large}^{v,z}$ we have $\mathcal{L}^d(P^v_j) \ge \delta$ and $\mathcal{L}^d(P^v_{j'}) \ge \delta$.  In view of \eqref{eq: Psi} and the fact that $j \neq j'$, this yields $0< \Phi(A',  U  ; v|_{B \setminus \overline{A'}},\delta) \le \frac{1}{2} M_1\Lambda(u,v)$. This, however, contradicts \eqref{eq: extra condition}.   In the following, the unique index in $J_i$, if existent, will be denoted by $j_i$.

\noindent  \emph{Step 3 (Definition of $w$):}  We now introduce the piecewise rigid function $w$. We define $w:U \to \R^d$ on each $P_i$ separately by distinguishing the two cases $\# J_i = 1$ and $\# J_i = 0$, see \eqref{eq: onlyone}. Recall $E_T$ defined before \eqref{eq: FsmallE} and the fact that $\R^d \setminus E_T \subset K$.    We let
\begin{align}\label{eq: division2}
  w = q^v_{j_i}   \text{ on } P^z_i \cap   E_T, \ \ \ \ w = v  \text{ on } P^z_i \setminus  E_T &  \ \ \ \ \ \text{if } \# J_i=1\notag\\
 w = z  \text{ on } P^z_i \cap   E_T, \ \ \ \ w = v  \text{ on } P^z_i \setminus E_T & \ \ \ \ \  \text{if } \# J_i=0,
\end{align}
where $j_i\in J_i$ is the index corresponding to $i \in \N$. Clearly,  $w \in PR(U)$ is well defined and piecewise rigid  since $v \in PR(B)$ and $U \setminus E_T \subset K \cap B$. For later purposes, we observe that up to sets of negligible $\mathcal{H}^{d-1}$-measure there holds
 \begin{align}\label{eq: K,B} 
{\rm (i)} & \ \  J_w \cap (J_v\setminus J_z) \subset K \cap B, \notag\\
{\rm (ii)} & \ \  J_w \setminus (J_z \cup J_v)  \subset F_{\rm small} \cap \partial E_T,
  \end{align}
where $F_{\rm small} \subset B$ was defined before \eqref{eq: Dsmall-bdy}. Indeed, property (i) follows from \eqref{eq: division2} and the fact that $U \setminus E_T \subset K \cap B$. To see (ii), we first observe that Theorem \ref{th: local structure}, \eqref{eq: bdy and jump}, and  \eqref{eq: division2} imply (up to sets of negligible $\mathcal{H}^{d-1}$-measure)
$$J_w \setminus (J_z \cup J_v)   \subset J_w \cap \partial E_T \cap \bigcup\nolimits_{i \in \N} (P^z_i)^1 \subset (\partial E_T \cap F_{\rm small}) \cup \bigcup\nolimits_{i \in \N} \big(J_w \cap (P^z_i)^1   \cap (\partial E_T \setminus F_{\rm small})\big).$$
 By using \eqref{eq: onlyone2} we have $P^z_i \cap (B \setminus F_{\rm small}) = \emptyset$ if $\# J_i = 0$ and   $P^z_i \cap (B \setminus F_{\rm small}) = P^z_i \cap P^v_{j_i}$ for $\# J_i = 1$.  In view of \eqref{eq: division2}, we also observe that $w$ does not jump on $P^z_i \cap  P^v_{j_i} \cap  \partial E_T$ for $\# J_i = 1$.  In both cases, we thus have $\mathcal{H}^{d-1}(J_w \cap (P^z_i)^1  \cap (\partial E_T \setminus F_{\rm small}) ) = 0$. This yields \eqref{eq: K,B}(ii).

\noindent  \emph{Step 4 (Proof of \eqref{eq: assertionfund-newnew}):} We define
  \begin{align}\label{eq: newTheta-def}
  \Theta(z_1,z_2) =  \big(\frac12 M_1+1\big)\Lambda(z_1,z_2)   \ \ \ \text{for } \ z_1 \in PR(A), \  z_2 \in PR(B),
  \end{align}
  where $\Lambda$ is given in \eqref{eq: Lambda0}. Then, if $\psi(t) = t^p$, $1 \le p < \infty$, $\Theta$ has the form  $\Theta(u,v)= M_2\Vert u - v \Vert_{L^p((A \setminus A') \cap B)}$ for some $M_2$ sufficiently large.

    We now establish \eqref{eq: assertionfund-newnew}.    First,    \eqref{eq: assertionfund-newnew}{\rm (iii)} follows directly from \eqref{eq: division2} and the fact that $B \setminus A = U \setminus A \subset U \setminus E_T$. As a preparation for \eqref{eq: assertionfund-newnew}{\rm (ii)}, we observe that
  \begin{align}\label{eq: assertionfund-lemma-iii}
\Vert    w- z    \Vert_{L^\infty(U)}   \le \frac{1}{4} M_1 \Lambda(u,v).
\end{align}
In fact,   on $U \setminus E_T \subset B \cap K$  we have $w=v$ by \eqref{eq: division2}, hence the inequality holds by  \eqref{eq: given by before}(ii) and the fact that $M_1 \ge 2$.  On the other hand, on each $P^z_i \cap E_T$,  we either have $w=z$, if  $\# J_i = 0$, or we can apply \eqref{eq: fundest7} for $j=j_i$, if $\# J_i = 1$.  In both cases, \eqref{eq: assertionfund-lemma-iii} follows. \EEE This along with \eqref{eq: assertionfund}(ii) (applied for $z$ in place of $w$) and \eqref{eq: newTheta-def} yields   \eqref{eq: assertionfund-newnew}{\rm (ii)}.

Finally, we prove \eqref{eq: assertionfund-newnew}{\rm (i)}. In view of {\rm (${\rm H_1}$)} and  {\rm (${\rm H_4}$)}, we have
\begin{align}\label{eq: Gamma0-new}
\mathcal{F}(w,U) \le \sum\nolimits_{j=1}^3 \mathcal{F}(w, \Gamma_j),
\end{align}
 where we define 
\begin{align*}
\Gamma_1 := J_w  \cap J_z , \ \ \ \ \ \ \ \Gamma_2 := J_w \cap (J_v \setminus J_z),  \ \  \ \ \ \ \  \Gamma_3 := J_w \setminus (J_z \cup J_v). 
\end{align*}
We  estimate each $\mathcal{F}(w, \Gamma_j)$ separately.

(1) For $\mathcal{H}^{d-1}$-a.e.\ $x \in \Gamma_1$, the one-sided approximate limits $w^+(x),w^-(x)$ of $w$ satisfy $|w^+(x) -z^+(x)|,|w^-(x)-z^-(x)| \le \frac{1}{4}  M_1\Lambda(u,v)$ by \eqref{eq: assertionfund-lemma-iii}, where we choose $\nu_w = \nu_z$ on $J_w \cap  J_z$. \BBB This implies $|w^+(x) -z^+(x)|+|w^-(x)-z^-(x)| \le  \frac12  M_1\Lambda(u,v) \le \Theta(u,v) $ for $\mathcal{H}^{d-1}$-a.e.\ $x \in \Gamma_1$, where we used \eqref{eq: newTheta-def}. Thus, by   {\rm (${\rm H_5}^\prime$)} this yields 
\begin{align*}
\begin{split}
\mathcal{F}(w,\Gamma_1)& \le \mathcal{F}(z,\Gamma_1) + \int_{\Gamma_1} \sigma(|w^+ -z^+| + |w^--z^-|) \, d\mathcal{H}^{d-1}\le  \mathcal{F}(z,U) + \mathcal{H}^{d-1}(J_z) \, \sigma(\Theta(u,v)),  
\end{split}
\end{align*}
where $\sigma$ is the modulus of continuity from {\rm (${\rm H_5}^\prime$)}\EEE. Then by   \eqref{eq: assertionfund}(i) (applied for $\mathcal{F}(z,U)$) and \eqref{eq: given by before}(i) we get
\begin{align}\label{eq: Gamma1-new}
\mathcal{F}(w,\Gamma_1)& \le \mathcal{F}(u,A \cap J_z)  + \mathcal{F}(v, B \cap J_z)    \\
&  \ \ \ + \big(\mathcal{H}^{d-1} (\partial A \cup \partial A' \cup \partial B) + \mathcal{F}(u,A)  + \mathcal{F}(v, B) \big) \big(\eta + M\sigma (\Lambda(u,v)) + 2\alpha^{-1}\, \sigma(\Theta(u,v))\big). \notag
\end{align}

(2) In a similar fashion, for $\mathcal{H}^{d-1}$-a.e.\ $x \in \Gamma_2$, we have $|w^+(x) -v^+(x)|,|w^-(x)-v^-(x)| \le \frac{1}{4}(M_1 + 2)\Lambda(u,v)$ by \eqref{eq: given by before}, \eqref{eq: assertionfund-lemma-iii}, and the fact that $\Gamma_2 \subset K \cap B$, see \eqref{eq: K,B}(i). \BBB Thus, we get $|w^+(x) -z^+(x)|+|w^-(x)-z^-(x)| \le \left (\frac12 M_1 + 1\right)\Lambda(u,v) = \Theta(u,v)$ by \eqref{eq: newTheta-def}.    Therefore, we obtain by {\rm (${\rm H_4}$)} and {\rm (${\rm H_5}^\prime$)}
\begin{align}\label{eq: Gamma2-new}
\mathcal{F}(w,\Gamma_2)& \le \mathcal{F}(v,\Gamma_2) + \int_{\Gamma_2} \sigma(|w^+ -z^+|+|w^--z^-|) \, d\mathcal{H}^{d-1}\notag\\
& \le \mathcal{F}(v,\Gamma_2) + \mathcal{H}^{d-1}(J_v)  \, \sigma(\Theta(u,v)) \le  \mathcal{F}(v, B \setminus J_z) +  \alpha^{-1}\mathcal{F}(v,B) \, \sigma(\Theta(u,v)).
\end{align}
\EEE

(3) Finally, \eqref{eq: given by before}(i), \eqref{eq: FsmallE}, \eqref{eq: K,B}(ii),  and {\rm (${\rm H_4}$)}  imply 
\begin{align}\label{eq: Gamma3-new}
\mathcal{F}(w,\Gamma_3)& \le \beta\mathcal{H}^{d-1}\big(F_{\rm small} \cap \partial E_T\big)   \le  8\beta\delta^{1/d} d_{A',A}^{-1} c_{\pi,d} \,  \big( \mathcal{H}^{d-1}(J_v) + \mathcal{H}^{d-1}(J_z) + \mathcal{H}^{d-1}(\partial B) \big)\notag \\
& \le  8\beta\delta^{1/d} d_{A',A}^{-1} c_{\pi,d} \,  3\alpha^{-1}\big(\mathcal{F}(u,A) + \mathcal{F}(v,B) + \mathcal{H}^{d-1}(\partial A' \cup \partial A \cup \partial B) \big)\notag\\ 
& \le  \eta \big(\mathcal{F}(u,A) + \mathcal{F}(v,B) + \mathcal{H}^{d-1}(\partial A' \cup \partial A \cup \partial B) \big),  
\end{align}
where in the last step we used the definition $\delta = (d_{A',A}\alpha\eta/(24\beta c_{\pi,d}))^d$. Define $M_2 =M + 3\alpha^{-1}$ and recall $\Theta(u,v) \ge \Lambda(u,v)$ by \eqref{eq: newTheta-def},  as well as that $\sigma$ is increasing. By combining \eqref{eq: Gamma0-new}--\eqref{eq: Gamma3-new} and using {\rm (${\rm H_1}$)} we obtain  \eqref{eq: assertionfund-newnew}{\rm (i)}. This concludes the proof. 
\end{proof}

We now close this section with the proof of Corollary \ref{rem: scaling}.

\begin{proof}[Proof of Corollary \ref{rem: scaling}]
We suppose that   $A', A, B \in \mathcal{A}_0(\Omega)$ with $A'  \subset \subset  A$ are given such that $\rho A',   \rho A, \rho B \subset \Omega$. Let $U=A' \cup B$. Let $M$ be the constant of Lemma \ref{lemma: fundamental estimate2} and  $M_1$, $M_2$, $\delta$ be the constants of Lemma \ref{lemma: fundamental estimate2-new}  (applied for $\psi(t) = t$). For brevity, set $C_{A',A,B}= \mathcal{H}^{d-1}(\partial A' \cup \partial A \cup \partial B)$.

Given $\mathcal{F}: PR(\Omega) \times \mathcal{B}(\Omega) \to [0,\infty)$ satisfying {\rm (${\rm H_1}$)}, \BBB {\rm (${\rm H_3}$)}--{\rm (${\rm H_4}$)} and {\rm (${\rm H_5}^\prime$)}\EEE, we define $\mathcal{F}^\rho: PR(\rho^{-1}\Omega) \times \mathcal{B}(\rho^{-1}\Omega) \to [0,\infty)$ by
\begin{align}\label{eq: scalin1}
\mathcal{F}^\rho(z,B) = \rho^{-(d-1)} \mathcal{F}(z_\rho,\rho B) 
\end{align}
for all $z \in PR(\rho^{-1}\Omega)$ and $B \in \mathcal{B}(\rho^{-1}\Omega)$, where $z_\rho \in PR(\Omega)$ is defined by $z_\rho(x):= z(x/\rho)$. Then it is elementary to check that also $\mathcal{F}^\rho$ satisfies {\rm (${\rm H_1}$)}, \BBB {\rm (${\rm H_3}$)}--{\rm (${\rm H_4}$)} and {\rm (${\rm H_5}^\prime$)}.\EEE

Let $u_\rho \in PR(\rho A)$ and $v_\rho \in PR(\rho B)$. We define $u \in PR(A)$ by $u(x) = u_\rho(\rho x)$ and $v \in PR(B)$ by $v(x) = v_\rho(\rho x)$. Note that a scaling argument yields
\begin{align}\label{eq: scalin2}
\rho^{-d}\Vert u_\rho - v_\rho \Vert_{L^1(\rho(A\setminus  A')\cap \rho B)} = \Vert u - v \Vert_{L^1((A\setminus  A')\cap B)}.
\end{align}
 Assumption \eqref{eq: extra condition-new} and  \eqref{eq: scalin2} imply
\begin{align*}
MM_1\Vert u - v \Vert_{L^1((A\setminus  A')\cap B)} & =  \rho^{-d}MM_1\Vert u_\rho - v_\rho \Vert_{L^1((\rho A\setminus \rho A')\cap \rho B)}\\
& \le   \Phi(\rho A', \, \rho A'\cup \rho B; \, v_\rho|_{\rho B \setminus \rho \overline{A'}}, \, \rho^d \delta) =  \Phi(A', A'\cup B;  v  |_{B \setminus  \overline{A'}}, \delta). 
\end{align*}
We apply Lemma \ref{lemma: fundamental estimate2-new} on $u$ and $v$ for $\psi(t)=t$ and $\mathcal{F}^\rho$, where we note that 
in this case $\Lambda(z_1,z_2) = M\Vert z_1-z_2 \Vert_{L^1((A \setminus A')\cap B)}$, see Lemma \ref{lemma: fundamental estimate2}.  We obtain $w \in PR(A' \cup B)$ such that
\begin{align*}
{\rm (i)}& \ \ \mathcal{F}^\rho(w, A' \cup B) \le  \mathcal{F}^\rho(u,A)  +  \mathcal{F}^\rho(v, B) \notag \\
 & \ \ \ \ \ \ \ \ \  \ \ \ \ \ \ \ \ \ \ \ + \big(C_{A',A,B} + \mathcal{F}^\rho(u,A)  + \mathcal{F}^\rho(v, B) \big) \Big(2\eta + M_2\sigma\big(M_2\Vert u - v\Vert_{L^1((A\setminus A')\cap B)}\big) \Big), \notag \\ 
{\rm (ii)}& \ \   \BBB \Vert \min\lbrace |w - u|, |w-v| \rbrace \Vert_{L^\infty(A' \cup B)} \EEE \le M_2 \Vert u - v \Vert_{L^1( (  A\setminus   A')\cap B)},\notag\\
{\rm (iii)}& \ \ w = v \text{ on } B\setminus A. 
\end{align*}
Define  $w_\rho \in PR(\rho A' \cup \rho B)$ by $w_\rho(x) = w( x/\rho)$. Then \eqref{eq: extra condition-new-new} follows from the estimates on $w$ along with \eqref{eq: scalin1}--\eqref{eq: scalin2}. 
\end{proof}

\section{Integral representation in $PR(\Omega)$}\label{sec: representation}

This section is devoted to the proof of Theorem \ref{theorem: PR-representation}.  In Section \ref{sec: representation1} we show how Theorem \ref{theorem: PR-representation} can be deduced from two auxiliary lemmas whose proofs are given  in Section \ref{sec: representation2}. In Section \ref{sec: representation3}  we also present a generalization which will be instrumental in Section \ref{sec: gamma}.

\subsection{Proof of Theorem \ref{theorem: PR-representation}}\label{sec: representation1}

Let  $\mathcal{F}: PR(\Omega) \times \mathcal{A}(\Omega) \to [0,\infty)$ and $u \in PR(\Omega)$. We first state that $\mathcal{F}$ is equivalent to $\mathbf{m}_{\mathcal{F}}$ (see \eqref{eq: general minimization}) in the sense that the two quantities have the same Radon-Nykodym derivative with respect to $\mathcal{H}^{d-1}\lfloor_{J_u \cap \Omega}$.

\begin{lemma}\label{lemma: G=m}
Suppose that $\mathcal{F}$ satisfies {\rm (${\rm H_1}$)}--{\rm (${\rm H_4}$)}. Let $u \in PR(\Omega)$ and   $\mu = \mathcal{H}^{d-1}\lfloor_{J_u \cap \Omega}$. Then for $\mu$-a.e.\ $x_0 \in \Omega$ we have
 $$\lim_{\eps \to 0}\frac{\mathcal{F}(u,B_\eps(x_0))}{\mu(B_\eps(x_0))} =  \lim_{\eps \to 0}\frac{\mathbf{m}_{\mathcal{F}}(u,B_\eps(x_0))}{\mu(B_\eps(x_0))}.$$
\end{lemma}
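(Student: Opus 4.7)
First I would split the claim into the two inequalities $\limsup_{\eps \to 0} \mathbf{m}_\mathcal{F}(u, B_\eps(x_0))/\mu(B_\eps(x_0)) \le \lim_{\eps \to 0} \mathcal{F}(u, B_\eps(x_0))/\mu(B_\eps(x_0))$ and the reverse with $\liminf$. The first is almost immediate: by (${\rm H_4}$) the Borel measure $\mathcal{F}(u,\cdot)$ satisfies $\mathcal{F}(u, \cdot) \le \beta \mu$, so the Besicovitch differentiation theorem applied to $\mathcal{F}(u, \cdot)$ and $\mu$ provides a density $f(x_0) \in [\alpha, \beta]$ for $\mu$-a.e.\ $x_0$, and admissibility of $u$ itself in the minimization defining $\mathbf{m}_\mathcal{F}(u, B_\eps(x_0))$ yields the pointwise bound $\mathbf{m}_\mathcal{F}(u, B_\eps(x_0)) \le \mathcal{F}(u, B_\eps(x_0))$.

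For the nontrivial direction I would argue by contradiction via a Vitali--Besicovitch covering. If the reverse inequality failed on a set of positive $\mu$-measure, a decomposition in $\eta=1/j$ would produce $\eta > 0$ and a Borel $S \subset J_u$ with $\mu(S) > 0$ on which $\liminf_\eps \mathbf{m}_\mathcal{F}(u, B_\eps(x))/\mu(B_\eps(x)) \le f(x) - \eta$. After further restricting $S$ to the $\mu$-full subset where the Besicovitch limit is attained and where $\mu$ has $\mathcal{H}^{d-1}$-density one, I would consider, for each $k \ge 1$, the family $\mathcal{G}_k$ of balls $B_\eps(x)$ with $x \in S$ and $0 < \eps < 1/k$ satisfying simultaneously $\mathcal{H}^{d-1}(J_u \cap \partial B_\eps(x)) = 0$, $|\mathcal{F}(u, B_\eps(x))/\mu(B_\eps(x)) - f(x)| \le \eta/8$, and $\mathbf{m}_\mathcal{F}(u, B_\eps(x))/\mu(B_\eps(x)) \le f(x) - \eta/2$. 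Such balls exist with arbitrarily small radii at every point of the refined $S$, so $\mathcal{G}_k$ is a fine cover; the Vitali--Besicovitch theorem then extracts a pairwise disjoint countable subfamily $\{B_{r_i}(x_i)\}_i \subset \mathcal{G}_k$ with $\mu(S \setminus \bigcup_i B_{r_i}(x_i)) = 0$. For each $i$ I would select an admissible competitor $v_i \in PR(\Omega)$ with $v_i = u$ in a neighborhood of $\partial B_{r_i}(x_i)$ (and, without loss of generality, $v_i = u$ outside $B_{r_i}(x_i)$) realizing $\mathcal{F}(v_i, B_{r_i}(x_i)) \le (f(x_i) - \eta/4)\,\mu(B_{r_i}(x_i))$.

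Next I would set $w_k := \sum_i v_i \chi_{B_{r_i}(x_i)} + u \chi_{\Omega \setminus \bigcup_i B_{r_i}(x_i)} \in PR(\Omega)$, which is well posed thanks to the boundary matching. Using (${\rm H_1}$), (${\rm H_3}$), and the fact that neither $u$ nor $w_k$ jumps across any of the chosen spheres $\partial B_{r_i}(x_i)$, the telescopic decomposition
\begin{equation*}
\mathcal{F}(w_k, \Omega) = \mathcal{F}(u, \Omega) + \sum_i \bigl[\mathcal{F}(v_i, B_{r_i}(x_i)) - \mathcal{F}(u, B_{r_i}(x_i))\bigr] \le \mathcal{F}(u, \Omega) - \tfrac{\eta}{8}\,\mu(S)
\end{equation*}
follows from the selected energy controls together with $\sum_i \mu(B_{r_i}(x_i)) \ge \mu(S)$. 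Moreover, since $r_i < 1/k$ and $\omega_d r^d \le c\, r\,\mu(B_r(x))$ at $\mu$-density-one points of $J_u$, the union of the balls has $\mathcal{L}^d$-measure bounded by $(C/k)\,\mu(\Omega) \to 0$, so $w_k \to u$ in measure. Lower semicontinuity (${\rm H_2}$) then forces $\mathcal{F}(u, \Omega) \le \liminf_k \mathcal{F}(w_k, \Omega) \le \mathcal{F}(u, \Omega) - (\eta/8)\mu(S)$, the desired contradiction.

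The hard part will be the simultaneous selection of radii satisfying all three conditions entering $\mathcal{G}_k$ and the careful measure-theoretic bookkeeping in the global decomposition of $\mathcal{F}(w_k, \Omega)$ (in particular, verifying that no spurious boundary contributions appear on $\bigcup_i \partial B_{r_i}(x_i)$). Note that only (${\rm H_1}$)--(${\rm H_4}$) are invoked here; (${\rm H_5}$) is not needed for this lemma and will only enter later when the density $f(x_0)$ is identified with the formula~\eqref{eq:gdef}.
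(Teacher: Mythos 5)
Your proof is correct and takes a genuinely more streamlined, self-contained route than the paper. The paper factors the argument through the auxiliary object $\mathbf{m}^*_{\mathcal{F}}$ of \eqref{eq: delta-def}--\eqref{eq: m*}: Lemma~\ref{lemma: G=m-2} establishes $\mathcal{F}=\mathbf{m}^*_{\mathcal{F}}$ by the same cover--glue--lower-semicontinuity mechanism you employ, and Lemma~\ref{lemma: flaviana} then converts this identity into the Radon--Nikod\'ym equality by invoking \cite[Lemmas 4.2--4.3]{Conti-Focardi-Iurlano:15} as a black box. You merge both steps into a single Vitali--Besicovitch contradiction argument, avoiding the $\mathbf{m}^*$ bookkeeping and the external reference; what the paper's route buys in exchange is modularity (the $\mathbf{m}^*$ identity is reused) and a shorter write-up by citation. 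The two technical points you flag are real but standard and handled identically on either route: for the simultaneous radius selection, use that your condition~(2) holds for all sufficiently small $\eps$ while condition~(3) holds along a sequence $\eps_j\to 0$, then perturb each $\eps_j$ upward to avoid the countably many bad radii, absorbing the increase in $\mathbf{m}_{\mathcal{F}}$ by the continuity of $\eps'\mapsto\mathcal{F}(u,B_{\eps'}\setminus\overline{B_{\eps_j}})$ and of $\eps'\mapsto\mu(B_{\eps'})$ (use (${\rm H_4}$)); for the boundary bookkeeping, it is cleanest to first pass to a \emph{finite} subfamily carrying at least $\mu(S)/2$ --- then $\bigcup_{j\neq i}\overline{B_{r_j}(x_j)}$ meets $\partial B_{r_i}(x_i)$ only on an $\mathcal{H}^{d-1}$-null sphere--sphere intersection, so $w_k=u$ in a full neighbourhood of $\mathcal{H}^{d-1}$-a.e.\ point of $\partial B_{r_i}(x_i)$, whence $\mathcal{H}^{d-1}(J_{w_k}\cap\partial B_{r_i}(x_i))=\mathcal{H}^{d-1}(J_{u}\cap\partial B_{r_i}(x_i))=0$ and the telescoping decomposition holds exactly. (The same finite truncation implicitly underlies the decomposition \eqref{eq: to show-flaviana2} in the paper's proof of Lemma~\ref{lemma: G=m-2}.) Your closing remark that (${\rm H_5}$) is unused here and only enters for the identification of the density with \eqref{eq:gdef} matches the paper's division of labour between Lemma~\ref{lemma: G=m} and Lemma~\ref{lemma: minsame}.
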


We defer the proof of Lemma \ref{lemma: G=m} to Section \ref{sec: representation2}. 
The second ingredient is that, asymptotically as $\eps \to 0$, the minimization problems $\mathbf{m}_{\mathcal{F}}(u,B_\eps(x_0))$ and $\mathbf{m}_{\mathcal{F}}(\bar{u}_{x_0},B_\eps(x_0))$ coincide for $\mathcal{H}^{d-1}$-a.e.\ $x_0 \in J_u$, where we write $\bar{u}_{x_0} := u_{x_0,[u](x_0),\nu_u(x_0)}$ for brevity, see \eqref{eq: jump competitor}.

\begin{lemma}\label{lemma: minsame}
Suppose that $\mathcal{F}$ satisfies {\rm (${\rm H_1}$)} and  {\rm (${\rm H_3}$)}--{\rm (${\rm H_5}$)}. Then for $\mathcal{H}^{d-1}$-a.e.\ $x_0 \in J_u$  we have
\begin{align}\label{eq: PR-proof1}
  \lim_{\eps \to 0}\frac{\mathbf{m}_{\mathcal{F}}(u,B_\eps(x_0))}{\omega_{d-1}\eps^{d-1}} =  \limsup_{\eps \to 0}\frac{\mathbf{m}_{\mathcal{F}}(\bar{u}_{x_0},B_\eps(x_0))}{\omega_{d-1}\eps^{d-1}}. 
  \end{align}
\end{lemma}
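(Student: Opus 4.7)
The plan is to invoke the scaled fundamental estimate (Corollary \ref{rem: scaling}) in both directions, comparing near-minimizers of the two Dirichlet problems on a fixed-width transition annulus inside $B_\eps(x_0)$. First I would pick a good point $x_0 \in J_u$ where: (a) the conclusions of Lemma \ref{lemma: blow up} hold, in particular \eqref{eq: blow up-new}(i),(ii),(vi), so that with $\bar{u}_{x_0}^* := u^-(x_0)+\bar{u}_{x_0}$ one has $\eps^{-d}\Vert u - \bar{u}_{x_0}^*\Vert_{L^1(B_\eps(x_0))} \to 0$; (b) $[u](x_0)\neq 0$; (c) Lemma \ref{lemma: G=m} applies for $\mu = \mathcal{H}^{d-1}\lfloor_{J_u\cap\Omega}$, and since the $(d-1)$-density of $\mu$ equals $1$ at $\mathcal{H}^{d-1}$-a.e.\ point of $J_u$, the left-hand side $L := \lim_{\eps\to 0} \mathbf{m}_{\mathcal{F}}(u,B_\eps(x_0))/(\omega_{d-1}\eps^{d-1})$ exists. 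Since $\mathcal{F}$ depends only on jumps, $\mathbf{m}_{\mathcal{F}}(\cdot,B_\eps(x_0))$ is invariant under the addition of constants, so after a translation we may assume $u^-(x_0) = 0$, whence $\bar{u}_{x_0}^* = \bar{u}_{x_0}$.

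For a small parameter $\theta > 0$ I would set $A' := B_{1-2\theta}(0)$, $A := B_{1-\theta}(0)$, $B := B_1(0)\setminus\overline{B_{1-3\theta}(0)}$, so that $\rho A' \cup \rho B = B_\eps(x_0)$ for $\rho = \eps$, and apply Corollary \ref{rem: scaling}. For the direction $L \le \limsup_\eps \mathbf{m}_{\mathcal{F}}(\bar{u}_{x_0},B_\eps(x_0))/(\omega_{d-1}\eps^{d-1})$, take a near-minimizer $w_\eps$ of $\mathbf{m}_{\mathcal{F}}(\bar{u}_{x_0},B_\eps(x_0))$ and apply the corollary with $u_\rho = w_\eps|_{\rho A}$ and $v_\rho = u|_{\rho B}$; the resulting function agrees with $u$ in an outer shell, is admissible for $\mathbf{m}_{\mathcal{F}}(u,B_\eps(x_0))$, and the estimate yields
\begin{equation*}
\mathbf{m}_{\mathcal{F}}(u,B_\eps(x_0)) \le \mathcal{F}(w_\eps, B_\eps(x_0)) + \mathcal{F}(u, \rho B) + E_\eps,
\end{equation*}
with $E_\eps = O\bigl((\eps^{d-1} + \mathcal{F}(w_\eps,B_\eps(x_0)) + \mathcal{F}(u, \rho B))(\eta + \sigma(\cdot))\bigr)$. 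By (H4) and Lemma \ref{lemma: blow up}(ii) both $\mathcal{F}(w_\eps, B_\eps(x_0))$ and $\mathcal{F}(u, \rho B)$ are $O(\eps^{d-1})$, with $\mathcal{F}(u,\rho B) \le C\theta\eps^{d-1}$ for small $\eps$. Dividing by $\omega_{d-1}\eps^{d-1}$, passing to $\liminf_\eps$ (which equals $L$), and letting first $\eta \downarrow 0$ and then $\theta \downarrow 0$ gives the inequality. The reverse direction is symmetric: for a near-minimizer $v_\eps$ of $\mathbf{m}_{\mathcal{F}}(u,B_\eps(x_0))$, apply the corollary with $u_\rho = v_\eps|_{\rho A}$, $v_\rho = \bar{u}_{x_0}|_{\rho B}$, and use $\mathcal{F}(\bar{u}_{x_0}, \rho B) = O(\theta\eps^{d-1})$ together with the fact that $L$ is a genuine limit on the left-hand side.

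Two conditions remain to be checked. The structural condition \eqref{eq: extra condition-new} demands that $\Phi$, the $L^\infty$-distance between distinct rigid motions on the large components of $v_\rho|_{\rho B \setminus \rho\overline{A'}}$, dominates $\rho^{-d} M M_1 \Vert u_\rho - v_\rho\Vert_{L^1}$. For $v_\rho = \bar{u}_{x_0}$ the only large components (for $\rho^d\delta \ll |\rho B|$) are the two half-annuli, on which $\bar{u}_{x_0}$ takes the constant values $0$ and $[u](x_0)$, so $\Phi = |[u](x_0)| > 0$; for $v_\rho = u$ the relevant large components are $P_i \cap \rho B$ and $P_j \cap \rho B$ arising from Lemma \ref{lemma: blow up}(i), and the corresponding rigid motions $q_i, q_j$ satisfy $\Vert q_i - q_j\Vert_{L^\infty(B_\eps(x_0))} \to |u^+(x_0) - u^-(x_0)| = |[u](x_0)| > 0$, so $\Phi$ remains bounded away from zero for small $\eps$. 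The main obstacle is the smallness of $\rho^{-d}\Vert u_\rho - v_\rho\Vert_{L^1}$ on the transition annulus when $u_\rho$ is an arbitrary near-minimizer: the growth bound (H4) affords no $L^\infty$-control on $PR$-competitors, so this quantity need not tend to zero a priori. I would resolve this by invoking the truncation techniques developed in Section \ref{sec: truncation} to replace each near-minimizer by a function whose $L^\infty$-norm is controlled by that of the boundary data of its Dirichlet problem (namely $\bar{u}_{x_0}$ or $u$, both bounded in rescaled $L^1$ near $x_0$), with only a negligible change in energy. Once this truncation is in place, item (a) furnishes $\rho^{-d}\Vert u_\rho - v_\rho\Vert_{L^1} \to 0$, (H5) and $\sigma(0) = 0$ make the $\sigma$-term in $E_\eps$ vanish, and combining with the bounds above completes the argument.
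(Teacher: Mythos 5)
Your overall strategy (blow-up at a good point, scaled fundamental estimate, verify $\Phi$ via \eqref{eq: blow up-new}) is the right one, but there is a genuine gap in the way you set up the comparison, and the fix you propose introduces hypotheses the lemma does not have.

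The obstacle you identify at the end is real, but it is self-inflicted: you take the near-minimizer (say $w_\eps$ for $\mathbf{m}_{\mathcal{F}}(\bar u_{x_0},B_\eps(x_0))$) on the \emph{full} ball $B_\eps(x_0)$, and then the transition annulus $\eps(A\setminus A')$ sits in the interior of the ball where $w_\eps$ is unconstrained, so $\eps^{-d}\Vert w_\eps - u\Vert_{L^1(\eps(A\setminus A'))}$ has no reason to be small. The paper's proof sidesteps this entirely: it picks the near-minimizer $z_\eps$ for $\mathbf{m}_{\mathcal{F}}(\bar u_{x_0}, B_{(1-3\theta)\eps}(x_0))$ on a \emph{strictly smaller} ball and extends it to all of $B_\eps(x_0)$ by $\bar u_{x_0}$. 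With $A'=B_{1-2\theta}$, $A=B_{1-\theta}$, $B=B_1\setminus\overline{B_{1-3\theta}}$ the transition annulus $\eps(A\setminus A')$ lies outside $B_{(1-3\theta)\eps}$, so there $z_\eps \equiv \bar u_{x_0}$. The $L^1$ smallness required by \eqref{eq: extra condition-new} then reduces to $\eps^{-d}\Vert \bar u_{x_0}-u_\eps\Vert_{L^1}\to 0$, which is exactly \eqref{eq: blow up-new}(vi). No truncation is needed, and the argument stays within $(\mathrm{H}_1),(\mathrm{H}_3)$--$(\mathrm{H}_5)$.

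Your proposed remedy via Section \ref{sec: truncation} would not be admissible here: Lemma \ref{lemma: truncation} requires $(\mathrm{H}_6)$, which is not among the hypotheses of Lemma \ref{lemma: minsame}, and it is proved only for $L=\R^{d\times d}_{\rm skew}$ and $L=SO(d)$, $d=2,3$, while Lemma \ref{lemma: minsame} holds for general $L$. Invoking it would silently shrink the scope of the lemma.

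A secondary deviation: you pair with $v_\rho = u|_{\rho B}$ directly, whereas the paper uses the cleaned-up blow-up competitor $u_\eps$ from Lemma \ref{lemma: blow up}, which by \eqref{eq: blow up-new}(iii) equals $q_i\chi_{P_i}+q_j\chi_{P_j}$ on $\eps A$ and by \eqref{eq: blow up-new}(iv) equals $u$ near $\partial B_\eps(x_0)$. This makes the set $J$ of large components in \eqref{eq: touching cond} contain at most $\{i,j\}$ (thanks to \eqref{eq: blow up-new}(i)), so $\Phi\ge |[u](x_0)|/2$ for small $\eps$ is immediate; at the same time it preserves admissibility for $\mathbf{m}_{\mathcal{F}}(u,B_\eps(x_0))$ via the auxiliary identity $\mathbf{m}_{\mathcal{F}}(u_\eps,B_\eps(x_0))=\mathbf{m}_{\mathcal{F}}(u,B_\eps(x_0))$ used in Step 2 of the paper's proof. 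Working with $u$ directly you have to worry about the small indecomposable components of $u$ near $x_0$, which, while of asymptotically vanishing volume, make the bookkeeping for $\Phi$ less clean. Once you adopt the paper's placement of the near-minimizer on the smaller ball and use $u_\eps$ in place of $u$, the rest of your outline goes through.
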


We   defer the proof of Lemma \ref{lemma: minsame} also to Section \ref{sec: representation2}  and now proceed to  prove Theorem \ref{theorem: PR-representation}.

\begin{proof}[Proof of Theorem \ref{theorem: PR-representation}]
We need to show that for $\mathcal{H}^{d-1}$-a.e.\ $x_0 \in J_u$ one has 
$$\frac{\mathrm{d}\mathcal{F}(u,\cdot)}{\mathrm{d}\mathcal{H}^{d-1}\lfloor_{J_u}}(x_0) = f(x_0,[u](x_0),\nu_u(x_0)),$$
 where $f$ was defined in \eqref{eq:gdef}.  By Lemma \ref{lemma: G=m} and the fact that  $\lim_{\eps \to 0} (\omega_{d-1}\eps^{d-1})^{-1}\mu(B_\eps(x_0))=1$ for $\mathcal{H}^{d-1}$-a.e.\ $x_0 \in J_u$  we deduce 
 $$\frac{\mathrm{d}\mathcal{F}(u,\cdot)}{\mathrm{d}\mathcal{H}^{d-1}\lfloor_{J_u}}(x_0)  = \lim_{\eps \to 0}\frac{\mathcal{F}(u,B_\eps(x_0))}{\mu(B_\eps(x_0))} =  \lim_{\eps \to 0}\frac{\mathbf{m}_{\mathcal{F}}(u,B_\eps(x_0))}{\mu(B_\eps(x_0))}  = \lim_{\eps \to 0}\frac{\mathbf{m}_{\mathcal{F}}(u,B_\eps(x_0))}{\omega_{d-1}\eps^{d-1}} < \infty$$
 for $\mathcal{H}^{d-1}$-a.e.\ $x_0 \in J_u$. The statement now follows   from \eqref{eq:gdef} and  Lemma \ref{lemma: minsame}.
 \end{proof}

\subsection{Proof of Lemmata \ref{lemma: G=m} and   \ref{lemma: minsame}}\label{sec: representation2}

For the proof of Lemma \ref{lemma: G=m}, we basically follow the lines  of \cite{BFLM, BDM, Conti-Focardi-Iurlano:15}, with the difference that the required compactness results are more delicate due to the weaker growth condition from below (see {\rm (${\rm H_4}$)}) compared to   \cite{BFLM, BDM, Conti-Focardi-Iurlano:15}. We start with some notation. We set $c_d$ as the dimensional constant
\[
c_d:=\frac12 \frac{\omega_{d-1}}{d\omega_d}\,.
\]
For $\delta>0$ and $A \in \mathcal{A}(\Omega)$  we define 
\begin{align}\label{eq: delta-def}
\mathbf{m}^\delta_{\mathcal{F}}(u,A) = \inf\Big\{  &\sum\nolimits_{i=1}^\infty \mathbf{m}_{\mathcal{F}}(u,B_i):  \ B_i \subset A   \text{ pairwise disjoint balls}, \,  \diam(B_i) \le \delta, \notag\\
 & \mathcal{H}^{d-1}(B_i \cap J_u)\ge c_d \mathcal{H}^{d-1}(\partial B_i),\   \mathcal{H}^{d-1}\Big( J_u \cap \Big( A \setminus \bigcup\nolimits_{i=1}^\infty B_i \Big)\Big) = 0\Big\} 
\end{align}
and, as $\mathbf{m}^\delta_{\mathcal{F}}(u,A) $ is decreasing in $\delta$, we can also introduce
\begin{align}\label{eq: m*}
\mathbf{m}^*_{\mathcal{F}}(u,A) =  \lim\nolimits_{\delta \to 0 } \mathbf{m}^\delta_{\mathcal{F}}(u,A).
\end{align}
Notice that the existence of coverings as in \eqref{eq: delta-def} follows from the Morse covering theorem (see, e.g., \cite[Theorem 1.147]{fonseca.leoni}),  provided one  observes  that at $\mathcal{H}^{d-1}$-a.e.\ $x\in J_u$,  there holds by rectifiability
\[
\lim_{\delta\to 0}\frac{\mathcal{H}^{d-1}(J_u\cap B_\delta(x))}{\mathcal{H}^{d-1}(\partial B_\delta(x))}=2c_d\,.
\]

 \begin{lemma}\label{lemma: flaviana}
Suppose that $\mathcal{F}$ satisfies {\rm (${\rm H_1}$)}, {\rm (${\rm H_3}$)}--{\rm (${\rm H_4}$)}. Let $u \in PR(\Omega)$ and   $\mu = \mathcal{H}^{d-1}\lfloor_{J_u \cap \Omega}$. If  $\mathcal{F}(u,A) = \mathbf{m}^*_{\mathcal{F}}(u,A)$ for all $A \in \mathcal{A}(\Omega)$, then for $\mu$-a.e.\ $x_0 \in \Omega$ we have
 $$\lim_{\eps \to 0}\frac{\mathcal{F}(u,B_\eps(x_0))}{\mu(B_\eps(x_0))} =  \lim_{\eps \to 0}\frac{\mathbf{m}_{\mathcal{F}}(u,B_\eps(x_0))}{\mu(B_\eps(x_0))}.$$
\end{lemma}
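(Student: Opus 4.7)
The plan is a Besicovitch-type differentiation argument that exploits the hypothesis $\mathcal{F}(u, \cdot) = \mathbf{m}^*_\mathcal{F}(u, \cdot)$ on $\mathcal{A}(\Omega)$. By {\rm (${\rm H_1}$)}, {\rm (${\rm H_3}$)}, and the upper bound in {\rm (${\rm H_4}$)}, the map $B \mapsto \mathcal{F}(u, B)$ extends to a finite Borel measure on $\Omega$ dominated by $\beta \mu$, hence absolutely continuous with respect to $\mu$. Besicovitch's differentiation theorem then yields a finite density
\[
g(x_0) := \lim_{\eps \to 0} \frac{\mathcal{F}(u, B_\eps(x_0))}{\mu(B_\eps(x_0))}
\]
for $\mu$-a.e.\ $x_0 \in \Omega$, with $\mathcal{F}(u, B) = \int_B g \, d\mu$. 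The trivial choice $v = u$ in \eqref{eq: general minimization} gives $\mathbf{m}_\mathcal{F}(u, B_\eps(x_0)) \le \mathcal{F}(u, B_\eps(x_0))$, and thus the $\limsup$ of $\mathbf{m}_\mathcal{F}(u, B_\eps(x_0))/\mu(B_\eps(x_0))$ is bounded by $g(x_0)$ for $\mu$-a.e.\ $x_0$.

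For the reverse $\liminf \ge g$ direction I would argue by contradiction. Suppose there exist $\eta > 0$ and a Borel set $E \subset \Omega$ with $\mu(E) > 0$ such that $\liminf_{\eps \to 0} \mathbf{m}_\mathcal{F}(u, B_\eps(x_0))/\mu(B_\eps(x_0)) < g(x_0) - 2\eta$ for every $x_0 \in E$. At $\mu$-a.e.\ $x_0 \in E$, the Lebesgue differentiation theorem applied to $g\mu$ and the $(d-1)$-rectifiability of $J_u$ yield $\mu(B_\eps(x_0))/(\omega_{d-1}\eps^{d-1}) \to 1$ and $\mu(B_\eps(x_0))^{-1} \int_{B_\eps(x_0)} g \, d\mu \to g(x_0)$. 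Since $c_d\,d\omega_d = \omega_{d-1}/2$, at such a point the density bound $\mu(B_\eps(x_0)) \ge c_d \mathcal{H}^{d-1}(\partial B_\eps(x_0))$ holds for all small $\eps$. Combining these, for every $\delta > 0$ and $\mu$-a.e.\ $x_0 \in E$ one finds arbitrarily small $\eps \in (0, \delta)$ satisfying the density condition as well as
\[
\mathbf{m}_\mathcal{F}(u, B_\eps(x_0)) \le \int_{B_\eps(x_0)} g \, d\mu - \eta \, \mu(B_\eps(x_0)).
\]

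Such balls form a fine cover of $\mu$-a.e.\ point of $E$, so the Vitali-Besicovitch covering theorem extracts a disjoint countable subfamily $(B_i)_{i \in I_1}$ exhausting $\mu$-a.e.\ of $E$. I would then cover $\mu$-a.e.\ of $(J_u \cap \Omega) \setminus \bigcup_{i \in I_1} \overline{B_i}$ by a disjoint family $(B_j)_{j \in I_2}$ of balls of diameter at most $\delta$, contained in the open set $\Omega \setminus \bigcup_{i \in I_1} \overline{B_i}$ and each satisfying the density condition; this is precisely the Morse-type construction already invoked to prove the existence of coverings admissible in \eqref{eq: delta-def}. The union is then admissible in the definition of $\mathbf{m}^\delta_\mathcal{F}(u, \Omega)$, and using $\mathbf{m}_\mathcal{F} \le \mathcal{F}$ on the $B_j$ together with $\sum_{i \in I_1} \mu(B_i) \ge \mu(E)$, we obtain
\begin{align*}
\mathbf{m}^\delta_\mathcal{F}(u, \Omega)
& \le \sum_{i \in I_1} \mathbf{m}_\mathcal{F}(u, B_i) + \sum_{j \in I_2} \mathbf{m}_\mathcal{F}(u, B_j) \\
& \le \sum_{i \in I_1} \Bigl(\int_{B_i} g \, d\mu - \eta \, \mu(B_i)\Bigr) + \sum_{j \in I_2} \int_{B_j} g \, d\mu \\
& \le \mathcal{F}(u, \Omega) - \eta \, \mu(E).
\end{align*}
Passing to $\delta \to 0$ gives $\mathbf{m}^*_\mathcal{F}(u, \Omega) \le \mathcal{F}(u, \Omega) - \eta \mu(E) < \mathcal{F}(u, \Omega)$, contradicting the hypothesis. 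Hence $\phi \ge g$ $\mu$-a.e., and together with the easy direction the limit exists and equals $g(x_0)$, which is the desired identity.

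The main obstacle I anticipate is the compatibility of the two sub-coverings in the third paragraph: the balls extracted by Vitali-Besicovitch on $E$ are optimal for the energy estimate, but they must be complemented by additional disjoint balls of diameter $\le \delta$ covering $\mu$-a.e.\ of the remaining jump set and still satisfying the density condition $\mathcal{H}^{d-1}(B \cap J_u) \ge c_d \mathcal{H}^{d-1}(\partial B)$. Since that density condition is a pointwise fine property that survives restriction to any open subset, and since the Morse-type construction used in the existence statement for $\mathbf{m}^\delta_\mathcal{F}$ can be performed inside $\Omega \setminus \bigcup_{i \in I_1} \overline{B_i}$, this reduces to verifying that $\mu$-a.e.\ point of $J_u$ outside the first family still admits arbitrarily small admissible balls contained in this open set; this is essentially automatic once one removes a further $\mu$-null set.
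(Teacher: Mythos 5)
Your proposal follows the same Besicovitch-differentiation strategy as the paper, which for this lemma simply defers to the corresponding arguments in \cite{Conti-Focardi-Iurlano:15}: absolute continuity of $\mathcal{F}(u,\cdot)$ with respect to $\mu$ via {\rm (${\rm H_1}$)}, {\rm (${\rm H_3}$)}, {\rm (${\rm H_4}$)}; Besicovitch for the density $g$; the trivial bound $\mathbf{m}_{\mathcal{F}}\le\mathcal{F}$ for one inequality; and a contradiction built from an admissible covering as in \eqref{eq: delta-def} with an energy deficit on $E$ for the other. So in outline you are reconstructing exactly the cited argument.

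There is, however, a concrete gap in the covering step, and it is precisely the point you flag as a ``main obstacle'' and then dismiss too quickly. For a countably infinite family $(B_i)_{i\in I_1}$, the set $\Omega\setminus\bigcup_{i\in I_1}\overline{B_i}$ is \emph{not} open in general, and the obstruction cannot be removed by discarding a $\mu$-null set: the accumulation set $\overline{\bigcup_i\overline{B_i}}\setminus\bigcup_i\overline{B_i}$ can carry positive $\mu$-measure. For instance, take $d=2$, $J_u=[0,1]\times\{0\}$, let $K\subset[0,1]$ be a fat Cantor set with $\mathcal{H}^1(K)>0$, and suppose $E=([0,1]\setminus K)\times\{0\}$; disjoint admissible balls whose closures lie inside the removed intervals can cover $\mu$-a.e.\ of $E$, yet they accumulate on all of $K\times\{0\}$, and no point of $K\times\{0\}$ admits a small ball disjoint from every $B_i$. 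At such points your Morse construction cannot produce the complementary balls, so the admissible covering in \eqref{eq: delta-def} cannot be completed, and the intended contradiction evaporates. The standard and inexpensive repair is a finite truncation: since $\mu$ is finite, pick a finite subfamily $(B_i)_{i=1}^N$ with $\sum_{i=1}^N\mu(B_i)\ge\mu(E)-\eps'$ (and, by the usual a.e.-radius argument, with $\mathcal{H}^{d-1}(J_u\cap\partial B_i)=0$). Then $\Omega\setminus\bigcup_{i=1}^N\overline{B_i}$ is genuinely open, Morse applies in it, and the estimate becomes $\mathbf{m}^\delta_{\mathcal{F}}(u,\Omega)\le\mathcal{F}(u,\Omega)-\eta\big(\mu(E)-\eps'\big)$, uniformly in $\delta$; letting $\delta\to0$ and then $\eps'\to0$ yields $\mathbf{m}^*_{\mathcal{F}}(u,\Omega)\le\mathcal{F}(u,\Omega)-\eta\mu(E)<\mathcal{F}(u,\Omega)$, contradicting the hypothesis as desired.
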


\begin{proof}
The statement follows by repeating exactly the arguments in   \cite[Proofs of Lemma 4.2 and Lemma 4.3]{Conti-Focardi-Iurlano:15}. \BBB Note that the assumption $\mathcal{F}(u,A) = \mathbf{m}^*_{\mathcal{F}}(u,A)$ enters the proof at the very end of \cite[Proof of Lemma 4.3]{Conti-Focardi-Iurlano:15} and replaces the application of \cite[Lemma 4.1]{Conti-Focardi-Iurlano:15}. \EEE
\end{proof}

In view of Lemma \ref{lemma: flaviana}, in order to see that $\mathcal{F}$ and $\mathbf{m}_{\mathcal{F}}$  have the same Radon-Nykodym derivative with respect to $\mathcal{H}^{d-1}\lfloor_{J_u \cap \Omega}$, it remains to show the following.

\begin{lemma}\label{lemma: G=m-2}
Suppose that $\mathcal{F}$ satisfies {\rm (${\rm H_1}$)}--{\rm (${\rm H_4}$)}. Then for all $u \in PR(\Omega)$ and all $A \in \mathcal{A}(\Omega)$ there holds $\mathcal{F}(u,A) = \mathbf{m}^*_{\mathcal{F}}(u,A)$. 
\end{lemma}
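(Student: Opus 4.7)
The plan is to prove the two inequalities separately, with the forward inequality following from using $u$ itself as a competitor and the reverse inequality from a gluing argument combined with the lower semicontinuity axiom {\rm (${\rm H_2}$)}.

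For the easy direction $\mathbf{m}^*_{\mathcal{F}}(u,A) \le \mathcal{F}(u,A)$, I would fix any admissible covering $\{B_i\}_i$ in the definition of $\mathbf{m}^\delta_{\mathcal{F}}(u,A)$. Using $u$ as a competitor in $\mathbf{m}_{\mathcal{F}}(u,B_i)$ trivially gives $\mathbf{m}_{\mathcal{F}}(u,B_i) \le \mathcal{F}(u,B_i)$. Since the $B_i$ are pairwise disjoint Borel subsets of $A$, axiom {\rm (${\rm H_1}$)} yields $\sum_i \mathcal{F}(u,B_i) = \mathcal{F}(u,\bigcup_i B_i)$; combining {\rm (${\rm H_4}$)} with the covering constraint $\mathcal{H}^{d-1}(J_u\cap(A\setminus \bigcup_i B_i))=0$ gives $\mathcal{F}(u,A\setminus \bigcup_i B_i)=0$, so $\mathcal{F}(u,\bigcup_i B_i)=\mathcal{F}(u,A)$. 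Taking infimum over coverings and then $\delta\to 0$ produces the claim.

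For the harder inequality $\mathcal{F}(u,A)\le \mathbf{m}^*_{\mathcal{F}}(u,A)$, I would build a recovery sequence via a diagonal construction. Given $\delta_n\to 0$, I would select a covering $\{B_i^n\}_i$ realising the infimum $\mathbf{m}^{\delta_n}_{\mathcal{F}}(u,A)$ up to an error $1/n$, and, for each $i$, a competitor $v_i^n\in PR(\Omega)$ with $v_i^n=u$ near $\partial B_i^n$ and $\mathcal{F}(v_i^n,B_i^n)\le \mathbf{m}_{\mathcal{F}}(u,B_i^n)+2^{-i}/n$. The glued function $v^n$ defined as $v_i^n$ on $B_i^n$ and as $u$ on $A\setminus \bigcup_i B_i^n$ belongs to $PR(\Omega)$ thanks to the boundary matching, and {\rm (${\rm H_1}$)}, {\rm (${\rm H_3}$)}, {\rm (${\rm H_4}$)} together give
$$\mathcal{F}(v^n,A)=\sum_i \mathcal{F}(v_i^n,B_i^n)+\mathcal{F}(u,A\setminus \textstyle\bigcup_i B_i^n)\le \mathbf{m}^{\delta_n}_{\mathcal{F}}(u,A)+2/n.$$

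The key geometric point, which I expect to be the main (though routine) obstacle, is to verify that $v^n\to u$ in measure so that {\rm (${\rm H_2}$)} applies. Writing $r_i^n$ for the radius of $B_i^n$, the density condition $\mathcal{H}^{d-1}(B_i^n\cap J_u)\ge c_d d\omega_d (r_i^n)^{d-1}$ together with disjointness of the balls yields
$$\sum_i (r_i^n)^{d-1}\le \frac{\mathcal{H}^{d-1}(J_u\cap A)}{c_d d\omega_d}<+\infty,$$
and combining this with $r_i^n\le \delta_n/2$ gives $\sum_i \mathcal{L}^d(B_i^n)\le \omega_d(\delta_n/2)\sum_i (r_i^n)^{d-1}\to 0$. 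Since $\{v^n\neq u\}\subset \bigcup_i B_i^n$, this forces $v^n\to u$ in measure on $A$. Lower semicontinuity {\rm (${\rm H_2}$)} then closes the argument:
$$\mathcal{F}(u,A)\le \liminf_{n\to\infty}\mathcal{F}(v^n,A)\le \liminf_{n\to\infty}\bigl(\mathbf{m}^{\delta_n}_{\mathcal{F}}(u,A)+2/n\bigr)=\mathbf{m}^*_{\mathcal{F}}(u,A).$$
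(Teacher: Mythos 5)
Your argument is correct and follows essentially the same route as the paper: the easy inequality is obtained by using $u$ as a competitor and the additivity of the measure $\mathcal{F}(u,\cdot)$, and the reverse inequality by gluing almost-minimizers on the Morse balls, showing that the total volume of the balls vanishes as $\delta_n\to 0$ (your computation with $\sum_i(r_i^n)^{d-1}$ is the direct-ball-geometry version of the paper's isoperimetric estimate), and invoking {\rm (${\rm H_2}$)}. The only point you should spell out, which the paper does explicitly, is that the glued function $v^n$ actually lies in $PR(\Omega)$: boundary matching alone does not ensure this, but since {\rm (${\rm H_4}$)} bounds $\sum_i \mathcal{H}^{d-1}(J_{v_i^n}\cap B_i^n)$ in terms of the (finite) energy sum $\sum_i\mathcal{F}(v_i^n,B_i^n)$, the glued partition is indeed Caccioppoli.
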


\begin{proof}
We follow the proof of \cite[Lemma 4.1]{Conti-Focardi-Iurlano:15} and only indicate the necessary changes. For each ball $B \subset A$ we have $\mathbf{m}_{\mathcal{F}}(u,B) \le \mathcal{F}(u,B)$ by definition. By {\rm (${\rm H_1}$)}  we  get $\mathbf{m}_{\mathcal{F}}^\delta(u,A) \le \mathcal{F}(u,A)$ for all $\delta>0$. This shows $\mathbf{m}_{\mathcal{F}}^*(u,A) \le \mathcal{F}(u,A)$, see \eqref{eq: m*}. 

We now address the reverse inequality.  We fix $A\in\mathcal{A}(\Omega)$ and  $\delta >0$. Let $(B^\delta_i)_i$ be balls as in the definition of $\mathbf{m}_{\mathcal{F}}^\delta(u,A)$ such that
\begin{align}\label{eq: to show-flaviana1}
\sum\nolimits_{i=1}^\infty \mathbf{m}_{\mathcal{F}}(u,B^\delta_i) \le \mathbf{m}_{\mathcal{F}}^\delta(u,A) + \delta.
\end{align}
By the definition of $\mathbf{m}_{\mathcal{F}}$, we find $v_i^\delta \in PR(B_i^\delta)$ such that $v_i^\delta = u$ in a neighborhood of $\partial B_i^\delta$ and 
\begin{align}\label{eq: new-GM2}
\mathcal{F}(v_i^\delta,B_i^\delta) \le \mathbf{m}_{\mathcal{F}}(u,B_i^\delta) + \delta \mathcal{L}^d(B_i^\delta).
\end{align}
We define 
\begin{align*}
v^\delta = \sum\nolimits_{i=1}^\infty v^\delta_i \chi_{B^\delta_i} + u \chi_{N_0^\delta}, 
\end{align*}
 where $N_0^\delta := \Omega \setminus \bigcup_{i=1}^\infty B^\delta_i$.  By \eqref{eq: to show-flaviana1}--\eqref{eq: new-GM2} and {\rm (${\rm H_4}$)} we get $\mathcal{H}^{d-1}(J_{v^\delta})<+\infty$. Thus,  by construction, we obtain $v^\delta \in PR(\Omega)$. Moreover, by (${\rm H_1}$), (${\rm H_3})$, and \eqref{eq: to show-flaviana1}--\eqref{eq: new-GM2} we have  
\begin{align}\label{eq: to show-flaviana2}
\mathcal{F}(v^\delta,A) &= \sum\nolimits_{i=1}^\infty\mathcal{F}(v_i^\delta,B_i^\delta)  + \mathcal{F}(u,N_0^\delta \cap A) \le  \sum\nolimits_{i=1}^\infty \big(\mathbf{m}_{\mathcal{F}}(u,B_i^\delta) + \delta \mathcal{L}^d(B_i^\delta)\big) \notag\\
&\le \mathbf{m}_{\mathcal{F}}^\delta(u,A) + \delta(1+\mathcal{L}^d(A)),    
\end{align}
where we also used  the fact that $\mathcal{H}^{d-1}(J_u \cap N_0^\delta \cap A)  =  \mathcal{F}(u,N_0^\delta \cap A) = 0$ by the definition of $(B^\delta_i)_i$  and (${\rm H_4})$.  We now  claim that $v^\delta \to u$ in measure. To prove this,  it suffices to show that
\[ 
\sum\nolimits_{i=1}^\infty\mathcal{L}^d(B_i^\delta) \to 0
\]
as $\delta \to 0$. The above limit ensues from the definition of the covering $(B^\delta_i)_i$, the isoperimetric inequality, and \eqref{eq: delta-def}, which yield \EEE 
\[
\begin{split}
\sum\nolimits_{i=1}^\infty\mathcal{L}^d(B_i^\delta) \le \sum\nolimits_{i=1}^\infty\mathcal{L}^d(B_i^\delta)^{\frac{1}d}\mathcal{L}^d(B_i^\delta)^{\frac{d-1}d}\le  c_{\pi,d}  \,  \delta \sum\nolimits_{i=1}^\infty\mathcal{H}^{d-1}(\partial B_i^\delta)\\
\le  \frac{ c_{\pi,d}  }{c_d}\, \delta \sum\nolimits_{i=1}^\infty\mathcal{H}^{d-1}( B_i^\delta  \cap J_u)\le  \frac{ c_{\pi,d}  }{c_d}\, \delta \,\mathcal{H}^{d-1}(J_u) \to 0,
\end{split}
\]
 where $c_{\pi,d}$ denotes the isoperimetric constant.  With this, using (${\rm H_2}$),  \eqref{eq: m*},  and  \eqref{eq: to show-flaviana2} we get the required inequality  $\mathbf{m}_{\mathcal{F}}^*(u,A) \ge \mathcal{F}(u,A)$   in the limit as $\delta \to 0$.
 This concludes the proof.
\end{proof}

\begin{proof}[Proof of Lemma \ref{lemma: G=m}.] 
The combination of Lemma \ref{lemma: flaviana} and Lemma \ref{lemma: G=m-2}  yields the result.
\end{proof}

We now turn our attention to Lemma \ref{lemma: minsame}. Our goal is to show that, asymptotically as $\eps \to 0$, the minimization problems $\mathbf{m}_{\mathcal{F}}(u,B_\eps(x_0))$ and $\mathbf{m}_{\mathcal{F}}(\bar{u}_{x_0},B_\eps(x_0))$ coincide for $\mathcal{H}^{d-1}$-a.e.\ $x_0 \in J_u$. Essentially, the argument relies on Lemma \ref{lemma: fundamental estimate2}, which allows us to join two piecewise rigid functions, and some properties of  piecewise rigid functions, see Lemma \ref{lemma: blow up}.

\begin{proof}[Proof of Lemma \ref{lemma: minsame}]
It suffices  to prove  \eqref{eq: PR-proof1}  for points $x_0 \in J_u$ where the statement of Lemma \ref{lemma: blow up} holds.

\emph{Step 1 (Inequality ``$\le$'' in \eqref{eq: PR-proof1}):}  We  fix $\eta>0$ and $\theta>0$.  Choose $z_\eps \in PR(B_{(1-3 \theta)\eps}(x_0))$ with $z_\eps = \bar{u}_{x_0}$ in a neighborhood of $\partial B_{(1-3\theta)\eps}(x_0)$ and
\begin{align}\label{eq:repr1+}
\mathcal{F}(z_\eps,B_{(1-3\theta)\eps}(x_0)) \le \mathbf{m}_{\mathcal{F}}(\bar{u}_{x_0},B_{(1-3\theta)\eps}(x_0)) + \eps^d.
\end{align}
We extend $z_\eps$ to a function in $PR(B_\eps(x_0))$ by setting  $z_\eps = \bar{u}_{x_0}$ outside $B_{(1-3\theta)\eps}(x_0)$. Let $(u_\eps)_\eps$ be the sequence given by Lemma \ref{lemma: blow up}.   We now want to apply Corollary \ref{rem: scaling} on $z_\eps$ (in place of $u_\rho$) and $u_\eps$ (in place of $v_\rho$) for $\eta$, $\rho= \eps$, $A' = B_{1-2\theta}(x_0)$,  $A = B_{1-\theta}(x_0)$, and $B = B_1(x_0) \setminus  \overline{B_{1-4\theta}(x_0)} $. 

To be in a position for applying  Corollary \ref{rem: scaling},  we must first check that in fact \eqref{eq: extra condition-new} holds for $\eps$ sufficiently small.  Let $\delta$ be the constant provided by Lemma \ref{lemma: fundamental estimate2-new}.
Now, for the given $x_0 \in J_u$, consider the components $P_i$ and $P_j$ provided  by Lemma  \ref{lemma: blow up} satisfying $x_0 \in \partial^* P_i \cap \partial^* P_j$.  Note that $u_\eps= q_i  \chi_{P_i}+ q_j \chi_{P_j}$ on $\eps A$, see \eqref{eq: blow up-new}(iii). Notice that $P_i \cup P_j$ might not form a Caccioppoli partition of $\eps A'\cup \eps B$. However, the remaining  components contained in  $(\eps A'\cup \eps B) \setminus (P_i \cup P_j)$, if nonempty, do not belong to the  index  set $J$ in \eqref{eq: touching cond} (with $\eps^d \delta$ in place of $\delta$, cf.\ \eqref{eq: extra condition-new})  for small values of $\eps$. Indeed, \eqref{eq: blow up-new}(i) implies
\[
\lim_{\eps \to 0} \frac{\mathcal{L}^d\left( (\eps A'\cup \eps B) \setminus (P_i \cup P_j)\right)}{\eps^d}=0\,.
\]
Hence, $J$ contains at most the indices $i$ and $j$.   Now, on the one hand, we find $\Vert q_i- q_j \Vert_{L^\infty(\eps A'\cup \eps B)} \ge |[u(x_0)]|/2$ for $\eps$ sufficiently small. By \eqref{eq: Psi} and \eqref{eq: touching cond} this yields 
$$\Phi\big(\eps A',\eps A'\cup \eps B; u_\eps|_{\eps B \setminus \eps \overline{A'}},\eps^d \delta\big)\ge |[u(x_0)]|/2$$ 
for $\eps$ sufficiently small.  On the other hand, \eqref{eq: blow up-new}(vi) and the fact that $z_\eps = \bar{u}_{x_0}$ on $\eps(A\setminus A')$ imply
\begin{align}\label{eq:repr8}
\lim_{\eps \to 0}\frac{1}{\eps^d}\int_{\eps (A\setminus A')} |z_\eps - u_\eps|\,\mathrm{d}x = 0. 
\end{align}
This shows that  \eqref{eq: extra condition-new} holds  with $z_\eps$ in place of $u_\rho$ and $u_\eps$ in place of $v_\rho$, for $\eps$ sufficiently small.

By  \eqref{eq: extra condition-new-new} there exist  functions  $w_\eps \in PR(B_\eps(x_0))$ such that $w_\eps = u_\eps$ on $B_\eps(x_0) \setminus B_{(1-\theta)\eps}(x_0)$ and 
\begin{align*}
\mathcal{F}&(w_\eps, B_\eps(x_0)) \le   \mathcal{F}(z_\eps,\eps A)  + \mathcal{F}(u_\eps, \eps B) \\
&    + (\mathcal{F}(z_\eps,\eps A)  + \mathcal{F}(u_\eps, \eps B) + 3d\omega_d\eps^{d-1}) \cdot   \big(2\eta +  M_2  \sigma\big( \eps^{-d} M_2  \Vert z_\eps - u_\eps \Vert_{L^1(\eps (A \setminus A'))}  \big)\big),
\end{align*}
where  $M_2$ is the constant of Lemma \ref{lemma: fundamental estimate2-new}. In particular, $w_\eps = u$ in a neighborhood of $\partial B_\eps(x_0)$ by \eqref{eq: blow up-new}(iv).  Using \eqref{eq:repr8} and the fact that $\lim_{t \to 0}\sigma(t)=0$ we find a sequence $(\rho_\eps)_\eps$ with $\rho_\eps \to 0$ such that 
\begin{align}\label{eq: rep2}
\mathcal{F}(w_\eps, B_\eps(x_0)) \le   (1 + 2\eta + \rho_\eps)\big(\mathcal{F}(z_\eps,\eps A)  + \mathcal{F}(u_\eps, \eps B)\big) +  3  d\omega_d\eps^{d-1} (  2  \eta + \rho_\eps).
\end{align}
Using that  $z_\eps = \bar{u}_{x_0}$ on  $B_{\eps}(x_0) \setminus B_{(1-3\theta)\eps}(x_0) \subset \eps B $,  {\rm (${\rm H_1}$)},  {\rm (${\rm H_4}$)},  \eqref{eq: jump competitor}, and \eqref{eq:repr1+} we compute  
\begin{align}\label{eq: rep1}
\limsup_{\eps\to 0}\frac{\mathcal{F}(z_\eps,\eps A)}{\eps^{d-1}} &\le \limsup_{\eps\to 0} \frac{\mathcal{F}(z_\eps,B_{(1- 3 \theta)\eps}(x_0))}{\eps^{d-1}}+  \limsup_{\eps\to 0} \frac{\mathcal{F}(\bar{u}_{x_0},\eps B)}{\eps^{d-1}}\notag\\
&  \le  \limsup_{\eps\to 0} \frac{\mathbf{m}_{\mathcal{F}}(\bar{u}_{x_0},B_{(1-3\theta)\eps}(x_0))}{\eps^{d-1}} + \omega_{d-1}\left[1-(1-4\theta)^{d-1}\right]\beta \notag \\
&\le  (1-3\theta)^{d-1}\limsup_{\eps'\to 0} \frac{\mathbf{m}_{\mathcal{F}}(\bar{u}_{x_0},B_{\eps'}(x_0))}{(\eps')^{d-1}} + \omega_{d-1}\left[1-(1-4\theta)^{d-1}\right]\beta,
\end{align}
where in the  step we substituted $(1-3\theta)\eps$ by $\eps'$. By \eqref{eq: blow up-new}(ii),(v), {\rm (${\rm H_4}$)}, and $B = B_1(x_0) \setminus  \overline{B_{1-4\theta}(x_0)}$ we also find
\begin{align}\label{eq: rep1XXX}
\limsup_{\eps \to 0} \frac{\mathcal{F}(u_\eps, \eps B)}{\eps^{d-1}} \le \limsup_{\eps \to 0} \frac{\mathcal{F}(u, \eps B) }{\eps^{d-1}}\le \omega_{d-1}\left[1-(1-4\theta)^{d-1}\right]\beta. 
\end{align}
Recall that  $w_\eps = u$ in a neighborhood of $\partial B_\eps(x_0)$. This together with  \eqref{eq: rep2}--\eqref{eq: rep1XXX} and  $\rho_\eps \to 0$ yields
\begin{align}\label{eq: cocnlusi}
 \lim\nolimits_{\eps \to 0}\frac{\mathbf{m}_{\mathcal{F}}(u,B_\eps(x_0))}{\omega_{d-1}\eps^{d-1}} &\le  \limsup\nolimits_{\eps \to 0}\frac{\mathcal{F}(w_\eps, B_\eps(x_0))}{\omega_{d-1}\eps^{d-1}} \notag \\
 & \le (1+2\eta) \Big( (1-3\theta)^{d-1} \limsup\nolimits_{\eps \to 0}  \frac{\mathbf{m}_{\mathcal{F}}(\bar{u}_{x_0},B_\eps(x_0))}{\omega_{d-1}\eps^{d-1}}\Big) \notag\\
& \ \  \ + 2(1+2\eta) \left[1-(1-4\theta)^{d-1}\right]\beta+6d\frac{\omega_d}{\omega_{d-1}}\eta.
\end{align}
Passing to $\eta,\theta \to 0$ we obtain  inequality ``$\le$'' in \eqref{eq: PR-proof1}.

\emph{Step 2 (Inequality ``$\ge$'' in \eqref{eq: PR-proof1}):} \BBB Let $(u_\eps)_\eps$ be again the sequence from Lemma \ref{lemma: blow up}.  Since $u_\eps = u$ in a neighborhood of $\partial B_{\eps}(x_0)$ by \eqref{eq: blow up-new}(iv), we get 
\begin{align}\label{eq: auxiliary step}
\mathbf{m}_{\mathcal{F}}(u_\eps,B_\eps(x_0))=   \mathbf{m}_{\mathcal{F}}(u,B_\eps(x_0)) 
\end{align}
for all $\eps >0$. \EEE
%where To see this, let $\theta>0$ and let $z_\eps \in PR(B_{(1-\theta)\eps}(x_0))$ with $z_\eps = u$ in a neighborhood of $\partial B_{(1-\theta)\eps}(x_0)$ such that 
%\begin{align}\label{eq:repr1-newnew}
%\mathcal{F}(z_\eps,B_{(1-\theta)\eps}(x_0)) \le \mathbf{m}_{\mathcal{F}}(u,B_{(1-\theta)\eps}(x_0)) + \eps^d.
%\end{align}
%We extend $z_\eps$ to a function in $PR(B_\eps(x_0))$ by setting  $z_\eps = u$ outside $B_{(1-\theta)\eps}(x_0)$. In view of \eqref{eq: blow up-new}(iv), this implies that $z_\eps = u_\eps$ in a neighborhood of $\partial B_\eps(x_0)$ and therefore  by \eqref{eq:repr1-newnew} 
%$$\mathbf{m}_{\mathcal{F}}(u_\eps,B_\eps(x_0)) \le \mathcal{F}(z_\eps,B_\eps(x_0)) \le  \mathbf{m}_{\mathcal{F}}(u,B_{(1-\theta)\eps}(x_0)) + \eps^d+ \mathcal{F}\big(u,B_\eps(x_0)\setminus B_{(1-\theta)\eps}(x_0) \big).$$
%Thus, we get by  \eqref{eq: blow up-new}(ii)  and {\rm (${\rm H_4}$)}
%\begin{align*}
%\limsup_{\eps \to 0} \frac{\mathbf{m}_{\mathcal{F}}(u_\eps,B_\eps(x_0))}{\omega_{d-1}\eps^{d-1}} &\le  \limsup_{\eps \to 0} \frac{\mathbf{m}_{\mathcal{F}}(u,B_{(1-\theta)\eps}(x_0))}{\omega_{d-1}\eps^{d-1}} + \left[1-(1-\theta)^{d-1}\right]\beta\\
%& \le (1-\theta)^{d-1}\limsup_{\eps' \to 0} \frac{\mathbf{m}_{\mathcal{F}}(u,B_{\eps'}(x_0))}{\omega_{d-1}(\eps')^{d-1}} +  \left[1-(1-\theta)^{d-1}\right]\beta,
%\end{align*}
%where  we substituted $(1-\theta)\eps$ by $\eps'$.   Since $\theta$ was arbitrary, we obtain \eqref{eq: auxiliary step}. 
With \eqref{eq: auxiliary step} at hand, the proof is now very similar to Step 1, and   we only indicate the main changes. Fix $\eta>0$, $\theta>0$, and choose $z_\eps \in PR(B_{(1- 3 \theta)\eps}(x_0))$ with $z_\eps = u_\eps$ in a neighborhood of $\partial B_{(1-3\theta)\eps}(x_0)$ such that
\begin{align}\label{eq:repr1-new}
\mathcal{F}(z_\eps,B_{(1-3\theta)\eps}(x_0)) \le \mathbf{m}_{\mathcal{F}}(u_\eps,B_{(1-3\theta)\eps}(x_0)) + \eps^d.
\end{align}
We extend $z_\eps$ to a function in $PR(B_\eps(x_0))$ by setting  $z_\eps = u_\eps$ outside $B_{(1-3\theta)\eps}(x_0)$. We apply Corollary \ref{rem: scaling} on $z_\eps$ (in place of $u_\rho$) and $\bar{u}_{x_0}$ (in place of $v_\rho$) for the same sets as in Step 1.  We observe $\Phi(\eps A',\eps A'\cup \eps B; \bar{u}_{x_0}|_{\eps B \setminus \eps \overline{A'}},\eps^d \delta) = |[u(x_0)]|$ and, as $z_\eps = u_\eps$ on $\eps(A\setminus A')$,  \eqref{eq: blow up-new}(vi) yields 
\begin{align}\label{eq:repr8-new}
\lim_{\eps \to 0}\frac{1}{\eps^d}\int_{\eps (A\setminus A')} |z_\eps - \bar{u}_{x_0}|\,\mathrm{d}x = 0. 
\end{align}
Thus,  \eqref{eq: extra condition-new} holds for $\eps$ sufficiently small. By  \eqref{eq: extra condition-new-new} there exist functions  $w_\eps \in PR(B_\eps(x_0))$ such that $w_\eps = \bar{u}_{x_0}$ on $B_\eps(x_0) \setminus B_{(1-\theta)\eps}(x_0)$ and 
\begin{align*}
\mathcal{F}&(w_\eps, B_\eps(x_0)) \le   \mathcal{F}(z_\eps,\eps A)  + \mathcal{F}(\bar{u}_{x_0}, \eps B) \\
&    + (\mathcal{F}(z_\eps,\eps A)  + \mathcal{F}( \bar{u}_{x_0},  \eps B) + 3d\omega_d\eps^{d-1}) \cdot   \big(2\eta +  M_2  \sigma\big( \eps^{-d}  M_2  \Vert z_\eps -  \bar{u}_{x_0}  \Vert_{L^1(\eps (A \setminus A'))}  \big)\big).
\end{align*}
Similar to Step 1, cf.\ \eqref{eq: rep2},  using \eqref{eq:repr8-new}  we find a sequence $(\rho_\eps)_\eps$ with $\rho_\eps \to 0$ such that 
\begin{align}\label{eq: rep2-new}
\mathcal{F}(w_\eps, B_\eps(x_0)) \le   (1 + 2\eta + \rho_\eps)\big(\mathcal{F}(z_\eps,\eps A)  + \mathcal{F}(\bar{u}_{x_0}, \eps B)\big) +  3  d\omega_d\eps^{d-1} (  2  \eta + \rho_\eps).
\end{align}
Repeating the arguments  in  \eqref{eq: rep1}--\eqref{eq: rep1XXX},  in particular using that  $z_\eps = u_\eps$ on  $B_{\eps}(x_0) \setminus B_{(1-3\theta)\eps}(x_0)$,    and  using  \eqref{eq:repr1-new} we derive  
\begin{align}\label{eq: rep1-new}
\limsup_{\eps\to 0}\frac{\mathcal{F}(z_\eps,\eps A)}{\eps^{d-1}}\le (1- 3  \theta)^{d-1}\limsup_{\eps\to 0} \frac{\mathbf{m}_{\mathcal{F}}(u_\eps,B_{\eps}(x_0))}{\eps^{d-1}} + \omega_{d-1}\left[1-(1-4\theta)^{d-1}\right]\beta.
\end{align}
Estimating $\mathcal{F}(\bar{u}_{x_0}, \eps B)$ as in \eqref{eq: rep1}, with \eqref{eq: rep2-new}--\eqref{eq: rep1-new} and     $\rho_\eps \to 0$ we then obtain   
\begin{align*}
 \limsup\nolimits_{\eps \to 0}\frac{\mathcal{F}(w_\eps, B_\eps(x_0))}{\omega_{d-1}\eps^{d-1}} & 
 \le (1+2\eta) \Big( (1-3\theta)^{d-1} \limsup\nolimits_{\eps \to 0}  \frac{\mathbf{m}_{\mathcal{F}}(u_\eps,B_\eps(x_0))}{\omega_{d-1}\eps^{d-1}}\Big) \\
& \ \ \ + 2(1+2\eta) \left[1-(1-4\theta)^{d-1}\right]\beta+6d\frac{\omega_d}{\omega_{d-1}}\eta.
\end{align*}
Passing to $\eta,\theta \to 0$ and recalling that  $w_\eps = \bar{u}_{x_0}$ in a neighborhood of $\partial B_\eps(x_0)$ we derive 
$$\limsup_{\eps \to 0}\frac{\mathbf{m}_{\mathcal{F}}(\bar{u}_{x_0},B_\eps(x_0))}{\omega_{d-1}\eps^{d-1} }\le   \limsup_{\eps \to 0} \frac{\mathbf{m}_{\mathcal{F}}(u_\eps,B_\eps(x_0))}{ \omega_{d-1}\eps^{d-1}  }.$$
 This along with \eqref{eq: auxiliary step}  shows   inequality ``$\ge$'' in \eqref{eq: PR-proof1}.
 \end{proof}

\subsection{A useful generalization}\label{sec: representation3} 
We now formulate a generalization of Theorem \ref{theorem: PR-representation} which will be instrumental in Section \ref{sec: gamma} below. Suppose that we have a sequence of functionals $\mathcal{F}_n: PR(\Omega) \times \mathcal{B}(\Omega) \to [0,\infty)$  satisfying {\rm (${\rm H_1}$)}, {\rm (${\rm H_3}$)}--{\rm (${\rm H_5}$)} \emph{uniformly}, i.e., for the same $0 <\alpha < \beta$ and $\sigma: [0,+\infty) \to [0,\beta]$. 

Let $\mathcal{F}:  PR(\Omega)\times \mathcal{B}(\Omega) \to [0,\infty]$ be a functional satisfying {\rm (${\rm H_1}$)}--{\rm (${\rm H_4}$)}. Later, we will show that these conditions will be guaranteed when $\mathcal{F}$ is the $\Gamma$-limit of the sequence $(\mathcal{F}_n)_n$. If we additionally assume \eqref{eq: condition-new-new}, we have the following generalization of Theorem \ref{theorem: PR-representation}.

 \begin{corollary}\label{corollary: PR-representation-tilde}
Consider a sequence $(\mathcal{F}_n)_n$ satisfying {\rm (${\rm H_1}$)}, {\rm (${\rm H_3}$)}--{\rm (${\rm H_5}$)} uniformly and $\mathcal{F}$ satisfying  {\rm (${\rm H_1}$)}--{\rm (${\rm H_4}$)}. Assume that \eqref{eq: condition-new-new}   holds.  Then $\mathcal{F}$ admits the representation
$$\mathcal{F}(u,B) = \int_{J_u\cap  B} f(x,[u](x),\nu_u(x))\, d\mathcal{H}^{d-1}(x)$$
for all $u \in PR(\Omega)$ and  $B \in \mathcal{B}(\Omega)$, with $f(x,\xi, \nu)$  given by \eqref{eq:gdef}.
\end{corollary}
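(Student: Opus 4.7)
The plan is to mimic the structure of the proof of Theorem \ref{theorem: PR-representation} while compensating for the missing assumption {\rm (${\rm H_5}$)} on $\mathcal{F}$: I will run the joining constructions at the level of each $\mathcal{F}_n$ (which does satisfy {\rm (${\rm H_5}$)}, uniformly in $n$) and then use the two halves of \eqref{eq: condition-new-new} to transfer the resulting comparisons back to $\mathcal{F}$. Since $\mathcal{F}$ satisfies {\rm (${\rm H_1}$)}--{\rm (${\rm H_4}$)}, Lemma \ref{lemma: G=m} is available and yields, at $\mathcal{H}^{d-1}\lfloor_{J_u}$-a.e.\ $x_0\in J_u$,
$$L(x_0):=\lim_{\eps\to 0}\frac{\mathbf{m}_{\mathcal{F}}(u,B_\eps(x_0))}{\omega_{d-1}\eps^{d-1}}=\frac{\mathrm{d}\mathcal{F}(u,\cdot)}{\mathrm{d}\mathcal{H}^{d-1}\lfloor_{J_u}}(x_0).$$
Writing $\bar u_{x_0}:=u_{x_0,[u](x_0),\nu_u(x_0)}$ and recalling \eqref{eq:gdef}, the representation will follow once I establish $L(x_0)=f(x_0,[u](x_0),\nu_u(x_0))$ at $\mathcal{H}^{d-1}\lfloor_{J_u}$-a.e.\ $x_0\in J_u$.

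To produce this identity I will replay both steps of the proof of Lemma \ref{lemma: minsame} but now applied to each $\mathcal{F}_n$. For Step~1, given $\eta,\theta>0$ and $\eps'>0$, I pick $z_{\eps'}^{(n)}\in PR(B_{(1-3\theta)\eps'}(x_0))$ as a near-minimizer of $\mathbf{m}_{\mathcal{F}_n}(\bar u_{x_0},B_{(1-3\theta)\eps'}(x_0))$, extend by $\bar u_{x_0}$, and join it with the blow-up sequence $u_{\eps'}$ of Lemma \ref{lemma: blow up} via the scaled fundamental estimate (Corollary \ref{rem: scaling}) applied to $\mathcal{F}_n$. The resulting competitor agrees with $u$ near $\partial B_{\eps'}(x_0)$ and yields
$$\mathbf{m}_{\mathcal{F}_n}(u,B_{\eps'}(x_0))\le \bigl(1+c(\eta,\theta,\eps')\bigr)\mathbf{m}_{\mathcal{F}_n}(\bar u_{x_0},B_{(1-3\theta)\eps'}(x_0))+c'(\eta,\theta)\,\omega_{d-1}(\eps')^{d-1},$$
where the error constants $c,c'\to 0$ as $\eps',\eta,\theta\to 0$ and are $n$-uniform because the growth constants in {\rm (${\rm H_4}$)} and the modulus $\sigma$ in {\rm (${\rm H_5}$)} feeding Corollary \ref{rem: scaling} are uniform in $n$ by hypothesis. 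Step~2, with the roles of $u$ and $\bar u_{x_0}$ exchanged and with $z_{\eps'}^{(n)}$ a near-minimizer of $\mathbf{m}_{\mathcal{F}_n}(u_{\eps'},B_{(1-3\theta)\eps'}(x_0))$, yields the symmetric inequality, cast via the same scaling and the identity \eqref{eq: auxiliary step} as in the proof of Lemma \ref{lemma: minsame}.

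The main obstacle, and the only place where \eqref{eq: condition-new-new} enters, is precisely the passage from these $\mathcal{F}_n$-level bounds to bounds for $\mathbf{m}_{\mathcal{F}}$. Starting from the Step~1 inequality I take $\liminf_n$ on both sides (the error constants being $n$-uniform); on the right-hand side, the lower-bound half of \eqref{eq: condition-new-new} applied to $\bar u_{x_0}$, which reads $\liminf_n\mathbf{m}_{\mathcal{F}_n}(\bar u_{x_0},\cdot)\le\limsup_n\mathbf{m}_{\mathcal{F}_n}(\bar u_{x_0},\cdot)\le\mathbf{m}_{\mathcal{F}}(\bar u_{x_0},\cdot)$, replaces $\mathbf{m}_{\mathcal{F}_n}$ by $\mathbf{m}_{\mathcal{F}}$; on the left, taking $\sup_{\eps'<\eps}$ and applying the upper-bound half $\mathbf{m}_{\mathcal{F}}(u,B_\eps)\le\sup_{\eps'<\eps}\liminf_n\mathbf{m}_{\mathcal{F}_n}(u,B_{\eps'})$ produces the desired $\mathbf{m}_{\mathcal{F}}(u,B_\eps)$. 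Dividing by $\omega_{d-1}\eps^{d-1}$, using the trivial bound $((1-3\theta)\eps'/\eps)^{d-1}\le(1-3\theta)^{d-1}$ for $\eps'<\eps$, passing to $\eps\to 0$ (which turns $\sup_{\eps'<\eps}$ into $\limsup_{\eps'\to 0}$) and then letting $\eta,\theta\to 0$ yields $L\le f$. The symmetric passage, starting from the Step~2 bound and swapping the two halves of \eqref{eq: condition-new-new}, produces $f\le L$. Combining the two gives $L=f$ at $\mathcal{H}^{d-1}$-a.e.\ $x_0\in J_u$, and the representation follows by the Radon--Nikodym identification recalled above.
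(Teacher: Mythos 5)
Your argument is correct and follows essentially the same path as the paper's Corollary \ref{cor: minsame}: run the joining construction of Lemma \ref{lemma: minsame} at the $\mathcal{F}_n$ level (exploiting the $n$-uniformity of ${\rm (H_4)}$--${\rm (H_5)}$ in Corollary \ref{rem: scaling} and the $n$-independence of the quantities entering \eqref{eq: extra condition-new}), transfer from $\mathbf{m}_{\mathcal{F}_n}$ to $\mathbf{m}_{\mathcal{F}}$ with the two inequalities in \eqref{eq: condition-new-new}, and conclude via Lemma \ref{lemma: G=m} as in the proof of Theorem \ref{theorem: PR-representation}. The only deviation is organizational: the paper diagonalizes by fixing $\eps'(\eps)$ and $n(\eps)$ upfront, whereas you keep $n$ and $\eps'$ independent and take $\liminf_{n}$ followed by $\sup_{\eps'<\eps}$ at the end, which amounts to the same thing.
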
 

We emphasize that we cannot apply  directly Theorem \ref{theorem: PR-representation} on $\mathcal{F}$, since we do not  assume {\rm (${\rm H_5}$)}. The idea is to prove equality in Lemma \ref{lemma: minsame} for $\mathcal{F}$  by  using the corresponding properties for $\mathcal{F}_n$.  
To prove Corollary \ref{corollary: PR-representation-tilde}, we need the following preliminary result, which is itself a corollary of Lemma \ref{lemma: minsame}. In the statement, we write again $\bar{u}_{x_0} := u_{x_0,[u](x_0),\nu_u(x_0)}$ for brevity, see \eqref{eq: jump competitor}.

\begin{corollary}\label{cor: minsame}
Consider a sequence $(\mathcal{F}_n)_n$ satisfying {\rm (${\rm H_1}$)}, {\rm (${\rm H_3}$)}--{\rm (${\rm H_5}$)} uniformly. \BBB  Assume \EEE that   \eqref{eq: condition-new-new}  holds. Let $u \in PR(\Omega) $.  Then  for $\mathcal{H}^{d-1}$-a.e.\ $x_0 \in J_u$  we have
\[
  \lim_{\eps \to 0}\frac{\mathbf{m}_{\mathcal{F}}(u,B_\eps(x_0))}{\omega_{d-1}\eps^{d-1}} =  \limsup_{\eps \to 0}\frac{\mathbf{m}_{\mathcal{F}}(\bar{u}_{x_0},B_\eps(x_0))}{\omega_{d-1}\eps^{d-1}}. 
\]
\end{corollary}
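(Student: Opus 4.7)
The plan is to adapt the argument of Lemma~\ref{lemma: minsame} to the present setting where $\mathcal{F}$ itself need not satisfy (${\rm H_5}$), by running the fundamental estimate on the approximating functionals $\mathcal{F}_n$ (for which the uniform (${\rm H_5}^\prime$) is available) and transferring the resulting bounds back to $\mathcal{F}$ via \eqref{eq: condition-new-new}. First I observe that Lemma~\ref{lemma: G=m} (together with Lemma~\ref{lemma: G=m-2}) applies to $\mathcal{F}$ itself, since only (${\rm H_1}$)--(${\rm H_4}$) are needed; combined with $\mathcal{F}(u, \cdot) \ll \mathcal{H}^{d-1}\lfloor_{J_u}$ this yields the existence of $\lim_{\eps \to 0} \mathbf{m}_{\mathcal{F}}(u, B_\eps(x_0))/(\omega_{d-1}\eps^{d-1})$ for $\mathcal{H}^{d-1}$-a.e.\ $x_0 \in J_u$ as the corresponding Radon--Nikodym derivative. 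Hence the task reduces to proving that this limit equals $f(x_0) := \limsup_{\eps \to 0} \mathbf{m}_{\mathcal{F}}(\bar{u}_{x_0}, B_\eps(x_0))/(\omega_{d-1}\eps^{d-1})$. I then invoke Lemma~\ref{lemma: minsame} for each $\mathcal{F}_n$ and take a countable intersection of full-measure sets to fix $x_0 \in J_u$ at which the blow-up properties of Lemma~\ref{lemma: blow up} hold and, simultaneously for every $n \in \mathbb{N}$,
\[
\lim_{\eps\to 0} \frac{\mathbf{m}_{\mathcal{F}_n}(u, B_\eps(x_0))}{\omega_{d-1}\eps^{d-1}} \;=\; \limsup_{\eps\to 0} \frac{\mathbf{m}_{\mathcal{F}_n}(\bar{u}_{x_0}, B_\eps(x_0))}{\omega_{d-1}\eps^{d-1}} \;=:\; f_n(x_0).
\]

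For the upper bound $\lim_\eps \mathbf{m}_{\mathcal{F}}(u, B_\eps(x_0))/(\omega_{d-1}\eps^{d-1}) \le f(x_0)$, I would mimic Step~1 of the proof of Lemma~\ref{lemma: minsame}, but using $\mathcal{F}_n$ in the fundamental estimate. Fixing small $\theta, \eta > 0$, for each $n$ and $\eps$ I select a near-minimizer $z_\eps^n \in PR(B_{(1-3\theta)\eps}(x_0))$ of $\mathbf{m}_{\mathcal{F}_n}(\bar{u}_{x_0}, B_{(1-3\theta)\eps}(x_0))$, extended by $\bar{u}_{x_0}$ outside, and join it with the sequence $u_\eps$ from Lemma~\ref{lemma: blow up} via Corollary~\ref{rem: scaling} applied to $\mathcal{F}_n$. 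The resulting $w_\eps^n$ satisfies $w_\eps^n = u$ in a neighborhood of $\partial B_\eps(x_0)$ and
\[
\mathbf{m}_{\mathcal{F}_n}(u, B_\eps(x_0)) \;\le\; (1 + 2\eta + \rho_\eps)\big[\mathbf{m}_{\mathcal{F}_n}(\bar{u}_{x_0}, B_{(1-3\theta)\eps}(x_0)) + C(\theta+\eta)\eps^{d-1}\big],
\]
with a modulus $\rho_\eps \to 0$ that is uniform in $n$, since the $L^1$-smallness entering Corollary~\ref{rem: scaling} depends only on $\|u_\eps - \bar{u}_{x_0}\|_{L^1}$ (controlled by Lemma~\ref{lemma: blow up}) and on the $\sigma$ from (${\rm H_5}^\prime$), which is the same for all $\mathcal{F}_n$. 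I then apply the right inequality of \eqref{eq: condition-new-new} for $u$ to write $\mathbf{m}_{\mathcal{F}}(u, B_\eps) \le \sup_{\eps' < \eps} \liminf_n \mathbf{m}_{\mathcal{F}_n}(u, B_{\eps'})$, and the left inequality of \eqref{eq: condition-new-new} for $\bar{u}_{x_0}$, which gives $\liminf_n \mathbf{m}_{\mathcal{F}_n}(\bar{u}_{x_0}, B_{\eps''}) \le \mathbf{m}_{\mathcal{F}}(\bar{u}_{x_0}, B_{\eps''})$. After dividing by $\omega_{d-1}\eps^{d-1}$, taking $\limsup_\eps$, and exploiting the fact that $\eps \mapsto \sup_{\eps'' < (1-3\theta)\eps} \mathbf{m}_{\mathcal{F}}(\bar{u}_{x_0}, B_{\eps''})/(\omega_{d-1}(\eps'')^{d-1})$ is increasing in $\eps$ with limit $f(x_0)$ as $\eps \to 0$, the inequality reduces to $\limsup_\eps \mathbf{m}_{\mathcal{F}}(u, B_\eps)/(\omega_{d-1}\eps^{d-1}) \le (1+2\eta)(1-3\theta)^{d-1} f(x_0) + C(\theta+\eta)$, and letting $\theta, \eta \to 0$ closes the estimate.

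For the reverse inequality $f(x_0) \le \lim_\eps \mathbf{m}_{\mathcal{F}}(u, B_\eps(x_0))/(\omega_{d-1}\eps^{d-1})$ I would carry out the dual construction of Step~2 of Lemma~\ref{lemma: minsame}: pick a near-minimizer of $\mathbf{m}_{\mathcal{F}_n}(u_\eps, B_{(1-3\theta)\eps}(x_0))$, join with $\bar{u}_{x_0}$ via Corollary~\ref{rem: scaling} for $\mathcal{F}_n$, and exploit the locality identity $\mathbf{m}_{\mathcal{F}_n}(u_\eps, B_\eps(x_0)) = \mathbf{m}_{\mathcal{F}_n}(u, B_\eps(x_0))$ from \eqref{eq: auxiliary step}. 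The right inequality of \eqref{eq: condition-new-new} for $\bar{u}_{x_0}$ and the left inequality for $u$ then play roles exactly symmetric to those in the upper bound. The main technical obstacle will be the non-commuting limits in $n$ and $\eps$: the precise $\sup_{\eps' < \eps} \liminf_n$ structure on the right-hand side of \eqref{eq: condition-new-new}, rather than a naive pointwise convergence $\mathbf{m}_{\mathcal{F}_n}(\cdot, B_\eps) \to \mathbf{m}_{\mathcal{F}}(\cdot, B_\eps)$, is tailored precisely to absorb the multiplicative factor $(1-3\theta)^{d-1}$ introduced by the fundamental estimate---via the substitution $\eps' = (1-3\theta)\eps$ and the monotonicity of $\eps \mapsto \sup_{\eps' < \eps}(\cdot)$---without requiring any uniform convergence or monotonicity of $\mathbf{m}_{\mathcal{F}_n}(\cdot, B_\eps)$ in $\eps$.
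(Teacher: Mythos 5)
Your proposal is correct and takes essentially the same route as the paper: run Steps 1 and 2 of Lemma~\ref{lemma: minsame} with $\mathcal{F}_n$ in place of $\mathcal{F}$ (exploiting the uniform (${\rm H_4}$), (${\rm H_5}$) to get the fundamental estimate with a modulus $\rho_\eps$ independent of $n$, since the $L^1$-error entering Corollary~\ref{rem: scaling} is $\|\bar{u}_{x_0}-u_\eps\|_{L^1}$, which does not depend on $n$), and then transfer to $\mathcal{F}$ via the two inequalities in \eqref{eq: condition-new-new}. The paper presents this by choosing a diagonal sequence $n(\eps)$ together with $\eps'(\eps)$ rather than first stating a uniform-in-$n$ bound and then applying \eqref{eq: condition-new-new}, but the content is identical; your preliminary invocation of Lemma~\ref{lemma: minsame} for each $\mathcal{F}_n$ and the resulting countable intersection defining $f_n(x_0)$ is superfluous, since the quantities $f_n(x_0)$ are never used and the only full-measure conditions actually needed are those of Lemma~\ref{lemma: blow up} (which depend only on $u$) together with the existence of the Radon--Nikodym derivative of $\mathcal{F}(u,\cdot)$, as you correctly obtain from Lemma~\ref{lemma: G=m}.
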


\begin{proof}
The proof is analogous to the one of Lemma \ref{lemma: minsame}. We therefore only highlight the adaptions for one inequality, see Step 1 above.  Fix $\eta, \theta >0$. First, by using \eqref{eq: condition-new-new}, for each $\eps$ we \BBB can choose first $\eps'(\eps) < \eps$  and then $n(\eps)\in  \N$, both depending on $\eps$, \EEE  such that 
\begin{align}\label{eq:repr-cor1}
{\rm (i)} & \ \ \mathbf{m}_{\mathcal{F}}(u,B_\eps(x_0)) \le \mathbf{m}_{{\mathcal{F}_{n(\eps)}}}(u,B_{\eps'  }(x_0)) + \eps^d,\notag\\
{\rm (i)} & \ \ \mathbf{m}_{{\mathcal{F}_{n(\eps)}}}(\bar{u}_{x_0},B_{(1-  3 \theta)  \eps'  }(x_0))  \le  \mathbf{m}_{\mathcal{F}}(\bar{u}_{x_0},B_{(1-  3  \theta)  \eps'  } (x_0))  + \eps^d.
\end{align}
\BBB In fact, first choose $\eps'(\eps) < \eps$ such that $\mathbf{m}_{{\mathcal{F}}}(u, B_\eps(x_0)) \le   \liminf_{n \to \infty} \mathbf{m}_{\mathcal{F}_n}(u, B_{\eps'}(x_0)) +  \eps^d/2$. Then, choose $n(\eps)$ depending on $\eps'$ (and thus on $\eps$) such that \eqref{eq:repr-cor1} holds. \EEE Choose $z_{ \eps' } \in PR(B_{(1-3\theta) \eps'  }(x_0))$ with $z_{ \eps' } = \bar{u}_{x_0}$ in a neighborhood of $\partial B_{(1-3\theta)  \eps' }(x_0)$ and
\begin{align*}
\mathcal{F}_{n(\eps)}(z_{ \eps' },B_{(1-3\theta)  \eps'  }(x_0)) \le \mathbf{m}_{\mathcal{F}_{n(\eps)}}(\bar{u}_{x_0},B_{(1-3\theta)  \eps' }(x_0)) + \eps^d.
\end{align*}
We proceed as in the proof of Lemma \ref{lemma: minsame}: we apply Corollary \ref{rem: scaling} to obtain $w_{ \eps' } \in PR(B_{ \eps' }(x_0))$ with $w_{ \eps' } = u_{ \eps' }$ on $B_{ \eps' }(x_0) \setminus B_{(1-\theta)  \eps'  }(x_0)$ which satisfies  (cf.\ \eqref{eq: rep2}) 
\begin{align}\label{eq:repr-cor3}
\mathcal{F}_{n(\eps)}(w_{ \eps' }, B_{ \eps' }(x_0)) \le   (1 + 2\eta + \rho_\eps)\big(\mathcal{F}_{n(\eps)}(z_{ \eps' },  \eps'  A)  + \mathcal{F}_{n(\eps)}(u_{ \eps' },  \eps'  B)\big) + 3  d\omega_d\eps^{d-1} ( 2  \eta + \rho_\eps)
\end{align}
for a sequence $\rho_\eps$ converging to zero, where we use that $(\mathcal{F}_n)_n$ satisfy {\rm (${\rm H_4}$)} and {\rm (${\rm H_5}$)} uniformly. Applying \eqref{eq:repr-cor1}(ii) and following the lines of \eqref{eq: rep1} we get 
\begin{align*}
\!\!\!\!\limsup_{\eps\to 0}\frac{\mathcal{F}_{n(\eps)}(z_{ \eps' },  \eps'  A) }{( \eps' )^{d-1}}&\le\limsup_{\eps\to 0} \frac{  \mathbf{m}_{\mathcal{F}_{n(\eps)}} (\bar{u}_{x_0},B_{(1-3\theta) \eps' }(x_0))}{ (\eps' )^{d-1}} + \omega_{d-1}\left[1-(1-4\theta)^{d-1}\right]\beta \notag \\
&\le  (1-3\theta)^{d-1}\!\limsup_{ \eps \EEE \to 0} \frac{\mathbf{m}_{\mathcal{F}}(\bar{u}_{x_0},B_{\eps''}(x_0))}{(\eps'')^{d-1}} +  \omega_{d-1}\left[1-(1-4\theta)^{d-1}\right]\beta,
\end{align*}
where we set $\eps''=  (1-3\theta)\eps'$,  and recall that $\eps'' = \eps''(\eps)$ depends on $\eps$. Admitting arbitrary sequence $\eps'' \to 0$, we do not decrease the right hand side. Therefore,
\begin{align}\label{eq:repr-cor4}
\!\!\!\!\limsup_{\eps\to 0}\frac{\mathcal{F}_{n(\eps)}(z_{ \eps' },  \eps'  A) }{( \eps' )^{d-1}}&\le(1-3\theta)^{d-1}\!\limsup_{\eps'' \to 0} \frac{\mathbf{m}_{\mathcal{F}}(\bar{u}_{x_0},B_{\eps''}(x_0))}{(\eps'')^{d-1}} +  \omega_{d-1}\left[1-(1-4\theta)^{d-1}\right]\beta\,.
\end{align}

We also get $\limsup_{\eps \to 0} (\eps')^{-(d-1)} \mathcal{F}_{n(\eps)} (u_{\eps' },  \eps'  B) \le \omega_{d-1}\left[1-(1-4\theta)^{d-1}\right]\beta $ by
\eqref{eq: blow up-new}(ii),(v) and the fact that  {\rm (${\rm H_4}$)} holds uniformly, cf.\ \eqref{eq: rep1XXX}. This together with \eqref{eq:repr-cor1}(i), \eqref{eq:repr-cor3}--\eqref{eq:repr-cor4},  $\eps' < \eps$, and   $w_{ \eps' } = u_{\eps'}  =  u$ in a neighborhood of $\partial B_{ \eps' }(x_0)$  now shows (cf.\ \eqref{eq: cocnlusi})
\begin{align*}
 \lim_{\eps \to 0}\frac{\mathbf{m}_{\mathcal{F}}(u,B_\eps(x_0))}{\omega_{d-1}\eps^{d-1}}& \le  \limsup\nolimits_{ \eps  \to 0}\frac{\mathbf{m}_{{\mathcal{F}_{n(\eps)}}}(u,B_{ \eps' }(x_0))}{\omega_{d-1} ( \eps' )^{d-1}} \le  \limsup\nolimits_{\eps \to 0}\frac{\mathcal{F}_{n(\eps)}(w_{ \eps' }, B_{ \eps' }(x_0))}{\omega_{d-1} ( \eps' )^{d-1}}  \\
 & \le (1+2\eta) \Big((1-3\theta)^{d-1}\! \limsup\nolimits_{\eps \to 0}  \frac{\mathbf{m}_{\mathcal{F}}(\bar{u}_{x_0},B_\eps(x_0))}{\omega_{d-1}\eps^{d-1}} \Big)\notag\\
& \ \ \ + 2(1+2\eta) \left[1-(1-4\theta)^{d-1}\right]\beta+6d\frac{\omega_d}{\omega_{d-1}}\eta.
\end{align*}
Passing to $\eta,\theta \to 0$ we obtain one inequality. To see the   reverse one, we follow the lines of Step 2 of the proof of Lemma \ref{lemma: minsame} and carry out similar adaptions. 
\end{proof}

We close  this section  by noting that Corollary \ref{corollary: PR-representation-tilde} follows from Corollary \ref{cor: minsame}, arguing exactly as in the proof of Theorem \ref{theorem: PR-representation}.  (Note that Lemma \ref{lemma: G=m} is applicable since $\mathcal{F}$ satisfies {\rm (${\rm H_1}$)}--{\rm (${\rm H_4}$)}.)

\section{$\Gamma$-convergence results for functionals on $PR(\Omega)$}\label{sec: gamma}

This section is devoted to the proof of Theorem \ref{th: gamma}.  Consider  a sequence of functionals $(\mathcal{F}_n)_n$   satisfying {\rm (${\rm H_1}$)} and {\rm (${\rm H_3}$)}--{\rm (${\rm H_5}$)} uniformly,  i.e.,  for the same $0 <\alpha < \beta$ and $\sigma: [0,+\infty) \to [0,\beta]$.  We will first identify a $\Gamma$-limit $\mathcal{F}$ with respect to the convergence in measure on $\Omega$. Then,  our goal is to obtain an integral representation of $\mathcal{F}$.  To this aim,  we apply the localization method for $\Gamma$-convergence together with the fundamental estimate in Lemma \ref{lemma: fundamental estimate2} to deduce that properties   {\rm (${\rm H_1}$)}--{\rm (${\rm H_4}$)} are satisfied. As mentioned before, we cannot prove directly that $\mathcal{F}$ satisfies   (${\rm H_5}$) and therefore the results of Subsections \ref{sec: representation1}, \ref{sec: representation2}  cannot be used.  To circumvent this problem, we  will use Corollary  \ref{corollary: PR-representation-tilde} to get the representation result. We will also eventually prove that  (${\rm H_5}$) is satisfied  by  showing that the integrand $f$ satisfies an equivalent property.

Consider a sequence of functionals $(\mathcal{F}_n)_n$ defined on $PR(\Omega)$. As a first step, we analyze fundamental properties of the $\Gamma$-liminf and $\Gamma$-limsup with respect to the topology of the convergence in measure. To this end, we define 
\begin{align}\label{eq: liminf-limsup}
\mathcal{F}'(u,A)&:=\Gamma-\liminf_{n \to \infty} \mathcal{F}_n(u,A)   = \inf \big\{ \liminf_{n \to \infty} \mathcal{F}_n(u_n,A):   \  u_n \to  u \hbox{ in measure on }  A  \big\}, \notag \\
\mathcal{F}''(u,A) &:= \Gamma-\limsup_{n \to \infty} \mathcal{F}_n(u,A)  = \inf \big\{  \limsup_{n \to \infty} \mathcal{F}_n(u_n,A):   \ u_n \to  u \hbox{ in measure on }  A \big\}
\end{align} 
for all $u \in PR(\Omega)$ and $A \in \mathcal{A}(\Omega)$.

\begin{lemma}[Properties of $\Gamma$-liminf and  $\Gamma$-limsup]\label{eq: liminflimsup-prop}
 Let $\Omega \subset \R^d$ open, bounded with Lipschitz boundary.  Let $\mathcal{F}_n: PR(\Omega) \times \mathcal{B}(\Omega) \to [0,\infty)$ be a sequence of functionals satisfying {\rm (${\rm H_1}$)}, {\rm (${\rm H_3}$)}--{\rm (${\rm H_5}$)} for the same $0 <\alpha < \beta$  and  $\sigma: [0,+\infty) \to [0,\beta]$. Define $\mathcal{F}'$ and $\mathcal{F}''$ as in \eqref{eq: liminf-limsup}. Then we have
\begin{align}\label{eq: infsup0}
{\rm (i)} & \ \ 
\mathcal{F}'(u,A) \le \mathcal{F}'(u,B), \ \ \ \ \ \  \mathcal{F}''(u,A) \le \mathcal{F}''(u,B) \ \ \ \text{ whenever } A \subset B, \notag \\
{\rm (ii)} & \ \ 
\alpha\mathcal{H}^{d-1}(J_u \cap A) \le \mathcal{F}'(u,A)    \le  \mathcal{F}''(u,A) \le \beta\mathcal{H}^{d-1}(J_u \cap A), \notag \\
{\rm (iii)} & \ \ 
\mathcal{F}'(u,A)  = \sup\nolimits_{B \subset \subset A} \mathcal{F}'(u,B),  \ \ \ \ \mathcal{F}''(u,A)  = \sup\nolimits_{B \subset \subset A} \mathcal{F}''(u,B)  \ \ \  \text{ whenever } A \in \mathcal{A}_0(\Omega),  \notag \\
{\rm (iv)} & \ \ 
\mathcal{F}'(u,A\cup B) \le  \mathcal{F}'(u,A) + \mathcal{F}''(u,B), \notag \\
& \ \   \mathcal{F}''(u,A\cup B) \le  \mathcal{F}''(u,A) + \mathcal{F}''(u,B)  \ \ \  \text{ whenever } A,B \in \mathcal{A}_0(\Omega). 
\end{align}

\end{lemma}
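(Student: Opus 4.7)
Properties (i) and (ii) follow directly from the axioms. For (i), (${\rm H_1}$) gives that $\mathcal{F}_n(u_n,\cdot)$ is a non-negative Borel measure, hence $\mathcal{F}_n(u_n,A) \le \mathcal{F}_n(u_n,B)$ whenever $A \subset B$, and convergence in measure on $B$ trivially restricts to convergence in measure on $A$; taking infima over recovery sequences yields both monotonicity statements. For the upper bound in (ii), one uses the constant sequence $u_n \equiv u$ together with (${\rm H_4}$). For the lower bound, any $u_n \to u$ in measure on $A$ with $\liminf_n \mathcal{F}_n(u_n, A) < \infty$ satisfies $\sup_n \mathcal{H}^{d-1}(J_{u_n}\cap A) < \infty$ by (${\rm H_4}$), and a localization of Lemma~\ref{lemma: PR compactness}(ii) (applied on Lipschitz subdomains exhausting $A$) gives $\mathcal{H}^{d-1}(J_u \cap A) \le \liminf_n \mathcal{H}^{d-1}(J_{u_n}\cap A) \le \alpha^{-1} \liminf_n \mathcal{F}_n(u_n,A)$.

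For (iii) I present the case of $\mathcal{F}''$; the $\mathcal{F}'$ case is identical. The inequality $\ge$ is from (i). For the reverse, fix $\eta > 0$, and using $\mathcal{H}^{d-1}(J_u) < \infty$ choose nested sets $B_3 \subset\subset B_2 \subset\subset B_1 \subset\subset B_0 \subset\subset A$ in $\mathcal{A}_0(\Omega)$ with $\beta\mathcal{H}^{d-1}(J_u \cap (A\setminus \overline{B_3})) < \eta$. Pick a recovery sequence $u_n \to u$ in measure on $B_0$ with $\limsup_n \mathcal{F}_n(u_n, B_0) \le \mathcal{F}''(u, B_0) + \eta$, and apply Lemma~\ref{lemma: fundamental estimate2} to $(u_n, u)$ with $A'_{\mathrm{lem}} = B_2$, $A_{\mathrm{lem}} = B_1$, $B_{\mathrm{lem}} = A\setminus \overline{B_3}$ and $\psi(t) = t/(1+t)$. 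Since $A'_{\mathrm{lem}} \cup B_{\mathrm{lem}} = A$ and the overlap $B_1\setminus B_2$ lies in $B_0$, one has $\Lambda(u_n, u) \to 0$. The glued $w_n \in PR(A)$ converge in measure to $u$ on $A$ (from \eqref{eq: assertionfund}(ii) combined with $u_n \to u$ in measure on $B_0$), and
\[
\limsup_n \mathcal{F}_n(w_n, A) \le \mathcal{F}''(u, B_0) + \beta \mathcal{H}^{d-1}(J_u \cap (A\setminus \overline{B_3})) + C\eta \le \sup_{B \subset\subset A} \mathcal{F}''(u, B) + (C+2)\eta,
\]
where $C$ depends only on the (now fixed) sets $B_1, B_2, B_3$. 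Letting $\eta \to 0$ closes (iii).

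For (iv), fix $\eta > 0$ and $A' \subset\subset A$ in $\mathcal{A}_0(\Omega)$. Choose $u_n \to u$ in measure on $A$ with $\liminf_n \mathcal{F}_n(u_n, A) \le \mathcal{F}'(u, A) + \eta$, and $v_n \to u$ in measure on $B$ with $\limsup_n \mathcal{F}_n(v_n, B) \le \mathcal{F}''(u, B) + \eta$. Apply Lemma~\ref{lemma: fundamental estimate2} to $(u_n, v_n)$ with $A'_{\mathrm{lem}} = A'$, $A_{\mathrm{lem}} = A$, $B_{\mathrm{lem}} = B$. The resulting $w_n \in PR(A' \cup B)$ converge to $u$ in measure (since $\Lambda(u_n,v_n) \to 0$ on $(A\setminus A')\cap B$), and using $\liminf(a_n + b_n) \le \liminf a_n + \limsup b_n$ we obtain
\[
\mathcal{F}'(u, A'\cup B) \le \liminf_n \mathcal{F}_n(w_n, A'\cup B) \le \mathcal{F}'(u, A) + \mathcal{F}''(u, B) + C\eta.
\]
To upgrade $A'\cup B$ to $A \cup B$, I invoke inner regularity (iii) for $\mathcal{F}'$: every $D \subset\subset A \cup B$ is covered by $A''\cup B''$ for some $A''\subset\subset A$, $B''\subset\subset B$ (by a finite open cover of $\overline{D}$); applying the bound above to these sets and taking the supremum over $D$ together with (iii) gives the first estimate in (iv) after $\eta \to 0$. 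The second estimate is obtained by the same argument with an $\mathcal{F}''$-recovery sequence on $A$ and $\limsup$ in place of $\liminf$ throughout.

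The main technical obstacle I foresee is the careful ordering of parameter choices in (iii)–(iv): the constants in Lemma~\ref{lemma: fundamental estimate2} depend on the chosen intermediate open sets through quantities like $M_\lambda$ and $\mathcal{H}^{d-1}(\partial A'_{\mathrm{lem}} \cup \partial A_{\mathrm{lem}} \cup \partial B_{\mathrm{lem}})$, so one must first fix $\eta$, then the outer sets controlling the jump mass of $u$, and only then the intermediate ``cut-off'' sets, so that the error $C\eta$ remains of order $\eta$ as $\eta \to 0$.
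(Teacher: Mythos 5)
Your proof uses the same essential tool as the paper (the fundamental estimate, Lemma~\ref{lemma: fundamental estimate2}), and (i), (ii) are handled identically. The organization for (iii) and (iv), however, differs from the paper's and is slightly less robust. The paper first proves the abstract subadditivity inequality $\mathcal{F}''(u,D) \le \mathcal{F}''(u,F) + \mathcal{F}''(u,D\setminus\overline{E})$ for $E\subset\subset F\subset\subset D$ (their \eqref{eq: infsup2}), and the crucial point is that $\eta\to 0$ is taken \emph{there}, with $E,F,D$ fixed, so the constant coming from $\mathcal{H}^{d-1}(\partial A'\cup\partial A\cup\partial B)$ in \eqref{eq: assertionfund}(i) is harmless. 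Both (iii) and (iv) are then derived from \eqref{eq: infsup2} with a separate choice of the annulus (for (iv), via disjoint sets $M'\cap N'=\emptyset$, which lets the term $\mathcal{F}''(u,M'\cup N')$ split with no further gluing). You instead run a single-parameter scheme in (iii): you fix $\eta$, choose the nested sets $B_3\subset\subset\ldots\subset\subset B_0$ as a function of $\eta$ to make $\beta\mathcal{H}^{d-1}(J_u\cap(A\setminus\overline{B_3}))<\eta$, and then apply Lemma~\ref{lemma: fundamental estimate2} with the same $\eta$. The constant $C$ in your bound contains $\mathcal{H}^{d-1}(\partial B_1\cup\partial B_2\cup\partial(A\setminus\overline{B_3}))$, which depends on your (now $\eta$-dependent) choice of $B_i$; as written, ``letting $\eta\to 0$'' does not automatically send $C(\eta)\eta\to 0$. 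You acknowledge this at the end, and it is indeed fixable either by decoupling the two smallness parameters (i.e.\ the paper's approach) or by observing that, since $A\in\mathcal{A}_0(\Omega)$, the inner approximants $B_i$ can always be chosen with uniformly bounded perimeter, but as it stands the argument has a quantifier-ordering gap. For (iv), your route -- gluing a $\mathcal{F}'$-recovery sequence on $A$ with a $\mathcal{F}''$-recovery sequence on $B$, and then upgrading $A'\cup B$ to $A\cup B$ via a finite cover of a compactly contained $D$ together with (iii) -- is workable: the $\liminf/\limsup$ split goes through because the error term in \eqref{eq: assertionfund}(i) is multiplicative in $\mathcal{F}_n(u_n,A)+\mathcal{F}_n(v_n,B)$, and the covering step is sound provided you again let $\eta\to 0$ with $D$ fixed before taking $\sup_D$. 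Overall, same idea, but the paper's separation of \eqref{eq: infsup2} from the subsequent estimates is what makes the parameter bookkeeping clean; you should adopt that order.
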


\begin{proof}
First, (i) is clear as all $\mathcal{F}_n(u,\cdot)$ are measures. The upper bound in (ii) follows from {\rm (${\rm H_4}$)} by taking the constant sequence $u_n = u$ in \eqref{eq: liminf-limsup}. For the lower bound in (ii), we take an (almost) optimal sequence in  \eqref{eq: liminf-limsup}, use {\rm (${\rm H_4}$)}, as well as the lower semicontinuity stated in \BBB Lemma \ref{lemma: PR compactness}(ii). \EEE

To see (iii), we fix  $D \in \mathcal{A}_0(\Omega)$  and first prove that for all sets   $E,F \in \mathcal{A}_0(\Omega)$, $E \subset \subset F \subset \subset D$,  we have
\begin{align}\label{eq: infsup2}
\mathcal{F}'(u,D) \le \mathcal{F}'(u,F) + \mathcal{F}'(u,D \setminus \overline{E}), \ \ \ \ \ \ \mathcal{F}''(u,D) \le \mathcal{F}''(u,F) + \mathcal{F}''(u,D \setminus \overline{E}). 
\end{align} 
(We use different notation for the sets to avoid confusion with the notation in Lemma \ref{lemma: fundamental estimate2}.)  Indeed, let $(u_n)_n, (v_n)_n \subset PR(\Omega)$ be sequences converging in measure to $u$ on $F$  and $D \setminus \overline{E}$, respectively, such that
\begin{align}\label{eq: infsup1}
\mathcal{F}''(u,F) = \limsup\nolimits_{n \to \infty} \mathcal{F}_n(u_n,F), \ \ \ \ \ \ \ \mathcal{F}''(u,D \setminus \overline{E}) = \limsup\nolimits_{n \to \infty} \mathcal{F}_n(v_n,D \setminus \overline{E}).
\end{align}
We apply Lemma \ref{lemma: fundamental estimate2} for  $\psi(t):=\frac{t}{1+t}$, $A = F$, $B =  D  \setminus \overline{E}$,   and some $A' \in \mathcal{A}_0(\Omega)$, $E \subset \subset A' \subset \subset F$,  to obtain $w_n \in PR(D)$ satisfying (see \eqref{eq: assertionfund}(i))
\begin{align}\label{eq: infsup3}
\mathcal{F}_n(w_n, D) \le  \big(\mathcal{F}_n(u_n,F)  + \mathcal{F}_n(v_n, D\setminus \overline{E})\big) (1 + \eta + \rho_n) + C(\eta + \rho_n),
\end{align}
where $C$ depends on $E,F,D$, and  for brevity we set  $\rho_n :=  M\sigma\left(\Lambda(u_n, v_n)\right)$.   We observe that $u_n-v_n$ tends to $0$ in measure on $F\setminus \overline{E}$, which is equivalent to
\[
\int_{F \setminus \overline{E}}\psi(|u_n - v_n|) \to 0
\]
for $\psi(t)=\frac{t}{1+t}$. Hence, $\Lambda(u_n, v_n)\to 0$ by \eqref{eq: Lambda0}, which implies $\rho_n \to 0$. \BBB Since by assumption $u_n \to u$ and $v_n \to u$ in measure on $F$ and $D\setminus \overline{E}$, respectively,  and $\Vert \min\lbrace |w_n - u_n|, |w_n-v_n| \rbrace \Vert_{L^\infty(D)}  \le \Lambda(u_n,v_n)$ by   \eqref{eq: assertionfund}(ii), \EEE the functions $w_n$ converge to $u$ in measure on $D$. Thus, passing to the limit $n \to \infty$ and using \eqref{eq: liminf-limsup}, \eqref{eq: infsup1}--\eqref{eq: infsup3}, we obtain 
$$\mathcal{F}''(u,D) \le \limsup\nolimits_{n \to \infty} \mathcal{F}_n (w_n, D) \le \big(\mathcal{F}''(u,F) + \mathcal{F}''(u,D \setminus \overline{E})\big) (1+\eta) +  C\eta. $$
Since $\eta>0$ was arbitrary, we obtain \eqref{eq: infsup2} for $\mathcal{F}''$. For $\mathcal{F}'$ we argue in a similar fashion.  

By \eqref{eq: infsup2} and \eqref{eq: infsup0}(ii) we get $\mathcal{F}''(u,D) \le \mathcal{F}''(u,F) + \beta \mathcal{H}^{d-1}(J_u \cap( D \setminus \overline{E}))$. As $\mathcal{H}^{d-1}(J_u \cap( D \setminus \overline{E}))$ can be taken arbitrarily small and  $\mathcal{F}''(u,\cdot)$   is an increasing set function, we obtain $\mathcal{F}''(u,D)  \le \sup\nolimits_{F \subset \subset D} \mathcal{F}''(u,F)$. This shows (iii) for  $\mathcal{F}''$. The proof of $\mathcal{F}'$ is similar. 

We finally show (iv). Observe that the inequalities are clear if $A \cap B = \emptyset$. Let  $A,B \in \mathcal{A}_0(\Omega)$  with nonempty intersection. Given $\eps >0$, one can choose $M \subset \subset M' \subset \subset A$ and $N \subset \subset N' \subset \subset B$ such that  $M,M',N,N' \in \mathcal{A}_0(\Omega)$, $M' \cap N' = \emptyset$, and  $\mathcal{H}^{d-1}(J_u \cap ((A\cup B) \setminus \overline{M \cup N} )) \le \eps$,  see \cite[Proof of Lemma 5.2]{AmbrosioBraides} for details. Then using \eqref{eq: infsup0}(i),(ii) and \eqref{eq: infsup2}  
\begin{align*}
\mathcal{F}''(u,A \cup B) & \le  \mathcal{F}''(u,M' \cup N') + \mathcal{F}''(u, (A\cup B) \setminus \overline{M \cup N} ) \le  \mathcal{F}''(u,M') +  \mathcal{F}''(u,N') + \beta\eps  \\ & \le \mathcal{F}''(u,A) +  \mathcal{F}''(u,B) + \beta\eps
\end{align*}
Here, we also used $\mathcal{F}''(u,M' \cup N') \le \mathcal{F}''(u,M') +  \mathcal{F}''(u,N')$ which holds due to $M' \cap N' = \emptyset$. The statement follows as $\eps$ was arbitrary. The proof for $\mathcal{F}'$ is again the same.
\end{proof}

The previous lemma allows us to identify a $\Gamma$-limit on $PR(\Omega)$.

\begin{lemma}\label{lemma: gamma-l2}
 Let $\Omega \subset \R^d$ open, bounded with Lipschitz boundary.  Let $\mathcal{F}_n: PR(\Omega) \times \mathcal{B}(\Omega) \to [0,\infty)$ be a sequence of functionals satisfying {\rm (${\rm H_1}$)}, {\rm (${\rm H_3}$)}--{\rm (${\rm H_5}$)} for the same $0 <\alpha < \beta$  and $\sigma: [0,+\infty) \to [0,\beta]$. Then there exists $\mathcal{F}:  PR(\Omega) \times \mathcal{B}(\Omega) \to [0,\infty]$  and a subsequence (not relabeled) such that
\begin{align}\label{eq_ Gamma limes} 
\mathcal{F}(\cdot,A) =\Gamma\text{-}\lim_{n \to \infty} \mathcal{F}_n(\cdot,A), 
\end{align}
with respect to the topology of the convergence in measure, for all $A \in  \mathcal{A}_0(\Omega)$. The functional $\mathcal{F}$ satisfies {\rm (${\rm H_1}$)}--{\rm (${\rm H_4}$)}. 
\end{lemma}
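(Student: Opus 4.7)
The plan is to carry out the standard localization procedure for $\Gamma$-convergence (cf.\ \cite[Chapter 16]{DalMaso:93} or \cite{Braides-Defranceschi:98}), adapted to convergence in measure on $PR(\Omega)$. Observe first that $PR(\Omega)$ with the topology of measure-convergence on $\Omega$ is separable (rigid motions with rational entries on partitions made of finite unions of rational polyhedra are dense), hence the general compactness theorem for $\Gamma$-convergence applies. By a diagonal extraction over a countable base $\mathcal{R} \subset \mathcal{A}_0(\Omega)$ of the topology of $\Omega$, I would select a subsequence (not relabeled) such that, for every $A \in \mathcal{R}$, the $\Gamma$-limit
\[
\widetilde{\mathcal{F}}(\cdot,A) := \Gamma\text{-}\lim_{n\to\infty} \mathcal{F}_n(\cdot,A)
\]
exists with respect to convergence in measure on $A$. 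Along this subsequence, $\mathcal{F}'(u,A)=\mathcal{F}''(u,A)=\widetilde{\mathcal{F}}(u,A)$ for every $u \in PR(\Omega)$ and $A \in \mathcal{R}$.

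Next, for an arbitrary $A \in \mathcal{A}_0(\Omega)$, I would define $\mathcal{F}(u,A) := \sup\{\widetilde{\mathcal{F}}(u,B) : B \in \mathcal{R},\ B \subset\subset A\}$ and use the inner regularity of both $\mathcal{F}'$ and $\mathcal{F}''$ (Lemma \ref{eq: liminflimsup-prop}(iii)) together with their monotonicity (Lemma \ref{eq: liminflimsup-prop}(i)) and weak subadditivity (Lemma \ref{eq: liminflimsup-prop}(iv)) to conclude that $\mathcal{F}'(u,A) = \mathcal{F}''(u,A) = \mathcal{F}(u,A)$ for every $A \in \mathcal{A}_0(\Omega)$. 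This yields \eqref{eq_ Gamma limes} on the class of sets with regular boundary.

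To secure property (H1), I would extend $\mathcal{F}(u,\cdot)$ to a Borel measure on $\Omega$ via the De Giorgi--Letta criterion, thereby also defining $\mathcal{F}(u,B)$ for general $B \in \mathcal{B}(\Omega)$. The hypotheses of the criterion are all at hand: monotonicity is Lemma \ref{eq: liminflimsup-prop}(i); inner regularity is Lemma \ref{eq: liminflimsup-prop}(iii); subadditivity on open sets is Lemma \ref{eq: liminflimsup-prop}(iv) together with $\mathcal{F}=\mathcal{F}''$; and superadditivity on disjoint open sets $A_1,A_2$ follows from the fact that each $\mathcal{F}_n(u,\cdot)$ is a Borel measure, since for any recovery sequence $u_n \to u$ in measure on $A_1 \cup A_2$ one has $\mathcal{F}_n(u_n,A_1 \cup A_2) = \mathcal{F}_n(u_n,A_1) + \mathcal{F}_n(u_n,A_2) \ge \mathcal{F}'(u,A_1)+\mathcal{F}'(u,A_2)$ in the limit. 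Property (H2) is a general property of $\Gamma$-limits, (H3) is inherited from the locality of each $\mathcal{F}_n$ and the measure property, and (H4) is Lemma \ref{eq: liminflimsup-prop}(ii) passed to Borel sets via the extension.

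The main obstacle, already tackled in the preceding Lemma \ref{eq: liminflimsup-prop}, is establishing subadditivity on general (possibly overlapping) open sets; this is precisely the step where the fundamental estimate Lemma \ref{lemma: fundamental estimate2} is indispensable, as it enables the gluing of two almost-optimal recovery sequences on neighbouring open sets into a single admissible competitor on the union, with an asymptotically vanishing error controlled by $\sigma \circ \Lambda$. All remaining verifications are routine consequences of Lemma \ref{eq: liminflimsup-prop}.
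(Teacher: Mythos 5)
Your proposal is correct and follows essentially the same route as the paper: a $\Gamma$-compactness/diagonal argument over a countable base, the passage from $\mathcal{R}$ to all of $\mathcal{A}_0(\Omega)$ via the inner regularity and near-subadditivity established in Lemma \ref{eq: liminflimsup-prop} (which is where the fundamental estimate enters), and the De Giorgi--Letta criterion to extend $\mathcal{F}(u,\cdot)$ to a Borel measure, after which (H1)--(H4) follow from Lemma \ref{eq: liminflimsup-prop}. The only expository difference is that the paper invokes the abstract compactness theorem for $\bar{\Gamma}$-convergence \cite[Theorem 16.9]{DalMaso:93} and then identifies the inner regular envelope $(\mathcal{F}'')_-$ with $\mathcal{F}''$ on $\mathcal{A}_0(\Omega)$, whereas you unwind the diagonal extraction by hand over a countable base $\mathcal{R}\subset\mathcal{A}_0(\Omega)$.
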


\begin{proof}
We apply a compactness result for $\bar{\Gamma}$-convergence, see \cite[Theorem 16.9]{DalMaso:93}, to find  an increasing sequence of integers $(n_k)_k$ such that  the objects $\mathcal{F}'$ and $\mathcal{F}''$ defined in \eqref{eq: liminf-limsup} with respect to  $(n_k)_k$ satisfy
$$(\mathcal{F}')_-(u,A) = (\mathcal{F}'')_-(u,A)  $$
for all $u \in PR(\Omega)$ and $A \in \mathcal{A}(\Omega)$, where $(\mathcal{F}')_-$ and $(\mathcal{F}'')_-$ denote the inner regular envelope defined by 
\begin{align}\label{eq: inner enve}
(\mathcal{F}')_-(u,A) = \sup_{B \subset \subset A, B \in \mathcal{A}(\Omega)} \mathcal{F}'(u,B), \ \ \ \ \ \ (\mathcal{F}'')_-(u,A) = \sup_{B \subset \subset A, B \in \mathcal{A}(\Omega)} \mathcal{F}''(u,B).
\end{align}
We write $\mathcal{F}_0 := (\mathcal{F}'')_-$ for simplicity. This along with \eqref{eq: liminf-limsup} and Lemma \ref{eq: liminflimsup-prop}(i) yields
\begin{align}\label{eq: the same on good2}
\mathcal{F}_0 = (\mathcal{F}')_- \le \mathcal{F}' \le  \mathcal{F}''.
\end{align}
We now check that 
\begin{align}\label{eq: the same on good}
\mathcal{F}''(u,A) = \mathcal{F}_0(u,A) \ \ \ \ \  \text{for all $u \in PR(\Omega)$ and all $A \in \mathcal{A}_0(\Omega)$. }  
\end{align}
In view of \eqref{eq: the same on good2}, it suffices to show $\mathcal{F}_0(u,A) \ge \mathcal{F}''(u,A)$. To this end, we fix $u \in PR(\Omega)$, $A \in \mathcal{A}_0(\Omega)$, and $\eps >0$. We choose sets $A'' \subset \subset A' \subset \subset A$  such that $A'\in \mathcal{A}_0(\Omega)$, $A\setminus \overline{A''} \in \mathcal{A}_0(\Omega)$, and $\mathcal{H}^{d-1}(J_u \cap (A \setminus \overline{A''})) \le \eps$. We then find by Lemma \ref{eq: liminflimsup-prop}(ii),(iv) and \eqref{eq: inner enve}  
\begin{align*}
\mathcal{F}''(u,A) \le \mathcal{F}''(u, A') + \mathcal{F}''(u, A \setminus \overline{A''}) \le \mathcal{F}''(u, A') + \beta\eps \le \mathcal{F}_0(u,A) + \beta\eps.
\end{align*}
As $\eps$ is arbitrary, the desired inequality follows. 

Now \eqref{eq: the same on good2}--\eqref{eq: the same on good} show that the $\Gamma$-limit exists for all $u \in PR(\Omega)$ and all $A \in \mathcal{A}_0(\Omega)$. It remains to extend $\mathcal{F}_0: PR(\Omega) \times \mathcal{A}(\Omega) \to [0,\infty]$ to a functional $\mathcal{F}$ defined on $PR(\Omega) \times \mathcal{B}(\Omega)$. To this end, we first note that $\mathcal{F}_0$ is superadditive and inner regular, see \cite[Proposition 16.12 and Remark 16.3]{DalMaso:93}. Moreover, $\mathcal{F}_0$ is subadditive. In fact, for $A,B \in \mathcal{A}(\Omega)$, we choose $A',B' \in \mathcal{A}_0(\Omega)$ with $A' \subset \subset A$, $B' \subset \subset B$, and since $\mathcal{F}_0$ is subadditive on $\mathcal{A}_0(\Omega)$ (see Lemma \ref{eq: liminflimsup-prop}(iv) and \eqref{eq: the same on good}), we get
\begin{align*}
\mathcal{F}_0(u,A' \cup B') \le \mathcal{F}_0(A') + \mathcal{F}_0(B') \le \mathcal{F}_0(A) + \mathcal{F}_0(B).
\end{align*}
  Then $\mathcal{F}_0(u,A \cup B) \le   \mathcal{F}_0(A) + \mathcal{F}_0(B)$ follows from the inner regularity of $\mathcal{F}_0$. 
  By De  Giorgi-Letta (see \cite[Theorem 14.23]{DalMaso:93}), $\mathcal{F}_0(u,\cdot)$ can thus be extended to a Borel measure. 
  
Lemma \ref{eq: liminflimsup-prop}  also yields that the extended functional $\mathcal{F}$ satisfies {\rm (${\rm H_1}$)}, {\rm (${\rm H_3}$)}--{\rm (${\rm H_4}$)}.  The lower semicontinuity  {\rm (${\rm H_2}$)} of $\mathcal{F}(\cdot, A) = \mathcal{F}_0(\cdot,A)$ for $A \in \mathcal{A}(\Omega)$ follows from \cite[Remark 16.3]{DalMaso:93}.
\end{proof}

We are now in the position to prove   Theorem \ref{th: gamma}. 
 
\begin{proof}[Proof of Theorem \ref{th: gamma}]
We observe that $\mathcal{F}$ satisfies \eqref{eq_ Gamma limes} and   {\rm (${\rm H_1}$)}--{\rm (${\rm H_4}$)} by Lemma \ref{lemma: gamma-l2}.  Since  we assume      \eqref{eq: condition-new-new}, we can apply Corollary \ref{corollary: PR-representation-tilde}, so that  $\mathcal{F}$ admits the integral representation 
\begin{align*}
\mathcal{F}(u,B) = \int_{J_u \cap B} f(x,[u](x),\nu_u(x))\, d\mathcal{H}^{d-1}(x)
\end{align*}
with the density $f$ given in \eqref{eq:gdef}.

We are only left to show that {\rm (${\rm H_5}$)} holds. We will equivalently   prove   that $f$ satisfies 
\begin{align*}
|f(x_0,\xi,\nu) -f(x_0,\xi',\nu)| \le   \alpha^{-1}\beta\,\sigma(|(\xi-\xi'|)
\end{align*}
for all $x_0 \in \Omega$, $\xi, \xi' \in \R^d$, and $\nu \in S^{d-1}$. This shows that {\rm (${\rm H_5}$)} holds for the modulus of continuity $\alpha^{-1}\beta \sigma$. To this end, it suffices to prove that for all   $x_0 \in \Omega$,  $\xi,\xi' \in \R^d$, and $\nu \in S^{d-1}$ one has 
\begin{align}\label{eq: h5/6}
 \Big|\limsup_{\eps \to 0} \frac{\mathbf{m}_{\mathcal{F}}(u_{x_0,\xi,\nu},B_\eps(x))}{\omega_{d-1}\,\eps^{d-1}} - \limsup_{\eps \to 0} \frac{\mathbf{m}_{\mathcal{F}}(u_{x_0,\xi',\nu},B_\eps(x))}{\omega_{d-1}\,\eps^{d-1}} \Big|  \le  \alpha^{-1}\beta \sigma(|\xi - \xi'|).
\end{align}
Indeed, then the statement follows from  \eqref{eq:gdef}. 

Let us show  \eqref{eq: h5/6}. We first observe that, in view of   \eqref{eq: condition-new-new}, it suffices to prove 
\begin{align}\label{eq: h5/6-n}
|\mathbf{m}_{{\mathcal{F}_n}}(u_{x_0,\xi,\nu},B_\eps(x_0)) - \mathbf{m}_{{\mathcal{F}_n}}(u_{x_0,\xi',\nu},B_\eps(x_0))| \le  \omega_{d-1}\eps^{d-1}\alpha^{-1}\beta \sigma(|\xi - \xi'|)
\end{align}
for every $n \in \N$. Indeed, once \eqref{eq: h5/6-n} is proved, we conclude as follows: without restriction we suppose that the term inside the brackets on the left hand side of \eqref{eq: h5/6} is nonnegative as otherwise we interchange the roles of $\xi$ and $\xi'$. By \eqref{eq: condition-new-new}, for each $\eps>0$, choose $n(\eps) \in \N$ and $\eps'(\eps) < \eps$  with
\begin{align*}
&\mathbf{m}_{{\mathcal{F}}}(u_{x_0,\xi,\nu},B_{\eps}(x_0)) \le \mathbf{m}_{{\mathcal{F}_{n(\eps)}}}(u_{x_0,\xi,\nu},B_{\eps'}(x_0)) + \eps^d, \\ 
& \mathbf{m}_{{\mathcal{F}_{n(\eps)}}}(u_{x_0,\xi',\nu},B_{\eps'}(x_0)) \le \mathbf{m}_{{\mathcal{F}}}(u_{x_0,\xi',\nu},B_{\eps'}(x_0)) +  (\eps')^d. 
\end{align*}
Then, since $\eps'=\eps'(\eps) < \eps$, we get by \eqref{eq: h5/6-n}
\begin{align*}
0 & \le  \limsup_{\eps \to 0}  \frac{\mathbf{m}_{\mathcal{F}}(u_{x_0,\xi,\nu},B_\eps(x)) }{\omega_{d-1}\,\eps^{d-1}} - \limsup_{\eps \to 0}  \frac{\mathbf{m}_{\mathcal{F}}(u_{x_0,\xi',\nu},B_{\eps'}(x))}{\omega_{d-1}\,(\eps')^{d-1}}  \\
&  \le \limsup_{\eps \to 0} \frac{\mathbf{m}_{{\mathcal{F}_{n(\eps)}}}(u_{x_0,\xi,\nu},B_{\eps'}(x_0))  - \mathbf{m}_{{\mathcal{F}_{n(\eps)}}}(u_{x_0,\xi',\nu},B_{\eps'}(x_0))}{\omega_{d-1}\, (\eps')^{d-1}}   \le  \alpha^{-1}\beta \sigma(|\xi - \xi'|).
\end{align*}
This gives  \eqref{eq: h5/6}. It thus remains to show \eqref{eq: h5/6-n}.  To this  end,  let $\delta >0$ and  choose $z \in PR(B_\eps(x_0))$ with $z= u_{x_0,\xi,\nu}$ in a neighborhood of $\partial B_\eps(x_0)$ and   
\begin{align}\label{eq: new for reprisi}
\mathcal{F}_n(z,B_\eps(x_0)) \le \mathbf{m}_{\mathcal{F}_n}(u_{x_0,\xi,\nu},B_\eps(x_0))+\delta. 
\end{align}
Clearly, in view of (${\rm H_4}$), $\mathbf{m}_{\mathcal{F}_n}(u_{x_0,\xi,\nu},B_\eps(x_0)) \le \omega_{d-1}\eps^{d-1}\beta $ by taking $u_{x_0,\xi,\nu}$ as competitor. Therefore, (${\rm H_4}$) implies 
\begin{align}\label{eq: simple-jumpbound}
\mathcal{H}^{d-1}(J_z) \le  (\omega_{d-1}\eps^{d-1} \beta+\delta)   \alpha^{-1}.
\end{align}
Let $P = \lbrace z = \xi \rbrace$ and note that $P$ is a set of finite perimeter.  (In fact, up to set of negligible $\mathcal{L}^d$-measure, it coincides with one component of its pairwise distinct representation, see \eqref{eq: bdy and jump}.)  We define $z' = z + (\xi'-\xi)\chi_P$ and observe that $z' \in PR(B_\eps(x_0))$ and that   $z'= u_{x_0,\xi',\nu}$ in a neighborhood of $\partial B_\eps(x_0)$. Moreover, we have $J_{z'} \subset J_z$, $[z'] =  [z]  $ $\mathcal{H}^{d-1}$-a.e.\ on $J_{z'} \setminus \partial^* P$, and $[z'] = [z] + \xi' - \xi$ $\mathcal{H}^{d-1}$-a.e.\ on $J_{z'} \cap \partial^* P$. Since the functionals $\mathcal{F}_n$ satisfy {\rm (${\rm H_5}$)} uniformly, we get
\begin{align*}
\mathbf{m}_{\mathcal{F}_n}(u_{x_0,\xi', \nu},B_\eps(x_0)) & \le \mathcal{F}_n(z',B_\eps(x_0)) \le  \mathcal{F}_n(z,B_\eps(x_0)) + \int_{J_{z'} \cap \partial^* P} \sigma(|\xi' - \xi|)\,{\rm d}\mathcal{H}^{d-1}.
\end{align*} 
Then   by \eqref{eq: new for reprisi}  and \eqref{eq: simple-jumpbound} we derive 
\begin{align*}
\mathbf{m}_{\mathcal{F}_n}(u_{x_0,\xi', \nu},B_\eps(x_0)) & \le   \mathbf{m}_{\mathcal{F}_n}(u_{x_0,\xi,\nu},B_\eps(x_0))+\delta +  ( \omega_{d-1}\eps^{d-1}\beta+\delta) \alpha^{-1}\sigma(|\xi' - \xi|).
\end{align*} 
As $\delta>0$ was arbitrary, we obtain one inequality in \eqref{eq: h5/6-n}. The other inequality can be obtained in a similar fashion by interchanging the roles of $\xi$ and $\xi'$.  \EEE 
\end{proof}

The above proof makes use of the assumption \eqref{eq: condition-new-new}, which is not a-priori guaranteed for our functionals, due to lack of coerciveness. As a matter of fact, we below prove that the first inequality in  \eqref{eq: condition-new-new} holds always true in our setting. In the next section, we will then show how, under an additional assumption on $\mathcal{F}_n$ and for specific choices of $L$, also the second one can be confirmed. This yields  a finer $\Gamma$- convergence result for those cases.

\begin{lemma}[Convergence of minima, upper bound]\label{lemma: gamma-min-upper}
 Let $\Omega \subset \R^d$ open, bounded with Lipschitz boundary.  Let $\mathcal{F}_n: PR(\Omega) \times \mathcal{B}(\Omega) \to [0,\infty)$ be a sequence of functionals satisfying {\rm (${\rm H_1}$)}, {\rm (${\rm H_3}$)}--{\rm (${\rm H_5}$)} for the same $0 <\alpha < \beta$  and $\sigma: [0,+\infty) \to [0,\beta]$. Let ${\mathcal{F}}: PR(\Omega) \times \mathcal{B}(\Omega) \to [0,\infty]$ be the $\Gamma$-limit identified in Lemma \ref{lemma: gamma-l2}.  Then for all $A \in \mathcal{A}_0(\Omega)$ and all $u \in PR(\Omega)$ we have
$$\limsup_{n \to \infty} \mathbf{m}_{\mathcal{F}_n}(u, A) \le \mathbf{m}_{{\mathcal{F}}}(u, A). $$
\end{lemma}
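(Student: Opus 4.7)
The plan is to take an almost-optimal competitor $v$ for $\mathbf{m}_{\mathcal{F}}(u,A)$, construct a recovery sequence $v_n \to v$ via the $\Gamma$-convergence of Lemma \ref{lemma: gamma-l2}, and glue $v_n$ with $u$ in a thin annulus near $\partial A$ using the fundamental estimate with boundary data (Lemma \ref{lemma: fundamental estimate2-new}) to produce valid competitors $w_n$ for $\mathbf{m}_{\mathcal{F}_n}(u,A)$.

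Concretely, I will fix $\eta > 0$ and pick $v \in PR(\Omega)$ with $v = u$ on $A \setminus \overline{A_1}$ for some $A_1 \in \mathcal{A}_0(\Omega)$, $A_1 \subset \subset A$, and $\mathcal{F}(v, A) \le \mathbf{m}_{\mathcal{F}}(u, A) + \eta$. Using the finiteness of $\mathcal{H}^{d-1}(J_u \cap A)$, I then choose $A_3, A_4 \in \mathcal{A}_0(\Omega)$ with $A_1 \subset \subset A_3 \subset \subset A_4 \subset \subset A$ such that $\mathcal{H}^{d-1}(J_u \cap (A \setminus \overline{A_3})) \le \eta$. A recovery sequence gives $v_n \to v$ in measure on $A$ with $\limsup_n \mathcal{F}_n(v_n, A) \le \mathcal{F}(v, A)$. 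I will then apply Lemma \ref{lemma: fundamental estimate2-new} with parameter $\tau > 0$, $\psi(t) = t/(1+t)$, $A' := A_3$, $A := A_4$, $B := A \setminus \overline{A_3}$, first input $v_n$, and second input $u$. Since $v = u$ on $A_4 \setminus \overline{A_3} \subset A \setminus \overline{A_1}$, convergence $v_n \to v$ in measure on $A$ forces $\Lambda(v_n, u) \to 0$ via \eqref{eq: Lambda0}, while $\Phi(A_3, A; u|_{A \setminus \overline{A_3}}, \delta) > 0$ is a fixed positive constant (a minimum over finitely many pairwise different rigid motions in the pairwise distinct representation of $u$ on large components). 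Thus the technical condition \eqref{eq: extra condition} holds for $n$ large, and the lemma produces $w_n \in PR(A)$ with $w_n = u$ on a neighborhood of $\partial A$ inside $A$, satisfying
\[
\mathcal{F}_n(w_n, A) \le \mathcal{F}_n(v_n, A_4) + \mathcal{F}_n(u, B) + K_n \bigl(2\tau + M_2 \sigma(\Theta(v_n, u))\bigr),
\]
where $K_n := \mathcal{H}^{d-1}(\partial A_3 \cup \partial A_4 \cup \partial B) + \mathcal{F}_n(v_n, A_4) + \mathcal{F}_n(u, B)$ is uniformly bounded in $n$ and $\sigma(\Theta(v_n, u)) \to 0$.

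Extending $w_n = u$ on $\Omega \setminus A$ makes $w_n$ admissible for $\mathbf{m}_{\mathcal{F}_n}(u, A)$. Passing to $\limsup_n$, and using $\mathcal{F}_n(v_n, A_4) \le \mathcal{F}_n(v_n, A)$ together with the recovery sequence property and the bound $\mathcal{F}_n(u, B) \le \beta \mathcal{H}^{d-1}(J_u \cap (A \setminus \overline{A_3})) \le \beta \eta$ from (${\rm H_4}$), I will conclude
\[
\limsup_n \mathbf{m}_{\mathcal{F}_n}(u, A) \le \mathcal{F}(v, A) + \beta \eta + 2 K \tau \le \mathbf{m}_{\mathcal{F}}(u, A) + (1 + \beta)\eta + 2 K \tau,
\]
where $K := \limsup_n K_n < \infty$. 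Letting $\tau \to 0$ and then $\eta \to 0$ completes the argument. The main obstacle is matching the boundary values: the recovery sequence $v_n$ has no a priori relation to $u$ near $\partial A$, and Lemma \ref{lemma: fundamental estimate2-new} requires the nontrivial condition \eqref{eq: extra condition}. This is resolved precisely by choosing $v$ to equal $u$ on an annulus near $\partial A$, which forces $v_n \to u$ in measure on that annulus.
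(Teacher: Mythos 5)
Your proposal follows the paper's proof very closely: pick an almost-optimal competitor $v$ equal to $u$ on an annulus near $\partial A$, take a recovery sequence $v_n\to v$, and glue $v_n$ with the boundary data using Lemma \ref{lemma: fundamental estimate2-new}, noting that the technical condition \eqref{eq: extra condition} holds for large $n$ because $\Lambda(v_n,u)\to 0$ while $\Phi$ on the right-hand side is a fixed positive constant of $u$ alone. The only substantive difference is cosmetic — you plug in $u$ as the second argument of the fundamental estimate, while the paper uses $v$; since $v=u$ on the relevant annulus, the two are interchangeable, and your verification of \eqref{eq: extra condition}, your use of $\mathcal{H}^{d-1}(J_u\cap(A\setminus\overline{A_3}))\le\eta$, and your final limit in $\tau$, then $\eta$, are exactly the paper's computation.

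There is one small slip you should fix. You set $A':=A_3$, $A_{\rm lem}:=A_4$, $B:=A\setminus\overline{A_3}$, so $A'\cup B = A_3\cup(A\setminus\overline{A_3}) = A\setminus\partial A_3$, which is strictly smaller than $A$. Lemma \ref{lemma: fundamental estimate2-new} only produces $w\in PR(A'\cup B)$ and only controls $\mathcal{F}_n(w,A'\cup B)$, so you have no a priori bound on $\mathcal{F}_n(w_n,\partial A_3\cap A)$, i.e.\ the function you constructed is not visibly a valid competitor with controlled energy on all of $A$ unless you also argue that $J_{w_n}$ cannot charge $\partial A_3$. The paper sidesteps this entirely by choosing the sets so that $A'\cup B$ equals $A$ exactly: it takes nested neighborhoods $N''\subset\subset N'\subset\subset N$ of $\partial A$ and applies the lemma with $A'=A\setminus\overline{N'}$, $A_{\rm lem}=A\setminus\overline{N''}$, $B=N$, so that $\overline{N'}\subset N$ makes $A'\cup B=A$. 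The analogous fix in your notation is to let $B:=A\setminus\overline{A_2}$ for some $A_2\subset\subset A_3$; then $A'\cup B=A$ and the rest of your argument goes through unchanged, with $w_n=u$ on $B\setminus A_4 = A\setminus A_4$.
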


\begin{proof}
 Let $D \in \mathcal{A}_0(\Omega)$ and let $\delta>0$. (It will be convenient from a notational point of view to use $D$ instead of $A$.) Let $v \in PR(D)$ with ${\mathcal{F}}(v,D) \le \mathbf{m}_{{\mathcal{F}}}(u, D) + \delta$ and $v=u$ on $N$, where $N \subset D$ is a neighborhood of $\partial D$ such that $N \in \mathcal{A}_0(\Omega)$  and  
\begin{align}\label{eq: neighborhood-smalljump}
\mathcal{H}^{d-1}(J_v \cap N)  = \mathcal{H}^{d-1}(J_u \cap N) \le \delta.
\end{align}
 Let $(v_n)_n \subset PR(D)$ be a recovery sequence for $v$, i.e., 
\begin{align}\label{eq: L2-converg}
\int_{D} \psi(|v_n - v|) \, {\rm d}x \to 0 \ \ \ \text{for} \ \ n\to \infty,
\end{align}
where $\psi(t):= \frac{t}{1+t}$, and 
\begin{align}\label{eq: en-recovery}
\lim_{n \to \infty} \mathcal{F}_n(v_n,D) =  {\mathcal{F}}(v,D) \le  \mathbf{m}_{{\mathcal{F}}}(u, D) + \delta.
\end{align}
We need to adjust the boundary data of $v_n$ to obtain competitors for the minimization problems $\mathbf{m}_{\mathcal{F}_n}(u, D)$. To this end, choose further neighborhoods $N', N''\subset D$ of $\partial D$ satisfying  $N'' \subset \subset  N' \subset \subset N$ and $D \setminus \overline{N'}, D \setminus \overline{N''} \in \mathcal{A}_0(\Omega)$.  We apply Lemma \ref{lemma: fundamental estimate2-new} with $A' = D \setminus \overline{N'}$, $A= D \setminus \overline{N''}$,   $B= N$, and some $\eta >0$    for the functions $u = v_n|_A \in PR(A)$ and $v=v|_B \in PR(B)$. We note that \eqref{eq: extra condition} is satisfied for $n$ sufficiently large since the right hand side is independent of $n$ and the left hand side converges to zero by \eqref{eq: Lambda0} and  \eqref{eq: L2-converg}. Consequently, we obtain a function $w_n \in PR(D)$, which satisfies $w_n = v=u$ on $N''$ by \eqref{eq: assertionfund-newnew}(iii). Moreover, \eqref{eq: assertionfund-newnew}(i) yields
\begin{align*}
\mathcal{F}_n(w_n,D) \le \mathcal{F}_n(v_n,D) + \mathcal{F}_n(v,N) + \big(C_\delta + \mathcal{F}_n(v_n,D) + \mathcal{F}_n(v,N)\big)(2 \eta + \rho_n),
\end{align*}
where $C_\delta$ depends on $D,N,N'$ (and thus on $\delta$), and $\rho_n$ is a sequence converging to zero by \eqref{eq: Theta convergence} and \eqref{eq: L2-converg}. In view of \eqref{eq: neighborhood-smalljump}, \eqref{eq: en-recovery}, and the fact that {\rm (${\rm H_4}$)}  holds for each $\mathcal{F}_n$, we then derive  
\begin{align*}
\limsup_{n\to \infty}   \mathbf{m}_{\mathcal{F}_n}(u, D) \le \limsup_{n \to \infty} \mathcal{F}_n(w_n,D) \le \big(\mathbf{m}_{{\mathcal{F}}}(u, D) + \delta + \beta\delta\big)(1+2  \eta) +  2   C_\delta\eta.
\end{align*}
Letting first $\eta \to 0$ and afterwards $\delta \to 0$, we obtain the desired inequality.  
\end{proof}

 \section{Examples}\label{sec: examples}
In this final section, we focus on the case  $L = \R^{d\times d}_{\rm skew}$ and  $L = SO(d)$, with $d=2,3$, which is relevant from the point of view of the applications. We consider an additional assumption {\rm (${\rm H_6}$)} (in the spirit of \cite{Caterina, Manuel}) for the functionals $\mathcal{F}_n$, and use it to truncate piecewise rigid functions at a low energy expense. This will allow us to overcome the lack of coercivity of our functionals, and to  deduce the lower bound in the inequality \eqref{eq: condition-new-new} (see Lemma \ref{lemma: gamma-min-lower}). With this, a full integral representation result for the $\Gamma$-limit holds true, which we state in Theorem \ref{thm: final}.

\subsection{Truncation}\label{sec: truncation}

We point out that in general, for a sequence $(u_n)_n \subset PR_L(\Omega)$, the bound  $\sup_n \int_{\Omega} \psi(|u_n|) <+\infty$   needed to apply \BBB Lemma \ref{lemma: PR compactness}(i) \EEE  is not guaranteed by the growth condition {\rm (${\rm H_4}$)}. As a remedy, we will therefore  truncate piecewise rigid functions in a suitable way. In this context, we will need to assume 
\begin{itemize}
\item [{\rm (${\rm H_6}$)}]  there exists $c_0\ge 1$   such that for any $u,v \in PR_L(\Omega)$ and $S \in \mathcal{B}(\Omega)$ with the property $S \subset \lbrace x \in J_u \cap J_v:   c_0 \le |[v]| \le c_0^{-1}|[u]|\rbrace  $ we have
$$\mathcal{F}(v,S)  \le  \mathcal{F}(u,S).$$ 
\end{itemize} 
This condition can be interpreted as a kind of `monotonicity condition at infinity' for the jump height. A similar assumption was used in \cite{Caterina, Manuel}, we refer to \cite[Remark 3.2, 3.3]{Caterina} for more details.   Recall the constants $\beta,c_0$ in {\rm (${\rm H_4}$)} and {\rm (${\rm H_6}$)}, respectively.

\begin{lemma}[Truncation]\label{lemma: truncation}
Let $d=2,3$, let $\Omega \subset \R^d$ open, bounded with Lipschitz boundary,   and let $L = \R^{d \times d}_{\rm skew}$ or $L = SO(d)$.  Let $\theta>0$.  Then there exists  $C_{\theta}=C_{\theta}(\theta,c_0,\Omega)>0$   such that for every $u \in PR_L(\Omega)$  and every $\lambda \ge 1$ the following holds:  there exist a rest set  $R \subset \R^d$  with
\begin{align}\label{eq: truncation-rest}
\mathcal{L}^d(R) \le \theta \big( \mathcal{H}^{d-1}(J_u) + \mathcal{H}^{d-1}(\partial \Omega)\big)^{d/(d-1)}, \ \ \ \ \  \mathcal{H}^{d-1}(\partial^*R) \le \theta(\mathcal{H}^{d-1}(J_u) + \mathcal{H}^{d-1}(\partial \Omega)),
\end{align}
and a function $v \in PR_L(\Omega) \cap L^\infty(\Omega;\R^d)$ such that
\begin{align}\label{eq: truncation-main}
{\rm (i)} & \ \  \lbrace u \neq v \rbrace  \subset R \cup \lbrace |u| > \lambda \rbrace \ \ \ \text{up to a set of negligible $\mathcal{L}^d$-measure},\notag\\
{\rm (ii)}& \ \ \Vert v \Vert_{L^\infty(\Omega)} \le C_\theta \lambda  ,\notag\\
{\rm (iii)}& \ \ \mathcal{F}(v,\Omega) \le \mathcal{F}(u,\Omega) + \beta\mathcal{H}^{d-1}(\partial^*R)
\end{align}
for all $\mathcal{F}$  satisfying  {\rm (${\rm H_1}$)}, {\rm (${\rm H_3}$)}, {\rm (${\rm H_4}$)}, and {\rm (${\rm H_6}$)}.
\end{lemma}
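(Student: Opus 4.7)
My approach would be to write $u$ in its indecomposable representation $u = \sum_j q_j \chi_{P_j}$ (cf.\ Section~\ref{sec: rig/piec-rig}), with $q_j(x) = Q_jx+b_j$, and to construct $v$ by modifying $u$ on components that either are geometrically small or have $\|q_j\|_{L^\infty(\Omega)}$ much larger than $\lambda$. I would classify $P_j$ as \emph{good} if $\|q_j\|_{L^\infty(\Omega)}\le C_0\lambda$ for a constant $C_0=C_0(\Omega,L)$ (e.g.\ $C_0=2(1+\mathrm{diam}(\Omega))$), \emph{small} if $\mathcal{L}^d(P_j)<\delta$ for a threshold $\delta$ to be chosen, and \emph{bad} otherwise. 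On good components, keep $v:=u$; on small components, set $v:=q_\star$ for a fixed bounded rigid motion $q_\star\in L$ (namely $q_\star\equiv 0$ for $L=\R^{d\times d}_{\rm skew}$, or $q_\star(x)=x$ for $L=SO(d)$) and include $P_j$ in the rest set $R$. On bad but not small components, I would apply the coarea formula to $|q_j|$ on $P_j$ to select a level $\lambda'_j\in[\lambda,2\lambda]$ such that the added interface $P_j\cap\{|q_j|=\lambda'_j\}$ has $\mathcal{H}^{d-1}$-measure bounded by $|Q_j|\mathcal{L}^d(P_j)/\lambda$, and then set $v:=q_j$ on the sublevel part (where $|v|\le 2\lambda$) and $v:=q_\star$ on the superlevel part, which is contained in $\{|u|>\lambda\}$. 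This would yield a piecewise rigid $v$ with $\{u\ne v\}\subset R\cup\{|u|>\lambda\}$ and $\|v\|_{L^\infty(\Omega)}\le C_\theta\lambda$ with $C_\theta$ depending on $C_0$ and $\|q_\star\|_{L^\infty(\Omega)}$ only.

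For the rest-set bound \eqref{eq: truncation-rest}, the isoperimetric inequality would give
\[
\sum_{j\in J_{\mathrm{small}}}\mathcal{L}^d(P_j)\le c_{\pi,d}\,\delta^{1/d}\sum_j\mathcal{H}^{d-1}(\partial^* P_j)\le 2c_{\pi,d}\,\delta^{1/d}\bigl(\mathcal{H}^{d-1}(J_u)+\mathcal{H}^{d-1}(\partial\Omega)\bigr),
\]
and analogously for the perimeter, so choosing $\delta$ of order $\theta^d\bigl(\mathcal{H}^{d-1}(J_u)+\mathcal{H}^{d-1}(\partial\Omega)\bigr)^{d/(d-1)}$ would deliver the small-component contribution to $R$. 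The additional interface from splitting bad components would be $\sum_{\text{bad}}|Q_j|\mathcal{L}^d(P_j)/\lambda$; for $L=SO(d)$ the factor $|Q_j|$ is uniformly bounded, while for $L=\R^{d\times d}_{\rm skew}$ I would exploit that on bad components $q_j$ varies by more than $C_0\lambda$ across $\Omega$, allowing an isoperimetric argument to bound $|Q_j|\mathcal{L}^d(P_j)$ by a controlled multiple of $\mathcal{H}^{d-1}(\partial^* P_j)^{d/(d-1)}$. For the energy bound \eqref{eq: truncation-main}(iii), I would decompose $J_v$ into: (a) jumps fully inherited from $J_u$ (contributing $\mathcal{F}(u,\cdot)$ via locality {\rm (${\rm H_3}$)}); (b) portions of $J_u\cap J_v$ with one trace of $u$ replaced by $q_\star$; and (c) new interfaces contained in $\partial^* R$ (contributing $\le\beta\mathcal{H}^{d-1}(\partial^* R)$ by {\rm (${\rm H_4}$)}). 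On type~(b), since $|u|>\lambda$ on the replaced side and $|v|\le C_\theta\lambda$, one has $c_0\le|[v]|\le c_0^{-1}|[u]|$ for $\lambda$ sufficiently large (absorbed into $C_\theta$), and {\rm (${\rm H_6}$)} would yield $\mathcal{F}(v,\cdot)\le\mathcal{F}(u,\cdot)$; the residual region where $|[v]|<c_0$ has $\mathcal{H}^{d-1}$-measure absorbable into $\beta\mathcal{H}^{d-1}(\partial^* R)$ after a suitable refinement of $\lambda'_j$.

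The main difficulty will be to ensure that $C_\theta$ depends only on $\theta$, $c_0$, and $\Omega$, independently of $u$. The constant provided by Lemma~\ref{lemma: rigid motion} degenerates as the underlying measure threshold shrinks, and since $\delta$ is forced by \eqref{eq: truncation-rest} to depend on $\mathcal{H}^{d-1}(J_u)$, one must circumvent Lemma~\ref{lemma: rigid motion} altogether — this is why I define ``good'' through a direct $L^\infty(\Omega)$-criterion on $q_j$ rather than through an integral estimate on a sublevel set. For $L=\R^{d\times d}_{\rm skew}$, the unbounded factor $|Q_j|$ in the coarea estimate is a further obstacle that requires a geometric argument showing that bad components with large $|Q_j|$ are forced into a narrow region of $\Omega$ whose volume is constrained by $\mathcal{H}^{d-1}(\partial^* P_j)$. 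This parallels the spirit of the truncation scheme in \cite{Manuel}, but, due to the rigidity constraint $\nabla v\in L$ and the lack of an $SBV$-type coarea formula in $SBD$ (cf.\ the discussion after Lemma~\ref{lemma: fundamental estimate2}), the bookkeeping here is substantially more delicate.
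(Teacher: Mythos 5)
Your scheme diverges from the paper's in a way that I believe creates irreparable gaps. The most serious one is the level-set cut of bad components: the interface $P_j\cap\lbrace|q_j|=\lambda'_j\rbrace$ is \emph{new}, $u$ does not jump there, so neither locality $({\rm H_3})$ nor $({\rm H_6})$ controls its contribution, and the only available bound is $\beta$ times its $\mathcal{H}^{d-1}$-measure, which must therefore be absorbed into $\beta\mathcal{H}^{d-1}(\partial^*R)\le\beta\theta(\mathcal{H}^{d-1}(J_u)+\mathcal{H}^{d-1}(\partial\Omega))$. No coarea or isoperimetric argument delivers this uniformly in $u$ and $\lambda\ge1$: take $L=\R^{2\times 2}_{\rm skew}$, $\Omega=B_1(0)$, $\lambda=1$, $u(x)=Qx$ with $Q$ skew of norm $N$ slightly above $C_0$. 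Then $J_u=\emptyset$, the single component is bad and not small, and every level set $\lbrace|u|=\lambda'\rbrace\cap\Omega$ with $\lambda'\in[1,2]$ is a circle of length comparable to $1/C_0$ --- a fixed positive constant --- while the allowed budget $2\pi\theta$ is arbitrarily small. (The proposed isoperimetric bound $|Q_j|\mathcal{L}^d(P_j)\lesssim\mathcal{H}^{d-1}(\partial^*P_j)^{d/(d-1)}$ already fails here since $|Q_j|$ is free, and in any case the sum of such terms over bad components is not $O(\theta\mathcal{H}^{d-1}(J_u))$.) Two further gaps: first, replacing $u$ by a \emph{small} constant $q_\star$ breaks $({\rm H_6})$ at $\partial^*P_j\cap\partial^*P_i$ with $P_j$ discarded and $P_i$ kept, since $|[v]|=|q_\star-q_i|$ can be below $c_0$ whenever $q_i$ is close to $q_\star$; adjusting $\lambda'_j$ modifies only the interior cut of $P_j$ and has no effect on $\partial^*P_j$, so the claimed absorption mechanism has no content. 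Second, placing every small non-good component into $R$ controls $\mathcal{L}^d(R)$ but not $\mathcal{H}^{d-1}(\partial^*R)$ --- a single small, non-good ball $P_1$ already gives $\mathcal{H}^{d-1}(\partial^*R)=\mathcal{H}^{d-1}(J_u)$, violating the bound in \eqref{eq: truncation-rest} for $\theta<1$.

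The paper's proof resolves all three issues with a mechanism your proposal lacks. It never cuts by a level set of $|u|$; for $L=\R^{d\times d}_{\rm skew}$ it applies a purely geometric tubular decomposition of each component (Lemma~\ref{lemma: decomp-sets}) whose added perimeter is a $\theta$-fraction of the component's own perimeter, together with the max-min control of Lemma~\ref{lemma: max-min}. The resulting pieces are grouped into multiplicative bands $I^k_\lambda$ according to $\Vert q_j\Vert_{L^\infty(P_j)}$ at scale $\theta^{-6}$, and a pigeonhole argument extracts one band $I^{K_\theta}_\lambda$ of total perimeter $\lesssim\theta(\mathcal{H}^{d-1}(J_u)+\mathcal{H}^{d-1}(\partial\Omega))$, which --- together with the tubular rest --- becomes $R$. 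This buffer band separates $\sup|u|$ on the retained region ($\le\lambda\theta^{-6K_\theta}$) from $\inf|u|$ on the discarded region ($\ge3\lambda\theta^{-6K_\theta-2}$), and the replacement value is a \emph{large} constant $b\,e_1$ with $b=\lambda\theta^{-6K_\theta-1}$, chosen strictly inside this gap. This forces $\theta^{-1}\le|[v]|\le\theta|[u]|$ on the entire truncation interface $\partial^*U\setminus\partial^*R$ and makes $({\rm H_6})$ applicable there. The multiscale banding together with the large replacement constant is the essential idea; a direct good/small/bad split with level-set cuts and a small replacement cannot close the argument.
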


We remark that the function $v$ also lies in $SBV(\Omega;\R^d)$. For $L = SO(d)$ this is clear. For $L = \R^{d \times d}_{\rm skew}$, this follows from the (much more general) embedding $SBD^2(\Omega) \cap L^\infty(\Omega;\R^d) \hookrightarrow SBV(\Omega;\R^d)$ (see \cite[Theorem 2.7]{Friedrich:15-4})  or from  
 \cite[Theorem 2.2]{Conti-Iurlano:15}. 

\begin{remark}\label{rem: truncation}
In the statement of Lemma \ref{lemma: truncation},  we can additionally get that $R\subset \Omega$ if $L=SO(d)$, as we are going to show in the proof. In the case $L = \R^{d \times d}_{\rm skew}$, our construction of $R$  might in principle not comply with the above inclusion. It can however be easily recovered a posteriori for many geometries of $\Omega$.  Indeed,  e.g., for convex $\Omega$, we can simply replace $R$ with $R \cap \Omega$ \BBB at the expense of a larger, but still universal, constant in \eqref{eq: truncation-rest} and \eqref{eq: truncation-main}(iii). \EEE This follows from the fact that 
\begin{align}\label{eq: the very last equation2}
\mathcal{H}^{d-1}(\partial \Omega \cap R) \le C\mathcal{H}^{d-1}(\partial^* R).
\end{align}
To see this, \BBB we fix a finite subset $\tilde S \subset  S^{d-1}$ and we  first remark that, up to changing $R$ on a null set, we can assume that,  for $\nu \in \tilde S$ and $y \in \Pi_\nu:= \lbrace x\in \R^d: \ \langle  x, \nu \rangle = 0 \rbrace$, either the line $y + \R \nu$ intersects $R$ on a set of positive Lebesgue measure, or has empty intersection therewith. If now $\partial \Omega \cap R\cap (y + \R \nu)\neq \emptyset$, on the one hand the line intersects  $\partial \Omega$ at most twice (due to convexity). On the other hand, for $\mathcal{H}^{d-1}$-a.e. $y \in \Pi_\nu$ with $\partial \Omega \cap R\cap (y + \R \nu)\neq \emptyset$, applying slicing properties \cite[Theorem 3.108]{Ambrosio-Fusco-Pallara:2000} for the $BV$ function $\chi_R$  we have 
\[
\mathcal{H}^0\big((y + \R \nu) \cap \partial^* R\big) =  \mathcal{H}^0\big(\partial^*((y + \R \nu) \cap  R)\big)\ge 2
\]
since $(y + \R \nu) \cap  R$ is bounded with positive measure. \EEE Thus, for each $\nu \in \tilde S$ there holds by the Slicing Theorem (see, e.g., \cite[Theorem 3.2.22]{Federer})
\begin{align*}
\int_{\partial \Omega \cap R} |\nu \cdot \nu_{\Omega}| {\rm d}\mathcal{H}^{d-1} & = \int_{\Pi_\nu}  \mathcal{H}^0((y + \R \nu) \cap \partial \Omega \cap R) {\rm d}\mathcal{H}^{d-1}(y)   \\
& \le    \int_{\Pi_\nu}  \mathcal{H}^0((y + \R \nu) \cap \partial^* R) {\rm d}\mathcal{H}^{d-1}(y) \le   \mathcal{H}^{d-1}(\partial^*R).
\end{align*}
This applied for a finite collection of $(\nu_i)_i \in S^{d-1}$ such that $\sup_{i} |\langle \nu,\nu_i\rangle| \ge \frac{1}{2}$ for all $\nu \in S^{d-1}$ yields  \eqref{eq: the very last equation2}. 
\end{remark}
 
 We  point out that standard Lipschitz-truncation techniques in  $SBV$, see   \cite[Lemma 3.5]{Braides-Defranceschi} or \cite[Lemma 4.1]{Caterina},   are not applicable here as they do not preserve the property that the function is  piecewise rigid. The main idea in the construction consists in replacing the function $u= \sum_j q_j\chi_{P_j}$ by a  constant function on components where $q_j$ is `too large'.  Since the energy in general depends on the jump height, the energy is affected by such modifications. Thus, this constant has to be chosen in a careful way, and one needs to use (${\rm H_6}$) to ensure \eqref{eq: truncation-main}(iii).  In this context, it is essential to control the maximal and minimal values of $q_j$ on each component $P_j$ outside of a rest set $R$ with small perimeter. To this aim, an additional tool is required when dealing with the case $L= \R^{d \times d}_{\rm skew}$, namely a careful decomposition of sets (Lemma \ref{lemma: decomp-sets}) for which an additional rest set $R_{\mathrm{aux}}$ has to be introduced.  Our  construction is inspired by similar techniques used in \cite[Theorem 3.2]{Manuel} and \cite[Theorem 4.1]{FriedrichSolombrino}.

 While Lemma  \ref{lemma: truncation} can be  proved directly in the case $L=SO(d)$, so that a reader only interested in this case can now already skip to its proof, we need  two auxiliary lemmas to deal with the case $L = \R^{d \times d}_{\rm skew}$. \EEE In the sequel, given $Q \in \R^{d \times d}_{\rm skew}$ and $b \in \R^d$, we denote by $\pi_{\ker Q}(b) \in \R^d$ the orthogonal projection of $b$ onto the kernel of $Q$. Likewise, $\pi_{{\ker Q}^\perp}(b) \in  \R^d$ denotes the projection on the orthogonal complement of $\ker Q$.  The first lemma concerns a uniform control for an affine function $q$ in terms of its  minimal  modulus on sets whose minimal and maximal distance  from the affine space $\lbrace q = \pi_{\ker Q}(b) \rbrace$ are comparable.

\begin{lemma}[Minimal and maximal values of rigid motions]\label{lemma: max-min}
Let $d =2,3$, let $E \subset \R^d$   be a  set of finite perimeter, and let $q = q_{Q,b}$ with $Q \in \R^{d \times d}_{\rm skew}$ such that 
\begin{align}\label{eq: el-lem1-2}
{\rm ess\, sup}_{x \in E}  \, \dist(x, \lbrace q = \pi_{\ker Q}(b) \rbrace ) \le C_0 \, {\rm ess\, inf}_{x \in E} \, \dist(x, \lbrace q = \pi_{\ker Q}(b) \rbrace )
\end{align}
 for some $C_0 \ge 1$. Then there holds 
 \begin{align*}
\Vert q \Vert_{L^\infty(E)} \le C_0\, {\rm ess \, inf}_{x \in E} |q(x)|.
\end{align*}
\end{lemma}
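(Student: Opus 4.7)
The plan is to reduce the statement to a direct algebraic identity for $|q(x)|^2$ that decouples the projection of $b$ onto $\ker Q$ from the part that measures distance to the affine kernel set. The key geometric fact is that, for skew $Q$, one has $(\ker Q)^\perp = \mathrm{im}\,Q$, and moreover in dimensions $d=2,3$ the restriction of $Q$ to $(\ker Q)^\perp$ is a scalar multiple of a rotation (case $d=2$: $Q$ acts as rotation by $\pi/2$ times an angular factor $|\omega|$; case $d=3$: $Qv = \omega \times v$ and $|Qv| = |\omega||v|$ on $\omega^\perp$). So $|Qv|=|\omega|\,|v|$ for every $v \in (\ker Q)^\perp$.

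The first step is to rewrite $q$ in a centered form: since $\pi_{(\ker Q)^\perp}(b) \in \mathrm{im}\,Q$, pick $y_0\in\R^d$ with $Qy_0 = -\pi_{(\ker Q)^\perp}(b)$, and set $a := \pi_{\ker Q}(b)$. Then
\begin{equation*}
q(x) \;=\; Q(x-y_0) + a, \qquad \lbrace q = a\rbrace = y_0 + \ker Q.
\end{equation*}
Next, decompose orthogonally $x - y_0 = p + p^\perp$ with $p \in \ker Q$ and $p^\perp \in (\ker Q)^\perp$. Then $\dist(x, \lbrace q=a\rbrace) = |p^\perp|$, and by the observation above $|Q(x-y_0)| = |Qp^\perp| = |\omega|\,|p^\perp|$.

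The second step is to exploit orthogonality. Since $Qp^\perp \in (\ker Q)^\perp$ and $a \in \ker Q$, Pythagoras gives
\begin{equation*}
|q(x)|^2 \;=\; |Qp^\perp|^2 + |a|^2 \;=\; |\omega|^2\, \dist(x, \lbrace q=a\rbrace)^2 + |a|^2.
\end{equation*}
Setting $d(x) := \dist(x, \lbrace q = a \rbrace)$, this identity says $|q|^2$ is a strictly increasing affine function of $d^2$.

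The third step is the actual estimate. Using the identity, the hypothesis \eqref{eq: el-lem1-2}, and $C_0 \ge 1$, I get
\begin{equation*}
\Vert q \Vert_{L^\infty(E)}^2 = |\omega|^2 (\mathrm{ess\,sup}_E d)^2 + |a|^2 \le C_0^2|\omega|^2(\mathrm{ess\,inf}_E d)^2 + |a|^2 \le C_0^2\bigl(|\omega|^2(\mathrm{ess\,inf}_E d)^2 + |a|^2\bigr) = C_0^2\,\mathrm{ess\,inf}_{x\in E}|q(x)|^2,
\end{equation*}
and taking square roots concludes the proof. The case $Q=0$ is trivial since then $q\equiv b$ is constant. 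There is no real obstacle here; the only delicate point is checking that the claimed structure $|Qv|=|\omega|\,|v|$ on $(\ker Q)^\perp$ genuinely uses $d \le 3$ (in higher dimensions a skew $Q$ may have several distinct singular values on $(\ker Q)^\perp$, which would break the clean identity $|q(x)|^2 = |\omega|^2 d(x)^2 + |a|^2$).
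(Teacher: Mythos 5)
Your proof is correct, and it hinges on the same key fact as the paper's: for $Q \in \R^{d\times d}_{\rm skew}$ with $d\le 3$, the spectral norm $|Q|_2$ is the \emph{unique} singular value of $Q$ on $(\ker Q)^\perp$, so $|Qv|=|\omega|\,|v|$ for every $v\in(\ker Q)^\perp$. Where you differ is in presentation: you make the orthogonal decomposition $q(x)=Qp^\perp + \pi_{\ker Q}(b)$ explicit and derive the clean Pythagorean identity
\begin{equation*}
|q(x)|^2=|\omega|^2\,\dist\big(x,\lbrace q=\pi_{\ker Q}(b)\rbrace\big)^2+|\pi_{\ker Q}(b)|^2,
\end{equation*}
which handles $d=2$ and $d=3$ in one stroke (the $d=2$ case is the degenerate one with $\pi_{\ker Q}(b)=0$). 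The paper instead proves the $d=2$ case first --- where $\ker Q=\lbrace 0\rbrace$ and hence $|q(x)|$ is simply a multiple of $\dist(x,\lbrace q=0\rbrace)$ --- and then disposes of $d=3$ with a one-line remark about restricting to planes orthogonal to the kernel line. Your route is a little more explicit than the paper's, and it makes transparent both where $C_0\ge 1$ is used (to absorb the constant $|\pi_{\ker Q}(b)|^2$ term) and why $d\le 3$ is essential (in higher dimensions a skew matrix can have several distinct singular values on $(\ker Q)^\perp$, destroying the identity). This is a genuine improvement in clarity, though the underlying mechanism is the same.
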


\begin{proof}
We start with $d=2$.  Without restriction we can suppose that $Q \neq 0$. Then $Q$ is invertible,  hence $\ker Q=\{0\}$,  and  $\lbrace q = 0 \rbrace = \lbrace z \rbrace$ for $z : = - Q^{-1} b$. If  $|Q|$ denotes the Frobenius norm, we have $|Qy| = \frac{\sqrt{2}}{2}|Q||y|$ for all $y \in \R^2$. Then the fact that $q(z) = 0 $ implies
$$|q(x)| = |q(x) - q(z)| = |Q(x-z)| = \frac{\sqrt{2}}{2}|Q||x-z| =  \frac{\sqrt{2}}{2}|Q| \, \dist(x, \lbrace q = 0 \rbrace). $$
By \eqref{eq: el-lem1-2} this implies 
\begin{align}\label{eq: 2D-calc}
{\rm ess \, inf}_{x \in E} |q(x)| & =   \frac{\sqrt{2}}{2}|Q| \, {\rm ess \, inf}_{x \in E}  \, \dist(x, \lbrace q = 0 \rbrace) \ge   \frac{\sqrt{2}}{2C_0}|Q| \, {\rm ess \, sup}_{x \in E}  \, \dist(x, \lbrace q = 0 \rbrace) \notag \\
&  = \frac{1}{C_0}\Vert q \Vert_{L^\infty(E)}.
\end{align}
This yields the statement for $d = 2$. The case $d=3$ may simply be reduced to the two-dimensional problem by performing calculation \eqref{eq: 2D-calc} restricted to planes which are orthogonal to the line $\lbrace q = \pi_{\ker Q}(b) \rbrace$. (Note that $\lbrace q= \pi_{\ker Q}(b) \rbrace$ is one-dimensional unless $Q  = 0$.) 
\end{proof}

  Note that for $Q \neq 0$ we have ${\rm dim} \, \lbrace q = \pi_{\ker Q}(b) \rbrace = 0$ if $d=2$ and  ${\rm dim} \, \lbrace q = \pi_{\ker Q}(b) \rbrace = 1$ if $d=3$. Property \eqref{eq: el-lem1-2} can always be achieved by introducing a suitable partition of sets of finite perimeter, as the following lemma shows.  Its proof is deferred to Appendix \ref{proof2}. \EEE
 
\begin{lemma}[Decomposition of sets]\label{lemma: decomp-sets}
There exists a universal constant $c>0$ such that the following holds for each $0 < \theta < 1$ :

(a) For each $x_0 \in \R^2$ and each indecomposable, bounded set of finite perimeter $E \subset \R^2$ there exists $R \subset \R^2$ with $\mathcal{H}^1(\partial^* R) \le \theta \mathcal{H}^1(\partial^* E)$ such that
\begin{align}\label{eq: essinf0}
{\rm ess\, sup}_{x \in E \setminus R}  \, |x-x_0| \le c\theta^{-1} \,  {\rm ess\, inf}_{x \in E \setminus R} \,  |x-x_0|.
\end{align}  

(b) For each line $K = x_0 + \R \nu \subset \R^3$, $x_0, \nu \in \R^3$, and each indecomposable, bounded set of finite perimeter $E \subset \R^3$ there exist pairwise disjoint sets of finite perimeter $R$ and $(D_j)_{j=1}^J$ satisfying $\bigcup_{j=1}^J D_j \subset  E \subset R \cup \bigcup_{j=1}^J D_j$ and
\begin{align}\label{eq: lengthi bound}
\mathcal{H}^2(\partial^* R) \le \theta \mathcal{H}^2(\partial^* E), \  \ \ \ \ \  \sum\nolimits_{j=1}^J \mathcal{H}^2(\partial^* D_j \setminus \partial^* E) \le \theta \mathcal{H}^2(\partial^* E)
\end{align}
 such that
\begin{align}\label{eq: essinf}
{\rm ess\, sup}_{x \in D_j}  \, \dist(x,K) \le c\theta^{-3} \, {\rm ess\, inf}_{x \in D_j} \, \dist(x,K) \ \ \  \text{ for all $j=1\ldots,J$.}
\end{align} 
 
\end{lemma}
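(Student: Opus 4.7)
For part (a), the plan is to take $R$ as a disk centered at $x_0$. Let $\rho_0 := \mathrm{ess\,inf}_{x \in E} |x - x_0|$ and $\rho_1 := \mathrm{ess\,sup}_{x \in E} |x - x_0|$. If $\rho_1/\rho_0 \le c/\theta$, set $R := \emptyset$. Otherwise, I would choose $r_0 := \theta\rho_1/c$ and set $R := B_{r_0}(x_0)$, so that $\mathcal{H}^1(\partial^* R) = 2\pi r_0$. The ratio bound is automatic, since $E \setminus R \subset \{|x - x_0| \ge r_0\}$ gives $\mathrm{ess\,sup}_{E\setminus R}|x-x_0|/\mathrm{ess\,inf}_{E\setminus R}|x-x_0| \le \rho_1/r_0 = c/\theta$. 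The key ingredient for the perimeter bound is the inequality
\[
\mathcal{H}^1(\partial^* E) \ge 2\,\diam(E),
\]
valid for every bounded indecomposable set of finite perimeter in $\R^2$. This follows by projecting $E$ orthogonally onto the line through two diameter-realizing points and applying the Slicing Theorem: the projection of $E$ covers an interval of length $\ge \diam(E)$, while on each fiber the slice $E \cap \mathrm{fiber}$ is nonempty (by indecomposability, as an empty slice would split $E$ into two parts with disjoint essential boundaries) and bounded, hence contributes at least $2$ points to $\partial^* E$. Combining with $\diam(E) \ge \rho_1 - \rho_0 \ge \rho_1(1-\theta/c)$ gives $\mathcal{H}^1(\partial^* E) \ge \rho_1$ for $c \ge 2$, so that $\mathcal{H}^1(\partial^* R) = 2\pi\rho_1\theta/c \le \theta \mathcal{H}^1(\partial^* E)$ for the universal choice $c := 2\pi$.

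For part (b), the same strategy with a solid cylinder around $K$ fails directly because the lateral surface is unbounded. The plan is therefore to first slice $E$ by planes perpendicular to $K$ into finitely many pieces of bounded axial extent, selecting the cutting heights via a coarea/pigeonhole argument based on the identity $\int_\R \mathcal{H}^2(E \cap \Pi_t)\,\mathrm{d}t = \mathcal{L}^3(E)$ (where $\Pi_t$ denotes the plane perpendicular to $K$ at height $t$) combined with the isoperimetric inequality, so that the total cutting cost stays below $\theta\mathcal{H}^2(\partial^* E)$. Each slice is then decomposed into its indecomposable components via \cite[Theorem 1]{Ambrosio-Morel}, which I would declare to be the $D_j$. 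On each $D_j$, of bounded axial extent, the planar argument of part (a) applies in the transverse direction, yielding a cylindrical contribution to $R$ with controlled lateral surface. The additional boundary produced by the axial cuts is absorbed into $\sum_j \mathcal{H}^2(\partial^* D_j \setminus \partial^* E)$, while the radial cylindrical cuts contribute to $\mathcal{H}^2(\partial^* R)$. Three factors of $\theta^{-1}$ then combine to produce the constant $c\theta^{-3}$: one from the axial slicing, one from the radial cylindrical cut, and one from an additional application of the relative isoperimetric inequality in 3D needed to convert axial thickness into perimeter.

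\textbf{Main obstacle.} The principal technical ingredient is the diameter-perimeter inequality for indecomposable bounded sets of finite perimeter, obtained via projection and the Slicing Theorem. In 3D the distance function to the line $K$ has a degenerate direction along $K$, so a direct projection argument is insufficient; the remedy is the preliminary axial slicing combined with the indecomposable decomposition \cite[Theorem 1]{Ambrosio-Morel}, which accounts for the worsened constant $\theta^{-3}$. A delicate point is the bookkeeping of boundaries: the extra boundary created by the axial cuts must be absorbed into $\sum_j \mathcal{H}^2(\partial^* D_j \setminus \partial^* E)$ rather than into $\mathcal{H}^2(\partial^* R)$, which requires the slicing planes to be chosen within a tight logarithmic window selected by pigeonhole, and then a careful verification that the cylindrical cut on each $D_j$ indeed satisfies the perimeter bound with uniform constants independent of the axial extent.
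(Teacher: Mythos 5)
Part (a) is correct and follows essentially the same route as the paper: remove a small disc centred at $x_0$ and control its circumference via the diameter--perimeter inequality for indecomposable bounded planar sets of finite perimeter. The paper cites \cite{maggi} for $\diam(E)\le\mathcal{H}^1(\partial^*E)$; your slicing sketch gives a valid proof of (a slightly sharper form of) that inequality, and the only cosmetic difference is that you parametrise the radius by $\rho_1={\rm ess\,sup}_E|x-x_0|$ instead of by $\diam(E)$.

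Part (b), however, has a genuine gap. Once $E$ is sliced by planes perpendicular to $K$ into slabs $(T_i)$, a thin solid cylinder of radius $z_0$ around $K$ must be removed from each slab so that the radial ratio is finite, and the dominant part of its boundary is the lateral surface $\approx z_0\cdot{\rm diam}_1(T_i)$, where ${\rm diam}_1$ denotes the axial extent. Keeping this below $\theta\,\mathcal{H}^2(\partial^*T_i)$ while also keeping $z_0$ comparable to the coarea levels that create the annular pieces $D_j$ (so that the ratio stays $\lesssim\theta^{-3}$) forces an estimate of the type ${\rm diam}_1(T_i)\lesssim\theta^{-1/2}\mathcal{H}^2(\partial^*T_i)^{1/2}$. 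In contrast to the planar situation, surface area does \emph{not} bound diameter in $\R^3$: a thin tube of radius $\eps$ and length $L$ aligned with $K$ has $\mathcal{H}^2(\partial^*)\approx\eps L$, so $\diam/\mathcal{H}^2(\partial^*)^{1/2}\approx\sqrt{L/\eps}$ is unbounded. The paper obtains the needed control (its Step~2) from a \emph{pointwise} lower bound on the transverse cross-sections of each slab, \eqref{eq: main cut3}, and this in turn requires the two-sided iterative cutting scheme of Step~1, which places a cut exactly where a cross-section drops below $\theta$ times the locally accumulated perimeter. A one-shot coarea/pigeonhole choice of cutting heights only bounds the total cutting cost on average and does not deliver this pointwise cross-section lower bound; nor does the 3D relative isoperimetric inequality, which compares volume to perimeter, not diameter. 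Note also that the paper's $D_j$ are annular shells given by level sets of $\dist(\cdot,K)$ rather than indecomposable components of the axial slices, and replacing slices by their components via \cite{Ambrosio-Morel} does not remove the obstruction, since a long thin tube aligned with $K$ is already indecomposable.
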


 We now proceed with the proof  of Lemma \ref{lemma: truncation}.
 
\begin{proof}[Proof of Lemma \ref{lemma: truncation}]
We  first provide the proof for $L = \R^{3 \times 3}_{\rm skew}$. Then, we briefly indicate the necessary changes for the two-dimensional case  $L = \R^{2 \times 2}_{\rm skew}$. In both cases, an additional step using the previous lemmata is needed to construct an auxiliary rest set $R_{\mathrm{aux}}$ and to derive \eqref{eq: lengthi bound--appl}--\eqref{eq: new boundary length}. We then sketch the proof for  the nonlinear case $L = SO(d)$, $d=2,3$, which follows by a similar argument but does not need  Lemmata \ref{lemma: max-min} and \ref{lemma: decomp-sets}. We note that it suffices to prove the lemma for $\theta \le \theta_0$ for some small $\theta_0 \le \frac{1}{2} $ depending on $c_0$ and $\Omega$.

\emph{Proof for $L = \R^{3 \times 3}_{\rm skew}$:} Let $u \in PR_L(\Omega)$ and let $u= \sum_{i\in \N} q'_i\chi_{P'_i}$ be an  indecomposable representation   (see Section \ref{sec: rig/piec-rig}).  On each $P'_i$ with  $Q'_i \neq 0$, we have $\dim \lbrace  q'_i = \pi_{\ker Q_i'}(b_i') \rbrace = 1$. Hence, we may apply Lemma \ref{lemma: decomp-sets}(b) for $K= \lbrace  q'_i = \pi_{\ker Q_i'}(b_i') \rbrace$  to obtain a  covering $P'_i \subset   R_i \cup \bigcup_{j=1}^{J_i} D^i_j$  with $D^i_j \subset P'_i$, $ j  = 1,\ldots,J_i$, satisfying
\eqref{eq: lengthi bound}--\eqref{eq: essinf}. Otherwise, if $Q'_i=0$, it trivially holds $ \lbrace  q'_i = \pi_{\ker Q_i'}(b_i') \rbrace  =\mathbb{R}^3$. On such components $P'_i$,  we simply set $R_i = \emptyset$ and $D^i_1 = P'_i$.

We define $R_{\rm aux} = \bigcup_{i \in \N} R_i$ and denote by $(P_j)_{j \in \N}$ the partition of $\Omega \setminus R_{\rm aux}$ consisting of the sets $(D^i_j \setminus R_{\rm aux})_{i,j}$. For each $j \in \N$, we let $q_j=q_{Q_j,b_j} = q'_{i_j}$, where the index $i_j \in \N$ is chosen such that $P_j \subset P'_{i_j}$. From  \eqref{eq: lengthi bound}--\eqref{eq: essinf} and Theorem \ref{th: local structure} we then obtain
\begin{align}\label{eq: lengthi bound--appl}
&\mathcal{H}^2(\partial^* R_{\rm aux}) \le \theta \sum\nolimits_{i\in \N} \mathcal{H}^2(\partial^* P'_i),\\
&\sum\nolimits_{j \in \N} \mathcal{H}^2\Big(\partial^* P_j \setminus \bigcup\nolimits_{i \in \N} \partial^* P'_i\Big) \le
\BBB  \sum\nolimits_{i \in \N}  \sum\nolimits_{j=1}^{J_i} \mathcal{H}^2(\partial^* D^i_j \setminus \partial^* P_i') +   \mathcal{H}^2(\partial^* R_{\rm aux})   \EEE \notag\\ & \quad\quad \ \, \quad\quad\quad\quad\quad\quad\quad\quad\quad\quad\quad \le  \theta \sum\nolimits_{i \in \N} \mathcal{H}^2(\partial^* P'_i) + \BBB \mathcal{H}^2(\partial^* R_{\rm aux})  \EEE \le  \BBB 2 \EEE \theta \sum\nolimits_{i \in \N} \mathcal{H}^2(\partial^* P'_i)\notag\,.
\end{align}
Moreover, we have
\begin{align}\label{eq: essinf-appl}
{\rm ess\, sup}_{x \in P_j}  \, \dist(x, \lbrace q_j =  \pi_{\ker Q_j}(b_j)  \rbrace ) \le c\theta^{-3} \, {\rm ess\, inf}_{x \in P_j} \, \dist(x, \lbrace q_j =  \pi_{\ker Q_j}(b_j)  \rbrace)   
\end{align} 
for all $j \in \N$. Indeed, if $\dim \lbrace q_j =  \pi_{\ker Q_j}(b_j)  \rbrace =1$, \eqref{eq: essinf-appl} follows from \eqref{eq: essinf} and the fact that $P_j \subset D^i_k$ for some $D^i_k$.  If   $\lbrace  q_j = \pi_{\ker Q_j}(b_j) \rbrace  =\mathbb{R}^3 $, it  is trivially satisfied. We also note that \eqref{eq: bdy and jump-new},  \eqref{eq: lengthi bound--appl}, and Theorem \ref{th: local structure} imply 
\begin{align}\label{eq: new boundary length}
\sum\nolimits_{j \in \N} \mathcal{H}^2(\partial^* P_j) \le c\sum\nolimits_{i\in \N} \mathcal{H}^2(\partial^* P'_i) \le c(\mathcal{H}^2(J_u) + \mathcal{H}^2(\partial \Omega)),
\end{align}
where $c>0$ is universal.

We define $I_\lambda =  \lbrace j\in \N: \ \Vert q_j \Vert_{L^\infty(P_j)}>\lambda\theta^{-6} \rbrace$ and introduce a decomposition of $I_\lambda$ according to the $L^\infty$-norms of the rigid motions: for $k \in \N$ we introduce the set of indices
\begin{align}\label{eq: trunc1}
{I}^k_\lambda = \lbrace j \in I_\lambda:  \lambda \theta^{-6k}  < \Vert q_j \Vert_{L^\infty(P_j)}  \le \lambda \theta^{-6(k+1)}   \rbrace
\end{align}
and  define $s_k = \sum_{j \in {I}^k_\lambda} \mathcal{H}^2(\partial^* P_j)$ for $k \in \N$. By \eqref{eq: new boundary length}  we find some $K_\theta \in \N$, $K_\theta \le \theta^{-1}$, such that 
\begin{align}\label{eq: trunc2} 
s_{K_\theta} \le c\theta (\mathcal{H}^2(J_u) + \mathcal{H}^2(\partial \Omega)).
\end{align}
We define the index set 
\begin{align}\label{eq: indexset}
I = \bigcup\nolimits_{k>K_\theta} {I}^k_\lambda
\end{align}
and introduce the rest set
\begin{align}\label{eq: indexset2}
R =  \bigcup\nolimits_{j \in {I}^{K_\theta}_\lambda} P_j \  \cup \  R_{\rm aux}.
\end{align}
By Theorem \ref{th: local structure}, \eqref{eq: lengthi bound--appl}, \eqref{eq: new boundary length},  and  \eqref{eq: trunc2}  we find 
\begin{align}\label{eq: trunc3} 
\mathcal{H}^2(\partial^* R) \le \sum_{j \in I_\lambda^{K_\theta}} \mathcal{H}^2(\partial^* P_j) +   \mathcal{H}^2(\partial^* R_{\rm aux}) \le s_{K_\theta}+ \theta \sum_{i\in \N} \mathcal{H}^2(\partial^* P'_i) \le c\theta(\mathcal{H}^2(J_u) + \mathcal{H}^2(\partial \Omega)).
\end{align}
In view of Lemma \ref{lemma: max-min} and \eqref{eq: essinf-appl}, we obtain for each $j \in I$  
\begin{align}\label{eq: maximini}
\Vert q_j \Vert_{L^\infty(P_j)} \le c \theta^{-3} \,  {\rm ess \, inf}_{x \in P_j} |q_j(x)| \le (3\theta^{4})^{-1} \,  {\rm ess \, inf}_{x \in P_j} |q_j(x)|,
\end{align}
where the last step holds for $\theta_0$ sufficiently small.  We define $U = R \cup \bigcup\nolimits_{j \in I} P_j$  and get by \eqref{eq: trunc1}, \eqref{eq: indexset}--\eqref{eq: indexset2}, and \eqref{eq: maximini} that
\begin{align}\label{eq: trunc4} 
{\rm (i)}& \ \ \Vert u \Vert_{L^\infty(\Omega \setminus U)} \le \lambda\theta^{-6K_\theta}, \notag \\ 
{\rm (ii)}& \ \  {\rm ess \, inf} \lbrace  |u(x)|: \ x \in U \setminus R \rbrace  \ge  3\theta^4 \lambda\theta^{-6 (K_\theta+1)} = 3\lambda\theta^{-6K_\theta-2}.
\end{align}
We define $v \in PR_L(\Omega) \cap L^\infty(\Omega;\R^3)$  by
\begin{align}\label{eq: trunc5} 
v := u\chi_{\Omega \setminus U} + b e_1 \,  \chi_{U}, \ \ \ \  \text{where} \ b:= \lambda\theta^{-6K_\theta-1}.
\end{align} 
We now show \eqref{eq: truncation-rest}--\eqref{eq: truncation-main} and start with \eqref{eq: truncation-main}. First, \eqref{eq: truncation-main}(i) follows from \eqref{eq: trunc4}(ii). Setting $C_\theta = \theta^{-6/\theta-1}$, \eqref{eq: truncation-main}(ii) follows from \eqref{eq: trunc4}(i), \eqref{eq: trunc5}, and the fact that $K_\theta \le 1/\theta$.  

We now address   \eqref{eq: truncation-main}(iii). As a preparation, we compare the jump sets of $u$ and $v$. First, \eqref{eq: trunc4}(i) and \eqref{eq: trunc5}  show that $J_v \supset \partial^* U \cap \Omega$ up to an $\mathcal{H}^{2}$-negligible set. Choose the orientation of $\nu_v(x)$ for $x \in \partial^* U \cap \Omega$ such that $v^+(x)$ coincides with the trace of $v\chi_{U}$ at $x$ and  $v^-(x)$ coincides with the trace of $v\chi_{\Omega \setminus U}$ at $x$. (The traces have to be understood in the sense of \cite[Theorem 3.77]{Ambrosio-Fusco-Pallara:2000}.) Moreover, we suppose that $\nu_v= \nu_u$ on $J_u \cap \partial^* U \cap \Omega$. Suppose that $\theta \le \theta_0 \le \frac{1}{2}$. For $\mathcal{H}^{2}$-a.e.\ $x \in ( \partial^* U \cap \Omega) \setminus \partial^* R$ we derive  by   \eqref{eq: trunc4}--\eqref{eq: trunc5} and $[v](x) = v^+(x) - v^-(x) =  b e_1 - u^-(x)$ that
\begin{align*}
\lambda\theta^{-6K_\theta} \le b- \Vert u \Vert_{L^\infty(\Omega \setminus U)} \le 
|[v](x)|    \le  b +  \Vert u \Vert_{L^\infty(\Omega \setminus U)} \le  2\lambda\theta^{-6K_\theta-1}.
\end{align*} 
In a similar fashion, we obtain
$$| [u](x)| \ge  |u^+(x)| - |u^-(x)| \ge  3\lambda\theta^{-6K_\theta-2} - \lambda\theta^{-6K_\theta} \ge 2\lambda\theta^{-6K_\theta-2}. $$
Therefore, since $\lambda \ge 1$ and   $K_\theta \ge 1$,  we find 
\begin{align}\label{eq: trunc6} 
\theta^{-1} \le |[v](x)| \le \theta |[u](x)|
\end{align}
for $\mathcal{H}^2$-a.e.\ $x \in (\partial^* U \cap \Omega) \setminus \partial^* R$. We are now in a position to show \eqref{eq: truncation-main}(iii). By {\rm (${\rm H_1}$)},  {\rm (${\rm H_3}$)},  $u=v$ on $\Omega \setminus U$,  and the fact that $v$ is constant on $U$, we get
\begin{align*}
\mathcal{F}(v,\Omega) &= \mathcal{F}(v, (U)^1) + \mathcal{F}(v, (\Omega \setminus U)^1) + \mathcal{F}(v, \partial U^* \cap \Omega)   = \mathcal{F}(u, (\Omega \setminus U)^1) + \mathcal{F}(v, \partial U^* \cap \Omega)\\
& \le \mathcal{F}(u, (\Omega \setminus U)^1) + \mathcal{F}(v, (\partial^* U \setminus \partial^* R)  \cap \Omega) +  \mathcal{F}(v, \partial^* R  \cap \Omega).
\end{align*}
By {\rm (${\rm H_4}$)}, {\rm (${\rm H_6}$)}, and \eqref{eq: trunc6}  (for $\theta$ sufficiently  small   such that $\theta^{-1} \ge c_0$) we get
\begin{align*}
\mathcal{F}(v,\Omega) 
& \le \mathcal{F}(u, (\Omega \setminus U)^1) + \mathcal{F}(u, (\partial^* U \setminus \partial^* R)  \cap \Omega) + \beta \mathcal{H}^1(\partial^* R )\\
& \le \mathcal{F}(u, \Omega) +  \beta\mathcal{H}^1(\partial^* R ),
\end{align*}
where in the last step we again used {\rm (${\rm H_1}$)} and the fact that $\mathcal{F}(u,(U)^1)\ge 0$. This concludes the proof of \eqref{eq: truncation-main}(iii).

It remains to show \eqref{eq: truncation-rest}. By \eqref{eq: trunc3}  and the isoperimetric inequality  we obtain the desired estimate with $c\theta$ in place of $\theta$. Clearly, the constant $c$ can be absorbed in $\theta$ by repeating the above arguments for $\theta/c$ in place of $\theta$. This concludes the proof for $L = \R^{3 \times 3}_{\rm skew}$. 

\emph{Adaptions for $L = \R^{2 \times 2}_{\rm skew}$:} For the two-dimensional case $L = \R^{2 \times 2}_{\rm skew}$, the following small adaption is necessary: before \eqref{eq: lengthi bound--appl}, for components $P'_i$ with $\dim \lbrace  q'_i = 0 \rbrace = 0$ (i.e., $Q'_i \neq 0$), we apply Lemma \ref{lemma: decomp-sets}(a) in place of Lemma \ref{lemma: decomp-sets}(b). (This case is even easier since the collection $(D^i_j)_j$  consists of one set only.)

\emph{Proof for $L = SO(d)$, $d=2,3$:} Here, we do not need to introduce a decomposition using Lemma \ref{lemma: decomp-sets}, and we can work  directly  with the indecomposable representation $u=\sum_{j\in \N} q_j\chi_{P_j}$. We define the index  sets  $I^k_\lambda$, the integer $K_\theta$, and the index set $I$ exactly as in \eqref{eq: trunc1}--\eqref{eq: indexset}. We set
\[
R=\bigcup\nolimits_{j \in {I}^{K_\theta}_\lambda} P_j\,.
\] 
Notice that, by construction, we have $R\subset \Omega$ as stated in Remark \ref{rem: truncation}. \EEE By Theorem \ref{th: local structure}  and  \eqref{eq: trunc2}  we find
\[
\mathcal{H}^{d-1}(\partial^* R) \le \sum\nolimits_{j \in I_\lambda^{K_\theta}} \mathcal{H}^{d-1}(\partial^* P_j) \le s_{K_\theta}\le c\theta(\mathcal{H}^{d-1}(J_u) + \mathcal{H}^{d-1}(\partial \Omega))\,.
\]
We further observe by \eqref{eq: trunc1}  and \eqref{eq: indexset} that we have for each $j \in I$ that $\Vert q_j \Vert_{L^\infty(P_j)}   \ge \lambda \theta^{-6} \ge  2{\rm diam}(\Omega)$, where the second step holds for $\theta_0$ sufficiently small. As   $q_j$ is an isometry, there holds 
\begin{align*}
\Vert q_j \Vert_{L^\infty(P_j)} \le   {\rm ess \, inf}_{x \in P_j} |q_j(x)| + {\rm diam}(\Omega) \le {\rm ess \, inf}_{x \in P_j} |q_j(x)| + \frac{1}{2}\Vert q_j \Vert_{L^\infty(P_j)},
\end{align*}
which in turn implies $\Vert q_j \Vert_{L^\infty(P_j)} \le 2 \,   {\rm ess \, inf}_{x \in P_j} |q_j(x)|$ for all $j \in I$. This inequality clearly  yields \eqref{eq: maximini}. The result then follows by {\it verbatim} repeating the argument after \eqref{eq: maximini}. 
\end{proof}

\subsection{A finer $\Gamma$-convergence result}
We first show that, under assumption {\rm (${\rm H_6}$)} and  for  $L = \R^{d\times d}_{\rm skew}$ or $L = SO(d)$, $d=2,3$, the second inequality in \eqref{eq: condition-new-new} holds  as a consequence of Lemma \ref{lemma: truncation}.

\begin{lemma}[Convergence of minima, lower bound]\label{lemma: gamma-min-lower}
 Let $d=2,3$, and let $L = \R^{d\times d}_{\rm skew}$ or $L = SO(d)$. Let $\Omega \subset \R^d$ open, bounded with Lipschitz boundary.  Let $\mathcal{F}_n: PR_L(\Omega) \times \mathcal{B}(\Omega) \to [0,\infty)$ be a sequence of functionals satisfying {\rm (${\rm H_1}$)}, {\rm (${\rm H_3}$)}--{\rm (${\rm H_6}$)} for the same $0 <\alpha < \beta$, $c_0 \ge 1$,  and $\sigma: [0,+\infty) \to [0,\beta]$. Let ${\mathcal{F}}: PR_L(\Omega) \times \mathcal{B}(\Omega) \to [0,\infty]$ be the $\Gamma$-limit identified in Lemma \ref{lemma: gamma-l2}.  Then for each ball $B_\eps(x_0) \subset \Omega$  and all $u \in PR_L(\Omega)$ we have
$$\sup\nolimits_{0<\eps' < \eps} \,  \liminf_{n \to \infty} \mathbf{m}_{\mathcal{F}_n}(u, B_{\eps'}(x_0)) \ge \mathbf{m}_{{\mathcal{F}}}(u, B_\eps(x_0)). $$
\end{lemma}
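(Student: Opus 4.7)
Plan: For each $\eps' < \eps$, I will construct a competitor for $\mathbf{m}_{\mathcal{F}}(u, B_\eps(x_0))$ starting from near-minimizers of $\mathbf{m}_{\mathcal{F}_n}(u, B_{\eps'}(x_0))$. The strategy is: (i) truncate the near-minimizers using Lemma \ref{lemma: truncation} to obtain an $L^\infty$-bound, (ii) extract a subsequential limit in measure via Lemma \ref{lemma: PR compactness}(i), (iii) restore the boundary data via the fundamental estimate in Lemma \ref{lemma: fundamental estimate2-new}, and (iv) invoke the $\Gamma$-liminf inequality together with the energy control from (${\rm H_4}$). The main obstacle is the absence of a-priori $L^\infty$-bounds on both $v_n$ and $u$, which is precisely what the truncation result of the previous subsection is designed to overcome.

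In more detail: I may assume $\liminf_n \mathbf{m}_{\mathcal{F}_n}(u, B_{\eps'}(x_0)) < \infty$ (else nothing to show) and, along a subsequence realizing this $\liminf$, pick $v_n \in PR(B_{\eps'}(x_0))$ with $v_n = u$ in a neighborhood of $\partial B_{\eps'}(x_0)$ and $\mathcal{F}_n(v_n, B_{\eps'}) \le \mathbf{m}_{\mathcal{F}_n}(u, B_{\eps'}) + 1/n$; extend by $u$ to $B_\eps(x_0)$. By (${\rm H_4}$), $\sup_n \mathcal{H}^{d-1}(J_{v_n} \cap B_\eps)$ is finite. For small $\theta > 0$ and large $\lambda \ge 1$ (to be chosen), applying Lemma \ref{lemma: truncation} on the domain $B_\eps(x_0)$ produces a truncated version $\tilde v_n^{(\theta,\lambda)} \in PR(B_\eps) \cap L^\infty$ with $\|\tilde v_n^{(\theta,\lambda)}\|_\infty \le C_\theta \lambda$, a rest set $R_n^{(\theta,\lambda)}$ of uniformly small perimeter and volume (of orders $\theta$ and $\theta^{d/(d-1)}$), and the energy bound $\mathcal{F}_n(\tilde v_n^{(\theta,\lambda)}, B_\eps) \le \mathcal{F}_n(v_n, B_\eps) + \beta C\theta$. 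The uniform $L^\infty$-bound enables Lemma \ref{lemma: PR compactness}(i) to yield (along a further subsequence) convergence in measure $\tilde v_n^{(\theta,\lambda)} \to \tilde v^{(\theta,\lambda)}$ on $B_\eps(x_0)$.

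Next, to restore $w = u$ near $\partial B_\eps(x_0)$, I pick an auxiliary $\tau \in (0, (\eps - \eps')/3)$ and apply Lemma \ref{lemma: fundamental estimate2-new} with $A' = B_{\eps - 2\tau}(x_0)$, $A = B_{\eps - \tau}(x_0)$, $B = B_\eps(x_0) \setminus \overline{B_{\eps - 3\tau}(x_0)}$, $\psi(t) = t/(1+t)$, and small $\eta > 0$, to the pair (inner function $\tilde v^{(\theta,\lambda)}$, outer function $u$). The crucial verification is condition \eqref{eq: extra condition}: on the gluing annulus $(A \setminus A')\cap B = B_{\eps-\tau} \setminus B_{\eps-2\tau} \subset B_\eps \setminus B_{\eps'}$ one has $v_n = u$, so $\tilde v_n^{(\theta,\lambda)} = u$ outside the small set $(R_n^{(\theta,\lambda)} \cup \{|u|>\lambda\}) \cap (\text{annulus})$, whose measure is uniformly bounded by $C\theta^{d/(d-1)} + \mathcal{L}^d(\{|u|>\lambda\} \cap B_\eps)$. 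This bound passes to the limit $n \to \infty$, so $\int_{(A\setminus A')\cap B}\psi(|\tilde v^{(\theta,\lambda)} - u|)$ can be made arbitrarily small by choosing $\theta$ small and $\lambda$ large, whence by \eqref{eq: Lambda0}, $\Lambda(\tilde v^{(\theta,\lambda)}, u) \to 0$; for fixed $\tau$ the Lemma's constants $M_1, \delta$ are fixed, while $\Phi$ is a fixed positive (possibly $+\infty$) quantity depending only on $u$ and the geometry. Hence \eqref{eq: extra condition} is satisfied, and the lemma yields $w \in PR(B_\eps(x_0))$ with $w = u$ on $B_\eps(x_0) \setminus \overline{B_{\eps - \tau}(x_0)}$ and an energy bound involving $\mathcal{F}(\tilde v^{(\theta,\lambda)}, B_{\eps-\tau})$, $\mathcal{F}(u, B_\eps \setminus \overline{B_{\eps - 3\tau}})$, and a small correction $C_\tau(2\eta + M_2\sigma(\Theta(\tilde v^{(\theta,\lambda)}, u)))$.

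Admissibility of $w$ for $\mathbf{m}_{\mathcal{F}}(u, B_\eps(x_0))$, combined with the $\Gamma$-liminf inequality on $B_\eps(x_0) \in \mathcal{A}_0(\Omega)$ and the truncation bound, produces
$$\mathbf{m}_{\mathcal{F}}(u, B_\eps(x_0)) \le \liminf_n \mathbf{m}_{\mathcal{F}_n}(u, B_{\eps'}(x_0)) + \beta\mathcal{H}^{d-1}(J_u \cap (B_\eps \setminus B_{\eps'})) + \beta C\theta + \mathcal{F}(u, B_\eps \setminus \overline{B_{\eps - 3\tau}}) + \text{err}(\eta, \theta, \lambda, \tau).$$
The conclusion follows by sending, in order, $\eta \to 0$, then $\theta \to 0$ and $\lambda \to \infty$ (so that $\Theta \to 0$ and $\beta C\theta \to 0$), then $\tau \to 0$ (so that $\mathcal{F}(u, B_\eps \setminus \overline{B_{\eps - 3\tau}}) \to 0$), and finally taking the supremum over $\eps' < \eps$ (whereby the remaining correction vanishes). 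The hard part will be navigating this four-parameter limit in the correct order: since the constants $M_1, \delta$ in Lemma \ref{lemma: fundamental estimate2-new} degenerate as $\tau \to 0$, one must fix $\tau$ first to freeze these constants, then tune $\theta, \lambda$ to satisfy \eqref{eq: extra condition} and shrink $\Theta$, next let $\eta \to 0$, and only afterwards send $\tau \to 0$ and $\eps' \to \eps$.
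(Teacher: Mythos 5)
Your strategy shares the skeleton of the paper's proof (truncate, extract a limit in measure, modify boundary data, invoke $\Gamma$-liminf), but there is a genuine gap at the gluing step. You apply Lemma \ref{lemma: fundamental estimate2-new} to the pair $(\tilde v^{(\theta,\lambda)}, u)$ \emph{for the limit functional $\mathcal{F}$}, reading off an energy bound on $\mathcal{F}(w, \cdot)$. However, Lemma \ref{lemma: fundamental estimate2-new} requires the functional to satisfy {\rm (${\rm H_5'}$)}, and at this point of the argument the $\Gamma$-limit $\mathcal{F}$ is only known to satisfy {\rm (${\rm H_1}$)}--{\rm (${\rm H_4}$)} (Lemma \ref{lemma: gamma-l2}). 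Establishing {\rm (${\rm H_5}$)} for $\mathcal{F}$ is precisely the eventual payoff of \eqref{eq: condition-new-new}, which requires the lemma you are proving --- so the argument is circular. If you instead apply the fundamental estimate to $\mathcal{F}_n$ and the pair $(\tilde v_n^{(\theta,\lambda)}, u)$, you then face a second compactness problem for the resulting sequence $(w_n)_n$, since $u$ itself need not be bounded and you must re-run a weighted compactness argument.

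The paper sidesteps both difficulties by avoiding the fundamental estimate entirely and gluing directly at the $n$-level. The key observations are: (a) the near-minimizers $v_n$ already agree with $u$ in a neighborhood $N_n$ of $\partial B_{\eps'}$; (b) after discarding a small-perimeter tail $S_\theta$ of the partition of $u$ (which leaves $u$ bounded by some $\lambda_\theta$ on $A\setminus S_\theta$) and truncating at this adapted level $\lambda_\theta$, one has $v_n' = v_n = u$ on $N_n\setminus(R_n^\theta\cup S_\theta)$; so one can simply set $v_n^\theta = u$ on $(A\setminus\overline{A'})\cup S_\theta$ and $v_n^\theta = v_n'$ elsewhere, paying only $\mathcal{O}(\theta)$ in extra jump. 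For compactness of $v_n^\theta$ the paper then uses a fixed coercive $\psi$ with $\int_A\psi(|u|)<\infty$, which covers the region $S_\theta$ where $u$ is kept and possibly unbounded; your approach of sending $\lambda\to\infty$ does not, on its own, yield the uniform $\psi$-bound that Lemma \ref{lemma: PR compactness}(i) needs, since the functions still coincide with $u$ on a region where $|u|$ exceeds $\lambda$. Coupling $\lambda=\lambda_\theta$ to the finite part of the partition of $u$ is what makes the direct gluing both admissible and compact.
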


\begin{proof}
For convenience, we again drop the subscript $L$ in the proof and write $A = B_\eps(x_0)$. Let $\theta>0$.  Fix $u \in PR(\Omega)$ and choose a ball $A' := B_{\eps'}(x_0)$, $\eps' < \eps$, such that
\begin{align}\label{eq: almost optimal}
 \mathcal{H}^{d-1}(J_u\cap (A\setminus A')) \le \theta.
\end{align}
As $u$ is measurable, we may fix  a nonnegative, monotone increasing, and  coercive function $\psi$ with
\begin{align}\label{eq: ourpsi}
\int_A \psi(|u|)\,\mathrm{d}x<+\infty\,.
\end{align}
Now, let $u = \sum_{j\in \N} q_j\chi_{P_j}$ be the piecewise distinct representation. In view of Theorem \ref{th: local structure}, we can choose $J\in\N$ sufficiently large such that the set $S_\theta := \bigcup_{j >J} P_j$ satisfies
\begin{align}\label{eq: gamma-lower-1}
\mathcal{H}^{d-1}\big( J_u\cap \big( \partial^* S_\theta \cup  (S_\theta)^1 \big)\big) \le \theta,
\end{align}
where $(S_\theta)^1$ denotes the set of points with density $1$.  Since $J$ is finite, we may fix  $\lambda_\theta \ge 1$  such that 
\begin{align}\label{eq: boundustheta}
\Vert u \Vert_{L^\infty(A\setminus S_\theta)} <\lambda_\theta.
\end{align}
We now consider a sequence $(v_n)_n \subset PR(A')$ with $v_n = u$ in a neighborhood $N_n \subset A'$ of $\partial A'$ and   
\begin{align}\label{eq: almost optimal-new}
\mathcal{F}_n(v_n,A') \le \mathbf{m}_{\mathcal{F}_n}(u,A') + 1/n\,.
\end{align}
Without restriction we can suppose that $\sup_n \mathcal{F}_n(v_n,A') <+ \infty$, i.e., $\sup_n \mathcal{H}^{d-1}(J_{v_n}) <+ \infty$ by  {\rm (${\rm H_4}$)}. We apply Lemma \ref{lemma: truncation} and Remark \ref{rem: truncation}  with $A'$ in place of $\Omega$  and for $\lambda= \lambda_\theta$ on each $v_n$ and find $v_n' \in PR(A') \cap L^\infty(A';\R^d)$  and sets of finite perimeter $R_n^{\theta} \subset A'$  such that  by \eqref{eq: truncation-rest} and \eqref{eq: truncation-main}(iii)  
\begin{align}\label{eq: gamma-lower-2}
\mathcal{F}_n(v_n',A') \le \mathcal{F}(v_n,A') + C\beta\theta, \ \ \  \ \  \  \  \  \mathcal{H}^{d-1}(\partial^* R_n^{\theta}) \le C\theta,
 \end{align}
 where $C$ depends on $A$ and  $\sup_n \mathcal{H}^{d-1}(J_{v_n}) <+ \infty$.   Observe  by  \eqref{eq: truncation-main}(i) that we have $\lbrace v_n \neq v_n' \rbrace\subset R_n^\theta \cup \lbrace  |v_n|  > \lambda_\theta \rbrace$, so that using \eqref{eq: boundustheta} we deduce that $v'_n = u$ on $N_n \setminus (R_n^\theta \cup S_\theta)$. 

We introduce the functions $v_n^\theta \in PR(A)$ by
\begin{align}\label{eq: vntheta}
v_n^{\theta} = \begin{cases} u & \text{ on } (A \setminus \overline{A'}) \cup S_\theta,\\
v_n' & \text{ else. }
\end{cases}
\end{align}
By {\rm (${\rm H_1}$)}, {\rm (${\rm H_3}$)}, and {\rm (${\rm H_4}$)} this implies 
\begin{align}\label{eq: the very last equation}
\mathcal{F}_n(v_n^\theta,A) &\le \mathcal{F}(u,(A \setminus \overline{A'}) \cup (S_\theta)^1)+ \mathcal{F}_n(v_n', A' \cap (S_\theta)^0) \notag\\
& \ \ \ + \beta\mathcal{H}^{d-1}(\partial^* S_\theta) +\beta\mathcal{H}^{d-1}(J_{v_n^\theta} \cap \partial A' \cap (S_\theta)^0),
 \end{align}
where  $(S_\theta)^0$ denotes the set of points with density $0$.  Since $v'_n = u$ on $N_n \setminus (R_n^\theta \cup S_\theta)$, we have $J_{v_n^\theta} \cap \partial A' \cap (S_\theta)^0 \subset  \partial^* R_n^\theta$.   With this, using \eqref{eq: the very last equation}, {\rm (${\rm H_1}$)}, and {\rm (${\rm H_4}$)}, we get 
\begin{align*}
\mathcal{F}_n(v_n^\theta,A) & \le \mathcal{F}_n(v_n', A') + \beta\mathcal{H}^{d-1}\big(J_u \cap \big( (A \setminus \overline{A'}) \cup (S_\theta)^1 \big) \big) + \beta\mathcal{H}^{d-1}(\partial^* S_\theta) + \beta\mathcal{H}^{d-1}(\partial^* R^\theta_n).
 \end{align*}  
Therefore, by  \eqref{eq: almost optimal}, \eqref{eq: gamma-lower-1}, and \eqref{eq: gamma-lower-2} we get
\begin{align}\label{eq: gamma-lower-2.2}
\mathcal{F}_n(v_n^\theta,A)  \le \mathcal{F}_n  (v_n,A') + C\beta\theta.
\end{align} 
Since $\sup_n \mathcal{F}_n(v_n, A'  ) <+ \infty$, we get  $\sup_n \mathcal{H}^{d-1}(J_{v^\theta_n}) <+ \infty$ by {\rm (${\rm H_4}$)} and \eqref{eq: gamma-lower-2.2}. By  \eqref{eq: truncation-main}(ii) and the  construction in \eqref{eq: vntheta}, it holds $|v_n^\theta(x)| \le \max \lbrace C_\theta \lambda_\theta, |u(x)| \rbrace$ for a.e.\ $x \in A$,   where $C_\theta$ is the constant in \eqref{eq: truncation-main}(ii).   With \eqref{eq: ourpsi} we then have $\sup_n \int_A \psi(|v_n^\theta|)\,\mathrm{d}x<+\infty$. Hence,  we can apply \BBB Lemma \ref{lemma: PR compactness}(i) \EEE to find $v^\theta \in PR(A)$ such that, up to a subsequence (not relabeled), $v_n^{\theta} \to v^\theta$ in measure on $A$.  Clearly, by \eqref{eq: vntheta} we have $v^\theta = u$ on $A \setminus \overline{A'}$.    \EEE     By  \eqref{eq_ Gamma limes}, \eqref{eq: almost optimal-new},  and \eqref{eq: gamma-lower-2.2} we get
\begin{align*}
{\mathcal{F}}(v^\theta, A)  \le \liminf_{n \to \infty}\mathcal{F}_n(v_n^{\theta},A) \le \liminf_{n \to \infty} \mathcal{F}_n(v_n,A') +  C\beta\theta \le \liminf_{n \to \infty} \mathbf{m}_{\mathcal{F}_n}(u,A') +  C\beta\theta.
\end{align*} 
As $v^\theta = u$ in a neighborhood of $\partial A$,  we get 
$$\mathbf{m}_{\mathcal{F}}(u,A) \le {\mathcal{F}}(v^\theta, A) \le \liminf_{n \to \infty} \mathbf{m}_{\mathcal{F}_n}(u,A') +  C\beta\theta\le \sup\nolimits_{0<\eps' < \eps} \,  \liminf_{n \to \infty} \mathbf{m}_{\mathcal{F}_n}(u, B_{\eps'}(x_0)) +C\beta\theta, $$   
where in the last step we used that $A' = B_{\eps'}(x_0)$.  By passing to $\theta \to 0$ we conclude the proof. 
\end{proof}  

By combining the above lemma with Theorem \ref{th: gamma} and Lemma \ref{lemma: gamma-min-upper} we finally get a full integral representation result for the $\Gamma$-limit in the setting considered in this section.

\begin{theorem}\label{thm: final}
 Let $d=2,3$, and let $L = \R^{d\times d}_{\rm skew}$ or $L = SO(d)$. Let $\Omega \subset \R^d$ open, bounded with Lipschitz boundary.  Let $\mathcal{F}_n: PR_L(\Omega) \times \mathcal{B}(\Omega) \to [0,\infty)$ be a sequence of functionals satisfying {\rm (${\rm H_1}$)}, {\rm (${\rm H_3}$)}--{\rm (${\rm H_6}$)} for the same $0 <\alpha < \beta$, $c_0 \ge 1$,  and $\sigma: [0,+\infty) \to [0,\beta]$.   Then there exists $\mathcal{F}:  PR_L(\Omega) \times \mathcal{B}(\Omega) \to [0,\infty)$ satisfying  {\rm (${\rm H_1}$)}--{\rm (${\rm H_5}$)} and a subsequence (not relabeled) such that
$$\mathcal{F}(\cdot,A) =\Gamma\text{-}\lim_{n \to \infty} \mathcal{F}_n(\cdot,A) \ \ \ \ \text{with respect to convergence in measure on $A$} $$
for all $A \in  \mathcal{A}_0(\Omega) $. Moreover,  $\mathcal{F}$ admits the representation \eqref{eq:gdef-new}--\eqref{eq:gdef}.

\end{theorem}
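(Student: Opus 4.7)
The plan is to assemble this theorem directly from the three main tools already developed in this section, namely Theorem \ref{th: gamma}, Lemma \ref{lemma: gamma-min-upper} and Lemma \ref{lemma: gamma-min-lower}. First I would invoke Theorem \ref{th: gamma} to extract a subsequence (not relabeled) and a functional $\mathcal{F}: PR_L(\Omega) \times \mathcal{B}(\Omega) \to [0,\infty)$ satisfying {\rm (${\rm H_1}$)}--{\rm (${\rm H_4}$)} such that $\mathcal{F}(\cdot,A) = \Gamma\text{-}\lim_n \mathcal{F}_n(\cdot,A)$ with respect to convergence in measure on $A$, for every $A \in \mathcal{A}_0(\Omega)$. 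Since {\rm (${\rm H_6}$)} is part of the hypotheses and $L$ is one of the two admissible choices in Lemma \ref{lemma: truncation}, the preparatory work of Section \ref{sec: truncation} is applicable.

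Next I would check that condition \eqref{eq: condition-new-new} holds for this $\mathcal{F}$ at every ball $B_\eps(x_0) \subset \Omega$ and every $u \in PR_L(\Omega)$. The upper inequality
\[
\limsup_{n\to\infty} \mathbf{m}_{\mathcal{F}_n}(u,B_\eps(x_0)) \le \mathbf{m}_{\mathcal{F}}(u,B_\eps(x_0))
\]
is exactly the content of Lemma \ref{lemma: gamma-min-upper}, whose proof uses only the fundamental estimate with boundary data (Lemma \ref{lemma: fundamental estimate2-new}) applied to a recovery sequence for an almost optimal competitor of the right-hand side. The lower inequality
\[
\mathbf{m}_{\mathcal{F}}(u,B_\eps(x_0)) \le \sup_{0<\eps'<\eps} \liminf_{n\to\infty} \mathbf{m}_{\mathcal{F}_n}(u,B_{\eps'}(x_0))
\]
is supplied by Lemma \ref{lemma: gamma-min-lower}; this is precisely the step where {\rm (${\rm H_6}$)} and the geometric restrictions on $L$ enter, since they are needed to apply the truncation result (Lemma \ref{lemma: truncation}) to restore coercivity and then a compactness argument via Lemma \ref{lemma: PR compactness}(i).

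Having verified \eqref{eq: condition-new-new}, the second part of Theorem \ref{th: gamma} immediately yields that $\mathcal{F}$ satisfies {\rm (${\rm H_5}$)} and admits the integral representation \eqref{eq:gdef-new}--\eqref{eq:gdef}. There are no genuine obstacles left at this stage: the entire difficulty has been isolated in the preparatory results. The only subtlety worth double-checking is that all the functionals $\mathcal{F}_n$, $\mathcal{F}$, and the competitors produced along the way are finite-valued piecewise rigid functions (so that the statements of the preceding lemmas literally apply), which follows from the uniform growth bound {\rm (${\rm H_4}$)}. Thus the theorem is obtained by concatenation, and no new argument beyond these three citations is required.
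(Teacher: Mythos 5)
Your proposal is correct and follows exactly the same route the paper takes: the authors state explicitly that Theorem \ref{thm: final} is obtained by combining Theorem \ref{th: gamma} (which gives the $\Gamma$-limit, {\rm (${\rm H_1}$)}--{\rm (${\rm H_4}$)}, and—once \eqref{eq: condition-new-new} is verified—both {\rm (${\rm H_5}$)} and the integral representation) with Lemma \ref{lemma: gamma-min-upper} and Lemma \ref{lemma: gamma-min-lower}, which supply the two inequalities of \eqref{eq: condition-new-new}. Your identification of {\rm (${\rm H_6}$)} and the restriction on $L$ as entering only through the truncation Lemma \ref{lemma: truncation} inside the proof of Lemma \ref{lemma: gamma-min-lower} is also accurate.
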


\section*{Acknowledgements} 
\BBB This work was supported by the DFG project FR 4083/1-1 and \EEE by the Deutsche Forschungsgemeinschaft (DFG, German Research Foundation) under Germany's Excellence Strategy EXC 2044 -390685587, Mathematics M\"unster: Dynamics--Geometry--Structure. The support by the Alexander von Humboldt Foundation is gratefully acknowledged. The authors are gratefully indebted to {\sc Marco Cicalese} for many stimulating discussions on the content of this work.  

\appendix

\section{Proof of Proposition \ref{prop: rotations}}\label{proof1}
\begin{proof}
We give the proof only for $d=3$   since it is  similar and simpler for $d=2$.   Consider the exponential map $S\mapsto \exp(S)$,   which is  surjective  from a compact subset of $\R^{3\times 3}_{\rm skew}$ to   $SO(3)$. Clearly,   once we have proved property \eqref{eq: repr0.0}  for $\exp$,    the desired map $\Psi_L$ can be defined as the composition of $\exp$  with the canonical isomorphism between $\R^{3\times 3}_{\rm skew}$  and $\R^3$. Throughout the proof, we denote by $|\cdot|$ the Frobenius norm of a matrix, by $|\cdot|_2$ its spectral norm, and with $c_2 \in (0,1)$ an equivalence constant between the two norms. Since in this case $r_L < + \infty$ (we can take for instance $r_L = 2\pi$), up to
rescaling the constant,  it is equivalent to prove (2.2) for a ball of radius $c_0$  in place of $c_L r_L$.

We start  by    fixing $c_0<\frac14$ and $\bar R\in SO(3)$. If $|\bar R-I|\le\frac12$, then $|R-I|<\frac34$ for all $R \in B_{c_0}(\bar R)$: therefore, a smooth inverse given by a  matrix logarithm is well-defined in  $B_{c_0}(\bar R)$ through the usual Taylor expansion around the identity.   By  its smoothness it clearly satisfies \eqref{eq: repr0.0}.

We therefore focus on the case where $|\bar R-I|>\frac12$. Since $c_0 < \frac{1}{4}$, there holds
\begin{align}\label{eq: barR distance}
|R-I| \ge \tfrac{1}{4} \ \ \ \text{for all} \ \ R \in B_{c_0}(\bar R).
\end{align} 
In view of Rodrigues' rotation formula and the power series expansion of $\exp$, for each $R \in B_{c_0}(\bar R) \cap SO(3)$ there exists a unit vector $n_R$ and an angle $\theta_R$ such that
\begin{align}\label{eq: Rodrigues}
R = I + \sin(\theta_R) N_R + (1-\cos(\theta_R)) N_R^2  = \exp(\theta_R N_R)\,,
\end{align}
where $N_R \in \R^{3\times 3}_{\rm skew}$ denotes the unique matrix with $N_R u = n_R \times u$ for all $u \in \R^3$. In particular, for each $u \in \R^3$ there holds  with the help of the Gra\ss{}mann identity $n_R \times (n_R \times u) = n_R \langle n_R,u \rangle- u$ 
\begin{align}\label{eq: Rodrigues2}
Ru =  \cos(\theta_R) \, u + \sin(\theta_R) \,  (n_R \times u)  + \langle n_R,u \rangle (1-\cos(\theta_R)) \,  n_R .
\end{align}    
Thus, our goal is to specify the choice of $N_R$ and $\theta_R$. The desired matrix $S_R \in \R_{\rm skew}^{3 \times 3}$ is then defined by $S_R = \theta_R N_R$ since  $\exp(S_R) = R$, see \eqref{eq: Rodrigues}. We start with some preliminary facts (Step 1). Then, we define the map $R \mapsto S_R$ on $B_{c_0}(\bar R) \cap SO(3)$  and show that it is Lipschitz (Step 2).  

\emph{Step 1: Preliminary facts.} Let $R\neq I$ be a rotation. Then there exists a unit eigenvector $n$ of $R$ with eigenvalue $1$ which corresponds to the rotation axis. Let $n^\perp = \lbrace w\in \R^3: \langle n,w \rangle = 0 \rbrace $. Since $Rn = n$, for all $w\in n^{\perp}$ with $|w|=1$ there holds \BBB that \EEE
\begin{equation}\label{eq: property1}
|(R-I)w|=|R-I|_2
\end{equation}
\BBB is constant with respect to $w$. \EEE Indeed, the fact that $|(R-I)w|$ is constant follows from the Rodrigues' rotation formula \eqref{eq: Rodrigues2}, and the second implication immediately follows by the definition of the spectral norm. Notice also that for $R$, $n$ as before, and for all $w\in n^{\perp}$ with $|w|=1$ it holds $|(R-I)w|^2=-2\langle(R-I)w, w\rangle$ since $R$ is a rotation.  Combining with \eqref{eq: property1} we get
\begin{equation}\label{eq: property2}
\langle(R-I)w, w\rangle=-\frac12|R-I|^2_2\,.
\end{equation}
As a further preparation, we show that for $R_1, R_2 \in  B_{c_0}(\bar R) \cap SO(3)$ there holds
\begin{align}\label{eq: axis diff}
\sqrt{1 - |\langle n_1, n_2 \rangle |^2} \le 4c_2^{-1}|R_1 - R_2| \le 8c_2^{-1} c_0\,,
\end{align} 
  where  $n_i$ are unit eigenvectors of $R_i$ with eigenvalue $1$ for $i=1,2$. The second inequality is clear. To see  the first, by $R_2n_2=n_2$ we  get on the one hand 
\[
|(R_1-I)n_2|=|( R_1-R_2 )n_2|\le |R_1-R_2|.
\]
On the other hand, writing $n_2=\mu n_1 +\sqrt{1-\mu^2}y$, where $\mu=\langle n_1, n_2  \rangle$ and $y\in n_1^{\perp}$ with $|y|=1$, we have by  \eqref{eq: barR distance},  \eqref{eq: property1}, and $R_1n_1 = n_1$
\[
|(R_1-I)n_2|=\sqrt{1-\mu^2}|(R_1-I)y|=\sqrt{1-\mu^2}|R_1-I|_2\ge c_2 \sqrt{1-\mu^2}|R_1-I|\ge \tfrac{1}{4} c_2  \sqrt{1-\mu^2}\,,
\]
where we also used $|\cdot|_2 \ge c_2|\cdot|$. By combining the two estimates we get \eqref{eq: axis diff}.

\emph{Step 2: Construction of the inverse mapping.} Given $\bar R$, we define a positive orthonormal basis $\{\bar n, \bar w, \bar z\}$ of $\R^3$ with $\bar R \bar n= \bar n$.   Consider $R \in  B_{c_0}(\bar R) \cap SO(3)$ and let $n$ be a unit vector with $Rn=n$.  Provided  that we let $c_0 \le c_2/16$, $n$ can be chosen such that $\langle n,\bar{n} \rangle \ge \frac{3}{4}$, see \eqref{eq: axis diff}. We define a positive orthonormal basis $\{n, w, z\}$ by $w = (\bar{z} \times n)/|\bar{z} \times n|$ and $z = n \times w$. Since $R \in SO(3)$ and $R n= n$, the vectors $\{n, R w, R z\}$ form a positive orthornormal basis of $\R^3$ as well. Hence,  we get the equalities
\begin{align*}\label{eq: decomposition}
Rw=\langle Rw, w\rangle w + \langle Rw, z\rangle z, \ \ \ \ \ \ \
Rz=-\langle Rw, z\rangle w + \langle Rw, w\rangle z\,.
\end{align*}
Now, the point $(\langle R w, w\rangle, \langle R w, z\rangle)$ lies in $S^1$. By an elementary computation along with \eqref{eq: barR distance} and \eqref{eq: property2} we get
\[
|(\langle R w, w\rangle, \langle R w, z\rangle)-(1,0)|=\sqrt{2\langle(I- R)w, w\rangle} = |R-I|_2 \ge c_2 |R-I| \ge c_2/4.
\]
Hence, we can consider a smooth inverse $\Theta$ of the mapping $\theta \mapsto (\cos(\theta), \sin(\theta))$ defined on $S^1\setminus B_{ c_2/4}(1,0)$ and with values in a compact interval of the form $[\eta ,2\pi-\eta]$. We define 
\begin{align}\label{eq_ thetaR}
\theta_R = \Theta(\langle Rw, w\rangle, \langle Rw, z\rangle).
\end{align} 
The function $\Theta$ can be taken globally Lipschitz on its domain since the latter is at positive distance to the singularity at $(1,0)$. 

Summarizing, given $R \in B_{c_0}(\bar R) \cap SO(3)$, we let $n_R = n$ with $Rn=n$, $|n|=1$, $N_R \in \R^{3\times 3}_{\rm skew}$ with $N_R u = n_R \times u$ for all $u \in \R^3$,  $\theta_R$ as in \eqref{eq_ thetaR}, 
and  $S_R =  \theta_R  N_R$. Recall that $R = \exp(S_R)$, see  \eqref{eq: Rodrigues}.  Finally, to check that $R \mapsto S_R$ is Lipschitz, we  first note that $R \mapsto n_R$ is Lipschitz.  Indeed, let  $n_1$ and $n_2$  be the rotation axes corresponding to  $R_1$ and $R_2$ with $\langle n_i, \bar n \rangle \ge \frac{3}{4}$ for $i=1,2$. Then it is elementary to check that $\langle n_1, n_2 \rangle \ge \langle n_2,\bar n \rangle - |n_1 - \bar n| \ge \frac{1}{4} \ge 0$. By \eqref{eq: axis diff} we then get 
\begin{align}\label{eq: n1n2}
|n_1 - n_2| = \sqrt{2 - 2\langle n_1,n_2 \rangle} \le \sqrt{2}\sqrt{ (1 + \langle n_1,n_2 \rangle) (1 - \langle n_1,n_2 \rangle)  }  \le 4\sqrt{2}c_2^{-1}|R_1 - R_2|\,.
\end{align}
In a similar fashion, $R \mapsto \theta_R$ is Lipschitz with $\theta_R$ from  \eqref{eq_ thetaR}. In fact, $\Theta$ is globally Lipschitz on $S^1\setminus B_{ c_2/4}(1,0)$, and by construction together with  \eqref{eq: n1n2}  there holds
$$|w_1-w_2| \le C|R_1-R_2|\,,\quad \quad|z_1-z_2| \le C|R_1-R_2|$$
for some universal  $C>0$, where $w_i=(\bar z \times n_i)/|\bar z \times n_i|$ and $z_i= n_i \times w_i$ for $i=1,2$.
\end{proof}

\section{Proof of Lemma \ref{lemma: decomp-sets}}\label{proof2}
\begin{proof}
 For the proof we use the notation  $\diam(F) = {\rm ess }\sup\lbrace \vert x-y \vert: x,y \in F \rbrace$ and 
 $${\rm diam}_1(F) = {\rm ess\, sup} \lbrace | \langle x-y,  e_1 \rangle |: \, x,y \in F \rbrace$$
for bounded, measurable sets $F \subset\R^d$, $d=2,3$.

Part (a) relies on the property that for each indecomposable, bounded set  of finite perimeter $E$ one has
\begin{align}\label{eq: maggi}
{\rm diam}(E) \le \mathcal{H}^1(\partial^* E).
\end{align}
For a proof we refer to \cite[Propostion 12.19, Remark 12.28]{maggi}. Then the statement  follows simply by letting $R =  B_r(x_0)$  be the circle with center $x_0$   and radius $r = \frac{1}{2\pi}\theta{\rm diam}(E)$. Then \eqref{eq: maggi} implies $\mathcal{H}^1(\partial R) \le \theta \mathcal{H}^1(\partial^* E)$ and \eqref{eq: essinf0} holds   since
 $${\rm ess\, sup}_{x \in E \setminus R} \,  |x-x_0| \le   {\rm ess\, inf}_{x \in E \setminus R} \,  |x-x_0| +  {\rm diam}(E) \le (1+2\pi \theta^{-1}) \, {\rm ess\, inf}_{x \in E \setminus R} \,  |x-x_0|. $$
 
For (b), we may    suppose that $K = \R \times \lbrace (0,0) \rbrace$ after applying an isometry. The proof  is considerably more difficult than the one in (a) since an estimate of the form \eqref{eq: maggi} is wrong in general and the object 
\begin{align}\label{eq: r def}
r :=  ({\rm diam}_1(E))^{-1}\, \mathcal{H}^{2}(\partial^* E)
\end{align}
might be much smaller than $1$. To this end, we will first need to construct a decomposition of $E$ into pieces with smaller diameter in $e_1$ direction (Step 1) which allows us to control the relation of perimeter and ${\rm diam}_1$ (see Step 2). Afterwards, a further tubular decomposition of each of these pieces is needed (Step 3). In Step 4 we will finally show that the constructed partition satisfies \eqref{eq: lengthi bound}--\eqref{eq: essinf}.  Throughout Steps 1 - 2 we will assume that ${\rm diam}_1(E) > 2\mathcal{H}^{2}(\partial^* E)^{\frac12}$, so that in particular ${\rm diam}_1(E) > 0$ and ${\rm diam}_1(E) > 4r$. If instead ${\rm diam}_1(E) \le 2\mathcal{H}^{2}(\partial^* E)^{\frac12}$ holds, one can directly skip to Step 3, consider a single $T_1 = E$ in Step 3 - 4, and observe that in this case \eqref{eq: diam-surf} is clearly satisfied for $\theta \le c_{\pi, 2}$, where this latter is the isoperimetric constant in the plane.

\noindent \emph{Step 1 (Cutting in $e_1$ direction):}  The goal of this step is to construct a decomposition of $E$  into pairwise disjoint sets $(T_i)_{i=1}^I$ of the form $T_i = E \cap ((t_{i-1}, t_i) \times \R^2 )$, $i=1,\ldots, I$, for suitable $ t_0 < t_ 1 < \ldots < t_{I} $, which satisfy
\begin{align}\label{eq: added length5}
\sum\nolimits_{i=1}^I \mathcal{H}^2(\partial^* T_i \setminus \partial^* E) \le c\theta\mathcal{H}^2(\partial^* E)  
\end{align}
for a universal constant $c>0$ and
\begin{align}\label{eq: main cut3}
\mathcal{H}^2\big( E \cap (\lbrace t \rbrace \times  \R^2 )   \big) > \theta \mathcal{H}^2\big( \partial^* E \cap ((t_{i-1},t_i) \times  \R^2 )   \big) \ \ \ \text{for $\mathcal{H}^1$-a.e.\ $t_{i-1} + r  \le t \le t_i-r$}.
\end{align}
We point out that ${\rm diam}_1(T_i) = t_i - t_{i-1} < 2r$  is possible. In this case, condition \eqref{eq: main cut3} is trivial.

 To achieve this, we perform an iterative decomposition of the set $E$. Choose the largest $t' \in \R$ and the smallest $t'' >t'$  such that $E \subset (t',t'') \times \R^2$ up to a set of negligible $\mathcal{L}^3$-measure.  We start to construct a first auxiliary decomposition $(S_j)_{j=1}^J$. We describe  the first step of the construction of $(S_j)_{j=1}^J$ in detail: choose $s_1 \in (t',t'']$ such that 
\begin{align}\label{eq: main cut}
{\rm (i)} & \ \  \mathcal{H}^2\big( E \cap (\lbrace s_1 \rbrace \times  \R^2 )   \big) \le 2\theta \mathcal{H}^2\big( \partial^* E \cap ((t',s_1) \times  \R^2 )   \big) \ \ \ \text{or} \ \ \ s_1 = t'',\notag \\
{\rm (ii)} & \ \  \mathcal{H}^2\big( E \cap (\lbrace t \rbrace \times  \R^2 )   \big) > 2\theta \mathcal{H}^2\big( \partial^* E \cap ((t',t) \times  \R^2 )   \big) \ \ \ \text{for $\mathcal{H}^1$-a.e.\ $t'+r \le t \le s_1-r$}.
\end{align}
In fact,  this is possible: let 
$$M = \big\{ t \in (t'+r,t''): \ \mathcal{H}^2\big( E \cap (\lbrace t \rbrace \times  \R^2 )   \big) \le 2\theta \mathcal{H}^2\big( \partial^* E \cap ((t',t) \times  \R^2 )   \big) \big\}. $$
If $M   \neq \emptyset$, select $s_1 \in M$ such that $(t' +r , s_1 - r) \cap M = \emptyset$.  (This is indeed possible by choosing $s_1 \in M \cap [\inf M ,\inf M + r)$. As pointed out below \eqref{eq: main cut3}, $s_1 - t' <2r$ is admissible. In this case, the interval is empty and condition \eqref{eq: main cut}(ii) is trivial.) 
If $M = \emptyset$, let $s_1 = t''$.  Define $S_1 := ((t',s_1) \times \R^2) \cap E$. Observe that this also implies ${\rm diam}_1(S_1) \ge r$. 
 
 We now proceed iteratively: suppose that $(S_j)_{j=1}^k$ have been defined and let $E_k = E \setminus \bigcup_{j=1}^k S_j$. As long as ${\rm diam}_1(E_k)>r$, we then repeat the above procedure for $E_k$ in place of $E$. Hereby, after a finite number of iterations, we obtain a decomposition $E = \bigcup_{j=1}^J S_j$, where ${\rm diam}_1(S_j) \ge r$ for all $j=1, \ldots, J-1$. (Note that the  control from below by $r$ on ${\rm diam}_1$ ensures that the iteration procedure stops after a finite number of steps.) Setting $s_0 = t'$ and  $s_J = t''$  for convenience, we find by \eqref{eq: main cut}(ii)
\begin{align}\label{eq: main cut4}
\mathcal{H}^2 \big( E \cap (\lbrace t \rbrace \times  \R^2 )   \big) > 2\theta \mathcal{H}^2 \big( \partial^* E \cap ((s_{j-1},t) \times  \R^2 )   \big) \ \ \ \text{for $\mathcal{H}^1$-a.e.\ $s_{j-1}+r \le t \le s_j-r$} 
\end{align}  
for all $j = 1,\ldots, J$.   Moreover, by using \eqref{eq: main cut}(i)  (with $s_{j-1}$ and  $s_j$ in place of $t'$ and   $s_1$)   we get
 \begin{align}\label{eq: added length}
\sum\nolimits_{j=1}^J \mathcal{H}^2(\partial^* S_j \setminus \partial^* E) &\le 2\sum\nolimits_{j=1}^{J-1} \mathcal{H}^2( E \cap (\lbrace s_j \rbrace \times  \R^2 )   ) \notag \\ 
& \le 4\theta \sum\nolimits_{j=1}^{J-1}  \mathcal{H}^2 \big( \partial^* E \cap ((s_{j-1},s_j) \times  \R^2 )   \big) \le 4\theta\mathcal{H}^2(\partial^* E). 
 \end{align}

We now repeat the above procedure for each $S_j$ in place of $E$ starting from the right instead of from the left: the first set in the decomposition of each $S_j$ is obtained by choosing $s^1_j  \in [s_{j-1},s_j)$ such that 
\begin{align}\label{eq: main cut2}
{\rm (i)} & \ \  \mathcal{H}^2 \big( S_j \cap (\lbrace  s^1_j  \rbrace \times  \R^2 )   \big) \le 2\theta \mathcal{H}^2 \big( \partial^* S_j \cap (( s^1_j, s_j) \times  \R^2 )   \big)  \ \ \ \text{or} \ \ \  s^1_j = s_{j-1}, \\
{\rm (ii)} & \ \  \mathcal{H}^2 \big( S_j \cap (\lbrace t \rbrace \times  \R^2 )   \big) > 2\theta \mathcal{H}^2 \big( \partial^* S_j \cap ((t,s_j) \times  \R^2 )   \big) \  \text{for $\mathcal{H}^1$-a.e.\ $s^1_j  +r \le t \le s_j-r$}.\notag
\end{align}
We set $S_j^1 := (s_j^1,s_j) \cap S_j = (s_j^1,s_j) \cap E$ and proceed iteratively as before to define sets $(S_j^k)_{k \ge 1}$ of the form $S_j^k =  ((s_j^{k+1},s_j^k) \times \R^2)   \cap E$. 

For convenience, we denote the decomposition $(S_j^k)_{j,k}$ of $E$ by $(T_i)_{i=1}^I$ and observe that there exist $t' =  t_0 < t_ 1 < \ldots < t_{I}  = t'' $ such that $T_i = E \cap ((t_{i-1}, t_i) \times \R^2 )$ for all $i=1,\ldots, I$. 

We show \eqref{eq: main cut3}: first, by \eqref{eq: main cut4}  and the fact that $(s_j^{k+1},t) \subset (s_{j-1},t)$ for all $s_j^{k+1} + r  \le t \le s_j^{k}-r$ we get 
$$
\mathcal{H}^2\big( E \cap (\lbrace t \rbrace \times  \R^2 )   \big) > 2\theta \mathcal{H}^2\big( \partial^* E \cap ((t_{i-1},t) \times  \R^2 )   \big) \ \ \ \  \text{for $\mathcal{H}^1$-a.e.\ $t_{i-1} + r  \le t \le t_i-r$}.
$$
The fact that in  \eqref{eq: main cut2}(ii)  we may replace $S_j$ by $E$ without changing the estimate yields 
$$
\mathcal{H}^2\big( E \cap (\lbrace t \rbrace \times  \R^2 )   \big) > 2\theta \mathcal{H}^2\big( \partial^* E \cap ((t,t_{i}) \times  \R^2 )   \big) \  \ \ \  \text{for $\mathcal{H}^1$-a.e.\ $t_{i-1}+r \le t \le t_i-r$}.
$$
Combining the previous two estimates and using that $\mathcal{H}^2\big( \partial^* E \cap (\lbrace t \rbrace \times  \R^2 )   \big) = 0$ for $\mathcal{H}^1$-a.e.\ $t$, we get \eqref{eq: main cut3}. Moreover, repeating the argument \eqref{eq: added length} we derive
\begin{align}\label{eq: added length2}
\sum\nolimits_{k \ge 1} \mathcal{H}^2(\partial^* S^k_j \setminus \partial^* S_j) \le 4\theta\mathcal{H}^2(\partial^* S_j)  
\end{align}
for all $j=1,\ldots,J$.    Then from \eqref{eq: added length} and \eqref{eq: added length2} we obtain
\begin{align*}
\sum\nolimits_{i=1}^I \mathcal{H}^2(\partial^* T_i \setminus \partial^* E) & \le \sum\nolimits_{j=1}^J \mathcal{H}^2(\partial^* S_j \setminus \partial^* E) + \sum\nolimits_{j=1}^J\sum\nolimits_{k \ge 1} \mathcal{H}^2(\partial^* S^k_j \setminus \partial^* S_j) \\
& \le 4\theta\Big( \mathcal{H}^2(\partial^* E)  + \sum\nolimits_{j=1}^J \mathcal{H}^2(\partial^* S_j)\Big)  \le 4\theta\Big( \mathcal{H}^2(\partial^* E)  +  2\mathcal{H}^2\Big( \bigcup\nolimits_{j=1}^J\partial^* S_j\Big)\Big)\\
& \le 12\theta \mathcal{H}^2(\partial^* E)  + 8\theta \sum\nolimits_{j=1}^J \mathcal{H}^2(\partial^* S_j \setminus \partial^* E)  \le (12\theta + 32\theta^2) \mathcal{H}^2(\partial^* E),
\end{align*} 
where we also used that each $x \in \R^3$ lies in at most two different $\partial^* S_j$. This yields \eqref{eq: added length5}.

\noindent \emph{Step 2 (Relation of ${\rm diam}_1$ and perimeter):} We now prove a fundamental ${\rm diam}_1$-perimeter-relation of the sets $(T_i)_{i=1}^I$ which have been constructed in Step 1: each  $T_i$ with ${\rm diam}_1(T_i) \ge 4r$ satisfies 
\begin{align}\label{eq: diam-surf}
 {\rm diam}_1(T_i) \le 2\sqrt{c_{\pi,2} \sigma_i /\theta},
\end{align}
where $c_{\pi,2}$ denotes the isoperimetric constant in dimension two and for brevity we use the notation
\begin{align}\label{eq: sigma-shorthand}
\sigma_i := \mathcal{H}^2 \big( \partial^* E \cap ((t_{i-1},t_{i}) \times  \R^2 )\big).
\end{align}

We prove   \eqref{eq: diam-surf}. Since we are assuming ${\rm diam}_1(T_i) \ge 4r$,  \eqref{eq: main cut3} is nontrivial and yields 
$$ \theta \sigma_i  < \mathcal{H}^2\big( E \cap (\lbrace t \rbrace \times  \R^2 )   \big)  $$
for $\mathcal{H}^1$-a.e.\ $t_{i-1} + r \le t \le t_i-r$. Thus, the isoperimetric inequality in dimension two applied on the sets $E \cap (\lbrace t \rbrace \times  \R^2)$ implies for $\mathcal{H}^1$-a.e.\ $t_{i-1} + r \le t \le t_i-r$ that  
$$\theta \sigma_i \le c_{\pi,2}  \Big(\mathcal{H}^1\big(  \partial^*( E \cap (\lbrace t \rbrace \times  \R^2) )   \big) \Big)^{2}.$$
We recall that the coarea formula on rectifiable sets (see, e.g., \cite[Theorem 18.8 and Formula (18.25)]{maggi}) gives, for all $a, b \in \mathbb{R}$, that
\[
 \mathcal{H}^2\big(  \partial^*E \cap ((a,b) \times  \R^2)    \big)\ge \int_{a}^{b} \mathcal{H}^1\big(  \partial^* E \cap (\lbrace t \rbrace \times  \R^2)  \big)  \, \mathrm{d}t= \int_{a}^{b} \mathcal{H}^1\big(  \partial^*( E \cap (\lbrace t \rbrace \times  \R^2) )   \big)  \, \mathrm{d}t\,,
\]
where the last equality is proved, for instance, in \cite[Theorem 18.11]{maggi}.
With this, \EEE by using  $t_i - t_{i-1} - 2r = {\rm diam}_1(T_i) - 2r \ge  {\rm diam}_1(T_i)/2$  and by integrating from $t_{i-1} +r$ to $t_i-r$ we find
\begin{align*}
\frac{1}{2}{\rm diam}_1(T_i)\sqrt{\theta \sigma_i} \le    (t_i - t_{i-1} - 2r)  \sqrt{\theta \sigma_i} \le \sqrt{c_{\pi,2} }\int_{t_{i-1} +r}^{t_i-r} \mathcal{H}^1\big(  \partial^*( E \cap (\lbrace t \rbrace \times  \R^2) )   \big)  \, dt \le \sqrt{c_{\pi,2} }\sigma_i,
\end{align*}
where the last step follows from the shorthand \eqref{eq: sigma-shorthand}. This  yields \eqref{eq: diam-surf} and concludes the proof of this step.

\noindent \emph{Step 3 (Tubular covering of each $T_i$):} Consider $T_i = ((t_{i-1},t_i) \times \R^2) \cap E$. For notational convenience, we will often not add indices $i$, even if the following objects depend on $i$.    We set   $w_j = j\theta^{-1} (\mathcal{H}^2(\partial^* T_i))^{1/2} $ for all $j \in \N$. We introduce the function  $f: \R^3 \to \R$ defined by  $f(x) = \dist(x, \R e_1 ) = \sqrt{|x_2|^2 + |x_3|^2}$ for $x \in \R^3$. We now decompose $T_i$ into sublevel sets of the function $f$: define $z_0 = \theta^2(\mathcal{H}^2(\partial^* T_i))^{1/2}$ and choose $z_j \in (w_{j},w_{j+1})$ for $j \in \N$ such that 
\begin{align} \label{eq: decompo}
\mathcal{H}^2(\lbrace f= z_j \rbrace \cap T_i) & \le \frac{1}{w_{j+1}-w_{j}} \int_{w_{j}}^{w_{j+1}}  \mathcal{H}^2(\lbrace f= z \rbrace \cap T_i) \, \mathrm{d}z   = \frac{1}{w_{1}} \int_{w_{j}}^{w_{j+1}}   \mathcal{H}^2(\lbrace f= z \rbrace \cap T_i) \, \mathrm{d}z,
\end{align}
where the last step follows from the definition of $w_j$. 

We define a covering of $T_i$ by setting $U^i_0 := ((t_{i-1},t_i) \times \R^2) \cap \lbrace f \le z_0 \rbrace$ and $U^i_j := T_i \cap \lbrace z_{j-1}  <  f \le z_j \rbrace$ for $j \ge 1$.   We  observe that this decomposition   is finite since $E$ (and thus $T_i$) is a bounded set in $\R^3$. For later purposes, we observe that 
\begin{align}\label{eq: dist-max-min}
\inf_{x \in U_1^i} f(x) \ge z_{0} = \theta^3  w_{1} = \frac{1}{2} \theta^3 w_{2}   \ge   \frac{1}{2} \theta^3 z_{1}  \ge  \frac{1}{2} \theta^3 \sup_{x \in U_1^i}\EEE f(x),
\end{align}
and in a similar fashion, for all $j \ge 2$,
\begin{align}\label{eq: dist-max-min2}
\inf_{x \in U_j^i} f(x) \ge z_{j-1} \ge  w_{j-1} = \frac{j-1}{j+1} w_{j+1}   \ge  \frac{1}{3} z_j \ge  \frac{1}{3}   \sup_{x \in U_j^i} f(x).
\end{align}
(Clearly, the above property is  false for $U_0^i$.) 
We now estimate the perimeter of the sets $(U^i_j)_{j \ge 0 }$. First, observe  that  by construction we clearly have $\sigma_i\le \mathcal{H}^2(\partial^* T_i)$, where $\sigma_i$ was defined in \eqref{eq: sigma-shorthand}. Moreover, by \eqref{eq: diam-surf} we get  $ {\rm diam}_1(T_i) \le 2 \sqrt{c_{\pi,2}  \sigma_i /\theta}$ if  ${\rm diam}_1(T_i) \ge 4r$ and   $ {\rm diam}_1(T_i) \le 2\sqrt{r}({\rm diam}_1(T_i))^{1/2}$  otherwise. To summarize both cases,  by  recalling the previous observation, we can write
\[
{\rm diam}_1(T_i) \le 2 \sqrt{c_{\pi,2}/\theta} \, \mathcal{H}^2(\partial^* T_i)^\frac12+ 2\sqrt{r}({\rm diam}_1(T_i))^{1/2}.
\]
Thus, recalling $z_0 = \theta^2(\mathcal{H}^2(\partial^* T_i))^{1/2} $ we can estimate  the perimeter of the cylinder $U^i_0$  by 
\begin{align}\label{eq: U0 length}
\mathcal{H}^2(\partial^* U^i_0)&   = 2 \cdot \pi z_0^2 + 2\pi z_0 {\rm diam}_1(T_i)   \le c\theta^4 \mathcal{H}^2(\partial^* T_i)+ c \theta^2 (\mathcal{H}^2(\partial^* T_i))^{1/2}  {\rm diam}_1(T_i) \notag \\
& \le  c\theta \mathcal{H}^2(\partial^* T_i)+ c \theta (\mathcal{H}^2(\partial^* T_i))^{1/2}  \sqrt{r}({\rm diam}_1(T_i))^{1/2},
\end{align} 
where in the last step we suitably enlarged the absolute  constant $c$ and \EEE also used $\theta^m \le \theta$ for $m \ge 1$. By using \eqref{eq: decompo} and the coarea formula we get
\begin{align*}
\sum_{j \ge 1} \mathcal{H}^2(\partial^* U^i_j \setminus \partial^* T_i) &\le 2\sum_{j \ge 1} \mathcal{H}^2(\lbrace f= z_j \rbrace \cap T_i) \le \frac{2}{w_1} \int_{0}^{\infty}  \mathcal{H}^2(\lbrace f= z \rbrace \cap T_i) \, \mathrm{d}z =  \frac{2}{w_1}\mathcal{L}^3(T_i)\,,
\end{align*}
 since $|\nabla f|=1$ a.e. in $\mathbb{R}^3$.  Then the isoperimetric inequality in dimension three applied on the set $T_i$ yields
\begin{align}\label{e: othersetsbound}
\sum\nolimits_{j \ge 1} \mathcal{H}^2(\partial^* U^i_j \setminus \partial^* T_i) \le   \frac{2c_{\pi,3} }{w_1 } \big(\mathcal{H}^2(\partial^* T_i) \big)^{3/2} \le 2c_{\pi,3} \theta \mathcal{H}^2(\partial^* T_i), 
\end{align} 
where $c_{\pi,3}$ denotes the isoperimetric constant in dimension three. Here, in the last step we used the definition $w_1 = \theta^{-1} (\mathcal{H}^2(\partial^* T_i))^{1/2} $.

\noindent \emph{Step 4 (Conclusion):} We are now in a position to define the covering of $E$ and to confirm \eqref{eq: lengthi bound}--\eqref{eq: essinf}. Define $R = \bigcup_{i=1}^I U^i_0$  and let $(D_j)_{j=1}^{J}$ be the partition of $E \setminus R$ consisting of the sets $\lbrace U^i_j: \ i=1,\ldots,I, \, j \ge 1 \rbrace$ constructed in Step 3. Then \eqref{eq: essinf} follows directly from \eqref{eq: dist-max-min}--\eqref{eq: dist-max-min2}. To see  \eqref{eq: lengthi bound}, we first recall $\sum\nolimits_{i=1}^I\mathcal{H}^2(\partial^* T_i) \le c\mathcal{H}^2(\partial^* E)$ by \eqref{eq: added length5} and that $\sum\nolimits_{i=1}^I {\rm diam}_1(T_i) = {\rm diam}_1(E)$. We compute by \eqref{eq: U0 length} and H\"older's inequality
\begin{align*}
\mathcal{H}^2(\partial^* R) & \le \sum\nolimits_{i=1}^I \mathcal{H}^2(\partial^* U_0^i) \le c\theta  \sum\nolimits_{i=1}^I \mathcal{H}^2(\partial^* T_i) + c\theta \sqrt{r} \sum\nolimits_{i=1}^I (\mathcal{H}^2(\partial^* T_i))^{1/2}({\rm diam}_1(T_i))^{1/2} \notag \\
& \le c\theta \mathcal{H}^2(\partial^* E) + c\theta \sqrt{r}  ({\rm diam}_1(E))^{1/2} \big(  \mathcal{H}^2(\partial^* E) \big)^{1/2} \le c\theta  \mathcal{H}^2(\partial^* E),
\end{align*} 
 where the last step follows from the definition of $r$ in \eqref{eq: r def}.
In a similar fashion, by using  \eqref{eq: added length5} and  \eqref{e: othersetsbound} we get 
\begin{align*}
\sum\nolimits_{j=1}^J\mathcal{H}^2(\partial^* D_j \setminus \partial^* E) &\le  \sum\nolimits_{i=1}^I \sum\nolimits_{j \ge 1}\mathcal{H}^2\big(\partial^* U^i_j \setminus \partial^* T_i \big) + \sum\nolimits_{i=1}^I   \mathcal{H}^2\big( \partial^* T_i \setminus \partial^* E \big) \\
&  \le c\theta \sum\nolimits_{i=1}^I \mathcal{H}^2(\partial^* T_i)  +   c\theta \mathcal{H}^2(\partial^* E) \le c\theta \mathcal{H}^2(\partial^* E). 
\end{align*}     
The previous two estimates show \eqref{eq: lengthi bound} and conclude the proof.  (Clearly, the constant $c$ can be absorbed in $\theta$ by repeating the above arguments for $\theta/c$ in place of $\theta$.) 
\end{proof}

%--------------------------------------------------------------------------

%--------------------------------------------------------------------------
%--------------------------------------------------------------------------
 \typeout{References}

\end{document}